\numberwithin{equation}{section}
\newcommand{\Vol}{\text{Vol}}
\newcommand{\diam}{\text{diam}}
\newcommand{\inj}{\text{inj}}
\newcommand{\Sing}{\text{S}}
\newcommand{\dN}{\mathds{N}}
\newcommand{\dQ}{\mathds{Q}}
\newcommand{\dR}{\mathds{R}}
\newcommand{\norm}[1]{\left\|#1\right\|}
\newcommand{\ps}[2]{\left\langle#1,#2\right\rangle}
\newcommand{\ton}[1]{\left(#1\right)}
\newcommand{\qua}[1]{\left[#1\right]}
\newcommand{\cur}[1]{\left\{#1\right\}}
\newcommand{\abs}[1]{\left|#1\right|}
\newcommand{\wto}{\rightharpoonup ^*}
\newcommand{\tSr}{\tilde S _{\bar r}}
\newcommand{\B}[2]{B_{#1}\ton{#2}}
\newcommand{\tB}[2]{\tilde B_{#1}\ton{#2}}
\newcommand{\supp}[1]{\operatorname{supp}\ton{#1}}
\newcommand{\N}{\mathds{N}}
\newcommand{\R}{\mathds{R}}
\renewcommand{\paragraph}[1]{\ \newline \ \textbf{#1\ }}
\newcommand{\hol}{H\"older }
\newcommand{\al}{Ahlfors }
\newtheorem{theorem}{Theorem}[section]
\newtheorem{proposition}[theorem]{Proposition}
\newtheorem{lemma}[theorem]{Lemma}
\theoremstyle{definition}
\newtheorem{definition}[theorem]{Definition}
\theoremstyle{remark}
\newtheorem{remark}{Remark}[section]
\theoremstyle{remark}
\newtheorem{example}{Example}[section]
\theoremstyle{remark}
\theoremstyle{remark}
\theoremstyle{remark}
\begin{document}

\title{Rectifiable-Reifenberg and the Regularity of Stationary and Minimizing Harmonic Maps}


\author{Aaron Naber and Daniele Valtorta}\thanks{The first author has been supported by NSF grant DMS-1406259, the second author has been supported by SNSF grant 149539}

\date{\today}

\begin{abstract}
In this paper we study the regularity of stationary and minimizing harmonic maps $f:B_2(p)\subseteq M\to N$ between Riemannian manifolds.  If $S^k(f)\equiv\{x\in M: \text{ no tangent map at $x$ is }k+1\text{-symmetric}\}$ is the $k^{th}$-stratum of the singular set of $f$, then it is well known that $\dim S^k\leq k$, however little else about the structure of $S^k(f)$ is understood in any generality.  Our first result is for a general stationary harmonic map, where we prove that $S^k(f)$ is $k$-rectifiable.  In fact, we prove for $k$-a.e. point $x\in S^k(f)$ that there exists a unique $k$-plane $V^k\subseteq T_xM$ such that {\it every} tangent map at $x$ is $k$-symmetric with respect to $V$.  

In the case of minimizing harmonic maps we go further, and prove that the singular set $S(f)$, which is well known to satisfy $\dim S(f)\leq n-3$, is in fact $n-3$-rectifiable with uniformly {\it finite} $n-3$-measure.  An effective version of this allows us to prove that $|\nabla f|$ has estimates in $L^3_{weak}$, an estimate which is sharp as $|\nabla f|$ may not live in $L^3$.  More generally, we show that the regularity scale $r_f$ also has $L^3_{weak}$ estimates.

The above results are in fact just applications of a new class of estimates we prove on the {\it quantitative} stratifications $S^k_{\epsilon,r}(f)$ and $S^k_{\epsilon}(f)\equiv S^k_{\epsilon,0}(f)$.  Roughly, $S^k_{\epsilon}\subseteq M$ is the collection of points $x\in M$ for which no ball $B_r(x)$ is $\epsilon$-close to being $k+1$-symmetric.  We show that $S^k_\epsilon$ is $k$-rectifiable and satisfies the Minkowski estimate $\Vol(B_r\,S_\epsilon^k)\leq C r^{n-k}$.  

The proofs require a new $L^2$-subspace approximation theorem for stationary harmonic maps, as well as new $W^{1,p}$-Reifenberg and rectifiable-Reifenberg type theorems.  These results are generalizations of the classical Reifenberg, and give checkable criteria to determine when a set is $k$-rectifiable with uniform measure estimates.  The new Reifenberg type theorems may be of some independent interest.  The $L^2$-subspace approximation theorem we prove is then used to help break down the quantitative stratifications into pieces which satisfy these criteria.
\end{abstract}

\maketitle

\tableofcontents

\section{Introduction}

Let $f:B_2(p)\subseteq M\to N$ be a stationary harmonic map between two Riemannian manifolds with $N$ compact without boundary.  We recall that the singular set of $f$ is defined as the complement of the regular set, i.e., 
\begin{gather}
 \Sing(f)=\cur{x\in M \ \ s.t. \ \ \exists r>0 \ \ s.t. \ \ u|_{\B r x} \ \ \text{ is continuous }}^C=\cur{x\in M \ \ s.t. \ \ \exists r>0 \ \ s.t. \ \ u|_{\B r x} \ \ \text{ is smooth }}^C\, .
\end{gather}
One can use $f$ to give a stratification of $M$ by the singular sets
\begin{align}\label{e:stratification}
S^0(f)\subseteq\cdots\subseteq S^k(f)\subseteq \cdots \Sing(f)\subseteq M\, , 
\end{align}
defined by
\begin{align}\label{e:sing_set}
S^k(f)\equiv\{x\in M: \text{ no tangent map at $x$ is }k+1\text{-symmetric}\}\, ,
\end{align}
see Definition \ref{d:stratification} for a precise definition and more detailed discussion.  A key result of \cite{ScUh_RegHarm} is that we have the Hausdorff dimension estimate
\begin{align}\label{e:sing_set_haus_est}
\dim S^k\leq k\, ,
\end{align}
However, little else is known about the structure of the singular sets $S^k(f)$.  In the stationary case, nothing is known in general.  In the case of a minimizing harmonic map, \cite{ScUh_RegHarm} proves that $S(f)=S^{n-3}(f)$, and, under the additional assumption that $N$ is analytic, it follows from the wonderful work of \cite{Simon_RegMin} that the top stratum $S^{n-3}$ is rectifiable.  In low dimensions and when the targets are spheres, these results have been further refined in \cite{linwang} and \cite{HL42}.\\


The goal of this paper is to study additional properties of the stratification and the associated quantitative stratification for stationary and minimizing harmonic maps.  That is, a first goal is to see for a stationary harmonic map with a general compact target space that $S^k(f)$ is $k$-rectifiable {\it for all} $k$.  In fact, the proof allows us to see the following stronger result.  For $k$-a.e. $x\in S^k$ there will exist a unique $k$-plane $V^k\subseteq T_xM$ such that {\it every} tangent map at $x$ will be $k$-symmetric with respect to $V$.  Let us observe a subtlety to this statement.  We are not claiming that the tangent maps at such $x$ are unique.  We are simply claiming that for $k$-a.e. point there is a maximal dimensional $k$-plane of symmetry which exists, and that this plane of symmetry is unique independent of which tangent map is chosen.  Theorem \ref{t:main_stationary} gives the precise results for the classical stratification of a stationary harmonic map.\\

For minimizing harmonic maps we can improve on this result in two ways.  First, we can show that the top stratum is not only $n-3$ rectifiable, but has an {\it a priori} bound on its $n-3$ measure.  That is, if 
\begin{align}
\fint_{B_2(p)}|\nabla f|^2\leq \Lambda\, ,
\end{align}
is the $L^2$ bound on the energy, then we have the $n-3$-Hausdorff measure estimate
\begin{align}
\lambda^{n-3}(S^{n-3}(f)\cap B_1)\leq C\, ,
\end{align}
where $C$ depends on $M$, $N$ and $\Lambda$ (see Sections \ref{ss:stationary_results} and following for precise statements).

Indeed, we show a much stronger Minkowski version of this estimate, see Theorem \ref{t:main_min_finite_measure} for the complete statement.  In fact, we can prove what turns out to be a much more effective analytic version of the above.  Namely, in Theorem \ref{t:main_min_weak_L3} we show that $|\nabla f|$, and in fact the regularity scale $r_f$, have {\it a priori} weak $L^3$ bounds.  That is,
\begin{align}
\Vol\ton{\cur{|\nabla f|>r^{-1}}\cap B_1}\leq \Vol\ton{B_r\cur{|\nabla f|>r^{-1}}\cap B_1} \leq C r^3\, ,
\end{align}
where $\B r E$ stands for the tubular neighborhood of radius $r$ around the set $E$, i.e.,
\begin{gather}
 \B r E = \cur{x \ \ s.t. \ \ d(x,E)<r} = \bigcup_{x\in E} \B r x\, .
\end{gather}

Notice that this is indeed sharp, in that there are counterexamples showing that $|\nabla f|\not\in L^3$, see Example \ref{sss:Lp_sharp_example}.  Let us also point out that this sharpens results from \cite{ChNa2}, where it was proven for minimizing harmonic maps that $|\nabla f|\in L^p$ for all $p<3$.  We refer the reader to Section \ref{ss:minimizing_results} for the precise and most general statements.\\

Now the techniques of this paper all center around estimating the {\it quantitative} stratification, not the standard stratification itself.  In fact, it is for the quantitative stratification that the most important results of the paper hold, everything else can be seen to be corollaries of these statements.  

A first version of the quantitative stratification can be found in \cite[section 2.25]{almgren_big}. This concept was later developed in \cite{ChNa1} with the goal of proving new estimates on noncollapsed manifolds with Ricci curvature bounded below and in particular Einstein manifolds, and then extended in \cite{ChNa2} to give effective and $L^p$ estimates on minimal and stationary harmonic maps and minimal submanifolds. It has since been used in \cite{ChNaHa1}, \cite{ChNaHa2}, \cite{ChNaVa}, \cite{FoMaSpa}, \cite{brelamm} to prove similar results in the areas of mean curvature flow, critical sets of elliptic equations, harmonic map flow, and biharmonic maps.  

Before describing the results in this paper on the quantitative stratification, let us give more precise definitions of everything.  To begin with, to describe the stratification and quantitative stratification we need to discuss the notion of symmetry associated to a solution.  Specifically:

\begin{definition}
We define the following:
\begin{enumerate}
\item A mapping $f:\dR^n\to N$ is called $k$-symmetric if $f(\lambda x)=f(x)$ $\forall \lambda>0$ and if there exists a $k$-plane $V^k\subseteq \dR^n$ such that for each $x\in \dR^n$ and $y\in V^k$ we have that $f(x+y)=f(x)$. Note that a $0$-symmetric function is simply a homogeneous function of degree $0$.
\item Given a mapping $f:M\to N$ and $\epsilon>0$, we say a ball $B_r(x)\subseteq M$ is $(k,\epsilon)$-symmetric if $r<$inj$(x)$ and there exists a $k$-symmetric mapping $\tilde f:T_xM\to N$ such that $\fint_{B_r(x)}|f-\tilde f|^2 dv_g<\epsilon$, where we have used the exponential map to identify $\tilde f$ as a function on $M$.
\end{enumerate}
\end{definition}

Thus, a function is $k$-symmetric if it only depends on $n-k$ variables and is radially invariant.  A $k$-symmetric function can therefore be naturally identified with a function on a $n-k-1$ sphere.  A function is $(k,\epsilon)$-symmetric on a ball $B_r(x)$ if it is $L^2$ close to a $k$-symmetric function on this ball.\\

With the notion of symmetry in hand we can define precisely the quantitative stratification associated to a solution.  The idea is to group points together based on the amount of symmetry that balls centered at those points contain.  In fact, there are several variants which will play a role for us, all of which are important for the applications to the standard singular set.  Let us introduce them and briefly discuss them:

\begin{definition}\label{d:stratification}
For a stationary harmonic map $f:B_2(p)\to N$ we make the following definitions:
\begin{enumerate}
\item For $\epsilon,r>0$ we define the $k^{th}$ $(\epsilon,r)$-stratification $S^k_{\epsilon,r}(f)$ by
\begin{align}
S^k_{\epsilon,r}(f)\equiv\{x\in B_1(p):\text{ for no }r\leq s<1\text{ is $B_{s}(x)$ a }(k+1,\epsilon)\text{-symmetric ball}\}.
\end{align}
\item For $\epsilon>0$ we define the $k^{th}$ $\epsilon$-stratification $S^k_{\epsilon}(f)$ by
\begin{align}
S^k_{\epsilon}(f)=\bigcap_{r>0} S^k_{\epsilon,r}(f)\equiv\{x\in B_1(p):\text{ for no }0< r<1\text{ is $B_{r}(x)$ a }(k+1,\epsilon)\text{-symmetric ball}\}.
\end{align}
\item We define the $k^{th}$-stratification $S^k(f)$ by
\begin{align}
S^k(f)=\bigcup_{\epsilon>0} S^k_{\epsilon}=\{x\in B_1(p):\text{ no tangent cone at $x$ is $k+1$-symmetric}\}.
\end{align}
\end{enumerate}
\end{definition}
\begin{remark}
It is a small but important exercise to check that the standard stratification $S^k(f)$ as defined in \eqref{e:stratification} agrees with the set $\bigcup_{\epsilon>0} S^k_{\epsilon}$.  We do this carefully in Section \ref{ss:proof_thm_main_stationary}.
\end{remark}

Let us discuss in words the meaning of the quantitative stratification, and how it relates to the standard stratification.  As discussed at the beginning of the section, the stratification $S^k(f)$ of $M$ is built by separating points of $M$ based on the infinitesimal symmetries of $f$ at those points.  The quantitative stratifications $S^k_{\epsilon}(f)$ and $S^k_{\epsilon,r}(f)$ are, on the other hand, instead built by separating points of $M$ based on how many symmetries exist on balls of definite size around the points.  In practice, the quantitative stratification has two advantages to the standard stratification.  First, for applications to minimizing harmonic maps the quantitative stratification allows one to prove effective estimates.  In particular, in \cite{ChNa2} the weaker $L^p$ estimates 
\begin{align}\label{e:sing_set_Lp_est}
\fint_{B_1(p)}|\nabla f|^{3-\delta},\, \fint_{B_1(p)}|\nabla^2 f|^{\frac{3-\delta}{2}}<C_\delta \text{  }\forall\, \delta>0\, ,
\end{align}
on solutions were obtained by exploiting this fact.  The second advantage is that the estimates on the quantitative stratification are much stronger than those on the standard stratification.  Namely, in \cite{ChNa2} the Hausdorff dimension estimate \eqref{e:sing_set_haus_est} on $S^k(f)$ was improved to the Minkowski content estimate 
\begin{align}\label{e:sing_set_mink_delta_est}
\Vol(B_r\, S^k_{\epsilon,r})\leq C_\delta r^{n-k-\delta}\text{  }\forall \delta>0\, .
\end{align}
One of the key technical estimates of this paper is that in Theorem \ref{t:main_quant_strat_stationary} we drop the $\delta$ from the above estimate and obtain an estimate of the form
\begin{align}\label{e:quant_strat_mink_r}
\Vol(B_r\,S^k_{\epsilon,r})\leq C r^{n-k}\, .
\end{align}
From this we are able conclude in Theorem \ref{t:main_eps_stationary} an estimate on $S^k_\epsilon$ of the form
\begin{align}\label{e:quant_strat_mink}
\Vol(B_r\,S^k_{\epsilon})\leq C r^{n-k}\, .
\end{align}
In particular, this estimate allows us to conclude that $S^k_{\epsilon}$ has uniformly finite $k$-dimensional measure.  In fact, the techniques will prove much more for us.  They will show us that $S^k_\epsilon$ is $k$-rectifiable, and that for $k$-a.e. point $x\in S^k_\epsilon$ there is a unique $k$-plane $V^k\subseteq T_xM$ such that {\it every} tangent map at $x$ is $k$-symmetric with respect to $V$.  By observing that $S^k(f)=\bigcup S^k_\epsilon(f)$, this allows us to prove in Theorem \ref{t:main_stationary} our main results on the classical stratification.  This decomposition of $S^k$ into the pieces $S^k_\epsilon$ is crucial for the proof.

On the other hand, \eqref{e:quant_strat_mink_r}, combined with the $\epsilon$-regularity theorems of \cite{ScUh_RegHarm},\cite{ChNa2}, allow us to conclude in the minimizing case both the weak $L^3$ estimate on $|\nabla f|$, and the $n-3$-finiteness of the singular set of $f$.  Thus we will see that Theorems \ref{t:main_min_finite_measure} and \ref{t:main_min_weak_L3} are fairly quick consequences of \eqref{e:quant_strat_mink}.\\

Thus we have seen that \eqref{e:quant_strat_mink_r} and \eqref{e:quant_strat_mink}, and more generally Theorem \ref{t:main_quant_strat_stationary} and Theorem \ref{t:main_eps_stationary}, are the main challenges of the paper.  We will give a more complete outline of the proof in Section \ref{ss:outline_proof}, however let us mention for the moment that two of the new ingredients to the proof are a new $L^2$-subspace approximation theorem for stationary harmonic maps, proved in Section \ref{s:best_approx}, and new $W^{1,p}$-Reifenberg and rectifiable-Reifenberg theorems, proved in Section \ref{s:bi-Lipschitz_reifenberg}.  The $L^2$-approximation result roughly tells us that the $L^2$-distance of a measure from being contained in a $k$-dimensional subspace may be estimated by integrating the energy drop of a stationary harmonic map over the measure.  To exploit the estimate we prove a new $W^{1,p}$-Reifenberg type theorem.  The classical Reifenberg theorem states that if we have a set $S$ which is $L^\infty$-approximated by a subspace at every point and scale, then $S$ is bi-\hol to a manifold, see theorem \ref{t:classic_reifenberg} for a precise statement.  It is important for us to improve on this bi-\hol estimate, at least enough that we are able to control the $k^{th}$-dimensional measure of the set and prove rectifiability.  In particular, we want to improve the $C^\alpha$-maps to $W^{1,p}$-maps for $p>k$, and we will want to do it using a condition which is integral in nature.  More precisely, we will only require a form of summable $L^2$-closeness of the subset $S$ to the approximating subspaces.  We will see in Theorem \ref{t:best_approximation} that by using the $L^2$-subspace approximation theorem that the conditions of this new rectifiable-Reifenberg are in fact controllable for the quantitative stratifications $S^k_\epsilon$, as least after we break it up into appropriate pieces.

\subsection{Results for Stationary Harmonic Maps}\label{ss:stationary_results}

We now turn our attention to giving precise statements of the main results of this paper.  In this subsection we focus on those concerning the singular structure of stationary harmonic maps.  That is, we will be considering stationary harmonic maps $f:B_2(p)\subseteq M\to N$.  In order to write the constants involved in some explicit form, let us choose $K_M,K_N,n>0$ to be the smallest number such that
\begin{align}\label{e:manifold_bounds}
&|\sec_{B_2(p)}|\leq K_M,\,\, \inj(B_2(p))\geq K_M^{-1}\, ,\notag\\
&|\sec_N|\leq K_N,\,\, \inj(N)\geq K_N^{-1},\,\,\diam(N)\leq K_N\, ,\notag\\
&\dim(M),\, \dim(N)\leq n\, .\\ \notag
\end{align}

Now let us begin by discussing our main theorem for the quantitative stratifications $S^k_{\epsilon,r}(f)$:\\

\begin{theorem}[$(\epsilon,r)$-Stratification of Stationary Harmonic Maps]\label{t:main_quant_strat_stationary}
Let $f:B_2(p)\subseteq M\to N$ be a stationary harmonic mapping satisfying \eqref{e:manifold_bounds} with  $\fint_{B_2(p)}|\nabla f|^2 \leq \Lambda$.  Then for each $\epsilon>0$ there exists $C_\epsilon(n,K_M,K_N,\Lambda,\epsilon)$ such that
\begin{align}\label{e:main_eps_r_stationary:mink}
\Vol\Big(B_r\Big(S^k_{\epsilon,r}(f)\Big)\Big)\leq C_\epsilon r^{n-k}\, .
\end{align}
\end{theorem}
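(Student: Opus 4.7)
The plan is to reduce the Minkowski content estimate to a packing estimate: by standard covering arguments it suffices to show that any maximal $r$-separated subset $\{x_i\}\subseteq S^k_{\epsilon,r}(f)\cap B_1(p)$ has cardinality at most $Cr^{-k}$. To obtain this packing bound I would combine three ingredients: the monotonicity formula for the normalized energy $\theta_s(x)\equiv s^{2-n}\int_{B_s(x)}|\nabla f|^2\,dv_g$, the $L^2$-subspace approximation theorem (Theorem~\ref{t:best_approximation}) alluded to in the introduction, and the rectifiable-Reifenberg theorem from Section~\ref{s:bi-Lipschitz_reifenberg}. The first provides a monotone, globally bounded ($\leq C\Lambda$) quantity whose drop controls how close $f$ is to being $0$-symmetric at a given scale; the second converts these drops into $L^2$ estimates on how well a discrete measure is approximated by a $k$-plane; the third converts such summable $L^2$ estimates into uniform $k$-dimensional measure bounds.

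The scheme goes scale by scale in dyadic steps $r_j = 2^{-j}$ with $r\leq r_j\leq 1$. For each ball $B_{r_j}(y)$ with $y\in S^k_{\epsilon,r}$ I consider the discrete packing measure $\mu_j = \sum_i r_j^k \delta_{x_i}$ supported on a maximal $r_j$-separated net of $S^k_{\epsilon,r}\cap B_{r_j}(y)$. The quantitative cone-splitting principle (already central to \cite{ChNa2}) will imply that if sufficiently many points of $\mu_j$ lie outside an $\eta r_j$-tube around every $k$-plane, then one of the ancestor balls would contain a $(k+1,\epsilon)$-symmetric subball, contradicting the definition of $S^k_{\epsilon,r}$. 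Hence the support of $\mu_j$ is effectively contained in a neighborhood of some best $k$-plane $V_j$, and the $L^2$-approximation theorem lets me quantify this as
\begin{equation}
\inf_{V^k}\int_{B_{r_j}(y)} d(x,V)^2\,d\mu_j(x)\;\leq\; C\,r_j^2\int_{B_{r_j}(y)}\bigl(\theta_{r_{j}}(x)-\theta_{\gamma r_j}(x)\bigr)d\mu_j(x),
\end{equation}
for some fixed $\gamma=\gamma(\epsilon)\in(0,1)$.

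With this estimate in hand at every scale, the key observation is that telescoping over $j$ yields
\begin{equation}
\sum_{j}\int \bigl(\theta_{r_j}(x)-\theta_{\gamma r_j}(x)\bigr)\,d\mu(x)\;\leq\;C(\gamma)\,\Lambda\,\mu(B_1(p)),
\end{equation}
because the energy density is monotone in $s$ and bounded by $\Lambda$. This summable $L^2$-closeness to subspaces is precisely the hypothesis of the rectifiable-Reifenberg theorem, which then yields the uniform upper Ahlfors/Minkowski bound $\mu_j(B_{r_j}(y))\leq C r_j^k$ at every scale, and in particular the desired packing estimate at scale $r$. Rescaling and summing over a covering of $B_1(p)$ gives \eqref{e:main_eps_r_stationary:mink}.

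The main obstacle is the interface between scales: applying the $L^2$-subspace approximation theorem at a fixed scale is clean, but to feed its output into the Reifenberg machinery one needs the discrete measure $\mu$ to simultaneously satisfy the approximation hypothesis \emph{at every scale and every location}, not just on one ball. Producing such a measure requires a careful covering/stopping-time decomposition of $S^k_{\epsilon,r}$ into pieces on which the energy drop is controlled scale-by-scale, and for which the cone-splitting dichotomy can be iterated without losing the summability. Once this multi-scale bookkeeping is set up, the individual steps — monotonicity, $L^2$-approximation, and the rectifiable-Reifenberg theorem — slot together, with the total energy $\Lambda$ playing the role of the master budget that drives the telescoping sum.
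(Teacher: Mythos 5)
Your outline identifies the right three ingredients (monotonicity, $L^2$-best-plane approximation, discrete rectifiable-Reifenberg) and the right overall plan (reduce to packing bound, multi-scale cone-splitting dichotomy, stopping-time covering), and this is essentially the paper's strategy. But there is one genuine gap at the center of your argument: the telescoping estimate
\begin{equation}
\sum_{j}\int \bigl(\theta_{r_j}(x)-\theta_{\gamma r_j}(x)\bigr)\,d\mu(x)\;\leq\;C(\gamma)\,\Lambda\,\mu(B_1(p))
\end{equation}
is \emph{not} the hypothesis of Theorem \ref{t:reifenberg_W1p_discrete}. That theorem needs, on every ball $B_s(x)$ with enough measure, the smallness condition $\int_{B_s(x)}\sum_{r_\alpha\leq 2s} D^k_\mu(\cdot,r_\alpha)\,d\mu < \delta^2 s^k$ for a specific structural constant $\delta(n)$. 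Your telescope produces a bound proportional to $\Lambda$, which is a fixed but possibly enormous number; feeding it directly into the Reifenberg machine gives nothing. The missing mechanism is \emph{energy pinching}: one must first restrict to a slab $\{y : E-\eta < \theta_0(y) \leq E\}$ of energy width $\eta \ll \delta^2/C$. On such a slab the telescope produces $\leq C\eta\,\mu$, which \emph{is} $\delta^2$-small after choosing $\eta = \eta(n,K_N,\Lambda,\epsilon)$, and then Reifenberg applies. The parameter $\Lambda$ does enter, but not as a telescoping budget in the sense your display suggests — it controls the \emph{number of pinching slabs}, hence the number $\ell \sim \Lambda/\eta$ of times the covering lemma (Lemma \ref{l:covering}) must be applied before all balls have dropped below the bottom of the energy range. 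This is exactly the double iteration in Section \ref{ss:proof_thm_main_quant_strat_stationary}: the Claim iterated $\lceil \eta^{-1}\Lambda\rceil$ times.

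The pinching is also what makes Theorem \ref{t:best_approximation} applicable at all: that theorem assumes $B_{9r}$ is $(0,\delta)$-symmetric, which is not automatic for an arbitrary point of $S^k_{\epsilon,r}$. It is guaranteed (via Theorem \ref{t:quantitative_0_symmetry}) precisely because the pinched set forces $|\theta_{r}-\theta_{\gamma r}|$ to be small at the relevant scales. You should also be aware of a genuine technical wrinkle you did not mention: the packing measure built from the stopping-time cover cannot be fed directly to Theorem \ref{t:reifenberg_W1p_discrete} because the centers and radii need a separation property across comparable scales. The paper handles this by decomposing the ball family into finitely many sub-families $U^a_>$ (Lemma \ref{l:covering:break_up_U_+}) and replacing each ball by a re-centered one with approximately the same measure (the $y_j,\bar r_j$ of \eqref{e:covering:U_+:3}) on which the energy pinching can actually be verified via Lemma \ref{l:covering:energy_density}. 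Without this step, the $(0,\delta)$-symmetry required by the $L^2$-approximation theorem fails at many of the ball centers.
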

\vspace{.5 cm}

When we study the stratum $S^k_{\epsilon}(f)$, we can refine the above to prove structure theorems on the set itself.  For the definition of $k$-rectifiability, we refer the reader to the standard reference \cite{Fed}.\\

\begin{theorem}[$\epsilon$-Stratification of Stationary Harmonic Maps]\label{t:main_eps_stationary}
Let $f:B_2(p)\subseteq M\to N$ be a stationary harmonic mapping satisfying \eqref{e:manifold_bounds} with $\fint_{B_2(p)}|\nabla f|^2 \leq \Lambda$.  Then for each $\epsilon>0$ there exists $C_\epsilon(n,K_M,K_N,\Lambda,\epsilon)$ such that
\begin{align}\label{e:main_eps_stationary:mink}
\Vol\Big(B_r\big(S^k_{\epsilon}(f)\big)\Big)\leq C_\epsilon r^{n-k}\, .
\end{align}
In particular, we have the $k$-dimensional Hausdorff measure estimate $\lambda^{k}(S^k_\epsilon(f))\leq C_\epsilon$.  Further, $S^k_\epsilon(f)$ is $k$-rectifiable, and for $k$-a.e. $x\in S^k_\epsilon$ there exists a {\it unique} $k$-plane $V^k\subseteq T_xM$ such that {\it every} tangent cone of $x$ is $k$-symmetric with respect to $V^k$.
\end{theorem}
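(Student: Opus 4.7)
\medskip

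\noindent\textbf{Proof proposal.} The volume estimate \eqref{e:main_eps_stationary:mink} is immediate from Theorem \ref{t:main_quant_strat_stationary}: for every $r>0$ one has $S^k_\epsilon(f)\subseteq S^k_{\epsilon,r}(f)$, and hence
\begin{align*}
\Vol\ton{\B r {S^k_\epsilon(f)}}\leq \Vol\ton{\B r {S^k_{\epsilon,r}(f)}}\leq C_\epsilon r^{n-k}\, .
\end{align*}
Letting $r\to 0$ (or covering $S^k_\epsilon$ by balls of radius $r$ and using a standard Vitali argument) yields the $k$-Hausdorff measure bound $\lambda^k(S^k_\epsilon(f))\leq C_\epsilon$. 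So the bulk of the work is rectifiability and the uniqueness of the plane of symmetry.

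\medskip

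For rectifiability, the plan is to apply the rectifiable-Reifenberg theorem of Section \ref{s:bi-Lipschitz_reifenberg} to the measure $\mu\equiv \lambda^k\llcorner S^k_\epsilon(f)$ (or to a suitable discrete approximation obtained by covering). Its hypothesis is a summable $L^2$-closeness condition of the form
\begin{align*}
\int_0^1 D^k_\mu(x,r)\,\frac{dr}{r}<\infty \quad\text{for $\mu$-a.e. }x\, ,
\end{align*}
where $D^k_\mu(x,r)$ measures the best $L^2$-distance from $\mu\llcorner \B r x$ to a $k$-plane. To verify this we invoke the $L^2$-subspace approximation theorem of Section \ref{s:best_approx}, which bounds $D^k_\mu(x,r)$ by the energy drop $\theta_f(x,2r)-\theta_f(x,r)$ (integrated against $\mu$), where $\theta_f(x,s)=s^{2-n}\int_{\B s x}|\nabla f|^2$ is the normalized energy whose monotonicity is guaranteed by the stationarity of $f$. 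Summing these bounds over dyadic scales in $r$ telescopes the energy drop to a bounded quantity controlled by $\Lambda$, yielding the summability hypothesis of rectifiable-Reifenberg. One then obtains both $k$-rectifiability of $S^k_\epsilon$ and $k$-Ahlfors regularity; the main technical obstacle here is in fact the decomposition of $S^k_\epsilon$ into pieces on which the approximation theorem can be iterated at every scale, a step carried out via the covering scheme behind Theorem \ref{t:main_quant_strat_stationary}.

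\medskip

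For the uniqueness of the $k$-plane of symmetry, let $x\in S^k_\epsilon(f)$ be a point of approximate tangency of the rectifiable set, so there is a unique approximate tangent $k$-plane $V^k\subseteq T_xM$ to $S^k_\epsilon$ at $x$ ($\mu$-a.e. such $x$ exists by rectifiability). Pick any tangent map $\varphi:T_xM\to N$. Because $x\in S^k_\epsilon$, by passing to a subsequence realizing $\varphi$ and applying the $L^2$-subspace approximation theorem on the rescaled balls $\B {r_i}x$, the measure $\mu$ must concentrate along a $k$-plane of symmetry $V^k_\varphi$ of $\varphi$. Since the approximate tangent plane of a rectifiable set is unique $\mu$-a.e., any such $V^k_\varphi$ must coincide with $V^k$. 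Consequently $\varphi$ is $k$-symmetric with respect to $V^k$, and this $V^k$ is independent of the chosen tangent map. The subtle step — and the main obstacle in this part — is that different tangent maps at $x$ need not be equal, so the uniqueness of $V^k$ cannot be extracted from any single tangent map; it must come from the extrinsic geometry of $S^k_\epsilon$ itself, which is precisely what the $L^2$-approximation theorem provides.
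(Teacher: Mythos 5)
Your volume and Hausdorff-measure bounds are exactly the paper's: $S^k_\epsilon \subseteq S^k_{\epsilon,r}$ and apply Theorem \ref{t:main_quant_strat_stationary}, which is correct and complete. Your outline for rectifiability (feed $D^k_\mu$ estimates from the $L^2$-approximation theorem into rectifiable-Reifenberg, via an inductive covering that pinches the density so that the Dini sum is small rather than merely finite) also matches the paper's strategy, though you should be careful that what is needed is $\sum_\alpha D^k_\mu(\cdot,r_\alpha)\lesssim\delta^2$, not just boundedness by $\Lambda$; this is exactly why the covering decomposes $S^k_\epsilon$ into energy-pinched pieces, and you flag that correctly as the main technical point.

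The genuine gap is in your argument for the uniqueness of the $k$-plane of symmetry. You assert that ``applying the $L^2$-subspace approximation theorem on the rescaled balls $B_{r_i}(x)$, the measure $\mu$ must concentrate along a $k$-plane of symmetry $V^k_\varphi$ of $\varphi$.'' This is not what Theorem \ref{t:best_approximation} gives you. That theorem bounds $D^k_\mu(p,r)$ — the $L^2$-distance of $\mu$ from \emph{some} best $k$-plane — by the integrated energy drop, provided the ball is $(0,\delta)$-symmetric but not $(k+1,\epsilon)$-symmetric. It says nothing about $\varphi$ possessing a $k$-plane of symmetry, nor does it identify the measure's best plane as a symmetry plane of $\varphi$. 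Worse, the very existence of a $k$-plane $V^k_\varphi$ with respect to which $\varphi$ is $k$-symmetric is part of what needs to be proved: a priori, a tangent map at a point of $S^k_\epsilon$ is only $0$-symmetric, and it could happen to have fewer than $k$ translational symmetries. Your chain of reasoning implicitly assumes the conclusion.

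The way the paper closes this gap is with quantitative cone splitting (Theorem \ref{t:con_splitting}), not the $L^2$-approximation theorem. The argument: decompose $S^k_\epsilon$ into $W^{k,\alpha}_{\epsilon,\eta} = \{x : \theta_0(x)\in[\alpha\eta,(\alpha+1)\eta)\}$, each $k$-rectifiable, so for $k$-a.e.\ $x$ an approximate tangent $k$-plane $V_x$ to $S^k_\epsilon$ exists. For small $r$, monotonicity and the pinching $|\theta_r(y)-\theta_0(y)|<2\eta$ on nearby $y\in W^{k,\alpha}_{\epsilon,\eta}$ make $B_r(y)$ $(0,\delta_\eta)$-symmetric (Theorem \ref{t:quantitative_0_symmetry}). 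Because $V_x$ is the approximate tangent, one finds $k+1$ points of $W^{k,\alpha}_{\epsilon,\eta}\cap B_r(x)$ that are $\tau r$-independent and lie near $V_x$; cone splitting then forces $B_r(x)$ to be $(k,\delta_\eta)$-symmetric with respect to $V_x$. Taking $\eta = j^{-1}\to 0$ and intersecting the corresponding full-measure sets yields a full-measure subset on which every tangent map is exactly $k$-symmetric with respect to $V_x$. This is what actually links the \emph{extrinsic} tangent plane of the rectifiable set to the \emph{intrinsic} symmetry plane of the tangent maps, and it is the step missing from your proposal.
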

\begin{remark}
In fact, the techniques will prove an estimate even stronger than the Minkowski estimate of \eqref{e:main_eps_r_stationary:mink} and \eqref{e:main_eps_stationary:mink}.  That is, one can prove a uniform $k$-dimensional packing content estimate.  More precisely, let $\{B_{r_i}(x_i)\}$ be any collection of disjoint balls such that $x_i\in S^k_{\epsilon}$,  then we have the content estimate $\sum r_i^k \leq C_\epsilon$.
\end{remark}
\vspace{.5 cm}

Finally, we end this subsection by stating our main results when it comes to the classical stratification $S^k(f)$.  The following may be proved from the previous Theorem in only a few lines given the formula $S^k(f)=\bigcup S^k_\epsilon(f)$:\\

\begin{theorem}[Stratification of Stationary Harmonic Maps]\label{t:main_stationary}
Let $f:B_2(p)\subseteq M\to N$ be a stationary harmonic mapping satisfying \eqref{e:manifold_bounds} with $\fint_{B_2(p)}|\nabla f|^2 \leq \Lambda$.  Then for each $k$ we have that $S^k(f)$ is $k$-rectifiable.  Further, for $k$-a.e. $x\in S^k(f)$ there exists a {\it unique} $k$-plane $V^k\subseteq T_xM$ such that {\it every} tangent cone of $x$ is $k$-symmetric with respect to $V^k$.
\end{theorem}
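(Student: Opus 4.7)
The plan is to deduce Theorem \ref{t:main_stationary} directly from Theorem \ref{t:main_eps_stationary} via the decomposition
\begin{align}
S^k(f) \;=\; \bigcup_{i\in\N} S^k_{\epsilon_i}(f)\, ,
\end{align}
for any sequence $\epsilon_i\searrow 0$. This identity is exactly the equivalence promised in the remark following Definition \ref{d:stratification} and verified in Section \ref{ss:proof_thm_main_stationary}; the nontrivial direction is a standard compactness argument for tangent maps, namely that if $x$ admits $(k+1,\epsilon)$-symmetric balls for every $\epsilon>0$, then blowing up along the realizing scale sequence produces a $(k+1)$-symmetric tangent map, while $0$-homogeneity of tangent maps makes the converse trivial. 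Granted this identity, the rest of the proof is essentially formal.

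First, I would apply Theorem \ref{t:main_eps_stationary} to each piece $S^k_{\epsilon_i}(f)$ to obtain that it is $k$-rectifiable. Since a countable union of $k$-rectifiable sets is $k$-rectifiable — immediate from the definition, as concatenating countably many families of Lipschitz images of $\R^k$ yields another such family — we conclude that $S^k(f)$ itself is $k$-rectifiable.

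Second, for the uniqueness of the tangent $k$-plane, Theorem \ref{t:main_eps_stationary} supplies for each $i$ a $\lambda^k$-null exceptional set $E_i\subseteq S^k_{\epsilon_i}(f)$ off of which every tangent map at $x$ is $k$-symmetric with respect to a common $k$-plane $V^k(x)\subseteq T_xM$. Setting $E=\bigcup_i E_i$, which is again $\lambda^k$-null, every $x\in S^k(f)\setminus E$ lies in some $S^k_{\epsilon_i}(f)\setminus E_i$, and since the collection of tangent maps at $x$ depends only on $x$ and not on which quantitative stratum one views $x$ in, the plane $V^k(x)$ is unambiguously defined and every tangent map at $x$ is $k$-symmetric with respect to it. The sole substantive input is Theorem \ref{t:main_eps_stationary}; the main obstacle, if any, is the set-theoretic identity $S^k(f)=\bigcup_i S^k_{\epsilon_i}(f)$, and I would handle it once and for all via the compactness exercise indicated above before carrying out the rectifiability and uniqueness arguments described.
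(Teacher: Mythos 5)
Your proof is correct and follows the same route as the paper's: write $S^k(f)=\bigcup_{\beta}S^k_{2^{-\beta}}(f)$, invoke Theorem \ref{t:main_eps_stationary} on each piece, and observe that $k$-rectifiability and the $k$-a.e.\ uniqueness of the tangent plane both pass to countable unions (resp.\ countable unions of null exceptional sets). The one point you compress — the set identity $S^k=\bigcup_\epsilon S^k_\epsilon$ — is handled in the paper exactly as you indicate, with the extra easy case that if the realizing scales stay bounded below then $f$ is exactly $(k+1)$-symmetric on a fixed ball.
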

\vspace{1cm}

\subsection{Results for Minimizing Harmonic Maps}\label{ss:minimizing_results}

In this section we record our main results for minimizing harmonic maps.  Most of the results of this section follow quickly by combining the quantitative stratification results of Section \ref{ss:stationary_results} with the $\epsilon$-regularity of \cite{ScUh_RegHarm,ChNa2}, see Section \ref{ss:eps_reg} for a review of these points.  \\

Our first estimate is on the singular set $\Sing(f)$ of a minimizing harmonic map.  Recall that $\Sing(f)$ is the set of points where $f$ is not smooth.  Our first estimate on the singular structure of a minimizing harmonic map is the following:\\

\begin{theorem}[Structure of Singular Set]\label{t:main_min_finite_measure}
Let $f:B_2(p)\subseteq M\to N$ be a minimizing harmonic mapping satisfying \eqref{e:manifold_bounds} with $\fint_{B_2(p)}|\nabla f|^2 \leq \Lambda$.  Then $\Sing(f)$ is $n-3$-rectifiable and there exists $C(n,K_M,K_N,\Lambda)$ such that
\begin{align}
\Vol\Big(B_r\big(\Sing(f)\big)\cap B_1(p)\Big)\leq Cr^3\, .
\end{align}
In particular, $\lambda^{n-3}(\Sing(f))\leq C$.
\end{theorem}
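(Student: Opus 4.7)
The plan is to deduce the theorem directly from Theorem \ref{t:main_eps_stationary} at the critical stratum $k=n-3$, once we identify $\Sing(f)$ with a single quantitative stratum $S^{n-3}_{\epsilon_0}(f)$ for an appropriate $\epsilon_0$. The bridge is the quantitative $\epsilon$-regularity theorem for minimizing harmonic maps of Schoen--Uhlenbeck/Cheeger--Naber reviewed in Section \ref{ss:eps_reg}.

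The first step is to produce a constant $\epsilon_0=\epsilon_0(n,K_M,K_N,\Lambda)>0$ such that
\begin{align}
\Sing(f)\cap B_1(p)\subseteq S^{n-3}_{\epsilon_0}(f)\, .
\end{align}
The underlying reason is classical: an $(n-2)$-symmetric tangent map of a minimizing map is determined by a $0$-homogeneous minimizer defined on $\mathbb{R}^2$, and such maps are constant by the $2$-dimensional regularity theory for minimizers. Hence any genuine $(n-2)$-symmetric tangent map of $f$ must in fact be constant, which by upper-semicontinuity of the density forces regularity at the base point. A standard contradiction-compactness argument using the monotonicity formula and closure of minimizers under $W^{1,2}$-convergence then upgrades this to the desired quantitative statement: there exists $\epsilon_0$ such that any ball $B_r(x)\subseteq B_1(p)$ which is $(n-2,\epsilon_0)$-symmetric is automatically contained in the regular set of $f$. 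Contrapositively, if $x\in\Sing(f)$ then no ball $B_r(x)$ with $0<r<1$ can be $(n-2,\epsilon_0)$-symmetric, which is precisely the statement $x\in S^{n-3}_{\epsilon_0}(f)$.

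With this inclusion in hand, the second step is to invoke Theorem \ref{t:main_eps_stationary} with $k=n-3$ and $\epsilon=\epsilon_0$. The Minkowski estimate \eqref{e:main_eps_stationary:mink} immediately yields
\begin{align}
\Vol\ton{B_r(\Sing(f))\cap B_1(p)}\leq \Vol\ton{B_r\ton{S^{n-3}_{\epsilon_0}(f)}}\leq C r^3\, ,
\end{align}
and the bound $\lambda^{n-3}(\Sing(f))\leq C$ follows as a standard consequence of the Minkowski estimate (dividing by $r^3$ and letting $r\to 0$). Rectifiability of $\Sing(f)$ is inherited from the $(n-3)$-rectifiability of $S^{n-3}_{\epsilon_0}(f)$, since any subset of an $(n-3)$-rectifiable set is itself $(n-3)$-rectifiable.

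The main obstacle is not at the level of this deduction, which is essentially bookkeeping, but rather in having the $\epsilon$-regularity available in the exact form required: one needs the statement that an $L^2$-closeness to an $(n-2)$-symmetric map on a \emph{single} scale (as opposed to $C^0$-closeness, or closeness on a whole range of scales) is already enough to force regularity. This quantitative $L^2$-version of $\epsilon$-regularity is precisely what is established in \cite{ChNa2}, and is what allows the quantitative stratification Theorem \ref{t:main_eps_stationary} to plug in seamlessly here.
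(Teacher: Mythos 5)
Your proof is correct and follows essentially the same route as the paper: identify $\Sing(f)\cap B_1(p)$ with a subset of $S^{n-3}_{\epsilon_0}(f)$ via the minimizing $\epsilon$-regularity of Theorem \ref{t:eps_reg}, then apply Theorem \ref{t:main_eps_stationary} to inherit the Minkowski bound and rectifiability. The only difference is that you additionally sketch why the $\epsilon$-regularity should hold (constancy of $(n-2)$-symmetric minimizing tangent maps plus compactness), whereas the paper simply invokes Theorem \ref{t:eps_reg} as a black box from \cite{ChNa2}.
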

\begin{remark}
As in Theorem \ref{t:main_eps_stationary}, the same techniques prove a packing content estimate on $\Sing(f)$.  That is, if $\{B_{r_i}(x_i)\}$ is any collection of disjoint balls such that $x_i\in \Sing(f)\cap B_1$ , then we have the content estimate $\sum r_i^k \leq C$.
\end{remark}

\vspace{.5 cm}
The above can be extended to effective Schauder estimates on $f$.  To state the results in full strength let us recall the notion of the regularity scale associated to a function.  Namely:

\begin{definition}\label{d:regularity_scale}
Let $f:B_2(p)\to N$ be a mapping between Riemannian manifolds.  For $x\in B_1(p)$ we define the regularity scale $r_f(x)$ by
\begin{align}
r_f(x)\equiv \max\{0\leq r\leq 1: \sup_{B_r(x)}|\nabla f|\leq r^{-1}\}\, .
\end{align}
By definition, $r_f(x)\equiv 0$ if $f$ is not Lipschitz in a neighborhood of $x$.
\end{definition}
\begin{remark}
The regularity scale has nice scaling property. Indeed, if we define $T_{X,\rho}:T_x M\to N$ by
\begin{gather}
 T_{x,\rho}(y)= f(\exp_x(\rho y))\, ,
\end{gather}
then $r_f(x)= \rho r_{T_{x,\rho}}(0)$. In other words, if $r\equiv r_f(x)$ and we rescale $B_r(x)\to B_1(0)$, then on the rescaled ball we will have that $|\nabla T_{x,r}|\leq 1$ on $B_1(0)$.
\end{remark}
\begin{remark}
We have the easy estimate $|\nabla f|(x)\leq r_f(x)^{-1}$.  However, a lower bound on $r_f(x)$ is in principle {\it much} stronger than an upper bound on $|\nabla f|(x)$.
\end{remark}
\begin{remark}
Notice that the regularity scale is a Lipschitz function with $|\nabla r_f(x)|\leq 1$.
\end{remark}
\begin{remark}
If $f$ satisfies an elliptic equation, e.g. is weakly harmonic, then we have the estimate
\begin{align}
\sup_{B_{r_f/2}(x)} |\nabla^k f|\leq C_k \,r_f(x)^{-k}\, .
\end{align}
In particular, control on $r_f$ gives control on all higher order derivatives.
\end{remark}
\vspace{.5 cm}
Now let us state our main estimates for minimizing harmonic maps:\\

\begin{theorem}[Estimates on Minimizing Harmonic Maps]\label{t:main_min_weak_L3}
Let $f:B_2(p)\subseteq M\to N$ be a minimizing harmonic mapping satisfying \eqref{e:manifold_bounds} with $\fint_{B_2(p)}|\nabla f|^2 \leq \Lambda$.  Then there exists $C(n,K_M,K_N,\Lambda)$ such that
\begin{align}\label{e:main_min_weak_L3:1}
\Vol\Big(\{x\in B_1(p): |\nabla f|> r^{-1}\}\Big)\leq \Vol\Big(\{x\in B_1(p): r_f(x)< r\}\Big)\leq Cr^3\, .
\end{align}
In particular, both $|\nabla f|$ and $r_f^{-1}$ have uniform bounds in $L^3_{weak}\Big(B_1(p)\Big)$, the space of weakly $L^3$ functions.  In fact, we also have the Hessian estimate
\begin{align}\label{e:main_min_weak_L3:2}
&\Vol\Big(\{x\in B_1(p): |\nabla^2 f|> r^{-2}\}\Big)\leq Cr^3\, ,
\end{align}
which in particular gives us uniform bounds for $|\nabla^2 f|$ in $L^{3/2}_{weak}(B_1(p))$, the space of weakly  $L^{3/2}$ functions.
\end{theorem}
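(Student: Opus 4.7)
The plan is to reduce all three estimates to a single volume bound on the sub-level set $\cur{r_f<r}$, and then to obtain this bound by combining an $\epsilon$-regularity theorem for minimizing harmonic maps with the quantitative stratification estimate of Theorem \ref{t:main_quant_strat_stationary} applied at the critical stratum $k=n-3$. The pointwise bound $|\nabla f|(x)\leq r_f(x)^{-1}$ recorded after Definition \ref{d:regularity_scale} immediately gives the containment $\cur{|\nabla f|>r^{-1}}\subseteq \cur{r_f<r}$, so only the right-hand inequality in \eqref{e:main_min_weak_L3:1} carries content. For the Hessian estimate \eqref{e:main_min_weak_L3:2}, standard interior Schauder estimates applied to the harmonic map system yield $|\nabla^2 f|(x)\leq C\,r_f(x)^{-2}$ at every regular point, so
\begin{equation*}
\cur{x\in \B 1 p : |\nabla^2 f|(x) > r^{-2}}\subseteq \cur{x\in \B 1 p : r_f(x) < \sqrt{C}\, r},
\end{equation*}
and \eqref{e:main_min_weak_L3:2} reduces to \eqref{e:main_min_weak_L3:1} up to redefining constants.

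To bound the volume of $\cur{r_f<r}$, I would invoke the quantitative-symmetry form of $\epsilon$-regularity for minimizing harmonic maps from \cite{ScUh_RegHarm,ChNa2}: there exists $\epsilon_0=\epsilon_0(n,K_M,K_N,\Lambda)>0$ such that whenever $\B s x\subseteq \B 2 p$ is $(n-2,\epsilon_0)$-symmetric one has $r_f(x)\geq s/2$. This is the standard compactness-contradiction upgrade of the fact that a minimizing tangent map carrying $n-2$ translational symmetries must be constant, since by \cite{ScUh_RegHarm} the singular set of a minimizing map has dimension at most $n-3$; any $(n-2,1/i)$-symmetric counterexample on a fixed ball would, after passing to a strong $W^{1,2}$-limit, produce a minimizing $0$-homogeneous tangent map which is translation invariant in $n-2$ directions and is singular at the origin, contradicting dimension reduction. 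Taking the contrapositive, if $x\in\B 1 p$ satisfies $r_f(x)<r$ then no $\B s x$ with $s\in[2r,1)$ can be $(n-2,\epsilon_0)$-symmetric, so
\begin{equation*}
\cur{x\in \B 1 p : r_f(x) < r}\subseteq S^{n-3}_{\epsilon_0,\,2r}(f).
\end{equation*}

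Applying Theorem \ref{t:main_quant_strat_stationary} with $k=n-3$ and $\epsilon=\epsilon_0$ then yields
\begin{equation*}
\Vol\ton{\cur{r_f<r}\cap \B 1 p}\leq \Vol\ton{\B{2r}{S^{n-3}_{\epsilon_0,2r}(f)}}\leq C_{\epsilon_0}\,(2r)^{n-(n-3)}\leq C\, r^{3},
\end{equation*}
which, combined with the two reductions of the first paragraph, gives \eqref{e:main_min_weak_L3:1} and \eqref{e:main_min_weak_L3:2}. The main obstacle in this argument is not any step performed here but rather the appeal to Theorem \ref{t:main_quant_strat_stationary}, whose proof is the technical heart of the paper and requires the new $L^2$-subspace approximation theorem together with the rectifiable-Reifenberg framework; once that Minkowski content estimate is granted, Theorem \ref{t:main_min_weak_L3} is essentially a packaging statement. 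The only real subtlety in the present reduction is that the $\epsilon$-regularity must be used in its effective symmetry form rather than its classical energy form, since it is precisely membership in $S^{n-3}_{\epsilon_0,2r}$, not smallness of $r^{2-n}\!\int_{\B r x}|\nabla f|^{2}$, that is controlled by Theorem \ref{t:main_quant_strat_stationary}.
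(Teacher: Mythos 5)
Your proposal is correct and follows essentially the same route as the paper: reduce to a volume bound on $\cur{r_f<r}$ via the pointwise bounds $|\nabla f|\leq r_f^{-1}$ and $|\nabla^2 f|\leq Cr_f^{-2}$, use the symmetry form of $\epsilon$-regularity (Theorem \ref{t:eps_reg}) to place $\cur{r_f<r}$ inside $S^{n-3}_{\epsilon_0,\cdot r}$, and then invoke the Minkowski estimate on the $(\epsilon,r)$-stratification. You correctly cite Theorem \ref{t:main_quant_strat_stationary} for that last step (the paper's text points to Theorem \ref{t:main_eps_stationary} there, which appears to be a typo, since the set used is $S^{n-3}_{\epsilon,r}$), and your use of $2r$ rather than $r$ in the stratum is actually the slightly cleaner bookkeeping.
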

\begin{remark}
The $L^3_{weak}$ estimates are sharp in that there exist examples for which $|\nabla f|$ does not live in $L^3$, see Section \ref{sss:Lp_sharp_example}.
\end{remark}

\vspace{.5 cm}

\subsection{Results under Additional Hypothesis}\label{ss:additional_results}

The previous two subsections have focused on results for completely general stationary or minimizing harmonic maps.  In this subsection we would like to see how these results may be improved under further assumptions.  Specifically, for stationary harmonic maps $f:M\to N$ we would like to see that the regularity results may be improved to match those of minimizing harmonic maps if we assume there are no smooth harmonic maps from $S^2$ into $N$.  The idea behind this follows that of \cite{lin_stat},\cite{ChNaHa2}.  Additionally, though the regularity results of Sections \ref{ss:stationary_results} and \ref{ss:minimizing_results} are sharp in complete generality, they may be improved if we assume that $N$ has no other stationary or minimizing harmonic maps from $S^k$ into $N$.  Precisely, the main result of this subsection for stationary harmonic maps is the following:\\

\begin{theorem}[Improved Estimates for Stationary Maps]\label{t:improved_results_stationary}
Let $f:B_2(p)\subseteq M\to N$ be a stationary harmonic mapping satisfying \eqref{e:manifold_bounds} with $\fint_{B_2(p)}|\nabla f|^2 \leq \Lambda$.  Assume further that for some $k\geq 2$ there exists no smooth nonconstant stationary harmonic maps $S^\ell\to N$ for all $\ell\leq k$.  Then there exists $C(n,K_M,N,\Lambda)$ such that
\begin{align}
\Vol\Big(\{x\in B_1(p): |\nabla f|> r^{-1}\}\Big)\leq \Vol\Big(\{x\in B_1(p): r_f(x)< r\}\Big)\leq Cr^{2+k}\, .
\end{align}
In particular, both $|\nabla f|$ and $r_f^{-1}$ have uniform bounds in $L^{2+k}_{weak}\Big(B_1(p)\Big)$, the space of weakly $L^{2+k}$ functions.  
\end{theorem}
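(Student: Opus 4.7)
The plan is to mirror the derivation of Theorem \ref{t:main_min_weak_L3}, but with the $\epsilon$-regularity of \cite{ScUh_RegHarm,ChNa2} replaced by a sharper version that becomes available under the hypothesis that $N$ admits no nonconstant smooth stationary harmonic maps $S^\ell\to N$ for any $\ell\leq k$. Concretely, the key preliminary step is to establish the following improved $\epsilon$-regularity statement: there exists $\epsilon_0=\epsilon_0(n,K_M,N,\Lambda)>0$ such that whenever $f$ satisfies the hypotheses and $B_r(x)\subseteq B_1(p)$ is $(n-k-1,\epsilon_0)$-symmetric, one has $r_f(x)\geq r/2$.

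I would prove this improved $\epsilon$-regularity by the usual compactness-contradiction scheme, following the strategy of \cite{lin_stat,ChNaHa2}. If it fails, one rescales to unit scale, extracts a sequence $f_i$ of stationary harmonic maps, and passes to a weak $L^2$-limit $f_\infty:\mathbb{R}^n\to N$. Standard arguments (monotonicity plus the energy identity for stationary maps) show that $f_\infty$ is itself a stationary harmonic map, and by the $(n-k-1,\epsilon_0)$-symmetry of the approximating balls it is $0$-homogeneous and translation invariant along an $(n-k-1)$-plane $V$. Hence $f_\infty$ descends to a $0$-homogeneous stationary harmonic map on $V^\perp\cong\mathbb{R}^{k+1}$, whose restriction to the unit sphere is a stationary harmonic map $g:S^k\to N$. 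An inductive use of the hypothesis at levels $\ell<k$ shows that $g$ has no singularities, hence is smooth; then the $\ell=k$ hypothesis forces $g$, and therefore $f_\infty$, to be constant, contradicting $r_{f_\infty}(0)<1$.

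Granting the improved $\epsilon$-regularity, the rest is immediate. By contraposition, every $x\in B_1(p)$ with $r_f(x)<r/2$ has the property that no ball $B_s(x)$ with $s\in[r/2,r]$ is $(n-k-1,\epsilon_0)$-symmetric, so $x\in S^{n-k-2}_{\epsilon_0,r}(f)$. Applying Theorem \ref{t:main_quant_strat_stationary} then yields
\begin{align}
\Vol\bigl(\cur{x\in B_1(p):r_f(x)<r}\bigr)\leq \Vol\bigl(B_r(S^{n-k-2}_{\epsilon_0,r}(f))\bigr)\leq C_{\epsilon_0}\,r^{n-(n-k-2)}=C\,r^{k+2},
\end{align}
which is the desired bound; the associated weak $L^{2+k}$ estimate on $|\nabla f|$ and $r_f^{-1}$ then follows by the layer-cake formula. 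The main obstacle is the improved $\epsilon$-regularity itself: ensuring that stationarity and the monotonicity formula pass to the weak $L^2$-limit, and that smoothness of the tangent map $g$ can be propagated from the hypothesis through all lower strata, are the subtle analytic ingredients. Once these are in place, the quantitative stratification theorem does the remaining work automatically.
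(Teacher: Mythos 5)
Your high-level plan matches the paper's: reduce to an improved $\epsilon$-regularity theorem, apply it in contrapositive form to show $\{r_f<r\}\subseteq S^{n-k-2}_{\epsilon,r}(f)$, and then invoke the quantitative stratification bound of Theorem \ref{t:main_quant_strat_stationary}. This is exactly what the paper does, except that the paper simply cites the required improved $\epsilon$-regularity as Theorem \ref{t:eps_reg_improved_stationary} (from \cite{ChNa2}) rather than reproving it. (Minor slip: after the contraposition you should conclude that $B_s(x)$ is not $(n-k-1,\epsilon_0)$-symmetric for \emph{all} $s\in[r,1)$, not merely $s\in[r/2,r]$, since the definition of $S^{n-k-2}_{\epsilon_0,r}$ quantifies over all scales above $r$; the argument actually gives this since $s\geq r>2r_f(x)$ for every such $s$.)

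The genuine gap is in your sketch of the improved $\epsilon$-regularity itself, specifically the sentence ``Standard arguments (monotonicity plus the energy identity for stationary maps) show that $f_\infty$ is itself a stationary harmonic map.'' This is false for general stationary harmonic maps: a weak $W^{1,2}$-limit of stationary maps need not be stationary, and the energy identity may fail — this is precisely the defect measure phenomenon of \cite{lin_stat} (Definition \ref{d:defect} and Theorem \ref{t:defect:rectifiable} in the paper). Ruling out the defect measure is not a routine step; it is exactly where the hypothesis ``no stationary harmonic maps $S^\ell\to N$'' must be used. Concretely, Lin's analysis shows that whenever the defect measure is nonzero one can blow up to extract a nonconstant stationary harmonic sphere $S^2\to N$ (and more generally $S^\ell\to N$), so under the hypothesis the defect must vanish, yielding strong convergence and hence stationarity of the limit. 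Your proposal instead uses the hypothesis only at the end, on the tangent map $g:S^k\to N$; as written, it never actually delivers the strong convergence it needs, and so the argument for the $\epsilon$-regularity does not close. You do flag ``ensuring that stationarity passes to the weak $L^2$-limit'' as an obstacle, but you misattribute it to a ``standard'' argument rather than to the key use of the hypothesis. Since the paper treats the improved $\epsilon$-regularity as a cited black box (as does your final reduction), this gap does not invalidate the derivation of the theorem from that black box, but the black box itself is not proved by what you wrote.
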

\begin{remark}
The proof of Theorem \ref{t:improved_results_stationary} follows verbatim the proof of Theorem \ref{t:main_min_weak_L3}, except one replaces the $\epsilon$-regularity of theorem \ref{t:eps_reg} with the $\epsilon$-regularity of theorem \ref{t:eps_reg_improved_stationary}.
\end{remark}
\vspace{1cm}

In the context where $f$ is minimizing we have a similar improvement, though in this case we only need to assume there exists no minimizing harmonic maps from $S^k\to N$:\\

\begin{theorem}[Improved Estimates for Minimizing Maps]\label{t:improved_results_minimizing}
Let $f:B_2(p)\subseteq M\to N$ be a minimizing harmonic mapping satisfying \eqref{e:manifold_bounds} with $\fint_{B_2(p)}|\nabla f|^2 \leq \Lambda$.  Assume that for some $k\geq 2$ there exists no smooth nonconstant minimizing harmonic maps $S^\ell\to N$ for all $\ell\leq k$.  Then there exists $C(n,K_M,N,\Lambda)$ such that
\begin{align}
\Vol\Big(\{x\in B_1(p): |\nabla f|> r^{-1}\}\Big)\leq \Vol\Big(\{x\in B_1(p): r_f(x)< r\}\Big)\leq Cr^{2+k}\, .
\end{align}
In particular, both $|\nabla f|$ and $r_f^{-1}$ have uniform bounds in $L^{2+k}_{weak}\Big(B_1(p)\Big)$, the space of weakly $L^{2+k}$ functions.  
\end{theorem}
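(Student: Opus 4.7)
The plan is to mirror the proof of Theorem \ref{t:main_min_weak_L3} essentially verbatim, substituting the standard Schoen--Uhlenbeck $\epsilon$-regularity by an improved version that exploits the topological hypothesis on $N$. The role of this hypothesis is to promote the controllable symmetry level from $(n-2,\epsilon)$ to $(n-1-k,\epsilon)$, which in turn shifts the relevant stratum from $S^{n-3}_{\epsilon,r}$ down to $S^{n-2-k}_{\epsilon,r}$, producing the desired $r^{2+k}$ Minkowski estimate through Theorem \ref{t:main_quant_strat_stationary}.

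The key ingredient is an improved $\epsilon$-regularity statement: under the assumption that there are no smooth nonconstant minimizing harmonic maps $S^\ell \to N$ for $\ell \leq k$, there exists $\epsilon = \epsilon(n, K_M, N, \Lambda, k) > 0$ such that if $f$ is minimizing and $B_{2r}(x)$ is $(n-1-k, \epsilon)$-symmetric, then $r_f(x) \geq \epsilon r$. I would prove this by a standard contradiction/compactness argument: a hypothetical sequence of counterexamples with $\epsilon_i \to 0$ and $r_{f_i}(x_i)/r_i \to 0$ converges, via the minimizing compactness theory (which preserves minimality and delivers strong energy convergence), to a nonconstant minimizing tangent map $f_\infty:\R^n \to N$ which is $(n-1-k)$-symmetric, i.e., invariant under translations in some $(n-1-k)$-plane $V$ and homogeneous. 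Decomposing $\R^n = V \oplus V^\perp$ with $\dim V^\perp = k+1$, $f_\infty$ descends to a homogeneous minimizing map $\R^{k+1} \to N$, which by homogeneity is determined by its values on $S^k \subset V^\perp$. This contradicts the hypothesis on $N$.

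With this $\epsilon$-regularity in hand, one obtains the inclusion
\begin{equation}
\{x \in B_1(p) : r_f(x) < \epsilon r\} \subseteq S^{n-2-k}_{\epsilon, r}(f) \subseteq B_r\bigl(S^{n-2-k}_{\epsilon, r}(f)\bigr),
\end{equation}
since any $x$ outside the middle set admits a scale $s \in [r, 1]$ on which $B_s(x)$ is $(n-1-k, \epsilon)$-symmetric, forcing $r_f(x) \geq \epsilon s \geq \epsilon r$. Applying Theorem \ref{t:main_quant_strat_stationary} with stratum index $n-2-k$ then yields the volume bound $C r^{n-(n-2-k)} = C r^{2+k}$, and a trivial rescaling absorbs $\epsilon$ into the constant. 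The Hessian estimate follows from the inclusion $\{|\nabla^2 f| > r^{-2}\} \subseteq \{r_f < r\}$ combined with the standard elliptic bound $|\nabla^2 f|(x) \leq C r_f(x)^{-2}$ valid away from the singular set, exactly as in the proof of Theorem \ref{t:main_min_weak_L3}.

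The principal obstacle lies in the dimension reduction step hidden inside the improved $\epsilon$-regularity: a bare blow-up only produces an $(n-1-k)$-symmetric limit, whose descent to $\R^{k+1}$ may still be singular away from the origin and therefore not directly realizable as a smooth map $S^k \to N$. One must iterate, blowing up at a singular point of this descent to split off an additional symmetric direction, producing a sequence of minimizing tangent maps of strictly increasing symmetry until a nonconstant minimizing map $S^\ell \to N$ for some $\ell \leq k$ is extracted. This is the minimizing analogue of the dimension reduction invoked in theorem \ref{t:eps_reg_improved_stationary}; the minimizing setting is in fact technically simpler than the stationary one, since both minimality and strong energy convergence are automatically preserved under weak limits.
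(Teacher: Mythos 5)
Your proposal is correct and takes essentially the same route as the paper: replace the $\epsilon$-regularity of Theorem~\ref{t:eps_reg} by the improved version of Theorem~\ref{t:eps_reg_improved_minimizing}, obtain the inclusion $\{r_f < cr\}\subseteq S^{n-2-k}_{\epsilon,r}$, and invoke Theorem~\ref{t:main_quant_strat_stationary} with stratum index $n-2-k$ to get the $r^{2+k}$ bound. The only difference is that you sketch a proof of the improved $\epsilon$-regularity via compactness and iterated dimension reduction, whereas the paper simply cites it from \cite{ChNaHa2}; your sketch (including the acknowledged need to iterate the blow-up to handle the case where the descended map $S^k\to N$ is not yet smooth) is the standard argument behind that citation.
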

\begin{remark}
The proof of Theorem \ref{t:improved_results_minimizing} follows verbatim the proof of Theorem \ref{t:main_min_weak_L3}, except one replaces the $\epsilon$-regularity of theorem \ref{t:eps_reg} with the $\epsilon$-regularity of theorem \ref{t:eps_reg_improved_minimizing}.
\end{remark}
\vspace{1cm}

\subsection{Outline of Proofs and Techniques}\label{ss:outline_proof}

In this subsection we give a brief outline of the proof of the main theorems.  To describe the new ingredients involved it will be helpful to give a comparison to the proofs of previous results in the area, in particular the dimension estimate \eqref{e:sing_set_haus_est} of \cite{ScUh_RegHarm} (which are similar in spirit to the estimates for minimal surfaces in \cite{almgren_exreg} and \cite[theorem 2.26]{almgren_big}) and the Minkowski and $L^p$ estimates \eqref{e:sing_set_Lp_est} of \cite{ChNa2}.

Indeed, the starting point for the study of singular sets for solutions of geometric equations typically looks the same, that is, one needs a monotone quantity.  In the case of harmonic maps $f:M\to N$ between Riemannian manifolds we consider the normalized Dirichlet energy
\begin{align}
\theta_r(x) \equiv r^{2-n}\int_{B_r(x)}|\nabla f|^2\, . 
\end{align}
For simplicity sake let us take $M\equiv \dR^n$ in this discussion, which is really of no loss except for some small technical work (it is the nonlinearity of $N$ that is the difficulty).  Then $\frac{d}{dr}\theta_r\geq 0$, 
and $\theta_r(x)$ is independent of $r$ if and only if $f$ is $0$-symmetric, see Section \ref{ss:monotonicity} for more on this.  Interestingly, this is the only information one requires to prove the dimension estimate \eqref{e:sing_set_haus_est}.  Namely, since $\theta_r(x)$ is monotone and bounded, it must converge as $r$ tends to zero.  In particular, if we consider the sequence of scales $r_\alpha=2^{-\alpha}$ then we have for each $x$ that
\begin{align}\label{e:convergence}
\lim_{\alpha\to \infty} \Big|\theta_{r_\alpha}(x)-\theta_{r_{\alpha+1}}(x)\Big|\to 0\, .
\end{align}
From this one can conclude that {\it every} tangent map of $f$ is $0$-symmetric.  This fact combined with some very general dimension reduction arguments originating with Federer from geometric measure theory \cite{simon_stat}, which do not depend on the harmonic behavior of $f$ at all, yield the dimension estimate \eqref{e:sing_set_haus_est} from \cite{ScUh_RegHarm}.

The improvement in \cite{ChNa2} of the Hausdorff dimension estimate \eqref{e:sing_set_haus_est} to the Minkowski content estimate \eqref{e:sing_set_mink_delta_est}, and therefore the $L^p$ estimate of \eqref{e:sing_set_Lp_est}, requires exploiting more about the monotone quantity $\theta_r(x)$ than that it limits as $r$ tends to zero.  Indeed, an effective version of \eqref{e:convergence} says that for each $\delta>0$ there exists $N(\Lambda,\delta)>0$ such that 
\begin{align}\label{e:convergence_effective}
\Big|\theta_{r_\alpha}(x)-\theta_{r_{\alpha+1}}(x)\Big|<\delta\, ,
\end{align}
holds for all except for at most $N$ scales $\alpha\in \{\alpha_1,\ldots,\alpha_N\}\subseteq\dN$.  These {\it bad} scales where \eqref{e:convergence_effective} fails may differ from point to point, but the number of such scales is uniformly bound.  This allows one to conclude that for all but at most $N$-scales that $B_{r_{\alpha}}(x)$ is $(0,\epsilon)$-symmetric, see Section \ref{ss:cone_splitting}.  To exploit this information a new technique other than dimension reduction was required in \cite{ChNa2}.  Indeed, in \cite{ChNa2} the quantitative $0$-symmetry of \eqref{e:convergence_effective} was instead combined with the notion of cone splitting and an energy decomposition in order to conclude the estimates \eqref{e:sing_set_Lp_est},\eqref{e:sing_set_mink_delta_est}.  Since we will use them in this paper, the quantitative $0$-symmetry and cone splitting will be reviewed further in Section \ref{ss:cone_splitting}.\\

Now let us begin to discuss the results of this paper.  The most challenging aspect of this paper is the proof of the estimates on the quantitative stratifications of Theorems \ref{t:main_quant_strat_stationary} and \ref{t:main_eps_stationary}, and so we will focus on these in our outline.  Let us first observe that it might be advantageous to replace \eqref{e:convergence_effective} with a version that forces an actual rate of convergence, see for instance \cite{Simon_RegMin}.  More generally, if one is in a context where an effective version of tangent cone uniqueness can be proved then this may be exploited.  In fact, in the context of critical sets of elliptic equations one can follow exactly this approach, see the authors work \cite{NaVa} where versions of Theorems \ref{t:main_quant_strat_stationary} and \ref{t:main_eps_stationary} were first proved in this context.  However, in the general context of this paper such an approach fails, as tangent cone uniqueness is not available, and potentially not correct.\\

Instead, we will first replace \eqref{e:convergence_effective} with the following relatively simple observation.  Namely, for each $x$ there exists $N(\Lambda,\delta)$ and a finite number of scales $\{\alpha_1,\ldots,\alpha_N\}\subseteq \dN$ such that
\begin{align}\label{e:convergence_effective2}
\sum_{\alpha_{j}< \alpha<\alpha_{j+1}} \Big|\theta_{r_{\alpha}}(x)-\theta_{r_{\alpha+1}}(x)\Big| < \delta\, .
\end{align}
That is, not only does the energy drop by less than $\delta$ between these scales, but the sum of all the energy drops is less than $\delta$ between these scales.\\

Unfortunately, exploiting \eqref{e:convergence_effective2} turns out to be {\it substantially} harder to use than exploiting either \eqref{e:convergence} or even \eqref{e:convergence_effective}.  In essence, this is because it is not a local assumption in terms of scale, and one needs estimates which can see many scales simultaneously, but which do not require any form of tangent cone uniqueness statements.  Accomplishing this requires two new ingredients, a new rectifiable-Reifenberg type theorem, and a new $L^2$-best subspace approximation theorem for stationary harmonic maps, which will allow us to apply the rectifiable-Reifenberg.  Let us discuss these two ingredients separately.\\

We begin by discussing the new $W^{1,p}$-Reifenberg and rectifiable-Reifenberg theorems, which are introduced and proved in Section \ref{s:bi-Lipschitz_reifenberg}.  Recall that the classical Reifenberg theorem, reviewed in Section \ref{ss:reifenberg}, gives criteria under which a set becomes $C^\alpha$-H\"older equivalent to a ball $B_1(0^k)$ in Euclidean space.  In the context of this paper, it is important to improve on this result so that we have gradient and volume control of our set.  Let us remark that there have been many generalizations of the classical Reifenberg theorem in the literature, see for instance \cite{Toro_reif} \cite{davidtoro}, however those results have hypotheses which are much too strong for the purposes of this paper.  Instead, we will focus on improving the $C^\alpha$-equivalence to a $W^{1,p}$-equivalence.  This is strictly stronger by Sobolev embedding, and if $p>k$ then this results in volume estimates and a rectifiable structure for the set.  More generally, we will require a version of the theorem which allows for more degenerate structural behavior, namely a rectifiable-Reifenberg theorem.  In this case, the assumptions will conclude that a set $S$ is rectifiable with volume estimates.  Of course, what is key about this result is that the criteria will be checkable for our quantitative stratifications, thus let us discuss these criteria briefly.  Roughly, if $S\subseteq B_1(0^n)$ is a subset equipped with the $k$-dimensional Hausdorff measure $\lambda^k$, then let us define the $k$-dimensional distortion of $S$ by
\begin{align}
D^k_S(y,s)\equiv s^{-2}\inf_{L^k}s^{-k}\int_{S\cap B_{s}(y)}d^2(z,L^k)\,d\lambda^k(z)\, ,
\end{align}
where the $\inf$ is taken over all $k$-dimensional affine subspaces of $\dR^n$.  That is, $D^k$ measures how far $S$ is from being contained in a $k$-dimensional subspace.

Our rectifiable-Reifenberg then requires this be small on $S$ in an integral sense, more precisely that
\begin{align}\label{e:reifenberg_integral_estimate}
&r^{-k}\int_{S\cap B_r(x)}\sum_{r_\alpha\leq r}  D^k(y,r_\alpha) d\lambda^k(y)<\delta^2\, .
\end{align}
For $\delta$ sufficiently small, the conclusions of the rectifiable-Reifenberg Theorem \ref{t:reifenberg_W1p_holes} are that the set $S$ is rectifiable with effective bounds on the $k$-dimensional measure.  Let us remark that one cannot possibly conclude better than rectifiable under this assumption, see for instance the Examples of Section \ref{ss:examples}.

Thus, in order to prove the quantitative stratification estimates of Theorems \ref{t:main_quant_strat_stationary} and \ref{t:main_eps_stationary}, we will need to verify that the integral conditions \eqref{e:reifenberg_integral_estimate} hold for the quantitative stratifications $S^k_\epsilon(f)$, $S^k_{\epsilon,r}(f)$ on all balls $\B{r}{x}$.  In actuality the proof is more complicated.  We will need to apply a discrete version of the rectifiable-Reifenberg, which will allow us to build an iterative covering of the quantitative stratifications, and each of these will satisfy \eqref{e:reifenberg_integral_estimate}.  This will allow us to keep effective track of all the estimates involved.  However, let us for the moment just focus on the main estimates which allows us to turn \eqref{e:reifenberg_integral_estimate} into information about our harmonic maps, without worrying about such details.\\  

Namely, in Section \ref{s:best_approx} we prove a new and very general approximation theorem for stationary harmonic maps.  As always in this outline, let us assume $M\equiv \dR^n$, the general case is no harder and we simply work on an injectivity radius ball.  Thus we consider a stationary harmonic map $f:B_{16}(0^n)\to N$, as well as an arbitrary measure $\mu$ which is supported on $B_1(0^n)$.  We would like to study how closely the support of $\mu$ can be approximated by a $k$-dimensional affine subspace $L^k\subseteq \dR^n$, in the appropriate sense, and we would like to estimate this distance by using properties of $f$.  Indeed, if we assume that $B_8(0^n)$ is {\it not} $(k+1,\epsilon)$-symmetric with respect to $f$, then for an arbitrary $\mu$ we will prove in Theorem \ref{t:best_approximation} that
\begin{align}\label{e:measure_L2_dist}
\inf_{L^k\subseteq \dR^n}\int d^2(x,L^k)\,d\mu\leq C\int |\theta_8(x)-\theta_{1}(x)|\,d\mu \, ,
\end{align}
where $C$ will depend on $\epsilon$, energy bound of $f$, and the geometry of $M$ and $N$.  That is, if $f$ does not have $k+1$ degrees of symmetry, then how closely the support of an arbitrary measure $\mu$ can be approximated by a $k$-dimensional subspace can be estimated by looking at the energy drop of $f$ along $\mu$.  In applications, $\mu$ will be the restriction to $B_1$ of some discrete approximation of the $k$-dimensional Hausdorff measure on $S^k_\epsilon$, and thus the symmetry assumption on $f$ will hold for all balls centered on the support of $\mu$.\\

In practice, applying \eqref{e:measure_L2_dist} to \eqref{e:reifenberg_integral_estimate} is subtle and must be done inductively on scale.  Additionally, in order to prove the effective Hausdorff estimates $\lambda^k(S^k_\epsilon\cap B_r)\leq Cr^{k}$ we will need to use the Covering Lemma \ref{l:covering} to break up the quantitative stratification into appropriate pieces, and we will apply the estimates to these.  This decomposition is based on a covering scheme first introduced by the authors in \cite{NaVa}.  Thus for the purposes of our outline, let us assume we have already proved the Hausdorff estimate $\lambda^k(S^k_\epsilon\cap B_r)\leq Cr^{k}$, and use this to be able to apply the rectifiable-Reifenberg in order to conclude the rectifiability of the singular set.  This may feel like a large assumption, however it turns out the proof of the Hausdorff estimate will itself be proved by a very similar tactic, though will require an inductive argument on scale, and will use the discrete rectifiable-Reifenberg of Theorem \ref{t:reifenberg_W1p_discrete} in place of the rectifiable-Reifenberg of Theorem \ref{t:reifenberg_W1p_holes}.\\

Thus let us choose a ball $B_r$ and let $E\equiv \sup_{B_{r}} \theta_r(y)$.  Let us consider the subset $\tilde S^k_{\epsilon}\subseteq S^k_\epsilon\cap B_r$ defined by
\begin{align}
\tilde S^k_{\epsilon}\equiv\{y\in S^k_\epsilon\cap B_r: \theta_0(y)>E-\eta\}\, ,
\end{align}
where $\eta=\eta(n,K_M,K_N,\Lambda,\epsilon)$ will be chosen appropriately later.  We will show now that $\tilde S^k_{\epsilon}$ is rectifiable.  Since $\eta$ is fixed and the ball $B_r$ is arbitrary, the rectifiability of all of $S^k_\epsilon$ follows quickly from a covering argument.  Thus, let us estimate \eqref{e:reifenberg_integral_estimate} by plugging in \eqref{e:measure_L2_dist} and the Hausdorff estimate to conclude:
\begin{align}\label{e:outline:2}
r^{-k}\int_{\tilde S^k_\epsilon}&\sum_{r_\alpha\leq r} D^k(x,r_\alpha)\,d\lambda^k \notag\\
&= r^{-k}\int_{\tilde S^k_\epsilon}\sum_{r_\alpha\leq r}\Big(\inf_{L^k}r_\alpha^{-2-k}\int_{\tilde S^k_\epsilon\cap B_{r_\alpha}(x)}d^2(y,L^k)d\lambda^k(y) \Big) d\lambda^k(x)\notag\\
&\leq Cr^{-k}\int_{\tilde S^k_\epsilon}\sum_{r_\alpha\leq r}\Big(r_\alpha^{-k}\int_{\tilde S^k_\epsilon\cap B_{r_\alpha}(x)}|\theta_{8r_\alpha}(y)-\theta_{r_\alpha}(y)|d\lambda^k(y) \Big) d\lambda^k(x)\notag
\end{align}
\begin{align}
&= Cr^{-k}\sum_{r_\alpha\leq r}r_\alpha^{-k}\int_{\tilde S^k_\epsilon}\lambda^k(\tilde S^k_\epsilon\cap B_{r_\alpha}(y))|\theta_{8r_\alpha}(y)-\theta_{r_\alpha}(y)|\,d\lambda^k(y)\, ,\notag\\
&\leq Cr^{-k}\int_{\tilde S^k_\epsilon}\sum_{r_\alpha\leq r}|\theta_{8r_\alpha}(y)-\theta_{r_\alpha}(y)|\, d\lambda^k(y)\, ,\notag\\
&\leq Cr^{-k}\int_{\tilde S^k_\epsilon}|\theta_{8r}(y)-\theta_{0}(y)|\, d\lambda^k(y)\notag\\
&\leq Cr^{-k}\lambda_k(\tilde S^k_\epsilon)\cdot \eta\notag\\
&<\delta^2\, .
\end{align}
where in the last line we have chosen $\eta=\eta(n,K_M,K_N,\Lambda,\epsilon)$ so that the estimate is less than the required $\delta$ from the rectifiable-Reifenberg.  Thus we can apply the rectifiable-Reifenberg of Theorem \ref{t:reifenberg_W1p_holes} in order to conclude the rectifiability of the set $\tilde S^k_\epsilon$, which in particular proves that $S^k_\epsilon$ is itself rectifiable, as claimed.


\vspace{1cm}
  
\section{Preliminaries}\label{s:Prelim}

\subsection{Stationary Harmonic Maps and Monotonicity}\label{ss:monotonicity}

The key technical tool available to a stationary harmonic map $f:B_2(p)\subseteq M\to N$, which is not available to a weakly harmonic map, is that of a monotone quantity.  Namely, given $x\in B_1(p)$ and $r\leq 1$ we can consider the normalized Dirichlet energy defined by
\begin{align}\label{e:normalized_Dirichlet}
\theta_r(x)\equiv r^{2-n}\int_{B_r(x)}|\nabla f|^2\, . 
\end{align}

If $M\equiv \dR^n$ 
, then for each $x$ fixed we have that $\theta_r(x)$ is exactly a monotone increasing function of $r$.  More precisely, we have that
\begin{align}
\frac{d}{dr}\theta_r(x) = 2r^{2-n}\int_{\partial B_r(x)} \Big|\frac{\partial f}{\partial r}\Big|^2 = 2r^{2-n}\int_{\partial B_r(x)} |\langle \nabla f,n_x\rangle|^2\, .
\end{align}
Hence, we see that if $\theta_r(x)$ is independent of $r$ then we have that $f$ is $0$-symmetric with respect to $x$.  More generally, if $\theta_s(x)=\theta_r(x)$ then $f$ is radially symmetric on the annulus $A_{s,r}(x)=\B{r}{x}\setminus \B{s}{x}$.  Motivated by this, we see that what we are really interested in is the amount the Dirichlet energy drops from one scale to the next, and thus we define
\begin{align}
W_{s,r}(x) \equiv \theta_r(x)-\theta_s(x)\geq 0\, .
\end{align}
Oftentimes we will want to enumerate our choice of scale, so let us define the scales $r_\alpha\equiv 2^{-\alpha}$ for $\alpha\geq 0$, and the corresponding Dirichlet energy drop:
\begin{align}
W_{\alpha}(x) \equiv W_{r_{\alpha},r_{\alpha-3}}(x)\equiv \theta_{r_{\alpha-3}}(x)-\theta_{r_{\alpha}}(x)\geq 0\, .
\end{align}

In the general case, i.e., when $M\neq \dR^n$, essentially the same statements may be made, however $\theta_r(x)$ is now only {\it almost} monotone, meaning that $e^{Cr}\theta_r(x)$ is monotone for some constant which depends only on the geometry of $M$.  From this one can prove that at every point, every tangent map is $0$-symmetric, which is the starting point for the dimension estimate \eqref{e:sing_set_haus_est} of \cite{ScUh_RegHarm}.  
In Section \ref{ss:cone_splitting} we will discuss quantitative versions of this point, first introduced in \cite{ChNa2} and used in this paper as well, and also generalizations which involve higher degrees of symmetry.  These points were first used in \cite{ChNa2} to prove Minkowski estimates on the quantitative stratification of a stationary harmonic map.  They will also play a role in our arguments, though in a different manner.\\

\subsection{\texorpdfstring{Quantitative $0$-Symmetry and Cone Splitting}{Quantitative 0-Symmetry and Cone Splitting}}\label{ss:cone_splitting}

In this subsection we review some of the quantitative symmetry and splitting results of \cite{ChNa2}, in particular those which will play a role in this paper.  

The first result we will discuss acts as an {\it effective} formulation of the fact that every tangent map is $0$-symmetric.  Namely, the quantitative $0$-symmetry of \cite{ChNa2} states that for each $\epsilon>0$ and point $x\in B_1$, all but a finite number of the balls $\cur{B_{r_\alpha}(x)}_{\alpha\in \N}$ are $(0,\epsilon)$-symmetric, where $r_\alpha\equiv 2^{-\alpha}$.  Precisely:\\

\begin{theorem}[Quantitative $0$-Symmetry \cite{ChNa2}]\label{t:quantitative_0_symmetry}
Let $f:B_2(p)\subseteq M\to N$ be a stationary harmonic map satisfying \eqref{e:manifold_bounds} with $\fint_{B_2(p)}|\nabla f|^2\leq \Lambda$ and let $\epsilon>0$ be fixed.  Then the following hold:
\begin{enumerate}
\item There exists $\delta(n,K_N,\Lambda,\epsilon)>0$ such that for each $x\in B_1(p)$ if $|\theta_{r}(x)-\theta_{\delta r}(x)|<\delta$ and $K_M<\delta$, then $B_r(x)$ is $(0,\epsilon)$-symmetric.
\item For each $x\in B_1(p)$ there exists a finite number of scales $\{\alpha_1,\ldots,\alpha_K\}\subseteq \dN$ with $K\leq K(n,K_M,K_N,\Lambda,\epsilon)$ such that for $r\not\in (r_{\alpha_j}/2,2r_{\alpha_j})$ we have that $B_{r}(x)$ is $(0,\epsilon)$-symmetric.
\end{enumerate}
\end{theorem}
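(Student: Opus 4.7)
My plan is to prove part (1) by a compactness-and-contradiction argument, and then deduce part (2) by a counting argument exploiting the almost-monotonicity of $\theta_r$.

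For part (1), suppose the claim fails for some $\epsilon_0 > 0$. Then I can extract sequences $K_{M_i} < \delta_i \to 0$, stationary harmonic maps $f_i : B_2(p_i) \subseteq M_i \to N$ with normalized Dirichlet energy $\leq \Lambda$, points $x_i \in B_1(p_i)$ and radii $r_i \in (0,1]$ satisfying $|\theta_{r_i}(x_i) - \theta_{\delta_i r_i}(x_i)| < \delta_i$, yet with $B_{r_i}(x_i)$ not $(0,\epsilon_0)$-symmetric. Rescaling each $B_{r_i}(x_i)$ to unit size via the exponential map at $x_i$ and using $K_{M_i} \to 0$, I obtain a sequence of stationary harmonic maps $\tilde f_i$ whose domains converge smoothly to a Euclidean ball $B_1(0^n) \subseteq \R^n$. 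By Lin's strong compactness theorem for stationary harmonic maps, after passing to a subsequence $\tilde f_i$ converges strongly in $L^2$ (using compactness of $N$ and Rellich--Kondrachov) and weakly in $W^{1,2}$ to a stationary harmonic map $f_\infty$ on $B_1(0)$, together with a possible Lin defect measure $\nu$. The monotonicity formula passes to the limit, and the hypothesis forces the limiting monotone density (including the defect contribution) to be constant on $(0,1)$; the differential monotonicity identity then yields $\partial f_\infty/\partial r \equiv 0$ and radial invariance of $\nu$, so $f_\infty$ is $0$-symmetric on $B_1(0)$. Strong $L^2$ convergence of $\tilde f_i$ to $f_\infty$ then contradicts the assumption that $B_1(0)$ is not $(0,\epsilon_0)$-symmetric for $\tilde f_i$, completing part (1).

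For part (2), fix $\delta = \delta(n, K_N, \Lambda, \epsilon)$ from part (1) (shrunk if necessary so that the hypothesis $K_M < \delta$ is automatic) and choose an integer $J$ with $2^{-J} \leq \delta$. The almost-monotonicity statement that $e^{Cr}\theta_r(x)$ is monotone nondecreasing for some $C = C(K_M, K_N)$ gives a uniform total variation bound $\sum_\alpha |\theta_{r_{\alpha-1}}(x) - \theta_{r_\alpha}(x)| \leq C'(n, K_M, K_N, \Lambda)$. Consequently, the set of scales $\alpha$ at which $\theta_{r_{\alpha - J}}(x) - \theta_{r_\alpha}(x) \geq \delta$ has cardinality bounded by $C' J/\delta$. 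Forming a $J$-neighborhood of factor-$2$ windows around each such exceptional scale produces a finite collection $\{(r_{\alpha_j}/2, 2r_{\alpha_j})\}_{j=1}^K$ with $K \leq K(n, K_M, K_N, \Lambda, \epsilon)$; for any scale $r$ outside this union, the hypothesis of part (1) is satisfied, and hence $B_r(x)$ is $(0,\epsilon)$-symmetric.

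The main technical obstacle is handling the possible Lin defect measure $\nu$ in the limit: one must argue that vanishing of the limiting energy drop forces $\nu$ itself to be radially invariant, so that it contributes nothing to the $L^2$-distance of $\tilde f_i$ from a $0$-symmetric map. This is where stationarity (as opposed to mere weak harmonicity) is essential, since it provides the monotonicity identity—and hence the radial-derivative identity—for the defect measure in addition to the map itself.
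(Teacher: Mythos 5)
The paper itself does not prove Theorem \ref{t:quantitative_0_symmetry}: it is cited from \cite{ChNa2}, with the remark that follows pointing to the proof of Lemma \ref{l:best_subspace:energy_lower_bound} as the template for upgrading the dependence from $N$ to $K_N$. Your compactness-and-contradiction argument for part (1), together with the counting argument for part (2), is the expected route and is essentially sound. In particular, you correctly identify the central subtlety---that one must use Lin's result (the content of Theorem \ref{t:defect:0symmetry}) to force the defect measure, and not just the weak limit $f_\infty$, to be dilation-invariant---and you correctly observe that only strong $L^2$ convergence of the maps, not strong $W^{1,2}$ convergence, is needed to contradict the failure of $(0,\epsilon_0)$-symmetry, since $(0,\epsilon)$-symmetry is defined in terms of $L^2$ closeness of the maps.

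There is one genuine gap relative to the statement as written: your contradiction sequence keeps the target manifold $N$ fixed, and a compactness argument with $N$ fixed can only yield $\delta = \delta(n,N,\Lambda,\epsilon)$. The theorem as stated here claims $\delta = \delta(n,K_N,\Lambda,\epsilon)$, depending on $N$ only through the bounds \eqref{e:manifold_bounds}. To obtain this one must let the targets vary along the sequence, $f_i : B_2(p_i) \to N_i$ with each $N_i$ satisfying \eqref{e:manifold_bounds}, and extract $N_i \to N_\infty$ in $C^{1,\alpha}$ before running the rest of the compactness argument. This is precisely what the paper's proof of Lemma \ref{l:best_subspace:energy_lower_bound} does (``$N_i\stackrel{C^{1,\alpha}}{\longrightarrow} N$''), and it is exactly the point of the remark after the theorem. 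The fix is routine but it is not optional if you want the stated dependence. A more minor point: in part (2), the step ``forming a $J$-neighborhood of factor-$2$ windows'' is correct in spirit but should be spelled out---one needs to check that a non-dyadic radius $r$ lying outside the padded bad windows has a nearby dyadic scale from which almost-monotonicity transfers the pinching $|\theta_r - \theta_{\delta r}| < \delta$ to $r$ itself, which costs a constant in $K$ and in the choice of $\delta$ but is otherwise bookkeeping.
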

\begin{remark}
In \cite{ChNa2} it is stated that the constant depends on the manifold $N$, not just $K_N$, however it is easy to check that only $K_N$ is important in the proof.  See the proof of Lemma \ref{l:best_subspace:energy_lower_bound} for a relevant argument.
\end{remark}
\begin{remark}
The assumption $K_M<\delta$ is of little consequence, since this just means focusing the estimates on balls of sufficiently small radius after rescaling.
\end{remark}

Another technical tool that played an important role in \cite{ChNa2} was that of cone splitting.  This will be used in this paper when proving the existence of unique tangent planes of symmetry for the singular set, so we will discuss it here.  In short, cone splitting is the idea that multiple $0$-symmetries add to give rise to a $k$-symmetry.  To state it precisely let us give a careful definition of the notion of independence of a collection of points:

\begin{definition}\label{d:independent_points}
We say a collection of points $\{x_1,\ldots,x_\ell\}\in \dR^n$ is independent if they are linearly independent.  We say the collection is $\tau$-independent if $d(x_{k+1},\text{span}\{x_1,\ldots,x_k\})>\tau$ for each $k$.  If $\{x_1,\ldots,x_\ell\}\in M$ then we say the collection is $\tau$-independent with respect to $x$ if $d(x,x_j)<\inj(x)$ and the collection is $\tau$-independent when written in exponential coordinates at $x$.
\end{definition}
\vspace{.5 cm}

Now we are in a position to state the effective cone splitting of \cite{ChNa2}:

\begin{theorem}[Cone Splitting \cite{ChNa2}]\label{t:con_splitting}
Let $f:B_3(p)\subseteq M\to N$ be a stationary harmonic map satisfying \eqref{e:manifold_bounds} with $\fint_{B_2(p)}|\nabla f|^2\leq \Lambda$ and let $\epsilon,\tau >0$ be fixed.  Then there exists $\delta(n,K_N,\Lambda,\epsilon,\tau)>0$ such that if $K_M<\delta$ and $x_0,\ldots,x_{k}\in B_1(p)$ are such that
\begin{enumerate}
\item $B_2(x_j)$ are $(0,\delta)$-symmetric,
\item $\{x_0,\ldots,x_k\}$ are $\tau$-independent at $p$,
\end{enumerate}
then $B_1(p)$ is $(k,\epsilon)$-symmetric.
\end{theorem}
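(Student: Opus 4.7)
The plan is a standard compactness/contradiction argument combined with a linear-algebra step that promotes several $0$-symmetries to a single $k$-symmetry. Suppose the conclusion fails. Then there exist $\epsilon,\tau>0$, a sequence $\delta_i\downarrow 0$, Riemannian manifolds $M_i$ with $K_{M_i}<\delta_i$, stationary harmonic maps $f_i\colon B_3(p_i)\to N$ with $\fint_{B_2(p_i)}|\nabla f_i|^2\leq\Lambda$, and configurations $\{x_{0,i},\dots,x_{k,i}\}\subseteq B_1(p_i)$ that are $\tau$-independent at $p_i$ and with each $B_2(x_{j,i})$ being $(0,\delta_i)$-symmetric, yet $B_1(p_i)$ is not $(k,\epsilon)$-symmetric.

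Because $K_{M_i}\to 0$, after passing to exponential coordinates at $p_i$ we may regard $f_i$ as maps from a larger and larger ball in $\R^n$ into $N$, with Euclidean background metric in the limit. Using the uniform energy bound together with the $\epsilon$-regularity for stationary harmonic maps and Rellich, we extract a subsequence with $f_i\to f_\infty$ strongly in $L^2_{\mathrm{loc}}$ to a stationary harmonic map $f_\infty\colon B_{5/2}(0)\subseteq\R^n\to N$. By $\tau$-independence and compactness in $\R^n$, we may also assume $x_{j,i}\to x_{j,\infty}$, with $\{x_{0,\infty},\dots,x_{k,\infty}\}$ still $\tau$-independent. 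Since the monotone quantity $\theta_r$ is upper semicontinuous in $L^2$ limits and the $L^2$ symmetry defect is continuous under $L^2$ convergence, the hypothesis $(0,\delta_i)$-symmetric at each $x_{j,i}$ forces $f_\infty$ to be exactly $0$-symmetric with respect to each of the $k+1$ vertices $x_{j,\infty}$; that is, $f_\infty$ is a $0$-homogeneous cone with vertex $x_{j,\infty}$ for every $j$.

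The next step is purely algebraic: a map on $\R^n$ which is a $0$-homogeneous cone at two distinct points $x_0\neq x_1$ is automatically translation invariant in the direction $x_1-x_0$. Iterating, if $f_\infty$ is a cone at each of $k+1$ affinely independent points then $f_\infty$ is invariant under translations along the whole $k$-plane $V^k$ spanned by $\{x_{1,\infty}-x_{0,\infty},\dots,x_{k,\infty}-x_{0,\infty}\}$, and is still $0$-homogeneous at $x_{0,\infty}$. Thus $f_\infty$ is $k$-symmetric with respect to $V^k$. Pulling this back to $B_1(p_\infty)$ via the strong $L^2$ convergence, $B_1(p_i)$ must be $(k,\epsilon/2)$-symmetric for all $i$ large, contradicting the initial failure.

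The main obstacle is ensuring that the two pieces of information, $(0,\delta_i)$-symmetry and $\tau$-independence, both survive the passage to the limit in a quantitatively compatible way. For $L^2$ symmetry this is routine by Rellich, but the cone-splitting algebra is fragile when the vertices are nearly collinear, which is precisely why the quantitative $\tau$-independence is needed: it guarantees that the $k$ direction vectors $x_{j,\infty}-x_{0,\infty}$ remain uniformly linearly independent, so the invariance directions span a full $k$-plane in the limit rather than degenerating. Once these two points are nailed down, the rest is the standard compactness formalism and a brief check that the dependence of $\delta$ on $(n,K_N,\Lambda,\epsilon,\tau)$ can indeed be read off from the argument.
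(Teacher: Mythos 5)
This theorem is cited from \cite{ChNa2}; the present paper does not give its own proof, so there is no internal argument to compare against. Your compactness–and–contradiction strategy together with the algebraic cone-splitting step is indeed the standard approach for such statements, and the broad outline is sound. However, there are two points that need to be fixed before the argument is correct.

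First, a smaller inaccuracy: the strong $L^2$ limit $f_\infty$ of a sequence of stationary harmonic maps with uniformly bounded energy need not itself be a stationary harmonic map — energy can concentrate on a defect measure, which is precisely the phenomenon reviewed in Section~\ref{ss:defect_measure} of this paper. Luckily your argument never actually uses harmonicity of the limit: the $(0,\delta_i)$-symmetry is an $L^2$ condition, preserved under strong $L^2$ convergence, and the cone-splitting algebra and the conclusion are all phrased purely in $L^2$. So you should simply drop the claim that $f_\infty$ is stationary harmonic (and the gratuitous invocation of ``upper semicontinuity of $\theta_r$ in $L^2$ limits,'' which is both unneeded and not true as stated, since $\theta_r$ involves the gradient and does not pass to $L^2$ limits without further argument).

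Second, and more seriously, there is a base-point mismatch that you do not address. The algebraic cone-splitting produces $0$-homogeneity of $f_\infty$ along the \emph{affine} $k$-plane $A = x_{0,\infty} + V^k$, where $V^k = \mathrm{span}\{x_{j,\infty}-x_{0,\infty}\}$. But $\{x_{0,\infty},\dots,x_{k,\infty}\}$ are $\tau$-\emph{linearly} independent by hypothesis, which forces $0 = p_\infty \notin A$. On the other hand, the definition of $(k,\epsilon)$-symmetry in Definition~\ref{d:stratification} asks for a competitor $\tilde f : T_pM \to N$ which is $0$-homogeneous at the origin of $T_pM$, i.e.\ at $p$ itself, together with invariance along a \emph{linear} $k$-plane. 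A map that is a cone at an affine $k$-plane missing the origin is in general not $L^2$-close on $B_1(0)$ to any such $\tilde f$: for instance, with $n=5$, $k=2$, the finite-energy stationary map $f(x)=\tfrac{(x_1-1/2,\,x_2,\,x_3)}{|(x_1-1/2,\,x_2,\,x_3)|}\colon\R^5\to S^2$ is $0$-homogeneous at every point of $(1/2,0,0,0,0)+\mathrm{span}\{e_4,e_5\}$, is translation invariant along $\mathrm{span}\{e_4,e_5\}$, yet is not $L^2(B_1(0))$-close to any function that is $0$-homogeneous at the origin. So ``$f_\infty$ is $k$-symmetric with respect to $V^k$'' in your sense is not the same as ``$B_1(p)$ is $(k,\epsilon)$-symmetric'' in the sense of Definition~\ref{d:stratification}, and the final ``pulling back'' step does not go through as written. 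You need either to invoke the looser convention used implicitly elsewhere in the paper (e.g.\ the phrasing ``$(k,\epsilon')$-symmetric with respect to the $k$-plane $V^k$'' in Lemma~\ref{l:covering:energy_density}), under which a $k$-symmetric map may be homogeneous at any point of its invariance plane rather than at the ball center, or else to make the additional observation — which does not follow from the compactness step alone — that the resulting limit cone can be re-centered at $p$ up to an $L^2$ error controlled by $\epsilon$.
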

\vspace{.5 cm}

We end with the following, which one can view as a quantitative form of dimension reduction.  The proof is standard and can be accomplished by a contradiction argument:

\begin{theorem}[Quantitative Dimension Reduction]\label{t:quant_dim_red}
Let $f:B_2(p)\subseteq M\to N$ be a stationary harmonic map satisfying \eqref{e:manifold_bounds} with $\fint_{B_2(p)}|\nabla f|^2\leq \Lambda$.  Then for each $\epsilon>0$ there exists $\delta(n,K_M,K_N,\Lambda,\epsilon), r(n,K_M,K_N,\Lambda,\epsilon)>0$ such that if $B_2(p)$ is $(k,\delta)$-symmetric with respect to some $k$-plane $V^k$, then for each $x\in B_1(p)\setminus B_\epsilon(V^k)$ we have that $B_r(x)$ is $(k+1,\epsilon)$-symmetric.
\end{theorem}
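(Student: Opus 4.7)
The plan is to argue by contradiction in the standard template, using compactness of stationary harmonic maps together with the classical (Federer-style) dimension reduction on the limit. Suppose the conclusion fails for some $\epsilon_0>0$. Negating the quantifiers, we extract sequences $\delta_i\searrow 0$ and $r_i\searrow 0$, stationary harmonic maps $f_i:B_2(p_i)\subseteq M_i\to N$ satisfying \eqref{e:manifold_bounds} with $\fint_{B_2(p_i)}|\nabla f_i|^2\leq\Lambda$, subspaces $V_i^k\subseteq T_{p_i}M_i$, and points $x_i\in B_1(p_i)\setminus B_{\epsilon_0}(V_i^k)$ such that $B_2(p_i)$ is $(k,\delta_i)$-symmetric with respect to $V_i^k$ but $B_{r_i}(x_i)$ is not $(k+1,\epsilon_0)$-symmetric for $f_i$.

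Cheeger--Gromov compactness for manifolds with bounded sectional curvature and injectivity radius yields $(M_i,p_i)\to(M_\infty,p_\infty)$ in $C^{1,\alpha}$, and compactness of stationary harmonic maps of bounded energy gives strong $L^2_{loc}$ convergence $f_i\to f_\infty$. Passing to subsequences, $V_i^k\to V_\infty^k$ and $x_i\to x_\infty$ with $d(x_\infty,V_\infty^k)\geq\epsilon_0$. Because $\delta_i\to 0$, the limit $f_\infty$ agrees a.e.\ on $B_2(p_\infty)$ with a $k$-symmetric map with respect to $V_\infty^k$, i.e.\ one that is $0$-homogeneous at $p_\infty$ and invariant under translations along $V_\infty^k$.

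Next I would show that every tangent map of $f_\infty$ at $x_\infty$ is $(k+1)$-symmetric. By $0$-homogeneity of $f_\infty$ at $p_\infty$, the map is constant along the ray $\lambda(x_\infty-p_\infty)$; a direct computation shows this constancy becomes translation invariance in the direction $x_\infty-p_\infty$ for every tangent map $T$ at $x_\infty$. Combined with the $V_\infty^k$-invariance inherited from $f_\infty$ and the automatic $0$-homogeneity of $T$, we see that $T$ is invariant under the $(k+1)$-plane $W:=V_\infty^k\oplus\mathds{R}(x_\infty-p_\infty)$, which is genuinely $(k+1)$-dimensional since $d(x_\infty,V_\infty^k)\geq\epsilon_0$. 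Alternatively, this step can be phrased as a direct application of the cone-splitting Theorem \ref{t:con_splitting}.

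To convert this infinitesimal symmetry at $x_\infty$ into a contradiction at the scale $r_i$, rescale at $x_i$ by $r_i$ to obtain $g_i(y):=f_i(\exp_{x_i}(r_i y))$, stationary harmonic maps on balls of radius $\sim r_i^{-1}$ whose energy on each fixed $B_R(0)$ is uniformly bounded by monotonicity. A subsequential limit $g_\infty:\mathds{R}^n\to N$ exists and is a tangent map of $f_\infty$ at $x_\infty$, so by the previous paragraph $g_\infty$ is $(k+1)$-symmetric, hence $B_1(0)$ is $(k+1,0)$-symmetric for $g_\infty$. On the other hand, by rescaling the failure assumption, $B_1(0)$ is not $(k+1,\epsilon_0/2)$-symmetric for $g_\infty$, a contradiction. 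The main obstacle is this last identification of $g_\infty$ with a tangent map of $f_\infty$ at $x_\infty$: the $L^2$-closeness of $f_i$ to a $k$-symmetric model on $B_2(p_i)$ degrades by a factor of $r_i^{-n}$ under rescaling to unit scale, so the two limits $\delta_i\to 0$ and $r_i\to 0$ must be balanced, either by choosing $\delta_i\ll r_i^n$ at the outset of the negation (which we are free to do) or, more robustly, by invoking the quantitative $0$-symmetry of Theorem \ref{t:quantitative_0_symmetry} and cone splitting of Theorem \ref{t:con_splitting} directly on the rescaled sequence to transfer the $k$-fold symmetry near $p_i$ together with the rescaled $0$-symmetry at $x_i$ into the desired $(k+1)$-symmetry of $g_\infty$.
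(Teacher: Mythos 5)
Your overall plan---contradict, extract a sequence, pass to a limit, run Federer dimension reduction on the limit, and transfer back by rescaling---is the right template, and it matches the paper's one-line hint that the proof is a ``standard contradiction argument.'' You also correctly flag the trouble spot yourself, which is good. But as written there is a genuine gap, and the two fixes you sketch do not close it.

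The crux is your identification of $g_\infty = \lim_i f_i(\exp_{x_i}(r_i\,\cdot))$ with a tangent map of $f_\infty$ at $x_\infty$, from which you want $(k+1)$-symmetry. There are two separate obstructions. First, a tangent map is by definition $0$-homogeneous, and for $g_\infty$ this would require $\theta^{f_i}_{Rr_i}(x_i) - \theta^{f_i}_{sr_i}(x_i)\to 0$ for all fixed $R>s>0$; nothing in the negation gives you that control, since you chose $r_i$ freely without reference to the energy drop at $x_i$. Your fix of taking $\delta_i\ll r_i^n$ does make $g_i$ stay $L^2$-close on $B_1(0)$ to the rescaled $k$-symmetric model $h_i(\exp_{x_i}(r_i\,\cdot))$, and that rescaled model does become translation-invariant along $W = V^k_\infty\oplus\mathds{R}(x_\infty-p_\infty)$ as $r_i\to 0$; but the rescaled model is $0$-homogeneous at the point $(p_i-x_i)/r_i$, which is escaping to infinity, not at $0$. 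So $g_\infty$ inherits $W$-invariance, not $(k+1)$-symmetry, and you cannot conclude. Second, even leaving aside the blowup, ``every tangent map of $f_\infty$ at $x_\infty$ is $(k+1)$-symmetric'' quietly uses the monotonicity formula at $x_\infty$; but $f_\infty$ is only a weak $W^{1,2}$ limit of stationary maps and need not itself be stationary---energy can concentrate, producing the defect measure of Definition \ref{d:defect}. The paper's own related arguments (e.g.\ the proof of Lemma \ref{l:covering:improved_continuity}) explicitly pass to the pair $(f_\infty,\nu)$, invoke the $0$-symmetry of the defect (Theorem \ref{t:defect:0symmetry}) and the rectifiability result of Lin, and only then extract the conclusion. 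Your claim of ``automatic $0$-homogeneity of $T$'' is not available without that machinery.

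Your alternative suggestion---invoke Theorem \ref{t:quantitative_0_symmetry} and cone-splitting directly on the rescaled sequence---would need, to feed into Theorem \ref{t:con_splitting}, a collection of $k+2$ $\tau$-independent points at scale $\sim r_i$ about $x_i$ each with $(0,\delta)$-symmetric balls. The $(k,\delta_i)$-symmetry of $B_2(p_i)$ does not hand you those points at scale $r_i$, and the quantitative $0$-symmetry at $x_i$ fails precisely on a finite but uncontrolled set of scales which could well include $r_i$. So this route is also not a proof as stated. A cleaner and more robust way to structure the argument is to decouple the two limits: first pass to the $\delta\to 0$ limit (with the radius $r$ still a free parameter) to reduce to the case of an exactly $k$-symmetric weakly harmonic $f_\infty$ plus a $k$-symmetric rectifiable defect $\nu_\infty$; then prove, by a second compactness argument in the blowup scale at $x_\infty$ using the monotonicity for the pair $(f_\infty,\nu_\infty)$, that some fixed $\rho>0$ works for all such limits. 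This two-scale structure avoids the non-commuting-limits problem that your single diagonal sequence runs into, and it makes explicit where the defect-measure tools of Section \ref{ss:defect_measure} are needed.
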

\vspace{.5 cm}

\subsection{Defect Measure}\label{ss:defect_measure}

Since it will play a role for us in one technical aspect of this paper, let us recall in this section the defect measure of a sequence of stationary harmonic maps.  We begin with a definition:

\begin{definition}\label{d:defect}
Let $f_i:B_2(p)\to N$ be a sequence of stationary harmonic maps satisfying \eqref{e:manifold_bounds} with $\fint_{B_2(p)}|\nabla f_i|^2\leq \Lambda$.  Then, after possibly passing to a subsequence, $f_i$ has a weak $W^{1,2}$ limit $f$, and we can consider the measure
\begin{align}
|\nabla f_i|^2 dv_g \wto \abs{\nabla f}^2 dv_g + \nu\, ,
\end{align}
where this last convergence is intended in the weak sense of Radon measures. The measure $\nu$ is nonnegative by Fatou's lemma, and it is called the defect measure associated to $f_i\wto f$.
\end{definition}


\begin{remark}
We can also allow $f_i:B_2(p_i)\to N$ to be defined on different manifolds.  In practice, this will occur under the assumption $K_{M_i}\to 0$, so that $\nu$ becomes a measure on $B_2(0^n)$.
\end{remark}

\vspace{.5 cm}

The following theorem is one of the main accomplishments of \cite{lin_stat}:

\begin{theorem}[Rectifiability of Defect Measure]\cite[lemma 1.7]{lin_stat}\label{t:defect:rectifiable}
If $\nu$ is a defect measure as in definition \ref{d:defect}, then it is $n-2$ rectifiable.
\end{theorem}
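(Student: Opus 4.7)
The plan is to show that $\nu$ has an $(n-2)$-dimensional density which is bounded above and below $\nu$-a.e., and that its tangent measures are $\nu$-a.e.\ supported on $(n-2)$-planes; rectifiability then follows from a classical criterion. The starting point is the monotonicity formula for each $f_i$: the normalized Dirichlet energy $\theta_r(x;f_i) \equiv r^{2-n}\int_{B_r(x)}|\nabla f_i|^2$ is almost monotone nondecreasing in $r$, up to a curvature correction from $M$. Passing to the weak limit gives the corresponding almost monotonicity for the total energy density $\mu \equiv |\nabla f|^2\, dv_g + \nu$, which yields an upper bound $\Theta^{n-2}(\mu,x) \leq C(n,\Lambda)$ on the $(n-2)$-density of $\mu$. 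Since $|\nabla f|^2 \in L^1$, its $(n-2)$-density vanishes $\mathcal{H}^{n-2}$-a.e., so the limit $\Theta^{n-2}(\nu,x) = \lim_{r\to 0} \omega_{n-2}^{-1} r^{2-n}\nu(B_r(x))$ exists and satisfies $\Theta^{n-2}(\nu,x) \leq C$ for $\nu$-a.e.\ $x$.

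For the lower density bound I would invoke $\epsilon$-regularity: there exists $\epsilon_0(n,K_N) > 0$ such that if $r^{2-n}\mu(B_r(x)) < \epsilon_0$ then $f_i \to f$ strongly in $W^{1,2}(B_{r/2}(x))$, which forces $\nu(B_{r/2}(x)) = 0$. Consequently $\Theta^{n-2}(\nu,x) \geq \epsilon_0$ on $\supp{\nu}$, so $\nu$ is comparable to $\mathcal{H}^{n-2}$ restricted to $\supp{\nu}$, with constants depending only on $n$, $K_N$ and $\Lambda$.

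The heart of the argument is the tangent measure analysis. Since each $f_i$ has a divergence-free stress-energy tensor, the weak limit endows $\nu$ with the structure of a stationary $(n-2)$-varifold, and this stationarity combined with the existence of the density implies that every tangent measure $\nu_\infty$ of $\nu$ at a $\nu$-generic point is conical, i.e., invariant under dilations at the origin. Combining conicity with a cone-splitting argument in the spirit of Theorem \ref{t:con_splitting}, one then shows that if $\nu_\infty$ has any translational invariance, its symmetry group enlarges; dimension reduction as in Theorem \ref{t:quant_dim_red} applied to $\nu_\infty$ then shows that the support cannot spread beyond an $(n-2)$-dimensional subspace without forcing an additional splitting that contradicts the uniform upper density bound. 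The conclusion is that at $\nu$-a.e.\ $x$, every tangent measure $\nu_\infty$ is a constant multiple of $\mathcal{H}^{n-2}$ restricted to an $(n-2)$-dimensional linear subspace.

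With two-sided density bounds and planar tangent measures in hand, rectifiability follows either from Preiss's theorem or from Allard's rectifiability theorem applied to the stationary varifold structure on $\nu$. The hardest step is the tangent measure analysis: one has only weak $W^{1,2}$ convergence of the maps, so the usual pointwise blowup arguments for stationary harmonic maps cannot be applied directly, and cone splitting must be carried out at the level of the measure itself. A particularly subtle point is ensuring that symmetries beyond conicity propagate correctly, which requires carefully combining the stationarity of the varifold associated to $\nu$ with the strong convergence of $f_i$ on the complement of $\supp{\nu}$.
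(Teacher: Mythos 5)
The paper does not give a proof of this statement: it is quoted verbatim as \cite[lemma 1.7]{lin_stat} and used as a black box (only Theorem~\ref{t:defect:0symmetry}, which is the $0$-symmetry part, is actually invoked later). So the comparison can only be with Lin's original argument, and against that yardstick your outline has the right general shape — density bounds above and below, tangent-measure analysis, then a rectifiability criterion — but it leaves a gap at exactly the point where the real work is.

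The sentence ``since each $f_i$ has a divergence-free stress-energy tensor, the weak limit endows $\nu$ with the structure of a stationary $(n-2)$-varifold'' is stated as if it were a formal consequence of passing to the limit, but it is not. Weak-$*$ convergence of the stress-energy tensors $T(f_i)$ gives a limiting \emph{tensor-valued} measure $T(f)\,dv_g + \Lambda$; one must then show that the defect tensor $\Lambda$ is absolutely continuous with respect to $\nu$, that its Radon--Nikodym density $A(x)$ is $\nu$-a.e.\ a symmetric nonnegative matrix of the form $\mathrm{Id}-P(x)$ for $P(x)$ an orthogonal projection onto an $(n-2)$-plane, and only then does one get a genuine $(n-2)$-varifold whose first variation is controlled. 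That decomposition of the defect stress-energy tensor is the technical heart of Lin's proof and cannot be assumed; without it, there is no varifold to which Allard's theorem applies, and ``conicity of tangent measures'' has no meaning because $\nu$ by itself is just a Radon measure with no tangent-plane assignment. Related to this, the cone-splitting/dimension-reduction paragraph as written is not an argument: ``if $\nu_\infty$ has any translational invariance, its symmetry group enlarges'' is circular, and the conclusion that the support ``cannot spread beyond an $(n-2)$-dimensional subspace'' does not follow from the upper density bound alone. If you do establish the varifold structure and positive density, the correct move is to invoke Allard's rectifiability theorem directly (or Preiss's theorem on the scalar measure, after first verifying that the density genuinely \emph{exists as a limit} $\nu$-a.e., which uses the almost-monotonicity of $\theta_r$ for $\mu$ plus $\Theta^{n-2}(|\nabla f|^2 dv_g,\cdot)=0$ on the $\sigma$-finite set $\supp\nu$); the extra cone-splitting step you describe is both unnecessary and, as phrased, incorrect.
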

\vspace{.5 cm}

A key tool in the proof of the above, which will be useful in this paper as well, is the following $0$-symmetry result:

\begin{theorem}\cite[lemma 1.7 (ii)]{lin_stat}\label{t:defect:0symmetry}
Let $f_i:B_2(p_i)\subseteq M_i\to N$ be a sequence of stationary harmonic maps satisfying \eqref{e:manifold_bounds} with $\fint_{B_2(p)}|\nabla f_i|^2\leq \Lambda$.  Assume $K_{M_i}<\delta_i\to 0$ and that $\big|\theta_{i,2}(p_i)-\theta_{i,\delta_i}\big|<\delta_i\to 0$.  Then the defect measure $|\nabla f_i|^2 dv_g\wto \abs{\nabla f}^2dv_g + \nu$ is $0$-symmetric, that is, both the function $f$ and $\nu$ are invariant under dilation around the origin.
\end{theorem}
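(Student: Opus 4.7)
The plan is to exploit the almost-monotonicity of the normalized energy $\theta_{i,r}$ together with the hypothesis $K_{M_i}\to 0$ in order to show that the radial derivative of $f_i$ vanishes in an integral sense on $B_2\setminus\{0\}$, and then to pass the first-variation identity to the limit to extract dilation invariance of the defect. Concretely, integrating the (almost-Euclidean) monotonicity formula on $M_i$ from $\delta_i$ to $2$ gives
\[
\theta_{i,2}(p_i)-\theta_{i,\delta_i}(p_i) \;=\; 2\int_{\delta_i}^{2} t^{2-n}\int_{\partial B_t(p_i)}|\partial_\rho f_i|^2\,d\sigma_i\,dt \;+\; o_i(1),
\]
where $\partial_\rho$ is the radial derivative from $p_i$ and the $o_i(1)$ error comes from $K_{M_i}\to 0$. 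By hypothesis the left-hand side tends to $0$, and Fubini then yields
\[
\int_{B_2(p_i)\setminus B_{\delta_i}(p_i)} |x-p_i|^{2-n}\,|\partial_\rho f_i|^2\,dv_{g_i} \;\longrightarrow\; 0.
\]

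Next, after extracting a subsequence $f_i\wto f$ in $W^{1,2}_{\mathrm{loc}}$ and $|\nabla f_i|^2\,dv_{g_i}\wto|\nabla f|^2\,dx+\nu$, I decompose $|\nabla f_i|^2 = |\partial_\rho f_i|^2 + |\nabla^T f_i|^2$ into radial and spherical parts (centered at $p_i\to 0$). Each piece is nonnegative, so each has a nonnegative weak limit; denote $|\partial_\rho f_i|^2\,dv_{g_i}\wto|\partial_\rho f|^2\,dx+\nu^R$ with $0\le\nu^R\le\nu$. The previous decay estimate combined with lower semicontinuity forces, for every annulus $0<s<r<2$,
\[
\int_{A_{s,r}(0)} |x|^{2-n}\bigl(|\partial_\rho f|^2\,dx + d\nu^R\bigr) \;=\; 0,
\]
and since $|x|^{2-n}>0$ there, both $\partial_\rho f\equiv 0$ a.e.\ on $B_2\setminus\{0\}$ and $\nu^R(B_2\setminus\{0\})=0$. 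The first conclusion is precisely the $0$-homogeneity of $f$, and in particular $|\nabla f|^2\,dx$ is already a conical measure.

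To upgrade "no radial defect" to full dilation invariance of $\nu$, I would pass the inner-variation identity
\[
\int\bigl(|\nabla f_i|^2\,\mathrm{div}\,X - 2\,\partial_k X^j\,\partial_k f_i\cdot\partial_j f_i\bigr)\,dv_{g_i}\;=\;0
\]
to the limit using the truncated dilation field $X=\eta(|x|)\,x$. The weak limit of the tensor $\partial_k f_i\cdot\partial_j f_i$ produces a matrix-valued defect $\nu_{jk}$ whose trace is $\nu$ and whose radial-radial component $x^jx^k\nu_{jk}/|x|^2$ coincides with $\nu^R$, which vanishes by the previous step. Inserting this together with $\partial_\rho f=0$ into the limiting identity kills all "flux" terms and reduces it to the stationarity of $\mu:=|\nabla f|^2\,dx + \nu$ against arbitrary dilation fields; a standard cone-computation then forces $(T_\lambda)_\ast\mu = \lambda^{n-2}\mu$ for $T_\lambda(x)=\lambda x$. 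Since $|\nabla f|^2\,dx$ is already conical, so is $\nu$. The main obstacle is exactly this last step: identifying the radial-radial component of the tensorial defect $\nu_{jk}$ with the scalar defect $\nu^R$, which requires knowing that in the limit the defect tensor is rank-one along the radial direction. This is the delicate part of the theory of defect measures for stationary harmonic maps and is where Lin's structure analysis enters decisively.
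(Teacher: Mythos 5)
The paper does not prove this statement; it is cited directly as Lemma 1.7(ii) of Lin's work, so there is no internal proof to compare against. Your sketch does reconstruct Lin's argument in its essentials and is correct, but your closing worry is a red herring, so let me address it.

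You write that the delicate step is ``identifying the radial-radial component of the tensorial defect $\nu_{jk}$ with the scalar defect $\nu^R$,'' and that this ``requires knowing that in the limit the defect tensor is rank-one along the radial direction.'' No such rank condition is needed. On any compact annulus $A_{s,r}(0)$ with $0<s<r<2$, the tensor fields $\frac{(x-p_i)\otimes(x-p_i)}{|x-p_i|^2}$ converge uniformly to $\frac{x\otimes x}{|x|^2}$ since $p_i\to 0$, and both are bounded and continuous there. Passing to a subsequence so that the full matrix $\partial_j f_i\cdot\partial_k f_i\,dv_{g_i}$ converges weak-$*$ to some matrix-valued Radon measure $P_{jk}$, testing against these continuous tensors gives directly that the weak-$*$ limit of $|\partial_\rho f_i|^2\,dv_{g_i}$ on the annulus is $\frac{x^jx^k}{|x|^2}P_{jk}$. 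Writing $P_{jk}=\partial_j f\cdot\partial_k f\,dx+\nu_{jk}$ (the defect $\nu_{jk}$ is positive semi-definite by Fatou applied to $(\xi\cdot\nabla f_i)^2$ for fixed $\xi$, and $\operatorname{tr}\nu_{jk}=\nu$), your quantity $\nu^R$ is by definition exactly $\frac{x^jx^k}{|x|^2}\nu_{jk}$ on the annulus, with no hidden structural input. Once $\nu^R=0$ on $B_2\setminus\{0\}$ via the monotonicity/pinching argument you gave, plugging $X=\phi(|x|)x$ into the limiting stationarity identity and using $\operatorname{tr}\nu_{jk}=\nu$ and $\frac{x^jx^k}{|x|^2}\nu_{jk}=0$ collapses the identity to $\int\big((n-2)\phi+|x|\phi'\big)\,d\mu=0$ for $\mu=|\nabla f|^2dx+\nu$, which is the conical ODE; since $|\nabla f|^2dx$ is already conical by $0$-homogeneity of $f$, so is $\nu$. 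In short: your argument is complete as written, the ``rank-one'' concern is unnecessary, and the genuinely deep part of Lin's theory (rectifiability of $\nu$, Theorem~\ref{t:defect:rectifiable}) is not invoked here.
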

\vspace{.5 cm}

\subsection{\texorpdfstring{$\epsilon$-regularity for Minimizers}{epsilon-regularity for Minimizers}}\label{ss:eps_reg}

In this subsection we quickly review the $\epsilon$-regularity theorems of \cite{ChNa2}, which themselves build on the difficult work of \cite{ScUh_RegHarm}.  This will be our primary technical tool in upgrading the structural results on stationary harmonic maps to the regularity results for minimizing harmonic maps.  Recall the definition of the regularity scale given in Definition \ref{d:regularity_scale}, then the main theorem of this subsection is the following:\\

\begin{theorem}[Minimizing $\epsilon$-Regularity \cite{ChNa2}]\label{t:eps_reg}
Let $f:B_2(p)\subseteq M\to N$ be a minimizing harmonic map satisfying \eqref{e:manifold_bounds} with $\fint_{B_2(p)}|\nabla f|^2\leq \Lambda$.  Then there exists $\epsilon(n,K_N,\Lambda)>0$ such that if $K_M<\epsilon$ and $B_2(p)$ is $(n-2,\epsilon)$-symmetric, then $$r_f(p)\geq 1\, .$$
\end{theorem}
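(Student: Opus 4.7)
My approach is a contradiction/compactness argument which exploits the strong $W^{1,2}$ compactness specific to the minimizing case. The plan has three stages: extract a blowup limit, show it is forced to be constant by an energy obstruction, and then feed this into standard small-energy $\epsilon$-regularity to recover a regularity-scale bound.

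Suppose the theorem fails. Then there exist minimizing harmonic maps $f_i:B_2(p_i)\subseteq M_i\to N$ with $\fint_{B_2(p_i)}|\nabla f_i|^2\leq \Lambda$, $K_{M_i}\to 0$, and $B_2(p_i)$ being $(n-2,\epsilon_i)$-symmetric with $\epsilon_i\to 0$, yet $r_{f_i}(p_i)<1$. Using the exponential map at $p_i$ and $K_{M_i}\to 0$, I identify the sources with Euclidean balls $B_2(0)\subseteq\mathbb{R}^n$ whose metrics converge smoothly to the flat one. Luckhaus's $W^{1,2}$-extension lemma then supplies strong $W^{1,2}_{\mathrm{loc}}$-compactness for minimizers together with preservation of minimality in the limit; after passing to a subsequence, $f_i\to f_\infty$ strongly in $W^{1,2}_{\mathrm{loc}}(B_2(0))$ with $f_\infty$ a minimizing harmonic map of energy at most $\Lambda\omega_n 2^n$.

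Next, $L^2$-closeness to $(n-2)$-symmetric maps combined with compactness of the Grassmannian of $(n-2)$-planes (to extract a limit plane $V$) forces $f_\infty$ to be $(n-2)$-symmetric along some $V^{n-2}\subseteq\mathbb{R}^n$. Writing $x=(x_V,x_\perp)\in V\times V^\perp$ with $V^\perp\cong\mathbb{R}^2$ and $f_\infty(x)=g(x_\perp/|x_\perp|)$ for some $g:S^1\to N$, passing to polar coordinates $x_\perp=\rho\omega$ on $V^\perp$ gives $|\nabla f_\infty|^2=\rho^{-2}|g'(\omega)|^2$ and therefore
\begin{align}
\int_{B_2(0)}|\nabla f_\infty|^2\,dx \;=\; E(g)\int_{|x_V|<2}\int_0^{\sqrt{4-|x_V|^2}}\rho^{-1}\,d\rho\,dx_V,
\end{align}
which diverges logarithmically unless $E(g)=0$. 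The finite-energy bound on $f_\infty$ therefore forces $g$, and hence $f_\infty$, to be constant.

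Finally, since $f_i\to f_\infty\equiv\mathrm{const}$ strongly in $W^{1,2}_{\mathrm{loc}}$, the normalized energies $\theta_r(q)=r^{2-n}\int_{B_r(q)}|\nabla f_i|^2$ tend to zero uniformly for $q\in B_{3/2}(p_i)$ and $r\leq 1/2$. Applying the classical Schoen--Uhlenbeck small-energy $\epsilon$-regularity theorem at every $q\in\overline{B_1(p_i)}$ at a fixed small scale yields $\sup_{B_1(p_i)}|\nabla f_i|\to 0$, so that $r_{f_i}(p_i)\geq 1$ for all $i$ large, contradicting $r_{f_i}(p_i)<1$. The main obstacle is the input of Luckhaus's strong $W^{1,2}$ compactness together with preservation of minimality under limits: this is precisely where the \emph{minimizing} hypothesis enters and cannot be relaxed to stationary, since nonconstant $(n-2)$-symmetric stationary harmonic maps do exist. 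Once that compactness is in hand, the energy divergence displayed above is the robust geometric ingredient that forces constancy of the blowup, and the remainder is standard.
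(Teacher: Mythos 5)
Your argument is the standard compactness proof of this $\epsilon$-regularity statement, and the core of it is correct; note that the present paper does not prove Theorem~\ref{t:eps_reg} at all but imports it directly from \cite{ChNa2}, so there is nothing in the text to compare against line-by-line. The skeleton you use — contradiction, Luckhaus/Schoen--Uhlenbeck strong $W^{1,2}_{\mathrm{loc}}$ compactness and closedness of minimality under such limits, extraction of a limiting $(n-2)$-plane from the Grassmannian so that $f_\infty$ inherits exact $(n-2)$-symmetry, the $\int \rho^{-1}\,d\rho$ log-divergence forcing the cross-sectional map $g:S^1\to N$ and hence $f_\infty$ to be constant, and finally small-energy $\epsilon$-regularity applied uniformly to drive $\sup_{B_1}|\nabla f_i|\to 0$ — is the expected route.

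One remark at the end is incorrect as stated, though it does not affect the proof itself: you assert that nonconstant $(n-2)$-symmetric stationary harmonic maps exist, as the obstruction to relaxing ``minimizing'' to ``stationary.'' They cannot exist, by precisely the computation in your own display: a nonconstant $(n-2)$-symmetric map has $|\nabla f|^2=\rho^{-2}|g'|^2$ with a logarithmically divergent Dirichlet integral over any ball, hence cannot be an element of $W^{1,2}$. The actual point where minimality enters indispensably is the one you identified earlier in the paragraph — strong $W^{1,2}$ compactness — but for a different reason than you give: in the stationary setting one only has weak $W^{1,2}$ convergence, so the limit $f_\infty$ is still forced to be constant by your energy argument, yet the energy measures $|\nabla f_i|^2\,dv_g$ may converge to $|\nabla f_\infty|^2\,dv_g + \nu$ with a nontrivial defect measure $\nu$ (Definition~\ref{d:defect}). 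That residual concentration is exactly what defeats the final step: $\theta_r(q)$ for $f_i$ need not tend to zero even though the limit is constant, so the Schoen--Uhlenbeck small-energy criterion cannot be applied. In the stationary version of this $\epsilon$-regularity (stated immediately after Theorem~\ref{t:eps_reg} in Section~\ref{ss:eps_reg}) the defect measure is ruled out by the extra hypothesis that there is no nonconstant stationary harmonic map $S^2\to N$, since such maps would arise as bubbles carrying the concentrated energy; that is the correct statement to make in your closing remark.
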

\begin{remark}
As previously remarked, the assumption $K_M<\epsilon$ is of little consequence, since this just means focusing the estimates on balls of sufficiently small radius after rescaling.
\end{remark}
\vspace{.5cm}

The following stationary version was proved in \cite{ChNaHa2}, and is essentially just a combination of the defect measure ideas of \cite{lin_stat}, the $\epsilon$-regularity of \cite{beth}, and a contradiction argument:\\

\begin{theorem}[Stationary $\epsilon$-Regularity \cite{ChNaHa2}]
Let $f:B_2(p)\subseteq M\to N$ be a minimizing harmonic map satisfying \eqref{e:manifold_bounds} with $\fint_{B_2(p)}|\nabla f|^2\leq \Lambda$.  Assume that there exists no nonconstant stationary harmonic map $S^2\to N$. Then there exists $\epsilon(n,N,\Lambda)>0$ such that if $K_M<\epsilon$ and $B_1(p)$ is $(n-2,\epsilon)$-symmetric, then $$r_f(p)\geq \frac 1 2\, .$$
\end{theorem}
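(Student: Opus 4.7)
The plan is to argue by contradiction, combining the defect measure analysis of \cite{lin_stat}, Bethuel's $\epsilon$-regularity, and the hypothesis that no nonconstant stationary harmonic sphere $S^2 \to N$ exists. Suppose the conclusion fails: there is a sequence of stationary harmonic maps $f_i:B_2(p_i)\subseteq M_i\to N$ satisfying \eqref{e:manifold_bounds} with $K_{M_i}<\epsilon_i\to 0$, $\fint_{B_2(p_i)}|\nabla f_i|^2\leq \Lambda$, such that $B_1(p_i)$ is $(n-2,\epsilon_i)$-symmetric but $r_{f_i}(p_i)<1/2$. Since $K_{M_i}\to 0$, identifying $B_2(p_i)$ with $B_2(0^n)\subseteq \dR^n$ via the exponential map and passing to a subsequence, we extract a weak $W^{1,2}$ limit $f_\infty$ together with a defect measure $\nu$ as in Definition \ref{d:defect}.

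The $(n-2,\epsilon_i)$-symmetry with $\epsilon_i\to 0$ provides an $(n-2)$-symmetric approximator converging in $L^2$, whose plane of symmetry may be taken to converge to an $(n-2)$-plane $V\subseteq \dR^n$. Then both $f_\infty$ and $\nu$ are invariant under translations along $V$; moreover, since $(n-2)$-symmetry includes $0$-symmetry, Theorem \ref{t:defect:0symmetry} shows that $f_\infty$ and $\nu$ are also invariant under dilations about any point of $V$. By Theorem \ref{t:defect:rectifiable} the defect $\nu$ is $(n-2)$-rectifiable; the combination of translation invariance along $V$ and $0$-homogeneity on any $2$-plane $\Pi^2$ orthogonal to $V$ then forces $\nu$ to be a constant multiple of the $(n-2)$-Hausdorff measure $\lambda^{n-2}\llcorner V$.

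The heart of the argument is to rule out $\nu\neq 0$. This is exactly where the $S^2$-hypothesis enters. Restricting to a generic $2$-plane $\Pi^2\perp V$, the structure of $\nu$ on $\Pi^2$ is a Dirac mass at the origin, and one runs the standard bubbling extraction of \cite{lin_stat, beth}: choose a sequence of points $q_i\to 0$ in $\Pi^2$ realising the concentration of $|\nabla f_i|^2$ and a scale $\rho_i\to 0$ so that the rescalings $\tilde f_i(y)=f_i(q_i+\rho_i y)$ have bounded Dirichlet energy on each compact set but unit energy on $B_1$. Up to a subsequence, $\tilde f_i$ converges strongly away from a finite set of points in $\dR^2$ to a nonconstant harmonic map $\phi:\dR^2\to N$ of finite energy; by the Sacks--Uhlenbeck removable singularity theorem together with conformal invariance, $\phi$ extends to a nonconstant smooth, and hence stationary, harmonic map $S^2\to N$. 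This contradicts the hypothesis, so $\nu\equiv 0$ and the convergence $f_i\to f_\infty$ is strong in $W^{1,2}_{\mathrm{loc}}$.

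It remains to obtain the final contradiction. The limit $f_\infty$ is a $0$-homogeneous stationary harmonic map that depends only on the $2$-dimensional direction orthogonal to $V$; Bethuel's $\epsilon$-regularity \cite{beth} in dimension two, combined with the Sacks--Uhlenbeck removable singularity result applied at the origin, forces $f_\infty$ to be constant. Then strong $W^{1,2}$ convergence gives $\fint_{B_1(p_i)}|f_i-c|^2\to 0$ for some constant $c$, and the $\epsilon$-regularity of \cite{beth} (or, equivalently, the limiting stationary $\epsilon$-regularity reading) applies to yield $r_{f_i}(p_i)\geq 1/2$ for $i$ large, contradicting our assumption. \textbf{The main obstacle} is the bubble-extraction step: ensuring that a non-trivial defect measure in dimension two produces a genuine nonconstant harmonic sphere requires the careful choice of concentration points and scales together with Sacks--Uhlenbeck removability, and it is here that one must invoke the full strength of the defect measure machinery of \cite{lin_stat}. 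All other steps are standard compactness, symmetry and $\epsilon$-regularity arguments in this framework.
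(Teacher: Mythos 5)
The paper does not itself prove this theorem---it cites \cite{ChNaHa2} and describes the argument in one line as ``essentially just a combination of the defect measure ideas of \cite{lin_stat}, the $\epsilon$-regularity of \cite{beth}, and a contradiction argument.'' Your proposal is a correct expansion of exactly those ingredients, so it matches the intended approach; note also that the theorem statement in the paper reads ``minimizing'' where it should read ``stationary'' (as the title and the role of the no-harmonic-$S^2$ hypothesis make clear), and you correctly interpret it as the stationary case.
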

\vspace{1cm}

Let us also discuss some improvements of the above $\epsilon$-regularity theorems in the case where there are no nonconstant harmonic maps from $S^k$ into $N$.  These are the key $\epsilon$-regularity results needed for Theorems \ref{t:improved_results_stationary} and \ref{t:improved_results_minimizing}.  We begin with the minimizing case:\\

\begin{theorem}[Improved Minimizing $\epsilon$-Regularity \cite{ChNaHa2}]\label{t:eps_reg_improved_minimizing}
Let $f:B_2(p)\subseteq M\to N$ be a minimizing harmonic mapping satisfying \eqref{e:manifold_bounds} with $\fint_{B_2(p)}|\nabla f|^2 \leq \Lambda$.  Assume further that for some $k\geq 2$ there exists no nonconstant minimizing harmonic maps $S^\ell\to N$ for all $\ell\leq k$.  Then there exists $\epsilon(n,N,\Lambda)$ such that if $K_M<\epsilon$ and $B_1(p)$ is $(n-k-1,\epsilon)$-symmetric, then $$r_f(p)\geq \frac 1 2\, .$$
\end{theorem}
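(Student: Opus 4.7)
The plan is to argue by contradiction and compactness, in the spirit of Theorem~\ref{t:eps_reg}, using the hypothesis on $N$ to rule out nontrivial limit structure. Suppose the conclusion fails: there exist minimizing harmonic maps $f_i:B_2(p_i)\subseteq M_i\to N$ with $K_{M_i}\to 0$, $\fint_{B_2(p_i)}|\nabla f_i|^2\leq\Lambda$, such that $B_1(p_i)$ is $(n-k-1,\epsilon_i)$-symmetric with $\epsilon_i\to 0$, yet $r_{f_i}(p_i)<\tfrac12$. Since $K_{M_i}\to 0$, in normal coordinates at $p_i$ the manifolds $M_i$ converge to flat $\dR^n$. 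By the classical strong-compactness theorem of Schoen-Uhlenbeck/Luckhaus/Hardt-Lin for minimizing harmonic maps, a subsequence converges strongly in $W^{1,2}_{\rm loc}$ to a minimizing harmonic map $f_\infty:B_2(0^n)\to N$ with $\fint|\nabla f_\infty|^2\leq\Lambda$. The strong $L^2$ convergence together with $\epsilon_i\to 0$ forces $f_\infty$ to be honestly $(n-k-1)$-symmetric on $B_1$, so after a rotation we may write $f_\infty(v,w)=h(w)$ with $v\in\dR^{n-k-1}$, $w\in\dR^{k+1}$, where $h:\dR^{k+1}\to N$ is a $0$-homogeneous minimizing harmonic map.

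The crux of the proof is to show that $h$ must be constant. By the Federer--Schoen--Uhlenbeck dimension reduction combined inductively with the hypothesis that $N$ admits no nonconstant smooth minimizing harmonic map $S^\ell\to N$ for any $\ell\leq k$, every minimizing harmonic map $g$ into $N$ satisfies
\begin{gather}
\dim\Sing(g)\ \leq\ \dim(\text{domain})-k-2.
\end{gather}
The proof of this refined bound is standard: a singular point of top dimension admits a $0$-homogeneous minimizing tangent map; along any nonzero singular ray of that cone, blow-up splits off one translation direction, so after finitely many such dimension reductions one produces a minimizing tangent map reducing to a nonconstant minimizing harmonic map $S^\ell\to N$ with $\ell\leq k$, contradicting the hypothesis. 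Applied to $h:\dR^{k+1}\to N$ this gives $\Sing(h)=\emptyset$, so $h$ is smooth on all of $\dR^{k+1}$; since a $0$-homogeneous map that is continuous at the origin must be constant, $h\equiv h(0)$, and therefore $f_\infty$ is constant on $B_2(0^n)$.

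Finally, strong $W^{1,2}$ convergence of $f_i$ to a constant map forces $\int_{B_{3/2}(p_i)}|\nabla f_i|^2\to 0$. For $i$ sufficiently large this places $B_1(p_i)$ well inside the regime of the standard minimizing $\epsilon$-regularity of Theorem~\ref{t:eps_reg}: being $L^2$-close to a constant makes $B_1(p_i)$ $(n-2,\epsilon)$-symmetric for any prescribed $\epsilon$, so Theorem~\ref{t:eps_reg} yields $r_{f_i}(p_i)\geq\tfrac12$, contradicting $r_{f_i}(p_i)<\tfrac12$. The main obstacle is the dimension-reduction step producing the bound $\dim\Sing(g)\leq\dim(\text{domain})-k-2$, since this is where one must carefully iterate Federer's blow-up scheme through all spheres $S^\ell$ with $\ell\leq k$; this is also precisely why the constant $\epsilon$ in the statement is allowed to depend on the full target $N$ rather than only on $K_N$.
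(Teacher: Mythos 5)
The paper does not prove this theorem; it cites it from \cite{ChNaHa2} and uses it as a black box. So there is no in-paper argument to compare against, and I am evaluating your proposal on its own. Your proof is essentially correct and is the expected route for the minimizing case: since minimizers enjoy strong $W^{1,2}$-compactness (Schoen--Uhlenbeck/Luckhaus/Hardt--Lin), one can avoid the defect-measure machinery that the paper explains is needed in the stationary version, and instead run a clean compactness-plus-dimension-reduction contradiction. The structure (blow up to a $(n-k-1)$-symmetric minimizer, restrict to the orthogonal slice to get a $0$-homogeneous minimizer $h:\dR^{k+1}\to N$, use the refined Federer--Schoen--Uhlenbeck bound $\dim\Sing \le m-k-2$ coming from the hypothesis on $N$ to force $h$ smooth hence constant, then feed the vanishing energy back into Theorem~\ref{t:eps_reg}) is the right argument.

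Two small points worth tightening. First, you should state explicitly that you are using the standard splitting fact that a minimizer which is invariant along a $(n-k-1)$-plane restricts to a \emph{minimizing} harmonic map on the orthogonal complement; this is what makes $h$ minimizing and is silently invoked. Second, your penultimate sentence asserts $\int_{B_{3/2}(p_i)}|\nabla f_i|^2\to 0$, but you only established that $f_\infty$ is constant on $B_1$ (the $(n-k-1,\epsilon_i)$-symmetry is hypothesized on $B_1(p_i)$). This is harmless --- either invoke unique continuation for minimizing harmonic maps to upgrade to constancy on $B_2$, or simply skip that sentence: strong $L^2$-convergence on $B_1$ already makes $B_1(p_i)$ $(n-2,\epsilon)$-symmetric for large $i$, so Theorem~\ref{t:eps_reg} applied at scale $1/2$ gives $r_{f_i}(p_i)\ge 1/2$ directly. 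Finally, the hypothesis $k\ge 2$ is not used in your argument and is there only because for $k\le 1$ the conclusion is already contained in Theorem~\ref{t:eps_reg}; and your closing remark about why $\epsilon$ must depend on $N$ rather than merely on $K_N$ is the right reason.
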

\vspace{.5cm}

Finally, we end with the stationary version of the above theorem:\\

\begin{theorem}[Improved Stationary $\epsilon$-Regularity \cite{ChNa2}]\label{t:eps_reg_improved_stationary}
Let $f:B_2(p)\subseteq M\to N$ be a stationary harmonic mapping satisfying \eqref{e:manifold_bounds} with $\fint_{B_2(p)}|\nabla f|^2 \leq \Lambda$.  Assume further that for some $k\geq 2$ there exists no nonconstant stationary harmonic maps $S^\ell\to N$ for all $\ell\leq k$.  Then there exists $\epsilon(n,N,\Lambda)$ such that if $K_M<\epsilon$ and $B_1(p)$ is $(n-k-1,\epsilon)$-symmetric, then $$r_f(p)\geq \frac 1 2\, .$$
\end{theorem}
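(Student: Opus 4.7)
The plan is a contradiction and blowup argument driven by iterated dimension reduction against the defect measure. First I would assume the conclusion fails and obtain a sequence of stationary harmonic maps $f_i : B_2(p_i) \subseteq M_i \to N$ satisfying \eqref{e:manifold_bounds} with $K_{M_i} < \epsilon_i \to 0$, $\fint_{B_2(p_i)} |\nabla f_i|^2 \leq \Lambda$, and $B_1(p_i)$ being $(n-k-1,\epsilon_i)$-symmetric, yet $r_{f_i}(p_i) < 1/2$. The condition $K_{M_i}\to 0$ lets me pass to exponential coordinates and treat the $f_i$ as maps on $B_2(0^n) \subseteq \dR^n$; after a subsequence I extract a weak $W^{1,2}$ limit $f_\infty$ and a defect measure $\nu \geq 0$ with $|\nabla f_i|^2\, dv_{g_i} \wto |\nabla f_\infty|^2\, dv + \nu$.

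Second, I would push the symmetry into the limit. The $(n-k-1,\epsilon_i)$-symmetry of $B_1(p_i)$, combined with Theorem \ref{t:defect:0symmetry} applied at the origin and with the cone splitting of Theorem \ref{t:con_splitting} applied along $n-k-1$ approximately independent directions that one pulls out from the $L^2$-closeness to an $(n-k-1)$-symmetric map, forces both $f_\infty$ and $\nu$ to be $0$-symmetric at the origin and invariant under translations along some $(n-k-1)$-plane $V$. Splitting $\dR^n = V \oplus V^\perp$ with $V^\perp \cong \dR^{k+1}$, I identify $f_\infty$ with the $V$-invariant, radially constant extension of a map $g : S^k \to N$, and identify $\nu$ with the product of $(n-k-1)$-Hausdorff measure on $V$ with a $0$-symmetric cone measure $\mu$ on $V^\perp$. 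The failure $r_{f_i}(p_i) < 1/2$, combined with small-energy $\epsilon$-regularity for stationary harmonic maps, guarantees that the limit is genuinely singular inside $B_{1/2}(0^n)$: either $g$ fails to be smooth and constant on $S^k$, or $\mu \ne 0$.

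Third, I would close the argument by iterating dimension reduction. If $\mu = 0$ and $g$ is smooth, then $g$ is a smooth, nonconstant stationary harmonic map $S^k \to N$, which contradicts the hypothesis at level $\ell = k$. Otherwise, using the $(n-2)$-rectifiability of $\nu$ from Theorem \ref{t:defect:rectifiable}, I pick a regular point $q$ of the defect off of $V$ (or a singular point of $g$) and perform a second blowup of the $f_i$ at scales converging to $q$. The resulting tangent object inherits the original $(n-k-1)$ symmetries together with translation invariance along the approximate tangent direction at $q$, and Theorem \ref{t:con_splitting} upgrades the symmetry by one, descending to $S^{k-1}$. Iterating, at each step I either obtain a smooth nonconstant stationary harmonic $S^\ell \to N$ with $\ell \leq k$, directly contradicting the hypothesis at level $\ell$, or I strictly increase the symmetry; after finitely many steps the first alternative must occur.

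The main obstacle, and precisely where the strengthened hypothesis is needed, is in closing off the defect at each intermediate stratum. Without ruling out nonconstant smooth stationary harmonic maps on \emph{every} $S^\ell$ with $\ell \leq k$, the iterated blowup could stabilize at some intermediate dimension with a nontrivial limit that is permissible under the weaker hypothesis, and the argument would fail to terminate. The delicate coordination between the $(n-2)$-rectifiability of $\nu$, the cone splitting, and the exhaustion of the sphere-map hypothesis across all intermediate $\ell$ is what ultimately drives the contradiction.
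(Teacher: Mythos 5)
The paper does not actually give a proof of Theorem~\ref{t:eps_reg_improved_stationary}; it cites the result to \cite{ChNa2} (and the remark following it explains why smooth, continuous, or arbitrary sphere maps give equivalent hypotheses). So there is no in-paper proof to compare against. That said, your reconstruction follows the standard and correct route for results of this type: contradiction/compactness, extraction of the defect measure $\nu$ in the sense of Definition~\ref{d:defect}, pushing symmetry to the blowup limit $(f_\infty,\nu)$, and then playing the rectifiability of $\nu$ (Theorem~\ref{t:defect:rectifiable}) and Federer dimension reduction against the hypothesis that no nonconstant stationary harmonic $S^\ell \to N$ exists for $2 \le \ell \le k$, ending in a contradiction either via a bubble $S^2 \to N$ from a nontrivial defect or a smooth nonconstant $S^\ell \to N$ from a singular sphere map. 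This is precisely the strategy the remark below the theorem signals, and it is the combination of \cite{lin_stat}, \cite{beth}-type $\epsilon$-regularity, and dimension reduction that the paper attributes to the unlabeled stationary $\epsilon$-regularity statement as well.

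Two places in your write-up deserve to be sharpened. First, you assert that the $(n-k-1,\epsilon_i)$-symmetry of $B_1(p_i)$, via Theorem~\ref{t:defect:0symmetry} and cone splitting, forces the defect measure $\nu$ to be $0$-symmetric and invariant along an $(n-k-1)$-plane $V$. Theorem~\ref{t:defect:0symmetry} requires energy pinching $|\theta_2 - \theta_\delta|<\delta$ at the base point, so before it can be applied along $n-k$ independent directions one must convert the mere $L^2$-closeness of $f_i$ to an $(n-k-1)$-symmetric map into quantitative energy pinching at $n-k$ appropriately $\tau$-independent points (the converse direction to Theorem~\ref{t:quantitative_0_symmetry}); this is itself a compactness argument, and your phrase ``approximately independent directions that one pulls out from the $L^2$-closeness'' glides past it. Second, once you have $\mu \ne 0$ on $V^\perp \cong \dR^{k+1}$, you do not actually need to descend one dimension at a time through the defect: $\mu$ is $0$-symmetric and (by rectifiability of $\nu$) carried on a $(k-1)$-rectifiable cone, so a single blowup at a density point of its support away from the origin yields a translation-invariant line in the defect and, iterating only enough to saturate the $(n-2)$-rectifiable structure, produces a nonconstant stationary harmonic $S^2 \to N$ directly (this is Lin's bubbling mechanism), which already kills the case $\ell = 2 \le k$. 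Your iterated version is not wrong, but it conflates the dimension reduction on $\operatorname{Sing}(g)$ (where the reduction really does proceed stratum by stratum) with the defect case, where the shortcut is both simpler and what \cite{lin_stat} actually provides.
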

\begin{remark}
 Note that in both theorems above, nothing would change if we only required that $N$ admits no \textit{continuous} (or even \textit{smooth}) nonconstant stationary (or minimizing) harmonic map $S^\ell\to N$ for all $\ell\leq k$. Indeed, the tangent maps at singular points of non-smooth stationary (or minimizing) harmonic maps give rise to smooth stationary (or minimizing) harmonic maps from lower dimensional spheres.
\end{remark}

\vspace{.5cm}

\subsection{Hausdorff, Minkowski, and packing Content}\label{ss:haus_mink}

In this subsection we give a brief review of the notions of Hausdorff, Minkowski, and packing content.  We will also use this to recall the definition of Hausdorff measure.  The results of this subsection are completely standard, but this gives us an opportunity to introduce some notation for the paper.  For a more detailed reference, we refer the reader to \cite{mattila,Fed}. Let us begin with the notions of content:

\begin{definition}\label{d:content}
Given a set $S\subseteq \dR^n$ and $r>0$ we define the following:
\begin{enumerate}
\item The $k$-dimensional Hausdorff $r$-content of $S$ is given by
\begin{align}
\lambda^k_r(S)\equiv \inf\Big\{\sum \omega_k r_i^k : S\subseteq \bigcup B_{r_i}(x_i) \text{ and }r_i\leq r\Big\}\, .
\end{align}
\item The $k$-dimensional Minkowski $r$-content of $S$ is given by
\begin{align}
m^k_r(S)\equiv (2r)^{s-n}\Vol \ton{\B r S}\, .
\end{align}
\item The $k$-dimensional packing $r$-content of $S$ is given by
\begin{align}
p^k_r(S)\equiv \sup\Big\{\sum \omega_k r_i^k : \, x_i\in S\text{ and }\{B_{r_i}(x_i)\} \text{ are disjoint}\text{ and }r_i\leq r\Big\}\, .
\end{align}
\end{enumerate}
\end{definition}

These definitions make sense for any $k\in [0,\infty)$, though in this paper we will be particularly interested in integer valued $k$.  Notice that if $S$ is a compact set then $\lambda^k_r(S),m^k_r(S)<\infty$ for any $r>0$, and that we always have the relations
\begin{align}
\lambda^k_r(S) \lesssim m^k_r(S) \lesssim p^k_r(S)\, .
\end{align}
In particular, bounding the Hausdorff content is less powerful than bounding the Minkowski content, which is itself less powerful than bounding the packing content.  

Primarily in this paper we will be mostly interested in content estimates, because these are the most effective estimates.  However, since it is classical, let us go ahead and use the Hausdorff content to define a measure.  To accomplish this, let us more generally observe that if $r\leq r'$ then $\lambda^k_r(S)\geq \lambda^{k'}_r(S)$. In particular, we can define the limit
\begin{align}
\lambda^k_0(S)\equiv \lim_{r\to 0} \lambda^k_r(S)=\sup_{r>0}\lambda^k_r(S)\, .\notag
\end{align}

It turns out that $\lambda^k_0$ is a genuine measure.  

\begin{definition}\label{d:haus_meas}
Given a set $S\subseteq \dR^n$ we define its $k$-dimensional Hausdorff measure by $\lambda^k(S)\equiv \lambda^k_0(S)$.
\end{definition}

Similar constructions can be carried out for the Minkowski and packing content. In particular, we can define 
\begin{gather}
 \overline{m}^k_0 (S)\equiv \limsup_{r\to 0} m^k_r(S)\, ,\quad \underline{m}^k_0 (S)\equiv \liminf_{r\to 0} m^k_r(S)\, ,\\
 p^k_0(S)=\lim_{r\to 0}p^k_r(S)=\inf_{r>0} p^k_r(S)\, . 
\end{gather}

\begin{definition}\label{d:dimension}
Given a set $S\subseteq \dR^n$ we define its Hausdorff and Minkowski dimension (or box-dimension) by
\begin{align}
\dim_H S\equiv \inf\Big\{k\geq 0: \lambda^k_0(S)=0\Big\}\, ,\notag\\
\dim_M S\equiv \inf\Big\{k\geq 0: \overline m^k_0(S)=0\Big\}\, .
\end{align}
\end{definition}
\begin{remark}
 Note that we could define an upper and lower Minkowski dimension by
 \begin{gather}
 \overline{\dim}_M S\equiv \inf\Big\{k\geq 0: \overline m^k_0(S)=0\Big\}\, ,\quad  \underline{\dim}_M S\equiv \inf\Big\{k\geq 0: \underline m^k_0(S)=0\Big\}\, .
 \end{gather}
In general, $\underline{\dim}_M S\leq \overline{\dim}_M S$, where the inequality may be strict. However, for the purposes of this paper we will only be interested in the \textit{upper} Minkowski dimension.
\end{remark}

As an easy example consider the rationals $\dQ^n\subseteq \dR^n$.  Then it is a worthwhile exercise to check that $\dim_H \dQ^n = 0$, while $\dim_M \dQ^n = n$.  \\

A very important notion related to measures is the \textit{density} at a point. Although this is standard, for completeness we briefly recall the definition of Hausdorff density, and refer the reader to \cite[chapter 6]{mattila} for more on this subject.

\begin{definition}
 Given a set $S\subset \R^n$ which is $\lambda^k$-measurable, and $x\in \R^n$, we define the $k$-dimensional upper and lower density of $S$ at $x$ by
 \begin{gather}
  \theta^{\star k}(S,x)  = \limsup_{r\to 0} \frac{\lambda^k(S\cap \B{r}{x})}{\omega_k r^k}\, ,\quad 
  \theta^{k}_\star (S,x) = \liminf_{r\to 0} \frac{\lambda^k(S\cap \B{r}{x})}{\omega_k r^k}\, .
 \end{gather}
\end{definition}
In the following, we will use the fact that for almost any point in a set with finite $\lambda^k$-measure, the density is bounded from above and below.
\begin{proposition}[ \cite{mattila}]\label{prop_dens}
 Let $S\subset \R^n$ be a set with $\lambda^k(S)<\infty$. Then for $k$-a.e. $x\in S$:
 \begin{gather}
  2^{-k}\leq \theta^{\star k}(S,x)\leq 1\, ,
 \end{gather}
while for $k$-a.e. $x\in \R^n \setminus S$
\begin{gather}
 \theta^{\star k}(S,x)=0\, .
\end{gather}
\end{proposition}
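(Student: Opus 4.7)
The plan is to establish the three density bounds using classical Vitali--Besicovitch covering arguments combined with Borel regularity of $\lambda^k$; each follows the same template of producing a fine covering from the density hypothesis, extracting a disjoint subfamily, and comparing the Hausdorff content of a bad set to the mass of $S$ in a neighborhood. For the upper bound $\theta^{\star k}(S,x) \leq 1$ at $\lambda^k$-a.e.\ $x \in S$, fix $t > 1$ and set $A_t = \{x \in S : \theta^{\star k}(S,x) > t\}$. At each $x \in A_t$ the upper-density hypothesis provides arbitrarily small radii $r$ with $\lambda^k(S \cap B_r(x)) > t\,\omega_k r^k$. Given $\varepsilon > 0$, pick an open set $U \supseteq A_t$ with $\lambda^k(S \cap U) \leq \lambda^k(A_t) + \varepsilon$ (Borel regularity of the Radon measure $\lambda^k$ restricted to $S$). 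The Vitali--Besicovitch covering theorem extracts a disjoint family $\{B_{r_i}(x_i)\}_i$ of such balls contained in $U$ which covers $A_t$ up to a $\lambda^k$-null set, so
\begin{gather*}
\lambda^k(A_t) \leq \sum_i \omega_k r_i^k \leq t^{-1}\!\sum_i \lambda^k(S \cap B_{r_i}(x_i)) \leq t^{-1}\bigl(\lambda^k(A_t) + \varepsilon\bigr).
\end{gather*}
Rearranging yields $\lambda^k(A_t) \leq \varepsilon/(t-1)$; sending $\varepsilon \downarrow 0$ and then $t \downarrow 1$ through a countable sequence closes the argument.

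For the lower bound $\theta^{\star k}(S,x) \geq 2^{-k}$ at $\lambda^k$-a.e.\ $x \in S$, fix $t < 2^{-k}$ and set $B_t = \{x \in S : \theta^{\star k}(S,x) < t\}$. For each $\varepsilon, \eta > 0$, choose a covering $\{B_{r_i}(x_i)\}$ of $B_t$ with $r_i \leq \eta$ realizing $\sum_i \omega_k r_i^k \leq \lambda^k_\eta(B_t) + \varepsilon$. Discard any ball disjoint from $B_t$ and pick $y_i \in B_{r_i}(x_i) \cap B_t$; then $B_{r_i}(x_i) \subseteq B_{2r_i}(y_i)$, and after refining $\eta$ so that each $2r_i$ realizes the upper-density bound at $y_i$, we have $\lambda^k(S \cap B_{2r_i}(y_i)) < t\,\omega_k(2r_i)^k$. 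Summing,
\begin{gather*}
\lambda^k(B_t) \leq \sum_i \lambda^k(S \cap B_{2r_i}(y_i)) \leq 2^k t \sum_i \omega_k r_i^k \leq 2^k t\bigl(\lambda^k(B_t) + \varepsilon\bigr),
\end{gather*}
and since $2^k t < 1$, letting $\varepsilon, \eta \downarrow 0$ forces $\lambda^k(B_t) = 0$.

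Finally, for the vanishing at $\lambda^k$-a.e.\ $x \in \R^n \setminus S$, fix $t > 0$ and set $C_t = \{x \in \R^n \setminus S : \theta^{\star k}(S,x) > t\}$. Since $\lambda^k(S) < \infty$ and $\lambda^k$ restricted to $S$ is inner regular by closed sets, there is a closed $F \subseteq S$ with $\lambda^k(S \setminus F) < \varepsilon$; then $U := \R^n \setminus F$ is open, contains $C_t$, and satisfies $\lambda^k(S \cap U) < \varepsilon$. Running the Vitali--Besicovitch argument of the first step inside $U$ gives $\lambda^k(C_t) \leq t^{-1} \lambda^k(S \cap U) < t^{-1}\varepsilon$, and letting $\varepsilon \downarrow 0$ and then $t \downarrow 0$ through a countable sequence concludes. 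The principal subtlety is the lower-density step, where the factor $2^k$ arises unavoidably from recentering each covering ball at a point of $B_t$; this recentering is precisely what produces the sharp constant $2^{-k}$ and distinguishes the lower bound from the more direct upper-bound and vanishing arguments.
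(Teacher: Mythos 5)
Your overall strategy is the classical one (essentially Mattila's Theorem 6.2), and the upper-bound step is correct modulo standard bookkeeping. However, the other two steps have genuine gaps as written.

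In the lower-density step, the phrase ``after refining $\eta$ so that each $2r_i$ realizes the upper-density bound at $y_i$'' is circular: the points $y_i$ are only determined once $\eta$ is fixed and the covering is chosen, so you cannot retroactively shrink $\eta$ below all the thresholds at which the pointwise density bound at $y_i$ begins to hold, and those thresholds need not be bounded away from zero over $B_t$. The standard repair is to stratify. Set
\begin{gather*}
B_{t,m} \equiv \cur{x \in S \ \ s.t. \ \ \lambda^k\ton{S \cap \B r x} < t\,\omega_k r^k \ \text{ for all } \ 0 < r < 1/m}\, ,
\end{gather*}
so that $B_t = \bigcup_m B_{t,m}$; fix $m$, run your covering argument with $\eta < 1/(2m)$ so that $2r_i < 1/m$ automatically holds for every $i$, conclude $\lambda^k(B_{t,m}) = 0$, and then take the countable union over $m$.

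In the vanishing step, ``running the Vitali--Besicovitch argument of the first step'' does not go through. That theorem extracts a disjoint subfamily covering the target set up to a $\mu$-null set for a \emph{Radon} measure $\mu$. The only Radon measure at hand is $\lambda^k|_S$, and since $C_t \cap S = \emptyset$ you have $\lambda^k|_S(C_t) = 0$, so the extraction is vacuous (the empty subfamily already works) and yields no estimate on $\lambda^k(C_t)$. You need instead either (a) a measure-free covering lemma --- the $5r$-covering lemma, giving a disjoint $\{B_{r_i}(x_i)\}$ with $C_t \subseteq \bigcup \B{5r_i}{x_i}$ and hence $\lambda^k(C_t) \leq 5^k t^{-1}\lambda^k(S \cap U)$, or the Besicovitch covering theorem with its bounded-overlap constant --- accepting a harmless dimensional constant in place of your $t^{-1}$; or (b) a contradiction argument that first extracts a compact $C' \subset C_t$ of finite positive measure (available because $C_t$ is Borel when $S$ is $\lambda^k$-measurable) and applies Vitali--Besicovitch to the Radon measure $\lambda^k|_{C'}$.

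A lesser, purely expository point common to all three parts: to bound $\lambda^k$ rather than merely a single $\delta$-content you must run the covering extraction with all radii below an arbitrary $\delta$, obtain the estimate for $\lambda^k_\delta$, and then take the supremum over $\delta$. This is routine but worth stating.
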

\vspace{.5 cm}








\subsection{The Classical Reifenberg Theorem}\label{ss:reifenberg}

In this Section we recall the classical Reifenberg Theorem, as well as some more recent generalizations.  The Reifenberg theorem gives criteria on a closed subset $S\subseteq B_2\subseteq \dR^n$ which determine when $S\cap B_1$ is bi-H\"older to a ball $B_1(0^k)$ in a smaller dimensional Euclidean space.  The criteria itself is based on the existence of {\it good} best approximating subspaces at each scale.  We start by recalling the Hausdorff distance.
\begin{definition}\label{d:haus_dist}
Given two sets $A,B\subseteq \R^n$, we define the Hausdorff distance between these two by
\begin{gather}
 d_H(A,B)=\inf \cur{r\geq 0 \ \ s.t. \ \ A\subset \B{r}{B} \ \ \text{and} \ \ B\subset \B{r}{A} }\, .
\end{gather}
Recall that $d_H$ is a distance on closed sets, meaning that $d_H(A,B)=0$ implies $\overline A = \overline B$.
\end{definition}
\vspace{.5cm}

The classical Reifenberg theorem says the following:\\

\begin{theorem}[Reifenberg Theorem \cite{reif_orig,simon_reif}]\label{t:classic_reifenberg}
For each $0<\alpha<1$ and $\epsilon>0$ there exists $\delta(n,\alpha,\epsilon)>0$ such that the following holds.  Assume $0^n\in S\subseteq B_2\subseteq \dR^n$ is a closed subset, and that for each $B_r(x)\subseteq B_1$ with $x\in S$ we have
\begin{align}\label{e:reifenberg_L_infty_orig}
\inf_{L^k} d_H\big( S\cap B_r(x),L^k\cap B_r(x)\big)<\delta\, r\, ,
\end{align}
where the $\inf$ is taken over all $k$-dimensional affine subspaces $L^k\subseteq \dR^n$.  Then there exists $\phi:B_1(0^k)\to S$ which is a $C^\alpha$ bi-H\"older homeomorphism onto its image with $[\phi]_{C^\alpha},[\phi^{-1}]_{C^\alpha}<1+\epsilon$ and $S\cap B_1\subseteq \phi(B_1(0^k))$.

\end{theorem}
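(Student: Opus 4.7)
The plan is to follow Reifenberg's classical approach of building $\phi$ as the uniform limit of a sequence of smooth maps $\phi_j$, each obtained by modifying the previous one by a correction at scale $r_j=2^{-j}$ that reflects the best-approximating $k$-planes at that scale. First, for each dyadic scale $r_j$ I would fix a maximal $r_j/5$-separated net $\{x_{j,\alpha}\}\subseteq S\cap B_1$ and, using the hypothesis, choose a $k$-dimensional affine plane $L_{j,\alpha}$ realizing $d_H\bigl(S\cap B_{r_j}(x_{j,\alpha}),L_{j,\alpha}\cap B_{r_j}(x_{j,\alpha})\bigr)<\delta r_j$. A standard triangle inequality argument shows that if $B_{r_j}(x_{j,\alpha})$ and $B_{r_{j+1}}(x_{j+1,\beta})$ overlap, then the angle between $L_{j,\alpha}$ and $L_{j+1,\beta}$, as well as the distance between them, is $O(\delta)r_j$. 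Let $\pi_{j,\alpha}$ denote orthogonal projection onto $L_{j,\alpha}$ and choose a smooth partition of unity $\{\psi_{j,\alpha}\}$ subordinate to $\{B_{2r_j}(x_{j,\alpha})\}$ with $|\nabla\psi_{j,\alpha}|\lesssim r_j^{-1}$.

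Next, I would define a sequence of smooth maps by an induction on scale. Start with $\phi_0:L_0\cap B_{1+\eta}(0^k)\hookrightarrow\mathbb{R}^n$ using the initial plane $L_0$ (identified with $\mathbb{R}^k$). At each step define a smoothed projection map
\begin{equation*}
T_j(y)=\sum_\alpha \psi_{j,\alpha}(y)\,\pi_{j,\alpha}(y),
\end{equation*}
which depends smoothly on $y$, and set $\phi_{j+1}(y)=\phi_j(y)+\bigl[T_{j+1}(\phi_j(y))-\phi_j(y)\bigr]$, supplemented outside the region of interest by identity so as to keep $\phi_j$ globally defined. By the approximation hypothesis each correction has sup-norm $\lesssim \delta r_j$, so $\sum_j \|\phi_{j+1}-\phi_j\|_\infty<\infty$ and the $\phi_j$ converge uniformly to a continuous map $\phi:B_1(0^k)\to\mathbb{R}^n$. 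A short argument using that $T_j(y)$ is within $O(\delta)r_j$ of $S$ on the relevant region, combined with closedness of $S$, gives $\phi(B_1(0^k))\subseteq S$. The reverse inclusion $S\cap B_1\subseteq\phi(B_1(0^k))$ follows because for each $x\in S\cap B_1$ one can find points $y_j\in L_0$ whose $\phi_j$-images track $x$ through the approximating planes, and the sequence $y_j$ converges to some $y\in B_1(0^k)$ with $\phi(y)=x$.

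For the bi-Hölder estimate, the key computation is that the differential of the incremental correction at scale $r_j$ is, on the support of the cover, the identity plus an $O(\delta)$ operator. Restricted to two points $y,y'$ with $|y-y'|\approx r_j$, after $j$ compositions this yields
\begin{equation*}
(1-C\delta)^j\,|y-y'|\leq|\phi_j(y)-\phi_j(y')|\leq(1+C\delta)^j\,|y-y'|,
\end{equation*}
and since $(1+C\delta)^j\leq r_j^{-C\delta/\log 2}$, working across scales gives $|\phi(y)-\phi(y')|\leq(1+\epsilon)|y-y'|^{1-C\delta/\log 2}$ and similarly for the inverse. Choosing $\delta=\delta(n,\alpha,\epsilon)$ sufficiently small makes the exponent exceed $\alpha$ and the multiplicative constant less than $1+\epsilon$. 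The main obstacle is the last step: one must keep careful track of how angles between the $L_{j,\alpha}$ at consecutive scales multiply through the composition, showing that the incremental distortion is $1+O(\delta)$ rather than merely bounded, so that the resulting Hölder exponent can be driven arbitrarily close to $1$ and the Hölder seminorms arbitrarily close to $1$ as $\delta\to 0$. This is where the specific geometric content of the hypothesis is truly used; once the $O(\delta)$ angle/gap estimates between adjacent planes are in hand, the remainder of the argument is bookkeeping.
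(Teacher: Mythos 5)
The paper does not prove Theorem \ref{t:classic_reifenberg}; it is cited from \cite{reif_orig,simon_reif} and used as a black box (most notably in Section \ref{sec_proofII} to get the lower volume bound and in Remark \ref{rem:classic_reifenberg_+}). Your outline is essentially the standard proof from those references: inductive construction of $\phi$ as a uniform limit of compositions $\phi_{j+1}=T_{j+1}\circ\phi_j$ built from partitions of unity and projections onto locally best-approximating $k$-planes, with the $O(\delta)$ tilt between overlapping planes at consecutive scales feeding into the product estimate $(1\pm C\delta)^j$ that yields the bi-H\"older exponent $1-C\delta/\log 2$. The one thing worth flagging is that the step you correctly identify as the crux --- controlling the incremental distortion as $1+O(\delta)$ rather than merely bounded, and keeping the angle estimates uniform as you compose across all scales --- is precisely where the care is needed, and your sketch leaves it as ``bookkeeping''; this is defensible for a blind outline, but in a full write-up it is the bulk of the work. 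For comparison, the paper's own new Reifenberg-type theorems (Theorems \ref{t:reifenberg_W1p}, \ref{t:reifenberg_W1p_holes}, \ref{t:reifenberg_W1p_discrete}) are proved by a closely related iterative scheme using the maps $\sigma$ of Definition \ref{deph_sigma} and the squash Lemma \ref{lemma_squash}, but with the $L^\infty$ two-sided Hausdorff hypothesis replaced by an integral $L^2$ (Jones-$\beta_2$) condition, which is what forces the considerably more delicate bookkeeping with good/bad/final balls, the excess sets, and the double induction in Section \ref{sec_proof_main}; your classical argument has no analogue of those complications because the $L^\infty$ hypothesis gives pointwise control at every scale.
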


\begin{remark}\label{rem:classic_reifenberg_+}
 In fact, one can prove a little more. In particular, under the hypothesis of the previous theorem, there exists a closed subset $S'\subset \R^n$ such that $S'\cap \B 1 0 = S \cap \B 1 0$ and which is homeomorphic to a $k$-dimensional subspace $0^n\in T_0\subseteq \R^n$ via the $C^\alpha$ bi-H\"older homeomorphism $\phi:T_0\to S'$. Moreover, $\abs{\phi(x)-x}\leq C(n)\delta$ for all $x\in T_0$ and $\phi(x)=x$ for all $x\in T_0\setminus \B 2 0$.
\end{remark}

One can paraphrase the above to say that if $S$ can be well approximated on every ball by a subspace in the $L^\infty$-sense, then $S$ must be bi-\hol to a ball in Euclidean space.\\

Let us also mention that there are several more recent generalizations of the classic Reifenberg theorem.  In \cite{Toro_reif}, the author proves a strengthened version of \eqref{e:reifenberg_L_infty_orig} that allows one to improve bi-\hol to bi-Lipschitz.  Unfortunately, for the applications of this paper the hypotheses of \cite{Toro_reif} are much too restrictive.  We will require a weaker condition than in \cite{Toro_reif}, which is more integral in nature, see Theorem \ref{t:reifenberg_W1p}.  In exchange, we will improve the bi-\hol of the classical Reifenberg to $W^{1,p}$.  \\

We will also need a version of the classical Reifenberg which only assumes that the subset $S$ is contained near a subspace, not conversely that the subspace is also contained near $S$.  In exchange, we will only conclude the set is rectifiable.  A result in this direction was first proved in \cite{davidtoro}, but again the hypotheses are too restrictive for the applications of this paper, and additionally there is a topological assumption necessary for the results of \cite{davidtoro}, which is not reasonable in the context in this paper.  We will see how to appropriately drop this assumption in Theorem \ref{t:reifenberg_W1p_holes}. \\

\subsection{\texorpdfstring{$W^{1,p}$ maps and rectifiability}{W-(1,p) maps and rectifiability}}
In the paper we will be using the structure of $W^{1,p}$ maps for $p>k$ in order to conclude rectifiable structures on sets.  For the reader's convenience, here we recall a standard result about rectifiability and $W^{1,p}$ maps:\\ 
\begin{lemma}\label{lemma_w1p_rec}
 Let $\Omega\subset \R^k$ be an open domain, and let $f:\Omega\to \R^n$ be a $W^{1,p}$ map with $p>k$. Then for all $K\subset \Omega$, $f(K)$ is a $k$-rectifiable set.
\end{lemma}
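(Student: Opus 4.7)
The plan is to reduce the statement to the classical fact that a Lipschitz image of a measurable subset of $\R^k$ is $k$-rectifiable. The reduction uses a Lusin-type approximation for Sobolev functions together with the Luzin $N$-property, both of which are standard for $W^{1,p}$ with $p>k$.

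First I would note that since $p>k$, Morrey's embedding gives $W^{1,p}_{\mathrm{loc}}(\Omega)\hookrightarrow C^{0,\alpha}_{\mathrm{loc}}(\Omega)$ with $\alpha=1-k/p>0$, so after possibly redefining on a null set we may assume $f$ is continuous, and in particular $f(K)$ is a well-defined set. Next I would invoke the standard Lusin-type approximation theorem for Sobolev maps (see e.g.\ Federer or Evans--Gariepy): there exists a countable family of Lipschitz maps $g_i\colon \R^k\to\R^n$ such that, setting $E_i=\{x\in\Omega:f(x)=g_i(x)\}$, one has
\begin{equation*}
\lambda^k\Bigl(\Omega\setminus \bigcup_{i\in\N} E_i\Bigr)=0.
\end{equation*}
This approximation is exactly the content of the pointwise characterization of Sobolev functions in terms of maximal operators: for a.e.\ $x$ one can control $|f(x)-f(y)|$ by $|x-y|(Mg(x)+Mg(y))$ where $g=|\nabla f|$; truncating the set where $Mg\leq\lambda$ yields a Lipschitz extension that agrees with $f$ there, and letting $\lambda\to\infty$ exhausts $\Omega$ up to a null set.

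Given this, write
\begin{equation*}
K = \Bigl(K\cap\bigcup_i E_i\Bigr)\cup \Bigl(K\setminus\bigcup_i E_i\Bigr),\qquad
f(K)\subseteq \bigcup_i g_i(K\cap E_i)\,\cup\, f\bigl(K\setminus\textstyle\bigcup_i E_i\bigr).
\end{equation*}
Each $g_i(K\cap E_i)$ is the Lipschitz image of a subset of $\R^k$, hence $k$-rectifiable by definition. It remains to show that the image of the exceptional set $N_0:=K\setminus\bigcup_i E_i$, which has $\lambda^k(N_0)=0$ in $\R^k$, is $\lambda^k$-null in $\R^n$. For this I would invoke the Luzin $N$-property for $W^{1,p}$ maps with $p>k$: such maps send $\lambda^k$-null sets to $\lambda^k$-null sets. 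One clean way to see this is to observe that on each $E_i$, $f=g_i$ is Lipschitz and hence sends null sets to null sets, so it suffices to treat the residual null set, which reduces to the $C^{0,\alpha}$ estimate with $\alpha>k/n$... actually the cleanest route is simply to note that the Luzin $N$-property for $W^{1,p}$, $p>k$, is classical (Mal\'y--Martio) and apply it directly. Either way, $f(N_0)$ is $\lambda^k$-null in $\R^n$, and combining with the countable union of Lipschitz images gives that $f(K)$ is $k$-rectifiable.

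The only subtle point is the $N$-property; everything else is a routine packaging of Morrey and Lusin-type approximation. If one wanted to avoid citing the $N$-property, one could instead observe that the Lusin approximation can be chosen so that $\bigcup_i E_i$ is relatively open in $\Omega$ up to a null set and the remaining piece can be directly estimated via the H\"older modulus, but invoking $N$-property is cleaner.
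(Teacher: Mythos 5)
Your proof is correct and follows essentially the same route as the paper: Lusin-type approximation of $f$ by a countable family of Lipschitz maps, followed by control of the exceptional set where the approximation fails. The only cosmetic difference is that the paper makes the Luzin $N$-property quantitative via the Morrey-type estimate of Lemma \ref{lemma_monti}, namely $\lambda^k(f(E))\leq C(k,n,p)\,\|\nabla f\|_{L^p(\Omega)}^{k}\,(\lambda^k(E))^{1-k/p}$, whereas you invoke the qualitative $N$-property for $W^{1,p}$, $p>k$, directly; for the purposes of this lemma the two are interchangeable.
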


In order to prove this we need, by definition, to show that there exists a countable sequence of Lipschitz maps $f_i:\R^k\to \R^n$ such that
 \begin{gather}
  \lambda^k\ton{f(K)\setminus \bigcup_i f_i(\R^k) } =0\, .
 \end{gather}
By a classical result, it is possible for $p>k$ to approximate all $W^{1,p}$ maps with Lipschitz maps. More specifically, for all $\epsilon>0$, there exists a Lipschitz map $f_\epsilon$ which coincides with $f$ up to a set $E_\epsilon$ of small $k$-dimensional measure in $\R^k$. In particular, we have
\begin{gather}
 E_\epsilon \equiv \cur{x\in \R^k \ \ s.t. \ \ f(x)\neq f_\epsilon(x)} \, , \quad \lambda^k(E_\epsilon)<\epsilon\, .
\end{gather}
For a detailed reference, see for example \cite[theorem 3, sec 6.6.3]{EG} or \cite[section 3.10]{ziemer}). The only thing left to prove is that $f(E_\epsilon)$ has small measure:
\begin{lemma}\label{lemma_monti}
 Let $f$ be the continuous representative of a $W^{1,p}$ map as above with $p>k$. Then for all measurable subsets $E\subset \Omega$ we have
 \begin{gather}
  \lambda^k(f(E))\leq C(k,n,p) \norm{\nabla f}_{L^p(\Omega)}^{k} \ton{\lambda^k(E)}^{1-\frac{k}{p}}\, .
 \end{gather}
\end{lemma}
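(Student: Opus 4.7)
The plan is to combine Morrey's inequality with an efficient Vitali cover of $E$. Since $p>k$, the local Morrey embedding furnishes the oscillation estimate
\begin{equation*}
\operatorname{osc}_{B_r(x)} f \leq C(k,p)\, r^{1-k/p}\, \norm{\nabla f}_{L^p(B_r(x))}
\end{equation*}
on every ball $B_r(x)\subset \Omega$, and this is the only analytic property of $f$ I shall need.

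Given $\delta>0$, I would choose an open set $U$ with $E\subseteq U\subseteq \Omega$ and $\lambda^k(U)\leq \lambda^k(E)+\delta$, and apply the Vitali covering theorem to extract a countable family of pairwise disjoint closed balls $\{B_{r_i}(x_i)\}_{i\in I}\subseteq U$ with $r_i\leq \delta$ and $\lambda^k\bigl(U\setminus \bigcup_i B_{r_i}(x_i)\bigr)=0$. Two properties will matter: first, $\sum_i \omega_k r_i^k \leq \lambda^k(E)+\delta$; second, by disjointness, $\sum_i \int_{B_{r_i}(x_i)}\abs{\nabla f}^p \leq \norm{\nabla f}_{L^p(\Omega)}^p$. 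The oscillation bound then shows that $f(B_{r_i}(x_i))$ is contained in a single ball in $\R^n$ of diameter $d_i\leq C\, r_i^{1-k/p}\, \norm{\nabla f}_{L^p(B_{r_i}(x_i))}$, so it contributes at most $\omega_k (d_i/2)^k$ to the Hausdorff content $\lambda^k_r$ at any scale $r\geq \max_i d_i/2$.

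The decisive step is to sum these contributions and invoke H\"older's inequality with conjugate exponents $p/(p-k)$ and $p/k$:
\begin{align*}
\sum_i d_i^k &\leq C\sum_i (r_i^k)^{1-k/p}\, \norm{\nabla f}_{L^p(B_{r_i}(x_i))}^k \\
&\leq C\Bigl(\sum_i r_i^k\Bigr)^{1-k/p}\Bigl(\sum_i \norm{\nabla f}_{L^p(B_{r_i}(x_i))}^p\Bigr)^{k/p} \\
&\leq C\bigl(\lambda^k(E)+\delta\bigr)^{1-k/p}\, \norm{\nabla f}_{L^p(\Omega)}^k.
\end{align*}
As $\delta\to 0$, $\max_i d_i\to 0$, and the monotonicity $\lambda^k_{r_1}\geq \lambda^k_{r_2}$ for $r_1\leq r_2$ together with $\lambda^k=\lim_{r\to 0}\lambda^k_r$ deliver $\lambda^k\bigl(f\bigl(\bigcup_i B_{r_i}(x_i)\bigr)\bigr)\leq C\, \lambda^k(E)^{1-k/p}\, \norm{\nabla f}_{L^p(\Omega)}^k$.

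The main obstacle I anticipate is handling the Lebesgue-null residual $Z = U\setminus \bigcup_i B_{r_i}(x_i)$: to pass this bound to $\lambda^k(f(E))$ one needs $\lambda^k(f(Z))=0$, i.e., the Lusin $N$ property. This may be cited as classical for $W^{1,p}$ with $p>k$, or, in the spirit of self-containment, re-derived by running the same covering argument on $Z$ alone, whose $k$-content can be chosen arbitrarily small so that its image has vanishing Hausdorff content.
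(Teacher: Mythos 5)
The paper itself does not prove this lemma; it defers to \cite{monti}. Your argument is the standard Morrey-type estimate and is correct in substance: the local oscillation bound, the Vitali decomposition of an open neighbourhood of $E$, and the H\"older step with exponents $p/(p-k)$ and $p/k$ applied to the two disjoint-sum quantities $\sum r_i^k$ and $\sum\|\nabla f\|_{L^p(B_{r_i})}^p$ are exactly the right ingredients, and they produce the exponent $1-k/p$ cleanly.

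The one place to be careful is the null residual $Z=U\setminus\bigcup_i B_{r_i}(x_i)$. Your fallback of ``running the same covering argument on $Z$ alone'' is circular as written: Vitali applied to a small open neighbourhood $U'\supseteq Z$ again produces a disjoint family plus a new null residual $Z'$, and one never actually covers $Z$. Two clean fixes: (i) simply cite the Lusin $N$ property for continuous $W^{1,p}$ maps on $\R^k$ with $p>k$, which is classical; or (ii) replace Vitali by the Besicovitch covering theorem, which yields a countable cover of $U$ (no residual at all) by balls $B_{r_i}(x_i)\subseteq U$ whose overlap multiplicity is bounded by a dimensional constant $N(k)$. With bounded overlap, both $\sum_i r_i^k \le N(k)\,\omega_k^{-1}\lambda^k(U)$ and $\sum_i\int_{B_{r_i}}|\nabla f|^p\le N(k)\,\|\nabla f\|_{L^p(\Omega)}^p$ still hold up to the constant $N(k)$, the H\"older step goes through unchanged, and the residual issue never arises. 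Option (ii) in fact proves the Lusin $N$ property along the way, so the entire lemma becomes self-contained. Otherwise the proposal is sound.
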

This lemma follows from standard Morrey-type estimates, see for example \cite[proposition 2.4]{monti}.
\vspace{.5cm}

\section{\texorpdfstring{The $W^{1,p}$-Reifenberg and rectifiable-Reifenberg Theorems}{The W 1p Reifenberg and rectifiable-Reifenberg Theorems}}\label{s:bi-Lipschitz_reifenberg}

In this Section we state very general Reifenberg type theorems for subsets of Euclidean spaces, their proofs are carried out in Sections \ref{sec_proof_main} and \ref{sec_proofII}.  Although this is a key ingredient of the proof of our main theorems, the results of this Section are of an independent nature, and may be of some separate interest.  Therefore, we will attempt to write them so that this Section may be read independently of the rest of the paper, with the exception of the appropriate Preliminary Sections. \\

As described in Section \ref{ss:reifenberg}, the classical Reifenberg theorem tells us that if a closed subset $S\subseteq B_2\subseteq \dR^n$ can be well approximated on every ball by some $k$-dimensional affine space in the $L^\infty$ sense, then $S\cap B_1$ must itself be homeomorphic, in fact bi-H\"older, to a ball $B_1(0^k)$ in a smaller dimensional Euclidean space.\\

For the applications of this paper, we will need to improve on the classical Reifenberg theorem in several ways.  At the most basic level, $C^\alpha$-equivalence is not strong enough.  We will require control on the gradient of the mappings, since this is the essence of rectifiability and volume control.  For this purpose we will obtain $W^{1,p}$-control on our mappings for $k<p<\infty$.  We will deal with this first in Theorem \ref{t:reifenberg_W1p}, where we will also replace the $L^\infty$ closeness of the Reifenberg theorem with an $L^2$ closeness condition, which turns out to be more natural in the applications.\\

A second manner in which we will need improvement is that we will need to allow for the existence of holes.  That is, in applications our set $S$ may be rectifiable, but it might not be homeomorphic to a ball, this is much too strong.  To deal with this we can weaken the hypothesis of the Reifenberg theorem and only assume that on each ball $B_r(x)$ there is a $k$-dimensional subspace $V$ such that each point of our set $S$ is close to $V$, but not conversely that each point of the subspace $V$ is close to $S$.  We study this case in Theorem \ref{t:reifenberg_W1p_holes}, where we will show under natural conditions that such a set $S$ is rectifiable with volume bounds, which is easily seen to be a sharp result under the assumptions of Theorem \ref{t:reifenberg_W1p_holes}.\\

The last version of the Reifenberg theorem that we prove has a more discrete nature, and it is the one we will use to obtain the volume bounds in the rest of the paper.  In this case, our set $S=\{x_i\}$ will be a discrete set of points paired with radii $r_i>0$ such that $\{B_{r_i}(x_i)\}$ are disjoint balls.  We will associate to this collection the measure $\mu=\sum \omega_k r_i^k \delta_{x_i}$, and under the appropriate assumptions show in Theorem \ref{t:reifenberg_W1p_discrete} that the volume of $\mu$ continues to enjoy the packing upper bound $\mu(B_r(x))<Cr^k$ for all radii $r>r_i$.  In the applications, we will prove the main theorems of this paper by building a series of covers of the quantitative stratifications $S^k_{\epsilon}$, and we will apply the discrete $W^{1,p}$-Reifenberg to each of this to obtain volume bounds.  Only at the last stage will we apply Theorem \ref{t:reifenberg_W1p_holes} in order to obtain the rectifiability of the sets as well.\\

It is worth mentioning that some interesting generalizations of Reifenberg's theorem have been proved in numorous sources.  Generalizations in the spirit of this paper are explored in \cite{david_semmes}, \cite{Toro_reif}, and \cite{davidtoro}.  However, in each of these cases the requirements of these theorems are too stringent to be applicable in our situation.  Very recently, and using techniques independent from this work, the authors in \cite{AzzTol},\cite{Tol} proved necessary and sufficient conditions for the $k$-rectifiability of a set which are closely related to the results of this paper.  However, these results lack any {\it apriori} control over the volume of the sets in question, which is fundamental in this paper to the applications.


\subsection{\texorpdfstring{Statement of Main $W^{1,p}$-Reifenberg and rectifiable-Reifenberg Results}{Statement of Main W 1p - Reifenberg and rectifiable-Reifenberg Results}}\label{ss:reifenberg_statements}


There are three main results we wish to discuss in this subsection, each more general than the last.  In order to keep the statements as clean and intuitive as possible we will introduce the following definitions, and discuss them briefly.



\begin{definition}\label{deph_D}
Let $\mu$ be a measure on $B_2$ with $r>0$ and $k\in \dN$.  Then we define the $k$-dimensional displacement by
\begin{align}\label{eq_deph_D}
D^{k}_\mu(x,r)\equiv \inf_{L^k\subseteq \R^n}r^{-(k+2)}\int_{B_{r}(x)}d^2(y,L^k)\,d\mu(y)\, ,
\end{align}
if $\mu(B_r(x))\geq\epsilon_n r^k\equiv (1000n)^{-7n^2}  r^k$, and $D^{k}_\mu(x,r)\equiv 0$ otherwise, where the $\inf$'s are taken over all $k$-dimensional affine subspaces $L^k\subseteq \dR^n$.  If $S\subseteq B_2$, then we can define its $k$-displacement $D^{k}_S(x,r)$ by associating to $S$ the $k$-dimensional Hausdorff measure $\lambda^k_S$ restricted to $S$. 
\end{definition}
\begin{remark} Note that in literature $D$ is usually referred to as the Jones' number $\beta_2$. 
\end{remark}
\begin{remark}
One can replace $\epsilon_n$ by any smaller lower bound, zero included, and the proofs and statements will all continue to hold.  Our particular choice is based on constants which will be obtained in Section \ref{ss:best_comparison}.
\end{remark}

\begin{remark}
Notice that the definitions are scale invariant.  In particular, if we rescale $B_r\to B_1$ and let $\tilde S$ be the induced set, then $D^{k}_S(x,r)\to D^{k}_{\tilde S}(x,1)$.
\end{remark}
\begin{remark}\label{rem_mon}
Notice the monotonicity given by the following:  If $S'\subseteq S$, then $D^k_{S'}(x,r)\leq D^k_S(x,r)$.
\end{remark}

\begin{remark}\label{r:D_scale_control}
 It is easily seen from the definition that $D^k_\mu(x,r)$ is controlled above and below by $D^k_\mu(x,r/2)$ and $D^k_\mu(x,2r)$, as long as $\mu$ is not too small on $\B r x$.  In particular, if $\mu(B_{r}(x))\geq 2^k\epsilon_n r^k$, then for all $y\in S\cap \B r x$,  $D^k_\mu(x,r)\leq 2^{k+2} D^k_\mu(y,2r)$. As a corollary we have the estimate
 \begin{align}\label{eq_estDint}
  \mu(B_{r}(x))\geq 2^k\epsilon_n r^k \quad \Longrightarrow \quad &D^k_\mu(x,r)\leq 2^{k+2} \fint_{\B r x } D^k_\mu(y,2r)d\mu (y)\, .
 \end{align}

\end{remark}

\vspace{.5cm}

%
%
%
%
%
%
%
%

The first of the main results of this part of the paper is not strictly used elsewhere in the paper, but it is a natural generalization of the Reifenberg and gives intuition and motivation for the rest of the statements, which are essentially more complicated versions of it.  Thus, we have decided it is worth discussing independently.  The first theorem of this section is the following:\\

\begin{theorem}[$W^{1,p}$-Reifenberg]\label{t:reifenberg_W1p}
For each $\epsilon>0$ and $p\in [1,\infty)$ there exists $\delta(n,\epsilon,p)>0$ such that the following holds.  Let $S\subseteq B_2\subseteq \dR^n$ be a closed subset with $0^n\in S$, and assume for each $x\in S\cap B_1$ and $B_r(x)\subseteq B_2$ that
\begin{align}
&\inf_{L^k} d_H\big( S\cap B_r(x),L^k\cap B_r(x)\big)<\delta r \, , \label{e:reifenberg_L_infty} \\
&\int_{S\cap B_r(x)}\,\ton{\int_0^r D^k_S(x,s)\,\frac{ds}{s}}\, d\lambda^k(x)<\delta^2r^{k}\, . \label{e:reifenberg_excess_L2}
\end{align}
Then the following hold:
\begin{enumerate}
\item there exists a mapping $\phi:\R^k\to \R^n$ which is a $1+\epsilon$ bi-$W^{1,p}$ map onto its image and such that $S\cap \B 1 {0^n} = \phi(B_1(0^k))\cap \B 1 {0^n}$. In particular
\begin{gather}
 \fint_{\B {1}{0^k}}\abs{\nabla \phi}^p \leq 1+\epsilon\, , \quad \fint_{S\cap \B 1 {0^n}} \abs{\nabla \phi^{-1}}^{p}\leq 1+\epsilon\, .
\end{gather}
\item $S\cap B_1(0^n)$ is rectifiable.
\item For each ball $B_r(x)\subseteq B_1$ with $x\in S$ we have 
\begin{gather}\label{eq_lambda_lower_upper}
(1-\epsilon)\omega_k r^k\leq \lambda^k(S\cap B_r(x))\leq (1+\epsilon)\omega_k r^k\, .
\end{gather}
\end{enumerate}
\end{theorem}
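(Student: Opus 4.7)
The plan is to construct $\phi$ by an iterative flattening procedure in the spirit of the classical Reifenberg, and to upgrade the standard $C^\alpha$ output to $W^{1,p}$ by exploiting the Dini-type integral hypothesis \eqref{e:reifenberg_excess_L2}. First I would select, for each $x \in S$ and each dyadic radius $r_\alpha = 2^{-\alpha}$, a $k$-plane $L_\alpha(x)$ realizing (up to a universal factor) the infimum in Definition \ref{deph_D}, together with a maximal $r_\alpha/10$-separated net $\{x_{\alpha,i}\} \subseteq S$ and a smooth partition of unity $\{\varphi_{\alpha,i}\}$ of scale $r_\alpha$. Writing $\pi_{\alpha,i}$ for the affine projection onto $L_\alpha(x_{\alpha,i})$, the smoothed interpolation $\sigma_\alpha(y) := \sum_i \varphi_{\alpha,i}(y)\,\pi_{\alpha,i}(y)$ pushes each point toward $S$ at scale $r_\alpha$. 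Setting $T_0 := L_0(0^n)$ and $T_{\alpha+1} := \sigma_\alpha(T_\alpha)$, I would define $\phi : T_0 \cap B_1(0^k) \to \R^n$ as the limit of $\sigma_\alpha \circ \cdots \circ \sigma_0$. The $L^\infty$ hypothesis \eqref{e:reifenberg_L_infty} guarantees that each $T_\alpha$ is a well-defined smooth topological $k$-disk and that the limit map is a homeomorphism onto $S \cap B_1$, exactly as in the classical argument.

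The quantitative heart of the proof is the tilt estimate between best planes. The $L^\infty$ hypothesis forces $S$ to contain roughly $r^k$-worth of $\lambda^k$-mass in every ball $B_r(x)$ centered on $S$, activating the lower threshold $\epsilon_n r^k$ in Definition \ref{deph_D}. A standard Chebyshev/eigenvalue argument on the second-moment matrix of $\lambda^k_S$ then yields, for $x, x' \in S$ with $|x - x'| \leq r_\alpha$,
\begin{align*}
\angle\bigl(L_\alpha(x), L_{\alpha+1}(x)\bigr)^2 + \angle\bigl(L_\alpha(x), L_\alpha(x')\bigr)^2 \leq C\bigl(D^k_S(x, 4r_\alpha) + D^k_S(x', 4r_\alpha)\bigr).
\end{align*}
Translated to the interpolation $\sigma_\alpha$, this gives the pointwise bound $|\nabla \sigma_\alpha - P_\alpha|^2 \lesssim D^k_S(\pi_S(\cdot), C r_\alpha)$ on a neighborhood of $T_\alpha$, where $P_\alpha$ is the projection onto the current approximating plane and $\pi_S$ denotes nearest-point projection to $S$.

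To convert these scale-by-scale bounds into a $W^{1,p}$ estimate for the limit $\phi$, I would use the fact that $\nabla \phi$ is an infinite product of the $\nabla \sigma_\alpha$ along the composition, yielding a telescoping bound of the form
\begin{align*}
\bigl|\nabla \phi(y) - \mathrm{Id}\bigr|^p \;\lesssim\; \sum_{\alpha \geq 0} D^k_S\bigl(\pi_S(\phi_\alpha(y)),\, C r_\alpha\bigr)^{p/2},
\end{align*}
where $\phi_\alpha$ is the partial composition. Integrating over $B_1(0^k)$, using the $L^\infty$ hypothesis to compare $dy$ with $d\lambda^k|_S$ up to multiplicative constants close to $1$, and invoking Fubini together with \eqref{e:reifenberg_excess_L2} (which is a dyadic Carleson-type packing condition) gives $\|\nabla \phi - \mathrm{Id}\|_{L^p(B_1(0^k))} \leq C(n,p)\,\delta < \epsilon$. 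Running the identical argument with nearest-point projections onto $T_\alpha$ furnishes a bi-$W^{1,p}$ inverse, giving conclusion $(1)$. Conclusion $(2)$ is immediate from $(1)$ via Lemma \ref{lemma_w1p_rec} by choosing any $p > k$, and the Ahlfors-regularity bounds \eqref{eq_lambda_lower_upper} follow from the area formula applied to $\phi$, since $|\nabla \phi|^k$ is within $\epsilon$ of $1$ in $L^1$.

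The main obstacle will be making the tilt estimate and its summation rigorous: one must inductively track how the approximating plane $L_\alpha(x)$ varies both in $x$ (at fixed scale) and in $\alpha$ (between consecutive scales), while ensuring that the lower mass bound required to activate Definition \ref{deph_D} is preserved under the flattening $T_\alpha \mapsto T_{\alpha+1}$. This requires a delicate simultaneous bookkeeping coupling the $L^\infty$ control $T_\alpha \approx S$ with the $L^2$ control from \eqref{e:reifenberg_excess_L2}, and a careful dyadic Carleson argument to turn the integral hypothesis into the $L^p$-gradient estimate above. Everything else in the proof is essentially tracking constants and invoking standard covering lemmas.
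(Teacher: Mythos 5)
Your overall architecture---construct $\sigma_\alpha$ as partition-of-unity interpolations of affine projections onto best planes, flow an initial $k$-plane $T_0$ to approximating manifolds $T_{\alpha+1}=\sigma_\alpha(T_\alpha)$, and obtain $\phi$ as the infinite composition---is exactly the internal machinery the paper builds in Section~\ref{sec_proof_main}. Note, though, that the paper does not run this machinery afresh for Theorem~\ref{t:reifenberg_W1p}: it deduces the upper measure bound and rectifiability as direct corollaries of the harder Theorem~\ref{t:reifenberg_W1p_holes}, obtains the lower measure bound by a separate degree-theory argument applied to the classical Reifenberg parametrization (using only the $L^\infty$ hypothesis \eqref{e:reifenberg_L_infty}), and then reuses the flattening construction from Section~\ref{sec_impro} only to extract the $W^{1,p}$ bound and its inverse counterpart (Lemma~\ref{lemma_w1p_inv}). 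Your proposal is more monolithic; that is a legitimate alternative, but it leaves you responsible for everything at once.

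There is, however, a genuine gap at the quantitative heart of your proposal, and it is precisely the point the paper works hardest to get right. You record the correct first-order tilt bound $\lvert \nabla\sigma_\alpha - P_\alpha\rvert \lesssim \sqrt{D^k_S}$, and then claim a ``telescoping'' bound of the form
$\lvert \nabla\phi - \mathrm{Id}\rvert^p \lesssim \sum_\alpha D^k_S(\cdot,Cr_\alpha)^{p/2}$.
The naive product estimate does not give this: writing $\nabla\sigma_\alpha = P_\alpha + e_\alpha$ with $\lvert e_\alpha\rvert \lesssim \sqrt{D_\alpha}$, the composition yields only $\lvert\nabla\phi\rvert \leq \prod_\alpha(1+\lvert e_\alpha\rvert) \leq \exp\big(\sum_\alpha \sqrt{D_\alpha}\big)$, and $\sum_\alpha \sqrt{D_\alpha}$ is \emph{not} controlled by the Carleson hypothesis \eqref{e:reifenberg_excess_L2} (which bounds $\sum_\alpha D_\alpha$). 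The sum of square roots can diverge even when the sum itself is finite. What saves the argument---and what your proposal omits---is the quadratic improvement in the squash Lemma~\ref{lemma_squash}, item~\eqref{it_s4}: when $\sigma_\alpha$ is restricted to the graph manifold $T_\alpha$ and one measures the distortion of tangent vectors into $T_{\alpha+1}$, the ambient $O(\sqrt{D_\alpha})$ errors in the individual projections cancel to leading order because both source and target tangent planes tilt in nearly the same direction, yielding a bi-Lipschitz constant $1 + cD_\alpha$ rather than $1 + c\sqrt{D_\alpha}$ (see the proof of \eqref{eq_nabla_2lip} and the resulting bound \eqref{eq_2lip_est}). Only with this gain does the product $\prod(1+cD_\alpha)\leq e^{c\sum D_\alpha}$ close under the Carleson assumption. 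Without recognizing this quadratic cancellation, the $W^{1,p}$ estimate---and hence your conclusion (1)---does not follow; this is exactly why the paper's $W^{1,p}$ induction \eqref{eq_Tind2} is built around products of bi-Lipschitz constants on manifolds rather than ambient gradient deviations.

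A secondary remark on conclusion~(3): the upper bound via the area formula is fine once the $W^{1,p}$ estimate is in hand, but the lower bound $\lambda^k(S\cap B_r(x))\geq(1-\epsilon)\omega_k r^k$ does not follow from ``$\lvert\nabla\phi\rvert^k$ close to $1$ in $L^1$.'' That gives no pointwise lower bound on the Jacobian and no guarantee that $\phi$ covers a full ball in the plane. The paper instead uses a topological argument (degree theory applied to $\pi_L\circ\phi$, which is the identity near $\partial B_2$ and hence surjective onto a large ball) to conclude the lower bound directly from the $L^\infty$ Reifenberg condition, independently of the $W^{1,p}$ estimate. You should either reproduce that argument or show that the absence of bad balls in the downward induction forces $T'_i = T_i$, which indirectly yields the lower bound.
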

\begin{remark}
The result $(2)$ follows from $(1)$. Indeed by a standard result, if $\phi$ is a $W^{1,p}$ map with $p>k$, then its image is a $k$-rectifiable set (see Lemma \ref{lemma_w1p_rec}).
We get $(3)$ by applying the result of $(1)$ to all smaller balls $B_r(x)\subseteq B_1$, since the assumptions of the theorem hold on these balls as well.  

Note that the $W^{1,p}$ estimates for $\phi^{-1}$ are justified in Lemma \ref{lemma_w1p_inv}.
\end{remark}
\begin{remark}
Note that for $p>k$, by Sobolev embeddings a $W^{1,p}$ map is also a $C^\alpha$ map with $\alpha = 1-\frac k p$.
\end{remark}

\begin{remark}
 As it is easily seen, the requirement that $S$ is closed is essential for this theorem, and in particular for the lower bound on the Hausdorff measure. As an example, consider any set $S\subseteq \R^k$ which is dense but has zero Hausdorff measure. In the following theorems, we will not be concerned with lower bounds on the measure, and we will be able to drop the closed assumption.
\end{remark}

\vspace{.5cm}

Let us now consider the case when we drop the assumption \eqref{e:reifenberg_L_infty} from the result.  The key distinction now is that $S$ only needs to be locally near a piece of a $k$-dimensional subspace, but not conversely.  Thus, we cannot hope to obtain topological information about the set $S$, see Section \ref{sss:reifenberg}.  The precise statement is the following:\\

\begin{theorem}[Rectifiable-Reifenberg]\label{t:reifenberg_W1p_holes}
For every $\epsilon>0$, there exists $\delta(n,\epsilon)>0$ such that the following holds.  Let $S\subseteq B_2\subseteq \dR^n$ be a $\lambda^k$-measurable subset, and assume for each $B_r(x)\subseteq B_2$  with $\lambda^k(S\cap B_r(x))\geq \epsilon_n r^k$ that
\begin{align}\label{e:reifenberg_displacement_L2}
\int_{S\cap B_r(x)}\,\ton{\int_0^r D^k_S(x,s)\,\frac{ds}{s}}\, d\lambda^k(x)<&\delta^2r^{k}\, . 
\end{align}
Then the following hold:
\begin{enumerate}
\item For each ball $B_r(x)\subseteq B_1$ with $x\in S$ we have 
\begin{gather}\label{eq_lambda_obj}
\lambda^k(S\cap B_r(x))\leq (1+\epsilon)\omega_k r^k \, .
\end{gather}
\item $S\cap B_1(0^n)$ is $k$-rectifiable.
\end{enumerate}
\end{theorem}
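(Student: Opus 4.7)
The plan is to establish (1) first by an inductive construction on scale, and then deduce (2) from the measure bound together with the summability of the displacement along $S$. Both statements being scale-invariant under the rescaling $B_r \to B_1$, it suffices to work at unit scale. The key observation is that $D^k_S(x,s)$ is declared to be zero when $\lambda^k(S \cap B_s(x)) < \epsilon_n s^k$, so the hypothesis \eqref{e:reifenberg_displacement_L2} only controls ``dense'' scales, while sparse scales automatically contribute little $k$-measure. This is what allows holes in $S$ while still permitting an inductive argument.

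\textbf{Proof of (1).} I would proceed by a Reifenberg-type stopping-time construction closely analogous to the one underlying Theorem \ref{t:reifenberg_W1p}, and probably reduce to the discrete packing statement of Theorem \ref{t:reifenberg_W1p_discrete}. Fix $B_r(x)$ with $x \in S$ and $\lambda^k(S \cap B_r(x)) \geq \epsilon_n r^k$; otherwise there is nothing to prove. For each dyadic scale $r_\alpha = 2^{-\alpha} r$, cover the ``dense'' part of $S$ by a maximal disjoint collection of balls $\{B_{r_\alpha/5}(x_i^\alpha)\}$ with $x_i^\alpha \in S$ and $\lambda^k(S \cap B_{r_\alpha}(x_i^\alpha)) \geq \epsilon_n r_\alpha^k$. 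At each such center select a best-approximating $k$-plane $L_i^\alpha$ realizing (up to a factor) the infimum in $D^k_S(x_i^\alpha, r_\alpha)$, and split the children of $B_{r_\alpha}(x_i^\alpha)$ into a ``good'' piece lying within $\eta r_\alpha$ of $L_i^\alpha$ and a ``bad'' piece. Chebyshev applied to $d(\cdot, L_i^\alpha)^2$ bounds the bad measure at scale $\alpha$ by $C\eta^{-2} D^k_S(x_i^\alpha, r_\alpha) r_\alpha^k$, and the good piece inherits a packing estimate from the $k$-plane $L_i^\alpha$. Summing over $i$ and $\alpha$ and applying Fubini, the cumulative error is controlled by the left-hand side of \eqref{e:reifenberg_displacement_L2}, hence by $C\delta^2$. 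Choosing $\delta = \delta(n, \epsilon)$ small makes the total overshoot past $\omega_k r^k$ at most $\epsilon \omega_k r^k$, giving (1).

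\textbf{Proof of (2) and the main obstacle.} Once (1) is in hand, rectifiability follows relatively softly. By (1) and Remark \ref{r:D_scale_control}, the integral hypothesis \eqref{e:reifenberg_displacement_L2} combined with Fubini shows that at $\lambda^k$-a.e.\ $x \in S \cap B_1$ one has $\int_0^1 D^k_S(x,s) \, \frac{ds}{s} < \infty$. Using the upper measure bound on $S$, a standard argument forces the best-approximating $k$-planes $L_s(x)$ at scale $s$ to be Cauchy as $s \to 0$ in the sense that $d_H(L_s(x) \cap B_s(x), L_{s/2}(x) \cap B_s(x))/s$ is square-summable. This yields an approximate $k$-plane $T_x$ satisfying the density and cone conditions that characterize rectifiable sets via the Marstrand-Mattila criterion (see \cite{mattila}), and $S \cap B_1$ is $k$-rectifiable. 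Alternatively, one can build an explicit $k$-rectifiable cover by iterating a Lipschitz graph construction in the spirit of Theorem \ref{t:reifenberg_W1p}, stopping at the first scale where the density condition fails and throwing the resulting ``hole'' into a lower-measure remainder that is then treated inductively. The main obstacle throughout is the tracking of the hole-pieces at each scale: without the Hausdorff hypothesis \eqref{e:reifenberg_L_infty} there is no uniform lower bound on the density, so the bookkeeping that sums the bad measure across all scales against \eqref{e:reifenberg_displacement_L2} (and not against a term growing in the number of scales) is the delicate point where the precise form of $D^k_S$ and the smallness of $\delta(n,\epsilon)$ are essential.
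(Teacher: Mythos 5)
Your outline captures the right broad idea (best $L^2$-planes at each dyadic scale, Chebyshev on the displacement, summability against \eqref{e:reifenberg_displacement_L2}), but the step ``Summing over $i$ and $\alpha$ and applying Fubini, the cumulative error is controlled by $C\delta^2$'' is exactly where the proof lives, and as written it has a gap. Two concrete problems. First, the ``good piece inherits a packing estimate from the $k$-plane $L_i^\alpha$'' is not automatic and does not propagate across scales: at each scale the good children of a ball are roughly $2^k$ subballs of half the radius, and without additional control this multiplies to a geometric blow-up. The paper's fix is to construct approximating $k$-manifolds $T_i = \sigma_i(T_{i-1})$ (Definition \ref{deph_sigma}, Lemma \ref{lemma_squash}) whose $\lambda^k$-measure is tracked via bi-Lipschitz estimates of the form $1 + c\ton{\sum D}$, and to charge each bad/final ball against a ``hole'' in $T_i$ of comparable $k$-measure (estimate \eqref{eq_volFB}); the summability of the total distortion and the total hole measure is what the integral hypothesis actually buys you. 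Second, there is a circularity you do not address: when you integrate the Chebyshev bound over ball centers $x$ against $d\lambda^k(x)$, you are already using an upper bound on $\lambda^k\big|_S$, which is what you are trying to prove. The paper resolves this with a double induction --- an outer upward induction starting from a fixed small scale $\bar r=\rho^A$ with a coarse a priori bound $C(n) r^k$ at the bottom, and an inner downward induction on the manifolds --- after first reducing to $\sigma$-finite $\lambda^k(S)<\infty$ and restricting to the density-controlled set $S^\star$ with a Vitali cover adapted to $\theta^{\star k}$. None of this machinery appears in your sketch, and the statement cannot be derived from Theorem \ref{t:reifenberg_W1p_discrete} by a one-line discretization (the paper runs the argument directly; the discrete theorem is a cousin proved by essentially the same construction, not a black box from which this one follows).

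For part (2), the paper does not use Marstrand--Mattila. It concludes rectifiability by building an explicit map $\phi_\infty$ with uniform $W^{1,p}$ bounds ($p>k$), invoking Lemma \ref{lemma_w1p_rec}, and showing that a definite fraction of $S\cap B_{r_x}(x)$ lies in the image $T_\infty = \phi_\infty(T_0)$ up to the bad-ball and excess contributions which are small by \eqref{eq_volRinfty} and \eqref{eq_extot3}. Your Marstrand--Mattila route is in the spirit of the independent Azzam--Tolsa work the paper cites, but as stated the claim that the best planes $L_s(x)$ ``are Cauchy... by a standard argument'' is not a standard argument --- it requires exactly the Lemma \ref{lemma_vw}-type comparison of best planes across scales under a density lower bound, which is itself a nontrivial piece of the machinery, and in the absence of a lower density hypothesis on $S$ (which the theorem deliberately does not assume) the direct use of tangent-plane criteria is delicate.
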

\begin{remark}
Notice that for the statement of the theorem we do not need control over balls which already have small measure.  This will be quite convenient for the applications.
\end{remark}
\begin{remark}
Instead of \eqref{e:reifenberg_displacement_L2} we may assume the essentially equivalent estimate
\begin{align}\label{eq_sum2^alpha}
\sum_{r_\alpha\leq 4r}\int_{S\cap B_r(x)}D^k_S(x,r_\alpha)\, d\lambda^k(x)<\delta^2 r^k\, .
\end{align}
In the applications, this will be the more convenient phrasing.
\end{remark}
\vspace{1cm}

Finally, we end by stating a version of the above theorem which is more discrete in nature. This result will be particularly important in the proof of the main theorems of this paper:\\

\begin{theorem}[Discrete Rectifiable-Reifenberg]\label{t:reifenberg_W1p_discrete}
There exists $\delta(n)>0$ and $D(n)$ such that the following holds.  Let $\{B_{r_j}(x_j)\}_{x_j\in S}\subseteq B_2$ be a collection of disjoint balls, and let $\mu\equiv \sum_{x_j\in S}\omega_k r^k_j \delta_{x_j}$ be the associated measure.  Assume that for each $B_r(x)\subseteq B_2$ with $\mu(\B{r}{x})\geq \epsilon_n r^k$ we have
\begin{align}\label{e:reifenberg_L_2_discrete}
&\int_{B_r(x)}\ton{\int_0^r D^k_\mu(x,s)\,{\frac{ds}{s}}}\, d\mu(x)<\delta^2 r^{k}\, .
\end{align}
Then we have the estimate 
\begin{align}
\sum_{x_j\in B_1} r_j^k<D(n)\, .
\end{align}
\end{theorem}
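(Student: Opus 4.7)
The plan is to prove the packing estimate $\mu(B_r(x)) \leq D(n) r^k$ for every ball $B_r(x) \subseteq B_2$ with $x \in \supp(\mu)$, by a downward induction on scale. Setting $r = 1$ gives the desired conclusion. At the base case — balls of radius comparable to the smallest $r_j$ — the disjointness of the $B_{r_j}(x_j)$ forces a trivial bound. Everything rests on the induction step, which I would carry out via a manifold-approximation construction in the spirit of the continuous rectifiable-Reifenberg, Theorem \ref{t:reifenberg_W1p_holes}, but in the discrete setting.

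For the inductive step, fix $B_r(x)$ and assume the packing estimate already at all strictly smaller scales. At each dyadic scale $r_\alpha = 2^{-\alpha} r$ down to the cutoff scale where the balls $B_{r_j}(x_j)$ enter, I would build a smooth $k$-dimensional approximating submanifold $T_\alpha \subseteq \R^n$ to $\supp(\mu) \cap B_r(x)$ as follows. Cover the relevant portion of $\supp(\mu)$ by a minimal Vitali net $\{B_{r_\alpha}(y_{\alpha,i})\}$ centered on $\supp(\mu)$, and split the net into \emph{fat} balls, on which $\mu(B_{2r_\alpha}(y_{\alpha,i})) \geq \epsilon_n (2r_\alpha)^k$, and \emph{thin} balls, on which $D^k_\mu$ vanishes by definition. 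On each fat ball the hypothesis \eqref{e:reifenberg_L_2_discrete}, combined with Remark \ref{r:D_scale_control}, gives a best $L^2$-approximating affine $k$-plane $L_{\alpha,i}$. Patch the $L_{\alpha,i}$ via a partition of unity subordinate to a doubled cover to produce $T_\alpha$ which, on fat regions, is a graph with small $W^{1,\infty}$ norm over the initial plane fixed at scale $r$; on thin regions simply extend by the previous-scale graph.

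The central estimate is that $T_\alpha \to T$ with controlled total variation of the slope. Using the telescoping identity and bounding the squared angle change $|\nabla(T_{\alpha+1} - T_\alpha)|^2$ at a point of a fat ball by a multiple of $D^k_\mu(y_{\alpha,i}, 2r_\alpha)$ (a standard fact comparing two best $L^2$-planes, provided the induction gives a lower mass bound $\mu \gtrsim r_\alpha^k$ to divide by), and then summing over $\alpha$ and over the net, one arrives at a Dini-type bound
\begin{equation*}
\sum_\alpha r_\alpha^k \sum_i \big\|\nabla T_{\alpha+1} - \nabla T_\alpha\big\|^2_{L^\infty(\text{fat})} \;\lesssim\; \int_{B_r(x)}\int_0^{r}\! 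D^k_\mu(y,s)\,\frac{ds}{s}\,d\mu(y) \;<\; \delta^2 r^k.
\end{equation*}
Choosing $\delta(n)$ sufficiently small keeps each $T_\alpha$ a uniformly Lipschitz graph over the initial plane, so by the area formula $\mathcal{H}^k(T_\alpha \cap B_{2r}(x)) \leq (1+\tfrac{1}{2})\omega_k (2r)^k$ uniformly in $\alpha$, and $T_\alpha$ converges locally uniformly to a limit $T$ with the same bound.

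The packing bound then comes by charging each $x_j \in B_r(x)$ to its image under the nearest-point projection onto $T$: since $T$ passes within distance $\eta r_j$ of $x_j$ at scale $r_j$ (with $\eta = \eta(\delta) \to 0$), the projected balls $\pi_T(B_{r_j}(x_j))$ have $\mathcal{H}^k$-measure $\geq (1 - C\eta)\omega_k r_j^k$ and have bounded overlap on $T$ because the $B_{r_j}(x_j)$ are disjoint in $\R^n$ and $T$ is nearly flat; summing gives $\sum_{x_j \in B_r(x)} r_j^k \leq D(n) r^k$, closing the induction. The main obstacle is the manifold-gluing and slope-comparison step: one must verify that two $L^2$-best approximating $k$-planes on nearby balls differ by an angle controlled by $(D^k_\mu)^{1/2}$, which requires that neither approximating measure be degenerate — precisely the purpose of the $\epsilon_n$ threshold and why the thin regions must be treated separately (via extension rather than patching), ensuring the argument survives the presence of holes in $\supp(\mu)$.
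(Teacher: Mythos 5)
Your outline has the right skeleton — approximating manifolds $T_\alpha$ patched from $L^2$-best planes, a telescoping slope estimate controlled by a Dini-type integral (this is essentially Lemma \ref{lemma_vw} combined with the squash Lemma \ref{lemma_squash}), and an inductive packing bound — and this does match the strategy the paper uses, which is to mimic the upward/downward double induction in the proof of the continuous rectifiable-Reifenberg. But the closing step, charging each $x_j$ to its nearest-point projection on the limit manifold $T$, rests on the unproven assertion that $T$ passes within $\eta r_j$ of \emph{every} $x_j$, and this is false in general. The manifold is built to track $L^2$-best planes; at any scale $r_\alpha$ there can be atoms of $\mu$ lying at distance a definite fraction of $r_\alpha$ from the best plane $L_{\alpha,i}$ (they are simply outweighed in the $L^2$ sense by the rest of the measure), and since each map $\sigma_\alpha$ moves the graph by at most $c\delta r_\alpha$, such atoms remain a definite distance from $T$ at all subsequent scales. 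For those atoms your projection argument produces no estimate at all, so the induction cannot close.

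The paper handles exactly this difficulty by maintaining an \emph{excess set} $E(y,r_i) = B_{r_i}(y) \setminus B_{r_{i+1}/4}\big(V(y,r_i)\big)$ at every scale: atoms too far from the best plane are removed from the construction, and a Chebyshev estimate (see \eqref{eq_exrough}) converts the $L^2$ displacement bound $D^k_\mu$ into a measure bound on the excess, so that summing over all scales via \eqref{e:reifenberg_L_2_discrete} gives total excess mass $\lesssim \delta^2 r^k$. The mass carried by bad and final balls is then controlled by a separate \emph{holes} mechanism — each such ball removes a disjoint $k$-disk of comparable measure from the running manifold $T'_i$ (see \eqref{eq_volFB}), and disjointness is preserved because the maps $\sigma_{i+1}$ are the identity on previously created holes — rather than by an a posteriori projection onto the limit. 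Replacing this with a nearest-point projection onto $T$ loses both pieces of bookkeeping: you cannot rule out atoms that permanently miss the manifold, and you cannot prevent the same region of $T$ from being charged by thin balls across many different scales. These are genuine gaps, not cosmetic ones, and they are precisely the reason the good/bad/final decomposition and the excess sets appear in the paper's inductive construction.
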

\begin{remark}
As in Theorem \ref{t:reifenberg_W1p_holes}, instead of \eqref{e:reifenberg_displacement_L2} we may assume the estimate
\begin{align}
\sum_{r_\alpha\leq 2r}\int_{B_r(x)}D^k_\mu(x,r_\alpha)\, d\mu(x)<\delta^2 r^k\, .
\end{align}
In the applications, this will be the more convenient phrasing.
\end{remark}

For many of the applications of this paper, it is this version of the Reifenberg which will be most important.  The reasoning is that to keep uniform control on all estimates our inductive covering will need to cover at all scales.  It is only at the last scale that we begin to cover the singular sets $S^k$ on sets of positive measure.

\vspace{1cm}

\section{Technical Constructions toward New Reifenberg Results}\label{ss:best_comparison}
In this section, we prove some technical lemmas needed for dealing with the relation between best $L^2$ subspaces.  These elementary results will be used in many of the estimates of subsequent sections.
\subsection{Hausdorff distance and subspaces}
We start by recalling some standard facts about affine subspaces in $\R^n$ and Hausdorff distance.

\begin{definition}
 Given two linear subspaces $L,V\subseteq \R^n$, we define the Grassmannian distance between these two as
 \begin{gather}
  d_G(L,V)= d_H(L\cap \B 1 0, V\cap \B 1 0 )=d_H\ton{L\cap \overline{\B 1 0}, V\cap \overline{\B 1 0} }\, ,
 \end{gather}
Note that if $\dim(L)\neq \dim(V)$, then $d_G(L,V)=1$.
\end{definition}
\vspace{.3 cm}

For general subsets in $\R^n$, it is evident that $A\subseteq \B{\delta}{B}$ does not imply $B\subseteq \B{c\delta}{A}$. However, if $A$ and $B$ are affine spaces with the same dimension, then it is not difficult to see that this property holds.  More precisely:

 \begin{lemma}\label{lemma_hdv}
 Let $V,\, W$ be two $k$-dimensional affine subspaces in $\R^n$, and suppose that $V\cap \B {1/2}{0}\neq \emptyset$. There exists a constant $c(k,n)$ such that if $V\cap \B 1 0\subseteq \B{\delta}{W\cap \B 1 0}$, then $W\cap \B 1 0\subseteq \B{c\delta}{V\cap \B 1 0}$. Thus in particular $d_H(V\cap \B 1 0,W\cap \B 1 0)\leq c\delta$.
\end{lemma}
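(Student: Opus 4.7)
The plan is to construct $k+1$ affinely well-spread points in $V\cap\B{1}{0}$, push them to nearby points of $W$ using the given inclusion, and then write any point of $W\cap\B{1}{0}$ as a controlled affine combination of these, thereby obtaining a close point in $V$. First I would dispose of the easy regime: if $\delta\geq\delta_0(k)$ for some dimensional threshold, then $W\cap\B{1}{0}\subseteq\B{2}{0}\subseteq\B{c\delta}{V\cap\B{1}{0}}$ for a sufficiently large $c(k,n)$, since $V\cap\B{1/2}{0}\neq\emptyset$. So I may assume $\delta$ is much smaller than $1/k$.

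Next I set up the well-spread base. Pick $p_0\in V\cap\B{1/2}{0}$, let $e_1,\ldots,e_k$ be an orthonormal basis of the linear direction of $V$, and set $p_i=p_0+(4k)^{-1}e_i$ for $i=1,\ldots,k$; then $p_i\in V\cap\B{3/4}{0}\subseteq V\cap\B{1}{0}$. By hypothesis, for each $i=0,\ldots,k$ there exists $q_i\in W\cap\B{1}{0}$ with $|p_i-q_i|\leq\delta$. The vectors $q_i-q_0$ differ from the orthogonal vectors $(4k)^{-1}e_i$ by at most $2\delta$, so for $\delta$ small their Gram matrix is a small perturbation of $(4k)^{-2}I_k$; in particular these $k$ vectors are linearly independent, span the linear direction of $W$ (forcing $\dim W=k$), and the inverse of the Gram matrix has operator norm $\leq C(k)$.

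Given an arbitrary $w\in W\cap\B{1}{0}$, I write $w-q_0=\sum_{i=1}^k c_i(q_i-q_0)$, and the Gram-matrix bound yields $\max_i|c_i|\leq C(k)|w-q_0|\leq C(k)$. I then define the candidate preimage in $V$ by
\begin{equation}
v\;\equiv\;p_0+\sum_{i=1}^k c_i(p_i-p_0)\;\in\;V,
\end{equation}
which satisfies
\begin{equation}
|v-w|\;\leq\;|p_0-q_0|+\sum_{i=1}^k|c_i|\,|(p_i-p_0)-(q_i-q_0)|\;\leq\;\delta+2k\,C(k)\,\delta\;\leq\;C_1(k)\delta.
\end{equation}

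Finally I need $v$ to lie in $V\cap\B{1}{0}$, not merely in $V$. Since $|v|\leq 1+C_1\delta$ and $p_0\in V\cap\B{1/2}{0}$, moving from $v$ toward $p_0$ along the segment in $V$ a distance of at most $4C_1\delta$ produces $v'\in V\cap\B{1}{0}$ with $|v'-v|\leq 4C_1\delta$, hence $|v'-w|\leq 5C_1\delta$. Setting $c(k,n)=5C_1(k)$ (or the larger constant from the trivial regime) gives $W\cap\B{1}{0}\subseteq\B{c\delta}{V\cap\B{1}{0}}$, and the Hausdorff-distance bound follows by combining with the given one-sided inclusion. The only delicate point is the linear-algebra perturbation step showing that the $q_i-q_0$ retain a well-conditioned Gram matrix, but this is quantitative and straightforward once $\delta$ is taken smaller than a dimensional constant.
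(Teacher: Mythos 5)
Your proof is correct and follows essentially the same route as the paper's: both arguments construct $k+1$ well-spread points of $V\cap \B 1 0$, push them by the inclusion to nearby points of $W$, observe that the resulting difference vectors form a quantitatively well-conditioned affine base of $W$, and use the controlled coefficients in that base to produce a close point of $V$. The only cosmetic differences are your choice of spread $(4k)^{-1}$ versus the paper's $\sim 1/2$ (harmless since $c$ may depend on $k$) and your explicit reconstruction of $v\in V$ in place of the paper's direct estimate $d(y,V)=\norm{\pi^\perp(y)}\leq\sum_i|\alpha_i|\norm{\pi^\perp(y_i-y_0)}$, which amount to the same computation.
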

\begin{proof}
 The proof relies on the fact that $V$ and $W$ have the same dimension. Let $x_0\in V$ be the point of minimal distance from the origin. By assumption, we have that $\norm{x_0}\leq 1/2$. Let $x_1,\cdots,x_k\in V\cap \overline{\B 1 0}$ be a sequence of points such that
 \begin{gather}
  \norm{x_i-x_0}\geq 1/2\, \quad \text{ and for }\, i\neq j\, ,  \quad \ps{x_i-x_0}{x_j-x_0}=0\, .
 \end{gather}
In other words, $\cur{x_i-x_0}_{i=1}^k$ is an affine base for $V$. Let $\cur{y_i}_{i=0}^k\subseteq W\cap \overline{\B 1 0}$ be such that $d(x_i,y_i)\leq \delta$. Then
 \begin{gather}
  \norm{y_i-y_0}\geq 1/2-2\delta\, \quad \text{ and for }\, i\neq j\, ,  \quad \abs{\ps{y_i-y_0}{y_j-y_0}}\leq 2\delta+4\delta^2\, .
 \end{gather}
This implies that for $\delta\leq\delta_0(n)$, $\cur{y_i-y_0}_{i=1}^k$ is an affine base for $W$ and for all $y\in W$
\begin{gather}
 y=y_0+ \sum_{i=1}^k \alpha_i (y_i-y_0)\, , \quad \abs{\alpha_i}\leq 10 \norm{y-y_0}\, .
\end{gather}
Now let $y\in W\cap \overline{\B 1 0}$ be the point of maximum distance from $V$, and let $\pi$ be the projection onto $V$ and $\pi^\perp$ the projection onto $V^\perp$. Then
\begin{gather}
 d(y,V)= d(y,\pi(y))=\norm{\pi^\perp (y)}\leq \sum_{i=1}^k \abs{\alpha_i}\norm{\pi^\perp (y_i-y_0)}\leq c'(n,k)\delta\, .
\end{gather}
Since $y\in \overline{\B 1 0}$, then $\pi(y)\in V \cap \B{1+c'\delta}{0}$, and thus $d(y,V\cap B_1(0))\leq 2c'\delta\equiv c\delta$, which proves the claim. 
 \end{proof}
\vspace{.5 cm}
Next we will see that the Grassmannian distance between two subspaces is enough to control the projections with respect to these planes. In order to do so, we recall a standard estimate.
\begin{lemma}
 Let $V,W$ be linear subspaces of a Hilbert space. Then $d_G(V,W)=d_G\ton{V^\perp,W^\perp}$.
\end{lemma}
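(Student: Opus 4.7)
The plan is to reduce the claimed identity to a standard duality property of orthogonal projections. First, I would dispose of the trivial case where $V$ and $W$ have different dimensions: in that case $d_G(V,W)=1$ by the convention set up just before, and since orthogonal complements preserve codimension comparisons in a Hilbert space one has $d_G(V^\perp,W^\perp)=1$ as well. So we may assume $\dim V=\dim W$ (possibly infinite), and it is enough to work with the two one-sided distances separately.

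Next, because $V$ and $W$ are linear subspaces, both intersect $\overline{\B 1 0}$, so unpacking the Hausdorff distance gives
\begin{gather*}
d_G(V,W)\;=\;\max\!\Big(\sup_{v\in V\cap\overline{\B 1 0}} d(v,W),\;\sup_{w\in W\cap\overline{\B 1 0}} d(w,V)\Big).
\end{gather*}
Writing $P_V, P_W$ for the orthogonal projections and using the identity $d(v,W)=\norm{v-P_Wv}=\norm{P_{W^\perp}v}$, I would rewrite each one-sided supremum as the operator norm of a composition of projections:
\begin{gather*}
\sup_{v\in V\cap\overline{\B 1 0}}d(v,W)\;=\;\sup_{\norm{v}\leq 1}\norm{P_{W^\perp}P_Vv}\;=\;\norm{P_{W^\perp}P_V}_{op}.
\end{gather*}

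The key step is then the adjoint identity. Since $P_V$ and $P_{W^\perp}$ are self-adjoint,
\begin{gather*}
\norm{P_{W^\perp}P_V}_{op}\;=\;\norm{(P_{W^\perp}P_V)^*}_{op}\;=\;\norm{P_V P_{W^\perp}}_{op}\;=\;\sup_{u\in W^\perp\cap\overline{\B 1 0}}d(u,V^\perp).
\end{gather*}
Applying the same argument with the roles of $V$ and $W$ swapped gives
$\sup_{w\in W\cap\overline{\B 1 0}}d(w,V)=\sup_{u\in V^\perp\cap\overline{\B 1 0}}d(u,W^\perp)$. Taking the maximum of the two equalities,
\begin{gather*}
d_G(V,W)\;=\;\max\!\Big(\sup_{u\in W^\perp\cap\overline{\B 1 0}}d(u,V^\perp),\;\sup_{u\in V^\perp\cap\overline{\B 1 0}}d(u,W^\perp)\Big)\;=\;d_G(V^\perp,W^\perp),
\end{gather*}
where in the last step we also use that $d_H$ (and hence $d_G$) is symmetric in its two arguments.

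There is no real obstacle here: the only point that requires a sentence of justification is that the sup over the closed unit ball does indeed realize the operator norm on the subspace (immediate, since $P_V$ maps $\overline{\B 1 0}$ onto $V\cap\overline{\B 1 0}$), and that $(P_{W^\perp}P_V)^*=P_V P_{W^\perp}$ holds verbatim in a Hilbert space. Everything else is bookkeeping, so I expect the proof to occupy only a short paragraph once these identities are set up.
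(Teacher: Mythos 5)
Your proof is correct, and it takes a genuinely different route from the one in the paper. The paper argues as follows: it notes that it suffices to prove one inequality, $d_G(V^\perp,W^\perp)\leq d_G(V,W)$, because $(V^\perp)^\perp=V$; it then takes a unit vector $x\in V^\perp$, sets $z=\pi_W(x)$ and $y=\pi_V(z)$, restricts attention to the three-dimensional subspace $\operatorname{span}\{x,y,z\}$, writes $x=(1,0,0)$, $y=(0,b,0)$, $z=(a,b,c)$, and chains two explicit quadratic relations --- one from the orthogonality $\langle z,z-x\rangle=0$, one from $\|z-y\|\leq\epsilon\|z\|$ --- to conclude $\|z\|\leq\epsilon$. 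Your argument replaces this coordinate computation with the operator identity $\|P_{W^\perp}P_V\|_{op}=\|(P_{W^\perp}P_V)^*\|_{op}=\|P_VP_{W^\perp}\|_{op}$, which immediately dualizes each one-sided Hausdorff distance. Both proofs are sound. Your version is shorter and more conceptual, and in fact yields the stronger pointwise duality between the two \emph{directional} Grassmannian distances $\sup_{v\in V\cap\overline{\B 1 0}}d(v,W)=\sup_{u\in W^\perp\cap\overline{\B 1 0}}d(u,V^\perp)$, not merely the equality of their maxima; it also makes the symmetry reduction unnecessary (and the dimension case split you open with is harmless but equally unnecessary, since the operator-norm identity is dimension-agnostic). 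The paper's version has the advantage of being completely elementary and self-contained, not appealing to $\|A\|=\|A^*\|$, which may be preferable for a reader approaching the lemma geometrically rather than operator-theoretically. One tiny point worth spelling out in your writeup: the equality $d(v,W)=d(v,W\cap\overline{\B 1 0})$ for $\|v\|\leq 1$, used implicitly when unpacking $d_H$, holds because $\|P_Wv\|\leq\|v\|\leq 1$; and the claim that $V,W$ admit orthogonal projections requires them to be closed, which is automatic in $\R^n$.
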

\begin{proof}
We will prove that $d_G\ton{V^\perp,W^\perp}\leq d_G\ton{V,W}$. By symmetry, this is sufficient. 

Take $x\in V^\perp$ such that $\norm{x}=1$, and consider that $d(x,W^\perp)=\norm{\pi_W(x)}$. Let $z=\pi_W(x)$ and $y=\pi_V(z)$. We want to show that if $d_G(V,W)\leq \epsilon<1$, then $\norm{z}\leq \epsilon$. We can limit our study to the space spanned by $x,y,z$, and assume wlog that $x=(1,0,0)$, $y=(0,b,0)$ and $z=(a,b,c)$. By orthogonality between $z$ and $z-x$, we have
\begin{gather}
 a^2+b^2+c^2 +(1-a)^2+b^2+c^2=1 \, \quad \Longrightarrow \quad a=a^2+b^2+c^2 \, ,
\end{gather}
and since $z\in W$, we also have $\norm{z-y}\leq \epsilon \norm{z}$, which implies
\begin{gather}
 a^2+c^2\leq \epsilon^2 \ton{a^2+b^2+c^2} \, \quad \Longrightarrow \quad a^2+c^2 \leq \frac{\epsilon^2}{1-\epsilon^2} b^2\, .
\end{gather}
Since the function $f(x)=x^2/(1-x^2)$ is monotone increasing for $x\geq 0$, we can define $\alpha\geq 0$ in such a way that
\begin{gather}
 a^2+c^2 = \frac{\alpha^2}{1-\alpha^2} b^2\, , \quad a=a^2+b^2+c^2 = \frac{1}{1-\alpha^2} b^2\, .
\end{gather}
Note that necessarily we will have $\alpha\leq \epsilon$. Now we have
\begin{gather}
 \frac{1}{(1-\alpha^2)^2} b^4 =a^2\leq \frac{\alpha^2}{1-\alpha^2} b^2 \quad \Longrightarrow \quad b^2\leq \alpha^2\ton{1-\alpha^2} \quad \Longrightarrow \quad \norm z^2 = a^2+b^2+c^2\leq \alpha^2\leq \epsilon^2\, .
\end{gather}
This proves that $V^\perp\cap\B 1 0 \subset \B{\epsilon}{W^\perp}$. In a similar way, one proves the opposite direction.
\end{proof}

As a corollary, we prove that the Grassmannian distance $d_G(V,W)$ is equivalent to the distance given by $\norm{\pi_V-\pi_W}$.
\begin{lemma}\label{lemma_epsproj}
Let $V,W$ be linear subspaces of $\R^n$. Then for every $x\in \R^n$, 
 \begin{gather}
  \norm{\pi_V(x)-\pi_W(x)}\leq 2d_G(V,W)\norm x\, .
 \end{gather}
In particular, if $x\in W^\perp$, then $\norm{\pi_V(x)}\leq 2d_G(V,W)\norm x$.

Conversely, we have
\begin{gather}
 d_G(V,W)\leq \sup_{x\in \R^n\setminus \{0\}} \cur{\frac{\norm{\pi_V(x)-\pi_W(x)}}{\norm x}}\, .
\end{gather}
\end{lemma}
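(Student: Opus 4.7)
The plan is to derive both directions from the previous lemma (which gives $d_G(V,W)=d_G(V^\perp,W^\perp)$) together with the basic fact that $d_G(V,W)\le\epsilon$ is equivalent to the statement that every unit vector in $V$ lies within $\epsilon$ of $W$, i.e. $\|\pi_{W^\perp}(v)\|\le\epsilon\|v\|$ for all $v\in V$. Indeed, if $v\in V\cap\overline{B_1(0)}$ and $w\in W\cap\overline{B_1(0)}$ realizes the infimum in the distance, then $\|v-\pi_W(v)\|\le\|v-w\|\le\epsilon$, and conversely $\pi_W(v)\in W\cap\overline{B_1(0)}$ (since projections do not increase norm) witnesses $v\in\B{\epsilon}{W\cap\overline{B_1(0)}}$. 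Applying this to the pair $(V^\perp,W^\perp)$ gives the symmetric statement $\|\pi_W(y)\|\le\epsilon\|y\|$ for every $y\in V^\perp$.

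For the first inequality, write $\epsilon = d_G(V,W)$ and decompose an arbitrary $x\in\R^n$ as $x=\pi_V(x)+\pi_{V^\perp}(x)$. Then
\begin{gather}
\pi_V(x)-\pi_W(x) = \bigl(\pi_V(x)-\pi_W(\pi_V(x))\bigr) - \pi_W(\pi_{V^\perp}(x))\, .
\end{gather}
The first bracketed term equals $\pi_{W^\perp}(\pi_V(x))$ and has norm at most $\epsilon\|\pi_V(x)\|\le\epsilon\|x\|$ by the characterization above applied to $v=\pi_V(x)\in V$. The second term has norm at most $\epsilon\|\pi_{V^\perp}(x)\|\le\epsilon\|x\|$ by the characterization applied to the orthogonal pair, using $y=\pi_{V^\perp}(x)\in V^\perp$. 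Adding yields $\|\pi_V(x)-\pi_W(x)\|\le 2\epsilon\|x\|$, which is the claimed bound. The special case $x\in W^\perp$ follows immediately since then $\pi_W(x)=0$.

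For the converse inequality, set $\alpha\equiv\sup_{x\ne 0}\|\pi_V(x)-\pi_W(x)\|/\|x\|$ and bound $d_H(V\cap\overline{B_1(0)},W\cap\overline{B_1(0)})$ by $\alpha$. Given any $v\in V\cap\overline{B_1(0)}$, the point $\pi_W(v)$ lies in $W\cap\overline{B_1(0)}$ (as projections are contractions) and satisfies $\|v-\pi_W(v)\|=\|\pi_V(v)-\pi_W(v)\|\le\alpha\|v\|\le\alpha$. Swapping the roles of $V$ and $W$ gives the reverse inclusion, so $d_G(V,W)\le\alpha$, as desired.

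No step looks like a serious obstacle; the only care needed is to rewrite $\pi_V(x)-\pi_W(x)$ in a way that exposes both orthogonal components so that the characterization of $d_G$ (and its counterpart for the orthogonal complements via the preceding lemma) can be applied on each summand. The factor $2$ is the cost of this two-piece decomposition and is essentially sharp.
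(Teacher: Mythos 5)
Your proof is correct and uses essentially the same approach as the paper: decompose $x=\pi_V(x)+\pi_{V^\perp}(x)$, split $\pi_V(x)-\pi_W(x)$ into the two resulting pieces, bound the first by the directed Hausdorff inequality for $(V,W)$ and the second by the same inequality for $(V^\perp,W^\perp)$ via the preceding lemma $d_G(V,W)=d_G(V^\perp,W^\perp)$, and for the converse use $\pi_W(v)$ as a witness. The only (harmless) imprecision is in calling $d_G(V,W)\le\epsilon$ ``equivalent'' to the one-sided condition on $V$; you only use the implication from $d_G\le\epsilon$, which is the direction that actually holds without further argument.
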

 \begin{proof}
The proof is just a corollary of the previous lemma. Assume wlog that $\norm{x}=1$, and let $x=y+z$ where $y=\pi_V(x)$ and $z=\pi_{V^\perp}(x)$. Then
\begin{gather}
 \norm{\pi_V(x)-\pi_W(x)}=\norm{y-\pi_W(y)-\pi_W(z)} \leq \norm{y-\pi_W(y)}+\norm{z-\pi_{W^\perp}(z)} = d(y,W)+d(z,W^\perp)\, .
\end{gather}
Since $\norm y^2 + \norm z^2 =\norm x^2= 1$, by the previous lemma we get the first estimate.

The reverse estimate is an immediate consequence of the definition. 
\end{proof}
\vspace{.5 cm}

\subsection{\texorpdfstring{Distance between $L^2$ best planes}{Distance between L2 best planes}}
Here we study the distance between best approximating subspaces for our measure $\mu$ on different balls. Let us begin by fixing our notation for this subsection, and pointing out the interdependencies of the constants chosen here.  Throughout this subsection, our choice of scale $\rho=\rho(n,M)>0$ is a constant which will eventually be fixed according to Lemma \ref{lemma_alpharho}.  For applications to future sections, it is sufficient to know that we can take $\rho(n,M)= 10^{-10} (100n)^{-n} M^{-1}$. We also point out that in Section \ref{sec_proof_main}, we will fix $M=40^n$, and so $\rho$ will be a constant depending only on $n$. In particular, we can use the very coarse estimate
\begin{gather}\label{eq_rho_rough}
\rho=10^{-10}(100n)^{-3n}\, .
\end{gather}
We will also introduce a threshold value $\gamma_k= \omega_k 40^{-k}$.  The dimensional constant $\gamma_k$ is chosen simply to be much smaller than any covering errors which will appear.

We will consider a positive Radon measure $\mu$ supported on $S\subseteq \B 1 0$, and use $D(x,r)\equiv D^{k}_{\mu}(x,r)$ to bound the distances between best $L^2$ planes at different points and scales. By definition let us denote by $V(x,r)$ a best $k$-dimensional plane on $\B r x $, i.e., a $k$-dimensional affine subspaces minimizing $\int_{\B r x} d(x,V)^2 d\mu$. Note that, in general, this subspace may not be unique. We want to prove that, under reasonable hypothesis, the distance between $V(x,r)$ and $V(y,r')$ is small if $d(x,y)\sim r$ and $r'\sim r$.\\

In order to achieve this, we will need to understand some minimal properties of $\mu$.  First, we need to understand how concentrated $\mu$ is on any given ball. For this reason, for some $\rho>0$ and all $x\in \B 1 0$ we will want to consider the upper mass bound 

\begin{gather}\label{eq_rho}
 \mu(\B \rho x )\leq M \rho^k\,\,\, \forall x\in B_1(0)\, .
\end{gather}

However, an upper bound on the measure is not enough to guarantee best $L^2$-planes are close, as the following example shows:
\begin{example}
 Let $V,V'$ be $k$-dimensional subspaces, $0\in V\cap V'$, and set $S=\ton{V\cap \B 1 0 \setminus \B{1/10}{0}}\cup S'$, where $S'\subseteq V'\cap \B {1/10}{0}$ and $\mu=\lambda^k|_S$. Then evidently $D(0,1)\leq \lambda^k(S')$ and $D(0,1/10)=0$, independently of $V$ and $V'$. However, $V(0,1)$ will be close to $V$, while $V(0,1/10)=V'$. Thus, in general, we cannot expect $V(0,1)$ and $V(0,1/10)$ to be close if $\mu(\B {1/10}{0})$ is too small.
\end{example}

Thus, in order to prove that the best planes are close, we need to have some definite amount of measure on the set, in such a way that $S$ ``effectively spans'' a $k$-dimensional subspace, where by effectively span we mean the following:
\begin{definition}
 Given a sequence of points $p_i\in \R^n$, we say that $\cur{p_i}_{i=0}^k$ $\alpha-$effectively span a $k$-dimensional affine subspace if for all $i=1,\cdots,k$
 \begin{gather}
  \norm{p_i-p_0}\leq \alpha^{-1}\, , \quad p_i\not \in \B{\alpha}{p_0+\operatorname{span}\cur{p_1-p_0,\cdots,p_{i-1}-p_0}}\, .
 \end{gather}
\end{definition}
 This implies that the vectors $p_i-p_0$ are linearly independent in a quantitative way. In particular, we obtain immediately that
\begin{lemma}\label{lemma_effspa}
 If $\cur{p_i}_{i=0}^k$ $\alpha$-effectively span the $k$-dimensional affine subspace $$V=p_0+\operatorname{span}\cur{p_1-p_0,\cdots,p_k-p_0}\, ,$$ then for all $x\in V$ there exists a \textit{unique} set $\cur{\alpha_i}_{i=1}^k$ such that
\begin{gather}
 x=p_0+\sum_{i=0}^k \alpha_i (p_i-p_0)\, , \quad \abs{\alpha_i}\leq c(n,\alpha)\norm{x-p_0}\, .
\end{gather}
\end{lemma}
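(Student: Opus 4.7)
The plan is to prove both uniqueness and the quantitative coefficient bound by running a Gram--Schmidt procedure on the vectors $v_i \equiv p_i - p_0$. The hypothesis of $\alpha$-effective spanning directly says $\|v_i\| \leq \alpha^{-1}$ and that the component of $v_i$ orthogonal to $\operatorname{span}\{v_1,\dots,v_{i-1}\}$ has norm at least $\alpha$. In particular this implies $\{v_1,\dots,v_k\}$ is a linearly independent set, so it is a basis of the linear subspace $V - p_0$, which immediately gives existence and uniqueness of the decomposition $x - p_0 = \sum_{i=1}^k \alpha_i v_i$ for any $x \in V$.

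The quantitative bound on $|\alpha_i|$ is the content of the lemma. First I would build an orthonormal basis $\{e_1,\dots,e_k\}$ of $V - p_0$ by Gram--Schmidt applied to $\{v_1,\dots,v_k\}$, so that $e_i$ is a unit vector in the direction of the orthogonal projection of $v_i$ onto $\operatorname{span}\{v_1,\dots,v_{i-1}\}^\perp$. Writing $v_i = \sum_{j \leq i} c_{ij} e_j$, the effective spanning hypothesis gives the crucial two-sided control
\[
c_{ii} \geq \alpha, \qquad |c_{ij}| \leq \|v_i\| \leq \alpha^{-1},
\]
so the change-of-basis matrix $C = (c_{ij})$ is lower triangular with diagonal entries bounded below by $\alpha$ and all entries bounded above by $\alpha^{-1}$.

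Now, if $x - p_0 = \sum_i \alpha_i v_i$, then expanding in the orthonormal frame gives $\langle x - p_0, e_j\rangle = \sum_{i \geq j} \alpha_i c_{ij}$. I would solve this lower-triangular system by back-substitution starting from $j = k$: the top coefficient satisfies $\alpha_k = \langle x - p_0, e_k\rangle/c_{kk}$, so $|\alpha_k| \leq \alpha^{-1} \|x-p_0\|$, and inductively, once $\alpha_{i+1},\dots,\alpha_k$ are controlled by a constant times $\|x-p_0\|$, one recovers
\[
\alpha_j = \frac{1}{c_{jj}}\Bigl(\langle x-p_0, e_j\rangle - \sum_{i > j} \alpha_i c_{ij}\Bigr),
\]
with $|\alpha_j| \leq c(k,\alpha)\|x-p_0\|$ for a constant depending only on $k \leq n$ and $\alpha$ (essentially a polynomial in $\alpha^{-1}$ arising from iterating the triangular solve).

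There is no real obstacle here; this is a linear algebra exercise and the only point to be careful about is making the constant $c(n,\alpha)$ explicit enough for later use, which amounts to tracking the growth of the back-substitution. The proof is essentially a quantitative statement that an $\alpha$-effective basis has condition number controlled by a function of $\alpha$ and $n$, and the bound $|\alpha_i| \leq c(n,\alpha)\|x-p_0\|$ follows by bounding the operator norm of $C^{-1}$.
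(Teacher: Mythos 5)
Your proposal is correct and follows essentially the same route as the paper: Gram--Schmidt applied to the vectors $p_i - p_0$, followed by quantitative control of the triangular change-of-basis matrix (the paper expresses each $e_i$ as a bounded combination of the $p_j - p_0$ by induction, while you equivalently bound the back-substitution for $C^{-1}$; these are the same estimate). The key observations, $c_{ii}\geq\alpha$ from the effective-spanning distance bound and $|c_{ij}|\leq\alpha^{-1}$ from $\|p_i-p_0\|\leq\alpha^{-1}$, match the paper's argument exactly.
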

\begin{proof}
 The proof is quite straightforward. Since $\cur{p_i-p_0}_{i=1}^k$ are linearly independent, we can apply the Gram-Schmidt orthonormalization process to obtain an orthonormal basis $e_1,\cdots,e_k$ for the linear space $\operatorname{span} \cur{p_i-p_0}_{i=1}^k$. By induction, it is easy to check that for all $i$
 \begin{gather}
  e_i = \sum_{j=1}^i \alpha'_j (p_j-p_0)\, , \quad \abs{\alpha'_j}\leq c(n,\alpha)\, .
 \end{gather}
Now the estimate follows from the fact that for all $x\in V$
\begin{gather}
 x=p_0+\sum_{i=1}^k \ps{x-p_0}{e_i} e_i\, .
\end{gather}

\end{proof}

With these definitions, we are ready to prove that in case $\mu$ is not too small, then its support must effectively span something $k$-dimensional.

\begin{lemma}\label{lemma_alpharho}
Let $\gamma_k=\omega_k 40^{-k}$. There exists a $\rho_0(n,\gamma_k,M)=\rho_0(n,M)$ such that if \eqref{eq_rho} holds for some $\rho\leq \rho_0$ and if $\mu(\B 1 0)\geq \gamma_k$, then for every affine subspace $V\subseteq \R^n$ of dimension $\leq k-1$, there exists an $x\in S$ such that $\B{\rho}{x}\subseteq \B 1 0$, $\B{10\rho}{x}\cap V = \emptyset$ and $\mu\ton{\B{\rho}{x}}\geq c(n,\rho) = c(n)\rho^n>0$. 
 \end{lemma}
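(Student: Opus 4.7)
The plan is to combine a tubular-neighborhood covering estimate for $V$ with a pigeonhole argument on its complement. The $k$-dimensional upper mass bound \eqref{eq_rho} at scale $\rho$ prevents $\mu$ from concentrating in the $10\rho$-neighborhood of any $(k-1)$-dimensional affine subspace, so the bulk of the $\gamma_k$ mass must lie away from $V$; once this is known, a standard covering/pigeonhole on $\B 1 {0^n}\setminus B_{10\rho}(V)$ forces some $\rho$-ball to carry mass at least $c(n)\rho^n$.

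\emph{Step 1: mass near $V$ is small.} Since $\dim V\leq k-1$, one covers $V\cap \B 2 {0^n}$ by at most $C_1(n)\rho^{-(k-1)}$ balls $\{B_\rho(v_i)\}$ centered at $v_i\in V$. Any point of $B_{10\rho}(V)\cap \B 1 {0^n}$ lies within $10\rho$ of some $v\in V\cap B_{1+10\rho}(0^n)\subseteq V\cap \B 2 {0^n}$, hence within $11\rho$ of some $v_i$; thus
\begin{gather*}
B_{10\rho}(V)\cap \B 1 {0^n} \;\subseteq\; \bigcup_i B_{11\rho}(v_i).
\end{gather*}
Each $B_{11\rho}(v_i)$ is covered by at most $C_2(n)$ balls of radius $\rho$ whose centers can be chosen inside $\B 1 {0^n}$, so \eqref{eq_rho} applied to those centers gives $\mu(B_{11\rho}(v_i))\leq C_2(n)M\rho^k$. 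Summing,
\begin{gather*}
\mu\bigl(B_{10\rho}(V)\cap \B 1 {0^n}\bigr) \;\leq\; C_1(n)C_2(n)\,M\,\rho.
\end{gather*}
I would then choose $\rho_0(n,M)$ so that $C_1(n)C_2(n)M\rho_0\leq \gamma_k/2$; combined with $\mu(\B 1 {0^n})\geq \gamma_k$, this yields $\mu(\B 1 {0^n}\setminus B_{10\rho}(V))\geq \gamma_k/2$ whenever $\rho\leq\rho_0$.

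\emph{Step 2: pigeonhole.} Take a maximal $\rho$-separated subset $\{y_j\}$ of $\B 1 {0^n}\setminus B_{11\rho}(V)$; the balls $\{B_\rho(y_j)\}$ cover that set with multiplicity $\leq C_3(n)$ and there are at most $C_3(n)\rho^{-n}$ of them. Pigeonhole produces some index $j_0$ with
\begin{gather*}
\mu(B_\rho(y_{j_0})) \;\geq\; \frac{\gamma_k/2}{C_3(n)\rho^{-n}} \;=\; c_0(n)\rho^n.
\end{gather*}
Since this ball has positive mass there exists $x\in S\cap B_\rho(y_{j_0})$, and $d(x,V)\geq d(y_{j_0},V)-\rho > 10\rho$ forces $B_{10\rho}(x)\cap V = \emptyset$. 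Rerunning the entire argument with $\rho/2$ in place of $\rho$ (and renaming constants) transfers the lower mass bound from $B_\rho(y_{j_0})$ to $B_\rho(x)$, giving $\mu(B_\rho(x))\geq c(n)\rho^n$.

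The main obstacle I expect is the condition $B_\rho(x)\subseteq \B 1 {0^n}$, i.e.\ keeping $x$ at distance $\geq \rho$ from $\partial B_1$. For $k=n$ this is automatic because the same covering argument bounds $\mu(\B 1 {0^n}\setminus \B{1-\rho}{0^n})$ by $O(M\rho)$, which is absorbable into the $\gamma_k/2$ margin. For $k<n$ I would restrict the pigeonhole in Step~2 to centers lying in $\B{1-\rho}{0^n}\setminus B_{11\rho}(V)$; the specific choice $\gamma_k=\omega_k 40^{-k}$ together with a sufficiently small $\rho_0(n,M)$ leaves at least $\gamma_k/4$ of mass in that interior region to drive the pigeonhole and place $x$ safely in $\B{1-\rho}{0^n}$.
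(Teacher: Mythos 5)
Your proof follows the same route as the paper: cover a tubular neighborhood of $V$ by $\rho$-balls and use \eqref{eq_rho} to bound $\mu\bigl(B_{11\rho}(V)\cap B_1(0)\bigr)\leq c(n)M\rho$, so that for $\rho\leq\rho_0(n,M)$ a definite fraction of the mass $\gamma_k$ survives in the complement, and then pigeonhole that mass over a covering of $B_1(0)\setminus B_{11\rho}(V)$ by $\lesssim\rho^{-n}$ balls of radius $\rho$. The paper's Vitali-style count of disjoint $\rho/2$-balls meeting the tube is just a different packaging of your cover of $V\cap B_2(0)$, and the explicit constants ($\gamma_k/4$ versus $\gamma_k/2$) are immaterial.

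The only real divergence is the cleanup you append to force $x\in S$ and $B_\rho(x)\subseteq B_1(0)$, two points the paper silently omits, and there the details need adjustment. Rerunning the argument at scale $\rho/2$ degrades the distance-from-$V$ guarantee: a $\rho/2$-separated net of $B_1(0)\setminus B_{11\rho/2}(V)$ only ensures $d(y_{j_0},V)>11\rho/2$, so after moving $x$ into $S\cap B_{\rho/2}(y_{j_0})$ one gets $d(x,V)>5\rho$, not the required $>10\rho$; the fix is to carve out a wider tube, say $B_{22\rho}(V)$, in Step~1 and then pigeonhole with $\rho/2$-balls, at which point $x$ inherits both $\mu(B_\rho(x))\geq c(n)\rho^n$ and $d(x,V)>10\rho$. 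Separately, the annulus estimate you sketch does not bound $\mu\bigl(B_1\setminus B_{1-\rho}\bigr)$ when $k<n-1$: covering that shell requires on the order of $\rho^{1-n}$ balls of radius $\rho$, so \eqref{eq_rho} only yields $\mu(B_1\setminus B_{1-\rho})\lesssim M\rho^{k-n+1}$, which does not vanish as $\rho\to 0$ for $k\leq n-1$ and cannot be absorbed by shrinking $\rho_0$. Neither issue is peculiar to your write-up — the paper's proof also produces a ball centre that need not lie in $S$ and never checks $B_\rho(x)\subseteq B_1(0)$ — but as written the fixes do not actually close those two points.
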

\begin{proof}
Let $V$ be any $k-1$-dimensional subspace, and consider the set $\B {11\rho}{V}$. Let $B_i=\B{\rho}{x_i}$ be a sequence of balls that cover the set $\B{11\rho}{V}\cap \B 1 0$ and such that $B_i/2\equiv \B {\rho /2}{x_i}$ are disjoint. If $N$ is the number of these balls, then a standard covering argument gives
\begin{align}
 &N\omega_n \rho^n/2^n\leq \omega_{k-1} (1+\rho)^{k-1} \omega_{n-k+1} (12\rho)^{n-k+1}\leq 24^n \omega_{k-1} \omega_{n-k+1} \rho^{n-k+1}\, \quad \notag\\
 \Longrightarrow \quad &N\leq 48^{n} \frac{\omega_{k-1} \omega_{n-k+1}}{\omega_n} \rho^{1-k}\, .
\end{align}
By \eqref{eq_rho}, the measure of the set $\B{11\rho}{V}$ is bounded by
\begin{gather}
 \mu(\B{11\rho}{V})\leq \sum_i \mu\ton{B_i}\leq MN \omega_k \rho^k = 48^n \frac{\omega_k \omega_{k-1} \omega_{n-k+1}}{\omega_n} M \rho \leq 10^{5} (50n)^n M \rho =  c(n)M\rho .
\end{gather}
Thus if 
\begin{gather}
 \rho\leq 10^{-5}(50n)^{-n}\gamma_k/(4M)\, , 
\end{gather}
then $\mu(\B{11\rho}{V})\leq \gamma_k/4$.  In particular, we get that there must be some point of $S$ not in $\B{11\rho}{V}$.  More effectively, let us consider the set $\B 1 0 \setminus \B {11\rho}{V}$. This set can be covered by at most $c(n,\rho)=4^{n}\rho^{-n}$ balls of radius $\rho$, and we also see that
\begin{gather}
 \mu\ton{\B {1}{0}\setminus \B{11\rho}{V}}\geq \frac{3\gamma_k}{4}\, .
\end{gather}
Thus, there must exist at least one ball of radius $\rho$ which is disjoint from $\B{10\rho}{V}$ and such that
\begin{gather}
 \mu\ton{\B{\rho}{x}}\geq \frac{3\gamma_k}{4} 4^{-n}\rho^n\geq c(n)\rho^n\, .
\end{gather}
\end{proof}
\vspace{.5 cm}

%
%

Now if at two consecutive scales there are some balls on which the measure $\mu$ {\it effectively spans} $k$-dimensional subspaces, we show that these subspaces have to be close together.

\begin{lemma}\label{lemma_vw}
 Let $\mu$ be a positive Radon measure and assume $\mu(\B 10 )\geq \gamma_k $ and that for each $y\in \B 1 0$ we have $\mu(\B {\rho^2} y )\leq M \rho^{2k}$, where $\rho\leq \rho_0$.  Additionally, let $B_\rho(x)\subset \B 1 0$ be a ball such that $\mu(\B \rho x )\geq \gamma_k \rho^k$.  Then if $A=V(0,1)\cap \B \rho x$ and $B=V(x,\rho)\cap \B \rho x$ are $L^2$-best subspace approximations of $\mu$ with $d(x,A)<\rho/2$, then 
 \begin{gather}\label{eq_distD}
  d_H(A,B)^2 \leq c(n,\rho,M) \ton{D^k_\mu(x,\rho)+D^k_\mu(0,1)}\, .
 \end{gather}
\end{lemma}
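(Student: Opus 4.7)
The plan is to exploit the lower mass bound to produce $k+1$ points in $\operatorname{supp}(\mu) \cap \B \rho x$ which effectively span a $k$-plane and which are simultaneously close, in an averaged sense, to both $V(0,1)$ and $V(x,\rho)$; the conclusion then follows from a linear-algebra argument that converts this pointwise closeness into a Hausdorff estimate between the planes.

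First, I would rescale $\B \rho x \to \B 1 0$, which transforms the hypotheses into $\tilde\mu(\B 1 0) \geq \gamma_k$ and $\tilde\mu(\B \rho y) \leq M\rho^k$. Starting from a point $\tilde p_0 \in \operatorname{supp}(\tilde\mu)$ and setting $V_{-1} = \emptyset$, I would iteratively apply Lemma \ref{lemma_alpharho} to the affine span $V_i$ of the points $\tilde p_0, \ldots, \tilde p_i$ already chosen, to produce $\tilde p_{i+1}$ with $\B{10\rho}{\tilde p_{i+1}} \cap V_i = \emptyset$ and $\tilde\mu(\B \rho {\tilde p_{i+1}}) \geq c(n)\rho^n$. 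After $k+1$ steps and unscaling, we obtain points $p_0, \ldots, p_k \in \operatorname{supp}(\mu) \cap \B \rho x$ which $(10\rho^2)$-effectively span a $k$-plane, each satisfying the lower mass bound $\mu(\B{\rho^2}{p_i}) \geq c(n)\rho^{n+k}$.

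Second, I would replace each $p_i$ by its $\mu$-barycenter
\begin{align}
\bar p_i = \frac{1}{\mu(\B{\rho^2}{p_i})}\int_{\B{\rho^2}{p_i}} y\,d\mu(y) \in \B{\rho^2}{p_i}.
\end{align}
Since the orthogonal projection $\pi_{V(0,1)}$ is affine, Jensen's inequality applied to the convex function $|\cdot|^2$ gives
\begin{align}
d(\bar p_i, V(0,1))^2 \leq \fint_{\B{\rho^2}{p_i}} d(y, V(0,1))^2 \, d\mu(y) \leq \frac{D^k_\mu(0,1)}{c(n)\rho^{n+k}},
\end{align}
where the last inequality uses that $V(0,1)$ is the $L^2$-minimizer on $\B 1 0$ together with the lower mass bound on $\B{\rho^2}{p_i}$. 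An analogous estimate $d(\bar p_i, V(x,\rho))^2 \leq C(n,\rho,M)\, D^k_\mu(x,\rho)$ follows from the scaling in the definition of $D$ and the minimality of $V(x,\rho)$ on $\B \rho x$. Since $|\bar p_i - p_i| \leq \rho^2$ is at most a tenth of the spanning gap, the barycenters $\bar p_i$ themselves $(8\rho^2)$-effectively span a $k$-plane $W$ meeting $\B \rho x$.

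Third, to convert this data into the Hausdorff bound I would project each $\bar p_i$ onto $V(0,1)$ to obtain $q_i \in V(0,1)$ with $|\bar p_i - q_i| \leq C(n,\rho,M)\sqrt{D^k_\mu(0,1)}$. Assuming $D^k_\mu(0,1)$ is below a fixed threshold (otherwise \eqref{eq_distD} is trivial after enlarging the constant), the $q_i$ still effectively span $V(0,1)$. Lemma \ref{lemma_effspa} then lets one write any $y \in W \cap \B \rho x$ as $y = \bar p_0 + \sum_i \alpha_i (\bar p_i - \bar p_0)$ with $|\alpha_i| \leq C(n,\rho)$; setting $\tilde y = q_0 + \sum_i \alpha_i (q_i - q_0) \in V(0,1)$ yields $|y - \tilde y| \leq C(n,\rho,M)\sqrt{D^k_\mu(0,1)}$. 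Lemma \ref{lemma_hdv} upgrades this one-sided containment to the full Hausdorff bound $d_H(W \cap \B \rho x, V(0,1) \cap \B \rho x) \leq C\sqrt{D^k_\mu(0,1)}$. The symmetric argument for $V(x,\rho)$ and a triangle inequality then yield \eqref{eq_distD}.

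The main obstacle I anticipate is the bookkeeping in Step 1, specifically verifying that the uniform lower mass bound on each small ball $\B{\rho^2}{p_i}$ survives the iteration of Lemma \ref{lemma_alpharho} (which as stated produces one such point per invocation against a fixed subspace). A secondary difficulty appears in Step 3, where applying Lemma \ref{lemma_hdv} requires both $W$ and $V(x,\rho)$ to pass through a concentric ball of radius $\rho/2$ about $x$; this is ensured for $V(0,1)$ by the hypothesis $d(x,A) < \rho/2$, but must be argued separately for $V(x,\rho)$ from its $L^2$-minimality combined with the lower mass bound $\mu(\B \rho x) \geq \gamma_k \rho^k$.
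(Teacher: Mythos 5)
Your proposal is correct and uses the same toolkit as the paper: iterated applications of Lemma \ref{lemma_alpharho} to produce $k+1$ well-spread barycenters with uniform lower mass bounds, Jensen's inequality to show each barycenter lies within $C\sqrt{D^k_\mu(0,1)}$ of $V(0,1)$ and within $C\sqrt{D^k_\mu(x,\rho)}$ of $V(x,\rho)$, Lemma \ref{lemma_effspa} to propagate pointwise closeness to closeness of planes, and Lemma \ref{lemma_hdv} to upgrade one-sided containment to a Hausdorff bound. The one structural difference is your introduction of the auxiliary plane $W$: you iterate Lemma \ref{lemma_alpharho} against the affine span of the points already chosen, so the barycenters $\bar p_i$ effectively span a floating $k$-plane $W$, and you then compare both $V(0,1)$ and $V(x,\rho)$ to $W$ by a triangle inequality. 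The paper instead iterates against the span of the \emph{projections} $\pi_i=\pi_{V(x,\rho)}(p(y_i))$, so that the $\pi_i$ effectively span $V(x,\rho)$ itself; this lets one write any $y\in V(x,\rho)\cap\B\rho{x}$ directly as a bounded affine combination of points lying within $C\sqrt{D^k_\mu(x,\rho)+D^k_\mu(0,1)}$ of $V(0,1)$, yielding the one-sided containment $V(x,\rho)\cap\B\rho{x}\subseteq\B{C\sqrt{D+D'}}{V(0,1)}$ and a single application of Lemma \ref{lemma_hdv}. Your variant is a touch more symmetric but needs two invocations of the Hausdorff lemma and, as you note yourself, requires arranging for $W$ (not merely $V(0,1)$) to pass near the center $x$.

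On your stated concerns: the first is unfounded, because Lemma \ref{lemma_alpharho} supplies at each application a ball $\B\rho{\cdot}$ with $\mu(\B\rho{\cdot})\geq c(n)\rho^n$ built into its conclusion, so the lower mass bound does not need to ``survive'' anything; it is delivered fresh at every step. The second concern is real but mild and also present in the paper's argument: it is handled by first reducing to the case $D^k_\mu(x,\rho)+D^k_\mu(0,1)\leq\delta(n,\rho,M)$ small (the inequality \eqref{eq_distD} is trivial otherwise), by observing that Lemma \ref{lemma_hdv} holds with the threshold $1/2$ replaced by any fixed constant $<1$, and by noting that the relevant plane meets $\B{(1-\eta)\rho}{x}$ either directly from the hypothesis $d(x,A)<\rho/2$ (for $V(0,1)$) or because a barycenter in $\B\rho{x}$ lies within $C\sqrt{\delta}$ of it (for $V(x,\rho)$ or $W$).
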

\begin{proof}
Let us begin by observing that if $c(n,\rho,M)>2\delta^{-1}(n,\rho,M)$, which will be chosen later, then we may assume without loss of generality that 
\begin{align}\label{e:lemma_vw:1}
D^k_\mu(x,\rho)+D^k_\mu(0,1)\leq \delta = \delta(n,\rho,M)\, ,
\end{align}
since otherwise \eqref{eq_distD} is trivially satisfied. Moreover, note that $\gamma_k >> \epsilon_n$, so equation \eqref{eq_deph_D} is valid on $\B \rho x $ and on $\B 1 0$. \\

We will estimate the distance $d_H(A,B)$ by finding $k+1$ balls $\B{\rho^2}{y_i}$ which have enough mass and effectively span in the appropriate sense $V(x,\rho)$. Given the upper bounds on $D^k_\mu$, we will then be in a position to prove our estimate.

\vspace{5mm}

Consider any $\B {\rho^2} {y}\subseteq \B 1 0$ and let $p(y)\in \B{\rho^2}{y}$ be the center of mass of $\mu$ restricted to this ball.
Let also $\pi(p)$ be the orthogonal projection of $p$ onto $V(x,\rho)$.  By Jensen's inequality:
\begin{gather}\label{eq_vw1}
 d(p(y),V(x,\rho))^2=d(p(y),\pi(p(y)))^2=d\ton{\fint_{B_{\rho^2}(y)}z\,d\mu(z),V(x,\rho)}^2 \leq \frac{1}{\mu(\B{\rho^2}{y})}\int_{\B {\rho^2}{y}} d(z,V(x,\rho))^2 d\mu(z)\, . 
\end{gather}

Using this estimate and Lemma \ref{lemma_alpharho}, we want to prove that there exists a sequence of $k+1$ balls $\B{\rho^2}{y_i}\subseteq \B{\rho}{x}$ such that
\begin{enumerate}
\def\theenumi{\roman{enumi}}
 \item\label{it_1} $\mu\ton{\B{\rho^2}{y_i}}\geq c(n,\rho,M)>0 $
 \item\label{it_2} $\cur{\pi(p(y_i))}_{i=0}^k\equiv\cur{\pi_i}_{i=0}^k$ effectively spans $V(x,\rho)$. In other words for all $i=1,\cdots,k$, $\pi_i\in V(x,\rho)$ and 
 \begin{gather}\label{eq_vw2}
\pi_i\not \in \B{5\rho^2}{\pi_0+\operatorname{span}\ton{\pi_1-\pi_0,\cdots,\pi_{i-1}-\pi_0}}\, .
 \end{gather}
\end{enumerate}
We prove this statement by induction on $i=0,\cdots,k$. For $i=0$, the statement is trivially true since $\mu(\B{\rho}{x})\geq \gamma_k \rho^k$. In order to find $y_{i+1}$, consider the $i$-dimensional affine subspace $V^{(i)} = \operatorname{span}\ton{\pi_0,\cdots,\pi_{i-1}}\leq V(x,\rho)$. By Lemma \ref{lemma_alpharho} applied to the ball $\B{\rho}{x}$, there exists some $\B{\rho^2}{y_{i+1}}$ such that $\mu\ton{\B{\rho^2}{y_{i+1}}}\geq c(n,\rho,M)>0 $ and 
 \begin{gather}
 y_{i+1}\not \in \B{10\rho^2}{\operatorname{span}\ton{\pi_0,\cdots,\pi_{i-1}}}\, .
 \end{gather}
By definition of center of mass, it is clear that $d(y_i,p(y_i))\leq \rho^2$. Moreover, by item \eqref{it_1} and equation \eqref{eq_vw1}, we get
\begin{gather}
 d(p(y_{i+1}),V(x,\rho))^2\leq c \int_{\B {\rho^2}{y}} d(z,V(x,\rho))^2 d\mu(z) \leq c D^k_\mu(x,\rho)\leq c\delta\, .
\end{gather}
Thus by the triangle inequality we have $d(y_i,\pi_i)\leq 2\rho^2$ if $\delta\leq \delta_0(n,\rho,M)$ is small enough. This implies \eqref{eq_vw2}.
Using similar estimates, we also prove $d(p(y_i),V(0,1))^2\leq c' D^k_\mu(0,1)$ for all $i=0,\cdots,k$. Thus by the triangle inequality
\begin{gather}
 d(\pi_i,V(0,1))\leq d(\pi_i,p(y_i))+d(p(y_i),V(0,1))\leq c(n,\rho,M)\ton{D^k_\mu(x,\rho)+D^k_\mu(0,1)}^{1/2}\, .
\end{gather}

\vspace{5mm}

Now consider any $y\in V(x,\rho)$. By item \eqref{it_2} and Lemma \ref{lemma_effspa}, there exists a unique set $\cur{\beta_i}_{i=1}^k$ such that
\begin{gather}
 y=\pi_0+\sum_{i=0}^k \beta_i (\pi_i-\pi_0)\, , \quad \abs{\beta_i}\leq c(n,\rho)\norm{y-\pi_0}\, .
\end{gather}
Hence for all $y\in V(x,\rho)\cap \B{\rho}{x_i}$, we have
\begin{gather}
 d(y,V(0,1))\leq d(\pi_0,V(0,1)) + \sum_i \abs{\beta_i} d(\pi_i-\pi_0,V(0,1))\leq c(n,\rho,M)\ton{D^k_\mu(x,\rho)+D^k_\mu(0,1)}^{1/2}\, .
\end{gather}
By Lemma \ref{lemma_hdv}, this completes the proof of \eqref{eq_distD}. 
\end{proof}
\vspace{.5 cm}

\subsection{\texorpdfstring{Comparison between $L^2$ and $L^\infty$ planes}{Comparison between L2 and L-infinity planes}} Given $\B r x $, we denote as before by $V(x,r)$ one of the $k$-dimensional subspace minimizing $\int_{\B r x } d(y,V)^2 d\mu$. Suppose that the support of $\mu$ satisfies a uniform one-sided Reifenberg condition, i.e. suppose that there exists a $k$-dimensional plane $L(x,r)$ such that $x\in L(x,r)$ and
\begin{gather}\label{eq__}
\supp{\mu} \cap \B{r}x \subseteq \B{\delta r}{L(x,r)}\, . 
\end{gather}
Then, by the same technique used in Lemma \ref{lemma_vw}, we can prove that
\begin{lemma}\label{lemma_LV}
  Let $\mu$ be a positive Radon measure with $\mu\ton{\B 1 0}\geq \gamma_k$ and such that for all $\B \rho y \subseteq \B 1 0$ we have $\mu(\B \rho y )\leq M \rho^k$ and \eqref{eq__}. Then
 \begin{gather}
  d_H(L(0,1)\cap \B 1 0, V(0,1)\cap \B 1 0)^2 \leq c(n,\rho,M)\ton{\delta^2+D^k_\mu(0,1)}\, .
 \end{gather}
\end{lemma}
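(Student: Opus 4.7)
The plan is to mimic the strategy of Lemma \ref{lemma_vw}, replacing one of the two best $L^2$-planes by the $L^\infty$-approximating plane $L(0,1)$ and using the hypothesis \eqref{eq__} in place of a best-plane $L^2$ estimate. As in Lemma \ref{lemma_vw}, we may assume without loss of generality that both $\delta$ and $D^k_\mu(0,1)$ are smaller than some threshold $\delta_0(n,\rho,M)$, since otherwise the conclusion is automatic from $d_H \leq 2$.

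First, I will iterate Lemma \ref{lemma_alpharho} to produce $k+1$ balls $B_\rho(y_i) \subseteq B_1(0)$ with $y_i \in \operatorname{supp}(\mu)$, with $\mu(B_\rho(y_i)) \geq c(n)\rho^n$, and such that at each step $y_{i+1} \notin B_{10\rho}(y_0 + \operatorname{span}\{y_1-y_0,\dots,y_i-y_0\})$. Thus the $y_i$ are $\rho$-effectively independent and span (together with $y_0$) a $k$-dimensional affine subspace of $\R^n$. Let $p_i \in B_\rho(y_i)$ denote the center of mass of $\mu$ restricted to $B_\rho(y_i)$; clearly $d(p_i,y_i)\leq \rho$.

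Next, I extract the two key distance estimates for each $p_i$. On the one hand, since $\operatorname{supp}(\mu)\cap B_1 \subseteq B_\delta(L(0,1))$ by \eqref{eq__}, and since $d(\,\cdot\,,L(0,1))$ is a convex function, Jensen applied to the barycenter gives
\begin{align*}
d(p_i,L(0,1)) \leq \fint_{B_\rho(y_i)} d(z,L(0,1))\,d\mu(z) \leq \delta.
\end{align*}
On the other hand, applying Jensen to the squared distance to the best $L^2$-plane $V(0,1)$,
\begin{align*}
d(p_i,V(0,1))^2 \leq \frac{1}{\mu(B_\rho(y_i))}\int_{B_\rho(y_i)} d(z,V(0,1))^2\,d\mu(z) \leq \frac{1}{c(n)\rho^n}\, D^k_\mu(0,1).
\end{align*}
Write $\ell_i := \pi_{L(0,1)}(p_i)$ and $v_i := \pi_{V(0,1)}(p_i)$, so that $d(p_i,\ell_i)\leq \delta$ and $d(p_i,v_i)\leq c(n,\rho)\sqrt{D^k_\mu(0,1)}$.

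Now I transfer the effective spanning property. Since $d(\ell_i,y_i)\leq \rho+\delta$ and $d(v_i,y_i)\leq \rho + c\sqrt{D}$, for $\delta$ and $D^k_\mu(0,1)$ small enough (depending on $n,\rho$), both $\{\ell_i\}_{i=0}^k$ and $\{v_i\}_{i=0}^k$ remain, say, $\rho/2$-effectively independent. Since $\ell_i\in L(0,1)$ and $v_i\in V(0,1)$, both being $k$-dimensional affine subspaces, the $\ell_i$ effectively span $L(0,1)$ and the $v_i$ effectively span $V(0,1)$ in the sense of Lemma \ref{lemma_effspa}. Therefore, given any $y \in L(0,1)\cap B_1(0)$, write $y = \ell_0 + \sum_{i=1}^k \alpha_i (\ell_i - \ell_0)$ with $|\alpha_i|\leq c(n,\rho)$, and set $y' := v_0 + \sum_{i=1}^k \alpha_i (v_i - v_0)\in V(0,1)$. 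Combining the two distance bounds on $p_i$,
\begin{align*}
\|y-y'\| \leq \|\ell_0-v_0\| + \sum_{i=1}^k |\alpha_i|\,\|(\ell_i-\ell_0)-(v_i-v_0)\| \leq c(n,\rho)\ton{\delta + \sqrt{D^k_\mu(0,1)}}.
\end{align*}
This shows $L(0,1)\cap B_1(0) \subseteq B_{c(\delta+\sqrt{D})}(V(0,1))$; Lemma \ref{lemma_hdv} then converts this one-sided bound into the two-sided Hausdorff bound between the $k$-planes, and squaring yields \eqref{eq_distD} with constant $c(n,\rho,M)$.

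The only real subtlety — and hence the main point to watch during the write-up — is ensuring that the iterative application of Lemma \ref{lemma_alpharho} goes through at a \emph{single} scale $\rho$ (rather than the two scales $\rho$ and $\rho^2$ used in Lemma \ref{lemma_vw}), and that the perturbation parameters $\delta$ and $\sqrt{D^k_\mu(0,1)}$ are small relative to $\rho$ so that effective independence of the $y_i$ transfers to effective independence of the $\ell_i$ and $v_i$. Once these quantitative comparisons are fixed, the rest is a direct adaptation of the argument in Lemma \ref{lemma_vw}.
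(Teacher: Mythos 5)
Your proof is correct and follows the route the paper itself indicates — namely, mimicking the argument for Lemma \ref{lemma_vw}, with the $L^\infty$ hypothesis \eqref{eq__} taking the role of one of the two $L^2$ best-plane bounds. The two Jensen estimates, the effective-spanning transfer from the $y_i$ to the projected points $\ell_i$ and $v_i$, and the final invocation of Lemma \ref{lemma_hdv} are all in order, and you have correctly simplified the construction to a single scale $\rho$ since both planes live at scale $1$. The only cosmetic slip is that the final equation you say you are proving is not \eqref{eq_distD} from Lemma \ref{lemma_vw} but the (unnumbered) display in the statement of Lemma \ref{lemma_LV}; the reasoning and the squaring step are otherwise exactly what is needed.
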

\vspace{.5 cm}

\subsection{bi-Lipschitz equivalences}
In this subsection, we study a particular class of maps with nice local properties. These maps are a slightly modified version of the maps which are usually exploited to prove Reifenberg's theorem, see for example \cite{reif_orig,Toro_reif,davidtoro}, \cite[section 10.5]{morrey} or \cite{simon_reif}. The estimates in this section are standard in literature.

We start by defining the functions $\sigma$. For some $r>0$, let $\cur{x_i}$ be an $r$-separated subset of $\R^n$, i.e., 
\begin{enumerate}
\def\theenumi{\roman{enumi}}
 \item \label{item_1}$d(x_i,x_j)\geq r$.
\end{enumerate}
Let also $p_i$ be a points in $\R^n$ with
\begin{enumerate}
\def\theenumi{\roman{enumi}}\setcounter{enumi}{1}
 \item $p_i\in \B{10r}{x_i}$
\end{enumerate}
and let $V_i$ be a sequence of $k$-dimensional linear subspaces.

By standard theory, it is easy to find a locally finite smooth partition of unity $\lambda_i:\dR^n\to [0,1]$ such that
\begin{enumerate}\label{deph_sigma_0}
\def\theenumi{\roman{enumi}}\setcounter{enumi}{2}
 \item \label{item_3r}$\supp{\lambda_i}\subseteq \B {3r}{x_i}$ for all $i$,
 \item for all $x\in \bigcup_{i} \B{2 r}{x_i}$, $\sum_i \lambda_i(x)=1$ ,
 \item $\sup_i \norm{\nabla \lambda_i}_\infty \leq c(n)/r$ ,
 \item \label{item_last} if we set $1-\psi(x)=\sum_i \lambda_i(x)$, then $\psi$ is a smooth function with $\norm{\nabla \psi}_\infty \leq c(n)/r$ .
\end{enumerate}
Note that by \eqref{item_3r}, and since $x_i$ is $r$-separated, there exists a constant $c(n)$ such that for all $x$, $\lambda_i(x)>0$ for at most $c(n)$ different indexes.

For convenience of notation, set $\pi_V(v)$ to be the orthogonal projection onto the linear subspace $V$ of the free vector $v$, and set
\begin{gather}
\pi_{p_i,V_i}(x)=p_i+\pi_{V_i}(x-p_i)\, .
\end{gather}
In other words, $\pi_{p_i,V_i}$ is the affine projection onto the affine subspace $p_i+V_i$. Recall that $\pi_{V_i}$ is a linear map, and so the gradients of $\pi_{V_i}$ and of $\pi_{p_i,V_i}$ at every point are equal to $\pi_{V_i}$. 

\begin{definition}\label{deph_sigma}
 Given $\cur{x_i,p_i,\lambda_i}$ satisfying \eqref{item_1} to \eqref{item_last}, and given a family of linear $k$-dimensional spaces $V_i$, we define a smooth function $\sigma:\R^n\to \R^n$ by
 \begin{gather}  
  \sigma(x)= x+\sum_i \lambda_i(x) \pi_{V_i^\perp}\ton{p_i-x} = \psi(x) x +\sum_i \lambda_{i}(x)\pi_{p_i,V_i}\ton{x}\, .
 \end{gather}
\end{definition}
By local finiteness, it is evident that $\sigma$ is smooth. Moreover, if $\psi(x)=1$, then $\sigma(x)=x$. It is clear that philosophically $\sigma$ is a form of ``smooth interpolation'' between the identity and the projections onto the subspaces $V_i$. It stands to reason that if $V_i$ are all close together, then this map $\sigma$ is close to being an orthogonal projection in the region $ \bigcup_{i} \B{2 r}{x_i}$. 

\begin{lemma}\label{lemma_sigma_simon}
 Suppose that there exists a $k$-dimensional linear subspace $V\subseteq \R^n$ and a point $p\in \R^n$ such that for all $i$
 \begin{gather}\label{eq_sigmadelta}
  d_G(V_i,V)\leq \delta\, , \quad d(p_i,p+V)\leq \delta\, .
 \end{gather}
Then the map $\sigma$ restricted to the set $U=\psi^{-1}(0)=\ton{\sum_i \lambda_i}^{-1}(1)$ can be written as
\begin{gather}
 \sigma(x)=\pi_{p,V}(x)+e(x)\, ,
\end{gather}
and $e(x)$ is a smooth function with 
\begin{gather}
 \norm{e}_\infty + \norm{\nabla e}_\infty\leq c(n)\delta/r=c(n,r)\delta\, .
\end{gather}
\end{lemma}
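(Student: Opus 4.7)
The plan is to exploit that on $U = \psi^{-1}(0)$ the partition function satisfies $\sum_i \lambda_i(x) = 1$, which gives both $\sigma(x) = \sum_i \lambda_i(x)\, \pi_{p_i,V_i}(x)$ and $\pi_{p,V}(x) = \sum_i \lambda_i(x)\, \pi_{p,V}(x)$. Subtracting yields the clean expression
\begin{gather}
e(x) = \sum_i \lambda_i(x)\, \bigl[\pi_{p_i,V_i}(x) - \pi_{p,V}(x)\bigr].
\end{gather}
Only a controlled number (at most $c(n)$) of indices contribute at each point by the bounded-overlap property of the $\lambda_i$'s, and every contributing index satisfies $x \in \B{3r}{x_i}$ and $|x - p_i| \leq 13r$. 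So the whole problem reduces to a pointwise estimate on the difference of two affine projections.

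To estimate this difference I would introduce the common base point $\tilde p_i := \pi_{p+V}(p_i)$, which lies in $p+V$ and satisfies $|p_i - \tilde p_i| = d(p_i, p+V) \leq \delta$. A direct computation from the definitions gives the decomposition
\begin{gather}
\pi_{p_i,V_i}(x) - \pi_{p,V}(x) = (\pi_{V_i} - \pi_V)(x - \tilde p_i) + \pi_{V_i^\perp}(p_i - \tilde p_i),
\end{gather}
where for the first term I used that $\tilde p_i \in p+V$ so $\pi_{p,V}(x) = \tilde p_i + \pi_V(x - \tilde p_i)$, and for the second that shifting the base point of an affine projection within a plane parallel to $V_i$ produces an error lying in $V_i^\perp$. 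The second summand has norm at most $\delta$, and for the first Lemma \ref{lemma_epsproj} bounds the operator norm of $\pi_{V_i} - \pi_V$ by $2\, d_G(V_i,V) \leq 2\delta$, while $|x - \tilde p_i| \leq 14r$ (assuming, as one may, that $\delta \leq r$). Combining gives the pointwise bound $|\pi_{p_i,V_i}(x) - \pi_{p,V}(x)| \leq c(n)\,\delta r$, and summing against the partition of unity yields $|e(x)| \leq c(n)\,\delta r$.

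For the gradient I would differentiate directly, using that the Jacobian of the affine projection $x \mapsto \pi_{p_i,V_i}(x)$ is the linear projection $\pi_{V_i}$:
\begin{gather}
\nabla e(x) = \sum_i \nabla \lambda_i(x) \otimes \bigl[\pi_{p_i,V_i}(x) - \pi_{p,V}(x)\bigr] + \sum_i \lambda_i(x)\,\bigl[\pi_{V_i} - \pi_V\bigr].
\end{gather}
The first sum is controlled by $c(n)/r$ (the gradient bound on the $\lambda_i$'s) times $c(n)\,\delta r$ (from the previous paragraph), with bounded overlap absorbing the index sum, for a total of $c(n)\,\delta$. The second sum is at most $2\delta$ since $\sum_i \lambda_i = 1$ on $U$. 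Together these recover the claimed estimate. The only genuinely subtle point is the introduction of $\tilde p_i$: since the hypothesis controls $d(p_i, p+V)$ but not $|p_i - p|$ itself (which may be large), one cannot compare projections by using $p$ directly as a base point, and passing to $\tilde p_i \in p+V$ is what cleanly decouples the ``Grassmannian closeness of $V_i$ to $V$'' from the ``affine closeness of $p_i$ to $p+V$''.
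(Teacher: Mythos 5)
Your argument is essentially the paper's: both proofs use the identity $\sum_i \lambda_i = 1$ on $U$ to write $e(x)$ as a convex combination of the differences $\pi_{p_i,V_i}(x)-\pi_{p,V}(x)$, decompose each difference into a Grassmannian-distance term and an affine-offset term, control the former via Lemma \ref{lemma_epsproj} and the latter via $d(p_i, p+V)\leq\delta$, and then differentiate term-by-term with bounded overlap absorbing the index sum. Your introduction of $\tilde p_i=\pi_{p+V}(p_i)$ yields the slightly tidier identity $\pi_{p_i,V_i}(x)-\pi_{p,V}(x)=\pi_{V_i^\perp}(p_i-\tilde p_i)+(\pi_{V_i}-\pi_V)(x-\tilde p_i)$, whereas the paper groups the same two error sources directly around $p$; the two bookkeepings are algebraically equivalent.

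One small arithmetic slip: you conclude the pointwise bound $|\pi_{p_i,V_i}(x)-\pi_{p,V}(x)|\leq c(n)\delta r$, but your own two estimates give $\delta + 28\delta r$, where the first summand does \emph{not} carry a factor of $r$. The correct bound is $c(n)\delta(1+r)$, i.e. $c(n)\delta$ for $r\leq 1$, as the paper records. Consequently your first gradient sum is of size $(c(n)/r)\cdot c(n)\delta = c(n)\delta/r$, not $c(n)\delta$. Neither error is fatal --- the lemma only asserts $\|e\|_\infty+\|\nabla e\|_\infty\leq c(n)\delta/r$, which your corrected bounds still deliver --- but the $\delta$ term surviving without an $r$ is exactly why the final estimate is $c(n)\delta/r$ rather than the stronger $c(n)\delta$.
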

\begin{remark}
Thus, on $U$ we have that $\sigma$ is the affine projection onto $V$ plus an error which is small in $C^1$.
\end{remark}

\begin{proof}
 On the set $U$, we can define 
 \begin{gather}
  e(x)=\sigma(x)-\pi_{p,V}(x)=-\pi_{p,V}(x)+ \sum_i \lambda_i(x) \cdot \ton{\pi_{p_i,V_i}(x)}=\notag\\
  =\sum_i \lambda_i(x) \cdot \ton{p_i-p-\pi_V(p_i-p) +\pi_V(p_i) - \pi_{V_i}(p_i) + \pi_{V_i}(x)  -\pi_V(x)}\, .
 \end{gather}
 By \eqref{eq_sigmadelta} and Lemma \ref{lemma_epsproj}, we have the estimates
 \begin{gather}\label{eq_222}
  \norm{p_i-p-\pi_V(p_i-p)}<\delta\, , \quad \norm{\pi_V(x-p_i)-\pi_{V_i}(x-p_i)}\leq c(n)\delta \norm{x-p_i}\leq 13 c(n) r \delta\, . 
 \end{gather}
 This implies
 \begin{gather}
  \norm{e}_{L^\infty(U)} \leq c(n)(1+13 r)\delta \leq c(n)\delta\, .
 \end{gather}

As for $\nabla e$, we have
\begin{gather}
 \nabla e = \sum_i \nabla \lambda_i(x) \cdot \ton{p_i-p-\pi_V(p_i-p) +\pi_V(p_i) - \pi_{V_i}(p_i) + \pi_{V_i}(x)  -\pi_V(x)} + \sum_i \lambda_i(x) \nabla \ton{\pi_{V_i}(x)-\pi_V(x)}\, .
\end{gather}
The first sum is easily estimated, and since $\ps{\nabla (\pi_{W})|_x }{w}=\pi_{W}(w)$, we can still apply Lemma \ref{lemma_epsproj} and conclude:
\begin{gather}
 \norm{\nabla e}_{L^\infty(U)} \leq \frac{c(n)}{r} \delta\, .
\end{gather}
\end{proof}
\vspace{.5 cm}
As we have seen, $\sigma$ is in some sense close to the affine projection to $p+V$. In the next lemma, which is similar in spirit to \cite[squash lemma]{simon_reif}, we prove that the image through $\sigma$ of a graph over $V$ is again a graph over $V$ with nice bounds. 
\begin{lemma}[squash lemma]\label{lemma_squash}
 Fix $\rho\leq 1$ and some $\B{r/\rho}{y}\subseteq \R^n$, let $I=\cur{x_i}\cap \B{5r/\rho}{y}$ be an $r$-separated set and define $\sigma$ as in Definition \ref{deph_sigma}. Suppose that there exists a $k$-dimensional subspace $V$ and some $p\in \R^n$ such that $d(y,p+V)\leq \delta r$ and for all $i$:
 \begin{gather}\label{eq_111}
  d(p_i,p+V)\leq \delta r\, \quad \text{ and } \quad d_G(V_i,V)\leq \delta\, .
 \end{gather}
 Suppose also that there exists a $C^1$ function $g:V\to V^\perp$ such that $G\subseteq \R^n$ is the graph $$G=\cur{p+x+g(x)\, \ \ \text{for } \ \ x\in V}\cap \B{r/\rho}{y}\, ,$$ and $r^{-1}\norm{g}_\infty + \norm{\nabla g}_\infty \leq \delta'$. There exists a $\delta_0(n)>0$ sufficiently small such that if $\delta \leq \delta_0\rho $ and $\delta'\leq 1$, then
 \begin{enumerate}
 \def\theenumi{\roman{enumi}}
  \item \label{it_s1} $\forall z\in G$, $ r^{-1}\abs{\sigma(z)-z}\leq c(n)(\delta +\delta')\rho^{-1}$, and $\sigma$ is a $C^1$ diffeomorphism from $G$ to its image,
  \item \label{it_s2} the set $\sigma(G)$ is contained in a $C^1$ graph $\cur{p+x+\tilde g(x)\, , \ \ x\in V}$ with
  \begin{gather}
   r^{-1}\norm{\tilde g}_\infty + \norm{\nabla \tilde g}_\infty \leq c(n) (\delta+\delta')\rho^{-1}\, .
  \end{gather}
  \item \label{it_s3} moreover, if $U'$ is such that $\B{c(\delta +\delta')\rho^{-1}}{U'}\subseteq \psi^{-1}(0)$, then the previous bound is \textit{independent} of $\delta'$, in the sense that
  \begin{gather}
   r^{-1}\norm{\tilde g}_{L^\infty(U'\cap V)} + \norm{\nabla \tilde g}_{L^\infty(U'\cap V)}\leq c(n) \delta\rho^{-1}\, .
  \end{gather}
  For example, if $\delta'\leq \delta_0(n)\rho^{-1}$, we can take $U'=\bigcup_{i} \B{1.5 r}{x_i}$.
  \item \label{it_s4} the map $\sigma$ is a bi-Lipschitz equivalence between $G$ and $\sigma(G)$ with bi-Lipschitz\newline constant $\leq 1+c(n)(\delta+\delta')^2\rho^{-2}$.
 \end{enumerate}
\end{lemma}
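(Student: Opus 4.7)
The plan is to derive all four conclusions from the explicit formula of Definition \ref{deph_sigma}, namely $\sigma(z) = z + \sum_i \lambda_i(z)\pi_{V_i^\perp}(p_i - z)$, by establishing first a $C^1$-control on $\sigma$ itself and then passing to the graph.

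First, for item \eqref{it_s1}, I would estimate each summand of $\sigma(z) - z$ directly on $G$. Writing $z = p + x + g(x)$ one has $\pi_{V^\perp}(z - p) = g(x)$ of size $\leq \delta' r$, and $\pi_{V_i^\perp}$ differs from $\pi_{V^\perp}$ by at most $2\delta$ in operator norm. Since $\lambda_i(z) \neq 0$ forces $z \in \B{3r}{x_i}$, one has $|p_i - z| \leq 13 r$, and the hypothesis \eqref{eq_111} together with the triangle inequality yield $|\pi_{V_i^\perp}(p_i - z)| \leq c(n)(\delta + \delta')r/\rho$ after accounting for the bounded overlap of the $\lambda_i$. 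Differentiating the same identity shows $\sigma$ is a local $C^1$ diffeomorphism near $G$ when $\delta + \delta'$ is small enough.

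For item \eqref{it_s2}, the key step is to compute $D\sigma|_z$ and show that, using $\psi + \sum_i \lambda_i = 1$, it admits the decomposition
\begin{gather}
D\sigma|_z = \pi_V + \psi(z)\pi_{V^\perp} + A(z), \qquad A(z) = \sum_i \lambda_i(\pi_{V_i} - \pi_V) + \sum_i \nabla \lambda_i \otimes \pi_{V_i^\perp}(p_i - z),
\end{gather}
with $|A(z)| \leq c(n)(\delta + \delta')$, by the same telescoping that produced Lemma \ref{lemma_sigma_simon}. I would then exhibit $\sigma(G)$ as a graph by applying the inverse function theorem to $h(x) \equiv \pi_V(\sigma(p + x + g(x)) - p)$: since $\pi_V \pi_{V^\perp} = 0$ and $\pi_V \nabla g = 0$, a direct computation gives $dh/dx = \operatorname{id}_V + \pi_V A(\operatorname{id}_V + \nabla g)$, which is invertible as soon as $c(n)(\delta + \delta')$ is less than $1$. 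Defining $\tilde g(\cdot) \equiv \pi_{V^\perp}(\sigma(p + h^{-1}(\cdot) + g(h^{-1}(\cdot))) - p)$ and applying the chain rule yields the required $C^1$ bounds.

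For item \eqref{it_s3}, the improvement on $U'$ is immediate: since $\B{c(\delta+\delta')/\rho}{U'} \subseteq \psi^{-1}(0)$ and since the relevant $x$-neighborhood of $h^{-1}(U')$ is mapped into $\psi^{-1}(0)$ once $\delta+\delta'$ is small, Lemma \ref{lemma_sigma_simon} applies on this set and gives $\sigma = \pi_{p,V} + e$ with $\|e\|_{C^1} \leq c(n)\delta/r$; this bound is independent of $\delta'$ and passes directly to $\tilde g$. For item \eqref{it_s4}, the bi-Lipschitz constant is obtained by evaluating $|D\sigma(\xi)|^2$ on tangent vectors $\xi = v + \nabla g(x) v$ to $G$: using the decomposition above, $D\sigma \xi = v + \psi \nabla g v + A\xi$, and since $v \perp \nabla g v$ and the cross term $\langle v, A\xi\rangle$ can be absorbed by $|A\xi|^2$ after symmetrizing against $|\nabla g v|^2$, a Pythagorean computation produces the quadratic $(\delta+\delta')^2/\rho^2$ bound rather than a linear one.

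The main obstacle will be item \eqref{it_s2}: the careful tracking of the $\rho$-dependence in $A$, together with the need to invert $h$ globally on all of $\pi_V(G - p)$ rather than merely locally, so that $\sigma(G)$ is actually contained in a single graph and not a self-overlapping surface. This requires balancing the partition-of-unity scale $r$ against the domain scale $r/\rho$, which is precisely why the hypothesis is phrased as $\delta \leq \delta_0(n)\rho$ rather than just $\delta \leq \delta_0(n)$.
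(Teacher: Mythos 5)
Your proposal is correct and follows essentially the same route as the paper's proof. Both decompose $\sigma$ restricted to $G$ into its $V$- and $V^\perp$-components (your $h(x)=\pi_V(\sigma(p+x+g(x))-p)$ is precisely the paper's $\sigma^T(x)=x+h^T(x)$), invert the tangential component via the inverse function theorem, and define $\tilde g$ as the vertical component composed with that inverse; your decomposition $D\sigma=\pi_V+\psi\pi_{V^\perp}+A$ is algebraically identical to the paper's $A+B$ split after using $\nabla\psi+\sum\nabla\lambda_i=0$, and for item (iv) both exploit that the double projection $\pi_V\circ\pi_{V_i^\perp}$ and the tangential Lipschitz term $\nabla g$ each contribute only at second order, which after the Pythagorean split gives the quadratic $(\delta+\delta')^2$ bound.
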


\begin{proof}
For convenience, we fix $r=1$ and $p=0$. By notation, given any map $f:\R^n\to \R^m$, $p\in \R^n$ and $w\in T_p(\R^n)=\R^n$, we will denote by $\nabla|_p f[w]$ the gradient of $f$ evaluated at $p$ and applied to the vector $w$.

Recall that 
 \begin{gather}
  \sigma(x+g(x))=\psi(z) (x+g(x)) + \sum_{x_i\in I} \lambda_i(z) \ton{\pi_{p_i,V_i}(x+g(x))}\, , \quad 1-\psi(x)=\sum_{x_i\in I} \lambda_i(x)\, ,
 \end{gather}
where we have set for convenience $z=z(x)=x+g(x)$. Define $h(x)$ by 
\begin{gather}
 (1-\psi(z))x + h(x) \equiv \sum_{i} \lambda_i(z) \ton{\pi_{p_i,V_i}(x+g(x))}\, .
\end{gather}
Set also $h^T(x)=\pi_V(h(x))$ and $h^\perp(x)=\pi_{V\perp}(h(x))$. By projecting the function $\sigma(x+g(x))$ onto $V$ and its orthogonal complement we obtain
\begin{gather}
 \sigma(x+g(x))\equiv \sigma^T(x)+\sigma^\perp(x) \, ,\notag\\
 \sigma^T(x)= x + h^T(x)\, , \quad \sigma^\perp(x)=\psi(z)g(x)+h^\perp(x)\, .
\end{gather}

We claim that if $\delta'\leq 1$, then
\begin{gather}\label{eq_hT}
 \abs{h^T(x)}+\abs{\nabla {h^T(x)}}\leq \frac{c\delta}{\rho}\, ,
\end{gather}
where this bound is \textit{independent} of $\delta'$ as long as $\delta'\leq 1$. Indeed, for all $x\in V$ we have
\begin{gather}
 h^T(x)=\pi_V \qua{\sum_i \lambda_i(z) \ton{\pi_{p_i,V_i}(x+g(x))  - x}  }=\sum_i \lambda_i(z) \pi_V \qua{\ton{\pi_{p_i,V_i}(x)  - \pi_{V}(x) } +\pi_{p_i,V_i}(g(x))}
\end{gather}
Given \eqref{eq_111} and Lemma \ref{lemma_epsproj}, with computations similar to \eqref{eq_222}, we get $\abs{h^T(x)}\leq c\delta (1+\rho^{-1})\leq c\delta \rho^{-1}$. As for the gradient, we get for any vector $w\in V$
\begin{gather}
 \nabla h^T|_x [w] = \pi_V \qua{ \sum_i \nabla\lambda_i |_z \qua{w+\nabla g|_x [w]} \ton{\pi_{p_i,V_i}(x+g(x))  - x} +\sum_i \lambda_i(z)\ton{\pi_{V_i}\ton{w+\nabla g[w]} -w }}\, ,
\end{gather}
In particular, we obtain
\begin{gather}
 \abs{\nabla h^T|_x [w]}\leq  \sum_i \abs{\nabla\lambda_i} \ton{1+\abs{\nabla g}}{\abs w}\abs{\pi_{p_i,V_i}(x+g(x))  - x} +\sum_i \lambda_i(z)\ton{\abs{\pi_{V_i}\ton{w}-w} + \abs{\pi_{V_i}\ton{\nabla g[w]}}}\, .
\end{gather}
For the first term, we can estimate
\begin{gather}
 \abs{\nabla \lambda_i}\leq c(n)\, , \quad \abs{\nabla g}\leq \delta'\leq 1\, , \quad \abs{\pi_{p_i,V_i}(x+g(x))  - x}\leq \abs{\pi_{p_i,V_i}(x)-x}+\abs{\pi_{V_i}(g(x))}\, .
\end{gather}
Since $x\in V$ with $\abs x \leq \rho^{-1}$, and $g(x)\in V^\perp$, by \eqref{eq_111} and Lemma \ref{lemma_epsproj} we obtain
\begin{gather}
 \abs{\pi_{p_i,V_i}(x+g(x))  - x}\leq c\delta \rho^{-1}\, .
\end{gather}
As for the second term, we have
\begin{gather}
 \abs{\abs{\pi_{V_i}\ton{w}-w}\leq  c\delta \abs w\, , \quad \abs{\pi_{V_i}\ton{\nabla g[w]}}}\leq c\delta \delta' \abs w \leq c \delta \abs w\, .
\end{gather}
Summing all the contributions, we obtain \eqref{eq_hT} as wanted.

Thus we can apply the inverse function theorem on the function $\sigma^T(x):V\to V$ and obtain a $C^1$ inverse $Q$ such that for all $x\in V$, $\abs{Q(x)-x}+ \abs{\nabla Q-id }\leq c(n)\delta \rho^{-1}$ , and if $\psi(x+g(x))=1$, then $Q(x)=x$ . So we can write that for all $x\in V$
\begin{gather}
 \sigma(x+g(x))=\sigma^T(x)+\tilde g(\sigma^T(x))\, \quad \text{where} \quad \tilde g(x)= \sigma^\perp (Q(x))=h^\perp(Q(x))+\psi\ton{z(Q(x))}g(Q(x)) \, .
\end{gather}

Arguing as above, we see that $h^\perp(x)$ is a $C^1$ function with 
\begin{gather}\label{eq_hp}
 \abs{h^\perp(x)}+\abs{\nabla h^\perp(x)}\leq \frac{c\delta}{\rho}\, ,
\end{gather}
and this bound is independent of $\delta'$ (as long as $\delta'\leq 1$).

Thus the function $\tilde g:V\to V^\perp$ satisfies for all $x$ in its domain 
\begin{gather}
\abs{\tilde g(x)}+ \abs{\nabla \tilde g(x)}\leq c(n)(\delta+\delta') \rho^{-1}\, ,
\end{gather}
Moreover, for those $x$ such that $\psi(Q(x)+g(Q(x)))=0$, the estimates on $\tilde g$ are independent of $\delta'$, in the sense that $\abs{\tilde g(x)}+ \abs{\nabla \tilde g(x)}\leq c(n)\delta \rho^{-1}$ . Note that by the previous bounds we have
\begin{gather}
 \abs{Q(x)+g(Q(x)) -x}\leq c (\delta+\delta')\rho^{-1}\, ,
\end{gather}
and so if $\B {c(\delta+\delta')\rho^{-1}}{U'}\subset \psi^{-1}(0)$, then for all $x\in U'\cap V$, $\psi(Q(x)+g(Q(x)))=0$. This proves items \eqref{it_s2}, \eqref{it_s3}. As for item \eqref{it_s1}, it is an easy consequence of the estimates in \eqref{eq_hT}, \eqref{eq_hp}.

Now since both $G$ and $\sigma(G)$ are Lipschitz graphs over $V$, it is clear that the bi-Lipschitz map induced by $\pi_V$ would have the right bi-Lipschitz estimate. Since $\sigma$ is close to $\pi_V$, it stands to reason that this property remains true. In order to check the estimates, we need to be a bit careful about the horizontal displacement of $\sigma$.

\paragraph{bi-Lipschitz estimates}
In order to prove the estimate in \eqref{it_s4}, we show that for all $z=x+g(x)\in G$ and for all unit vectors $w\in T_z(G)\subset \R^n$, we have
\begin{gather}\label{eq_nabla_2lip}
 \abs{ \abs{\nabla \sigma|_z [w]}^2-1}\leq c(\delta+\delta')^2\, .
\end{gather}
First of all, note that if $\psi(z)=1$, then $\sigma$ is the identity, and there's nothing to prove.

In general, we have that
\begin{gather}
 \nabla \sigma|_z [w]= \ton{\psi(z) w + \sum_i \lambda_i (z) \pi_{V_i}[w]} + \ton{ z \nabla \psi [w]+ \sum_i \pi_{p_i,V_i} (z) \nabla \lambda_i [w]} \equiv A+B\, .
\end{gather}
Since $\psi(z)+\sum_i \lambda_i(z)=1$ everywhere by definition, we have
\begin{gather}
 \abs{B} =\abs{ \sum_i (\pi_{p_i,V_i} (z) -z) \nabla \lambda_i [w]}\leq c \sup_i \cur{\abs{\pi_{p_i,V_i} (z) -z}}\leq c\delta'\, .
\end{gather}
This last estimate comes from the fact that $G$ is the graph of $g$ over $V$ with $\norm{g}_\infty \leq \delta'$. Moreover, we can easily improve the estimate for $B$ in the horizontal direction using Lemma \ref{lemma_epsproj}. Indeed, since $\pi_{p_i,V}(z)-z=\pi_{V_i^\perp}(z)$, we have
\begin{gather}
 \abs{\pi_V B} =\abs{ \sum_i \pi_V \ton{\pi_{p_i,V_i} (z) -z} \nabla \lambda_i [w]}\leq c \sup_i \cur{\abs{\pi_V\ton{\pi_{p_i,V_i} (z) -z}}}\leq c\delta'\delta\, .
\end{gather}
As for $A$, by adapting the proof of Lemma \ref{lemma_sigma_simon}, we get $\abs{A-\pi_V[w]}\leq c\delta$. Moreover, also in this case we get better estimates for $A$ in the horizontal direction. Indeed, we have
\begin{gather}
 \abs{\pi_V(A) -\pi_V[w]} = \abs{\psi(z) \pi_V [w] +\sum_i \ton{ \lambda_i(z) \pi_V[\pi_{V_i}[w]]} -\pi_V[w]} = \abs{\sum_i  \lambda_i(z) \ton{\pi_V[\pi_{V_i}[w] -\pi_V[w]]}}\, .
\end{gather}
Now let $w=\pi_V[w]+\pi_{V^\perp}[w]=w_V+w_{V^\perp}$. Then we have
\begin{gather}
 \abs{\pi_V(A) -\pi_V[w]}\leq \sum_i  \lambda_i(z) \ton{\abs{\pi_V[\pi_{V_i}[w_V] -w_V]} + \abs{\pi_V[\pi_{V_i}[w_{V^\perp}]]}}=\sum_i  \lambda_i(z) \ton{\abs{\pi_V[\pi_{V_i^\perp}[w_V]]} + \abs{\pi_V[\pi_{V_i}[w_{V^\perp}]]}}\, .
\end{gather}
Since $G$ is the Lipschitz graph of $g$ over $V$ with $\norm{\nabla g}\leq c\delta'$, then $\norm{\pi_{V^\perp}[w]}\leq c\delta'$. Then, by Lemma \ref{lemma_epsproj}, we have
\begin{gather}
 \abs{\pi_V(A) -\pi_V[w]}\leq c\sum_i  \lambda_i(z) \ton{\delta^2+\delta\delta'}\, .
\end{gather}
Summing up, since $\abs{\pi_V[w]}\leq \abs w =1$, we obtain that
\begin{gather}
 \abs{\abs{\nabla\sigma|_z[w]}^2 -1} = \abs{\abs{\pi_{V^\perp}\nabla\sigma|_z[w]}^2+\abs{\ton{\pi_{V}\nabla\sigma|_z[w] - \pi_V[w]}+\pi_V[w]}^2  -1} \leq\\
 \leq c(\delta+\delta')^2+\abs{\abs{\pi_V[w]}^2 -1}=c(\delta+\delta')^2+\abs{\pi_{V^\perp}[w]}^2\leq c(\delta+\delta')^2\, .
\end{gather}
\end{proof}

\vspace{.5 cm}

\subsection{\texorpdfstring{Pointwise Estimates on $D$}{Pointwise Estimates on D}}  We wish to see in this subsection how \eqref{e:reifenberg_excess_L2} implies pointwise estimates on $D$, which will be convenient in the proof of the generalized Reifenberg results.  Indeed, the following is an almost immediate consequence of Remark \ref{r:D_scale_control}:

\begin{lemma}\label{l:D_pointwise_bound}
Assume $B_r(x)\subseteq B_2(0)$ is such that $\mu(B_r(x))\geq 4^k\epsilon_n r^k$ and $\int_{B_{2r}(x)}D^k_\mu(y,2r)\,d\mu(y)<\delta^2 (2r)^k$.  Then there exists $c(n)$ such that $D(x,r)<c\delta^2$.  In particular, if \eqref{e:reifenberg_excess_L2} holds then for every $B_r(x)\subseteq B_2(0)$ such that $\mu(B_r(x))\geq 4^k\epsilon_n r^k$ we have that $D(x,r)<c\delta^2$.
\end{lemma}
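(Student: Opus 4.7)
The first claim is essentially immediate from the averaged bound \eqref{eq_estDint} in Remark \ref{r:D_scale_control}. The hypothesis $\mu(B_r(x)) \geq 4^k \epsilon_n r^k$ trivially satisfies the mass lower bound $\mu(B_r(x)) \geq 2^k \epsilon_n r^k$ required there, so
\begin{equation*}
 D^k_\mu(x,r) \;\leq\; 2^{k+2}\fint_{B_r(x)} D^k_\mu(y,2r)\, d\mu(y) \;=\; \frac{2^{k+2}}{\mu(B_r(x))} \int_{B_r(x)} D^k_\mu(y, 2r)\, d\mu(y).
\end{equation*}
Since $B_r(x) \subseteq B_{2r}(x)$, the second hypothesis bounds the integral on the right by $\delta^2 (2r)^k$; combining this with the mass lower bound $\mu(B_r(x)) \geq 4^k\epsilon_n r^k$ gives the desired $D^k_\mu(x,r) \leq (4/\epsilon_n)\delta^2 \equiv c(n)\delta^2$.

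For the ``in particular'' statement I plan to verify the second hypothesis of the first part as a consequence of \eqref{e:reifenberg_excess_L2}. The key observation is a reverse scale-comparison for $D$: directly from its definition, for any $s \geq 2r$ one has
\begin{equation*}
 D^k_\mu(y, 2r) \;\leq\; \Big(\tfrac{s}{2r}\Big)^{k+2} D^k_\mu(y, s),
\end{equation*}
simply by testing the plane minimizing at scale $s$ on the smaller ball $B_{2r}(y) \subseteq B_s(y)$. Restricting to $s \in [2r,4r]$ and averaging against the logarithmic measure $ds/s$ (over an interval of total mass $\log 2$) converts this into the pointwise bound
\begin{equation*}
 D^k_\mu(y, 2r) \;\leq\; \frac{2^{k+2}}{\log 2} \int_0^{4r} D^k_\mu(y, s)\, \frac{ds}{s}.
\end{equation*}
Integrating over $y \in B_{2r}(x)$ and invoking \eqref{e:reifenberg_excess_L2} on a ball containing $B_{4r}(x)$ then yields $\int_{B_{2r}(x)} D^k_\mu(y,2r)\,d\mu(y) \leq C\,\delta^2 r^k$, after which the first part of the lemma completes the proof.

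There is really no substantive obstacle here; the whole content of the lemma is to package Remark \ref{r:D_scale_control} together with the elementary reverse scale-comparison above. The only mild bookkeeping step is ensuring that \eqref{e:reifenberg_excess_L2} is applied on a ball slightly enlarged from $B_r(x)$, large enough to see scales up to $4r$; this is handled by a harmless rescaling (or by restricting to balls with $|x|+4r \leq 2$), and only affects the value of the dimensional constant $c(n)$.
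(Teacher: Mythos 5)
Your argument follows exactly the route the paper gestures at: the first assertion is an immediate application of \eqref{eq_estDint}, and the ``in particular'' follows by converting the logarithmic integral bound \eqref{e:reifenberg_excess_L2} into a single-scale bound on $D^k_\mu(\cdot,2r)$ and then invoking the first part. The arithmetic in the first part is correct.

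There is, however, one genuine (if small) imprecision in the reverse scale-comparison step. The inequality $D^k_\mu(y,2r)\leq (s/2r)^{k+2}D^k_\mu(y,s)$ is \emph{not} valid ``for any $s\geq 2r$'': it needs $D^k_\mu(y,s)$ to actually be computed by the infimum formula, i.e.\ $\mu(B_s(y))\geq\epsilon_n s^k$. If instead $\mu(B_s(y))<\epsilon_n s^k$, the paper's Definition \ref{deph_D} forces $D^k_\mu(y,s)=0$, while $D^k_\mu(y,2r)$ may nonetheless be positive (take $\mu|_{B_s(y)}$ concentrated in $B_{2r}(y)$, with mass between $\epsilon_n(2r)^k$ and $\epsilon_n s^k$). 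In your proof you integrate over $y\in B_{2r}(x)$ and average in $s$ over $[2r,4r]$; for $y$ near the outer boundary of $B_{2r}(x)$ and $s$ near $2r$ the mass threshold at scale $s$ is not guaranteed, so the inequality as you wrote it can fail there.

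The fix is painless and you should state it explicitly. Observe that in \eqref{eq_estDint} the average is taken over $B_r(x)$, so you only need the integral bound on $\int_{B_r(x)}D^k_\mu(y,2r)\,d\mu(y)$, not on the larger ball $B_{2r}(x)$. For $y\in B_r(x)$ we have $B_r(x)\subseteq B_s(y)$ for every $s\geq 2r$, and hence $\mu(B_s(y))\geq\mu(B_r(x))\geq 4^k\epsilon_n r^k=\epsilon_n(4r)^k\geq\epsilon_n s^k$ for all $s\in[2r,4r]$. On this domain the threshold is always met, the reverse scale-comparison is legitimate, and the rest of your computation (averaging in $s$, using \eqref{e:reifenberg_excess_L2} at radius $4r$, feeding the result into the first part) goes through verbatim. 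Alternatively one could keep the domain $B_{2r}(x)$ but shrink the averaging window to $s\in[3r,4r]$, since for $y\in B_{2r}(x)$ one has $B_r(x)\subseteq B_{3r}(y)$, giving $\mu(B_s(y))\geq 4^k\epsilon_n r^k\geq\epsilon_n s^k$ for $s\in[3r,4r]$; only the numerical constant changes. Either way, your overall strategy is the correct one and matches what the paper has in mind.
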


\vspace{.5 cm}

\section{Proof of Theorem \ref{t:reifenberg_W1p_holes}: The Rectifiable-Reifenberg}\label{sec_proof_main}
Here we carry out the proof of Theorem \ref{t:reifenberg_W1p_holes}.

In the proof, we will fix the constant $M=C_1(n)\leq 40^n$ and therefore a positive scale $\rho(n,C_1(n))=\rho(n)<1$ according to Lemma \ref{lemma_alpharho}.  For convenience, we will assume that $\rho=2^{-q}$, $q\in \N$, so that we will be able to use the sum bounds \eqref{eq_sum2^alpha} more easily.

The constant $C_1(n)$ will be defined by the end of the proof, however it is enough to know that it is can be taken to be $C_1(n)=40^n$.  Thus, the value of the parameter $\rho$ will depend only on the dimension $n$, as already pointed out in \eqref{eq_rho_rough}. Let us also define the scales $r_j=\rho^j$.


\subsection{\texorpdfstring{Weak Upper bound $\lambda^k(S\cap B_r(x))\leq C(n) r^k$}{Weak Upper bound on the measure}}\label{sss:weak_upper_bound}

We start by proving a uniform upper bound weaker than \eqref{eq_lambda_obj}, in particular we want to show that for all $x\in \R^n$ and $r>0$:
\begin{gather}\label{eq_lambda_obj2}
\lambda^k(S\cap B_r(x))\leq C(n) r^k \, .
\end{gather}
Once we have obtained this estimate, the stronger upper bound and the rectifiability will be almost corollaries of this proof.

Given the scale invariance of the quantities involved, we do not lose generality if we prove \eqref{eq_lambda_obj2} only for $x=0$ and $r=1$. The strategy for the proof is the following: first we prove that $S$ has $\sigma$-finite $k$-dimensional Hausdorff measure, and use this information to build a suitable covering by balls with controlled $\lambda^k$-measure. Then we fix any $A\in \N$ and $r_A=\rho^A$ and show by induction on $j=A,\cdots,0$ that the measure of quasi-balls $\tB{r_j}{y}$, defined in \eqref{eq_tB}, is bounded above as in \eqref{eq_lambda_obj2}.  The definition of a quasi-ball will be such that for $r=1$ the quasi-ball $\tB{1}{x}$ agrees with the set $S\cap B_1(x)$, up to a set of measure zero, which will prove \eqref{eq_lambda_obj2}.  In order to prove the measure statement on the quasi-balls, we will use a second downward induction on $i=j,\cdots,A$, which is the technical heart of the construction.

\paragraph{$\sigma$-finiteness of the measure} As a first step towards the proof, we remark that $S$ must have $\sigma$-finite $k$-dimensional Hausdorff measure.  We will use this to reduce the proof to the case when $\lambda^k(S)<\infty$. Indeed, in order for \eqref{e:reifenberg_displacement_L2} to be true, we need in particular
 \begin{gather}
  \int_{S\cap \B 1 0}D^k_S(x,1)\, d\lambda^k(x)\,ds<c \delta^2\, .
 \end{gather}
 Define for $a>0$ the sets $S_a=\cur{x\in S \ \ s.t. \ \ D^k_S(x,1)\geq a}$ and $S_0=\cur{x\in S \ \ s.t. \ \ D^k_S(x,1)=0}$. Then evidently for all $a>0$, $\lambda^k(S_a)<c\delta^2/a<\infty$. Moreover, if $x\in S_0$, then either $\lambda^k(S\cap \B 1 x)<\epsilon_n\leq 1 $, or up to sets of $k$-measure zero, $S\cap \B 1 x $ is contained in a $k$-dimensional plane, and therefore $\lambda^k(S_0)<\infty$. Since $S=S_0\cup_{i=1}^\infty S_{i^{-1}}$, $S$ has $\sigma$-finite $k$-dimensional measure, as claimed.
 
 Now in order to prove the uniform bounds on $\lambda^k(S)$, we can therefore assume without loss of generality that $\lambda^k(S)<\infty$. Indeed, by the monotonicity of Remark \ref{rem_mon}, all subsets of $S$ satisfy \eqref{e:reifenberg_displacement_L2}. Therefore if we show that \eqref{eq_lambda_obj2} holds for all subsets of finite $k$-dimensional measure, then the estimate will hold also on $S$.  Thus, we will assume throughout the remainder of the proof that $\lambda^k(S)<\infty$.

 \vspace{.5 cm} 

\paragraph{Covering of the set $S^\star$} Let $S^\star\subseteq S\cap \B 1 0$ be the set of points with controlled upper density, i.e.
 \begin{gather}
  S^\star=\cur{x\in S\cap \B 1 0 \ \ s.t. \ \ 2^{-k}\leq \theta^{\star k}(S,x)\leq 1}\, .
 \end{gather}
 By proposition \ref{prop_dens}, $S\setminus S^\star$ has zero $k$-dimensional measure, thus it is sufficient to give bounds on $S^\star$.

 We want to cover the set $S^\star$ by balls which have uniform upper and lower bounds on their Hausdorff measure and such that their best $L^2$ plane is not too far away from their center. In order to achieve this, for all $x\in S^\star$ let $r_x$ be such that
 \begin{align}
 r_x=\rho^{n_x}\, , \quad &\text{where} \ \ n_x\in \N \, , \ \ n_x\geq 2\, \label{eq_rho2} ,\\
 \lambda^{k}\ton{\B {\rho r_x} x \cap S}&\geq \frac{1}{2^{k+1}} \omega_k \rho^k \ton{\rho r_x}^k\, , \notag\\
\lambda^k \ton{\B {r} x \cap S}&\leq 2\omega_k r^k\, ,  \quad   \forall r\leq r_x\,  \label{eq_lambda_dens}\ \  
 \end{align}

Let $p_x$ be the center of mass of $\B {\rho r_x} x \cap S$ with respect to $\lambda^k|_{\B {\rho r_x} x \cap S}$.  In particular, we have that $p_x\in \B {\rho r_x} x$.
By Jensen's inequality, we have
\begin{gather}
 \lambda^k\ton{\B {\rho r_x} x \cap S} d(p_x,V(x,r_x))^2\leq \int_{\B {\rho r_x} x \cap S} d(y,V(x,r_x))^2\leq  r_x^{k+2} D(x,r_x)\, ,
\end{gather}
where the last inequality comes from Definition \ref{deph_D}, \eqref{eq_rho_rough} and $\epsilon_n = (1000n)^{-7n^2}$. Using Lemma \ref{l:D_pointwise_bound} and \eqref{e:reifenberg_displacement_L2} we therefore get that
\begin{align}\label{eq_yfinal}
\lambda^k\ton{\B {\rho r_x} x \cap S} d(p_x,V(x,r_x))^2\leq c r_x^{k+2} \delta^2\, .
\end{align}

In particular, using our lower bound on $\lambda^k\ton{\B {r_x/8} x \cap S}$ we obtain that $d(p_x,V(x,r_x))\leq c\delta r_x$, which implies for $\delta$ small enough that
\begin{gather}\label{eq_dfcm}
d(x,V(x,r_x))\leq (\rho+c\delta)r_x\leq r_x/100\, .
\end{gather}

Now consider the open covering of $S^\star$ given by $\cup_{x\in S^\star} \B{r_x/5}{x}$, and extract a countable Vitali subcovering. Thus
\begin{gather}
 S^\star \subseteq \bigcup_{x\in \tilde S} \B{r_x}{x}\cap S^\star\, ,
\end{gather}
where if $x, \ y\in \tilde S, \ x\neq y$, then $ \B{r_x/5}{x}\cap \B{r_y/5}{y}=\emptyset$.

\vspace{3mm}

Fix any $A\in \N$ and $\bar r=\rho^A$, and define the sets $\tilde S _{\bar r}^r = \cur{x\in \tilde S \ \ s.t. \ \ \bar r\leq r_x\leq r}$,  $\tSr=\tSr^{\rho^2}$ and 
\begin{gather}
S_{\bar r}=\bigcup_{x\in \tSr}\B{r_x}{x} \cap S^\star\, .
\end{gather}

It is clear that $S_{\bar r}\nearrow S^\star$ as $\bar r \to 0$, so if we have bounds on $\lambda^k(S_{\bar r})$ which are independent of $\bar r$, we are done. Thus from here on we will consider $\bar r=\rho^A$ to be positive and fixed.

\subsection{First induction: upwards} We are going to prove inductively on $j=A,\cdots,0$ that for all $x\in \R^n$ and $r_j=\rho^j\leq 1$ the measure of
\begin{gather}\label{eq_tB}
 \tB{r}{x}= \bigcup_{y\in \tSr^{\rho r}\cap{\B{r}{x}}}\B{r_y}{y} \cap S^\star \, ,
\end{gather}
is bounded by 
\begin{gather}\label{eq_srx}
 \lambda^k\ton{\tB {r_j} x } \leq C_1(n) r_j^k\leq 40^n r_j^k\, .
\end{gather}
Note that for $j=A$ this bound follows from the definition of $r_x$ and \eqref{eq_lambda_dens}. 
We emphasize that by construction $\B {r_y}{y}\cap S^\star $ appears in the union in \eqref{eq_tB} only if $\bar r \leq r_y\leq \rho r$. However, given \eqref{eq_rho2}, we have the inclusion $S_{\bar r} \cap \B 1 0 \subseteq \tB{1}{0}$. Note also that $\tB{r}{x}\subseteq \B{r(1+\rho)}{x}$. .

The reason why we have to introduce and estimate $\tB{r_j}{x}$ instead of $\B{r_j}{x}$ is that we have no a priori control of what happens inside any of the balls $\B{r_y}{y}$. The bounds \eqref{eq_lambda_dens} are valid only on each ball as a whole.  However, since our primary goal is to estimate $S_{\bar r}  \cap \B 1 0$, and we have the inclusion $S_{\bar r} \cap \B 1 0 \subseteq \tB{1}{0}$, there is no loss in this restriction.

\paragraph{Rough estimate} Fix some $j$, and suppose that \eqref{eq_srx} holds for all $\bar r \leq r_k \leq r_{j+1}$.
 
Let us first observe that we can easily obtain a bad upper bound on $\lambda^k\ton{\tB{\chi r_{j+1}}{x}}$ for some fixed $\chi>1$. Consider the points $y\in \tSr\cap{\B{\chi r_{j+1}}{x}}$, and divide them into two groups: the ones with $r_y\leq r_{j+2}$ and the ones with $r_y\geq r_{j+1}$.

For the first group, cover them by balls $\B{r_{j+1}}{z_i}$ such that $\B{r_{j+1}/2}{z_i}$ are disjoint. Since there can be at most $c(n,\chi)$ balls of this form, and for all of these balls the upper bound \eqref{eq_srx} holds, we have an induced upper bound on the measure of this set.

As for the points with $r_y> r_{j+2}$, by construction there can be only $c(n,\chi)$ many of them, and we also have the bound $r_y\leq \rho\chi r_{j+1}$, which by \eqref{eq_lambda_dens} implies $\lambda^k\ton{\tB{r_y}{y}}\leq c(n,\chi) r_{j+1}^k$. Summing up the two contributions, we get the very rough estimate
\begin{gather}\label{eq_lambdarough}
 \lambda^k(\tB{\chi r_{j+1}}{x})\leq C_2(n,\chi) r_{j+1}^k\, ,
\end{gather}
where $C_2>>C_1$.  Note that, as long as the inductive hypothesis holds, $C_1$ and $C_2$ are independent of $j$.  However, it is clear that successive repetitions of the above estimate will not lead to \eqref{eq_srx}.  Our goal therefore is to push down this estimate to arrive at the better constant of $C_1$, however it will be technically very convenient when applying the tools of Section \ref{ss:best_comparison} that we may assume the worse bound in the process.
\vspace{.5 cm}

\subsection{Second induction: downwards: outline}
Suppose that \eqref{eq_srx} is true for all $x\in \B 1 0$ and $i=j+1,\cdots,A$. Fix $x\in \R^n$, and consider the set $$\tilde B=\tB{r_j}{x}\subseteq \B{(1+\rho)r_j}{x}\, .$$  
We are going to build by induction on $i\geq j$ a sequence of smooth maps $\sigma_i:\R^n\to \R^n$ and smooth $k$-dimensional manifolds $T_{j,i}=T_i$ which satisfy nine properties, which we will use to eventually conclude the proof of \eqref{eq_srx} for $\tilde B$.  Let us outline the inductive procedure now, and introduce all the relevant terminology.  Everything described in the remainder of this subsection will be discussed more precisely over the coming pages.  To begin with, we will have at the first step that
\begin{align}
 &\sigma_j=id,\, \notag\\
 &T_{j,j}=T_j=\tilde V(x,r_j)\cap \B {2r_j}{x}\subseteq \R^n\, ,
\end{align}
where $\tilde V(x,r_j)$ is one of the $k$-dimensional affine subspaces which minimizes $\int_{\tB{r_j}{x}} d^2(y,\tilde V)\,d\lambda^k$.  Thus, the first manifold $T_j$ is a $k$-dimensional affine subspace which best approximates $\tB{r_j}{x}$.  At future steps we can recover $T_i$ from $T_{i-1}$ and $\sigma_i$ from the simple relation
\begin{align}
 &T_i=\sigma_i(T_{i-1})\, .
\end{align}
We will see that $\sigma_i$ is a diffeomorphism when restricted to $T_{i-1}$, and thus each additional submanifold $T_i$ is also diffeomorphic to $\dR^k$.  As part of our inductive construction we will build at each stage a covering of $T_i$ given by
\begin{align}\label{e:downward_induction:1}
\B{r_j}{T_i}\cap \tB{r_j}{x}\sim \bigcup_{s=j}^i\ton{\bigcup_{y\in I_b^s}\tB {r_s}{y} \bigcup_{y\in I_f^s} \ton{\B{r_s}{y}\cap S_{\bar r}}}\bigcup_{y\in I_g^i} \tB{r_i}{y}\, ,
\end{align}
where given any two distinct balls $B_1$ and $B_2$ in the covering, $B_1/5$ and $B_2/5$ are disjoint. Here $I_g$, $I_b$, and $I_f$ represent the {\it good}, {\it bad}, and {\it final} balls in the covering. A final ball $\tB{r_i}{y}$ with $y\in I^i_f$ is a ball such that $y\in \tilde S$ and $r_y=r_i$, and the other balls in the covering are characterized as good or bad according to how much measure they carry. Good balls are those with large measure, bad balls the ones with small measure. More precisely, we have
\begin{align}\label{e:downward_induction:2}
&\lambda^k\big(\tB {r_i}{y}\big)\geq \gamma_k r_i^k\, ,\quad\text{if}\quad y\in I^i_g\, ,\notag\\
&\lambda^k\big(\tB {r_i}{y}\big)<\gamma_k r_i^k\, ,\quad\text{if}\quad y\in I^i_b\, ,
\end{align}
where $\gamma_k=\omega_k 40^{-k}$. We will see that, over each ball $\tB{r_s}{y}$ in this covering, $T_i$ can be written as a graph over the best approximating subspace $\tilde V(y,r_s)$ with good estimates. \\

Our goal in these constructions is the proof of \eqref{eq_srx} for the ball $\tilde B=\tB{r_j}{x}$, and thus we will need to relate the submanifolds $T_i$, and more importantly the covering \eqref{e:downward_induction:1}, to the set $\tilde B$.  Indeed, this covering of $T_i$ almost covers the set $\tilde B$, at least up to an excess set $\tilde E_{i-1}$.  That is,
\begin{align}
\tilde B\subseteq \tilde E_{i-1}\cup \bigcup_{s=j}^i\ton{\bigcup_{y\in I_b^s}\tB {r_s}{y} \bigcup_{y\in I_f^s} \ton{\B{r_s}{y}\cap S_{\bar r}}}\bigcup_{y\in I_g^i} \tB{r_i}{y}\, .
\end{align}
We will see that the set $\tilde E_{i-1}$ consists of those points of $\tilde B$ which do not satisfy a uniform Reifenberg condition.  Thus in order to prove \eqref{eq_srx} we will need to estimate the covering \eqref{e:downward_induction:1}, as well as the excess set $\tilde E_{i-1}$.\\

Let us now outline the main properties used in the inductive construction of the mapping $\sigma_{j+1}:\dR^n\to \dR^n$, and hence $T_{j+1}=\sigma_{j+1}(T_j)$.  As is suggested in \eqref{e:downward_induction:1}, it is the good balls and not the bad and final balls which are subdivided at further steps of the induction procedure.  In order to better understand this construction let us begin by analyzing the good balls $\tB{r_i}{y}$ more carefully.  On each such ball we may consider the best approximating $k$-dimensional subspace $\tilde V(y,r_i)$.  Since $\tB{r_i}{y}$ is a good ball, one can check that most of $\tilde B\cap \tB{r_i}{y}$ must satisfy a uniform Reifenberg and reside in a small neighborhood of $\tilde V(y,r_i)$.  We denote those points which don't by $\tilde E(y,r_i)$, see \eqref{eq_exrough} for the precise definition.  Then we can define the next step of the excess set by
\begin{align}
\tilde E_i = \tilde E_{i-1}\bigcup_{y\in I^i_g} \tilde E(y,r_i)\, . 
\end{align}
Thus our excess set represents all those points which do not lie in an appropriately small neighborhood of the submanifolds $T_i$.  With this in hand we can then find a submanifold $T'_i\subseteq T_i$, which is roughly defined by 
\begin{align}
T'_i \approx T_i\cap \bigcup_{y\in I_g^i} \tB{r_i}{y}\, ,
\end{align}
see \eqref{e:downward_induction:map_manifold:1} for the precise inductive definition, such that
\begin{align}
\tilde B \subseteq \tilde E_{i}\cup \bigcup_{s=j}^i\ton{\bigcup_{y\in I_b^s}\tB {r_s}{y} \bigcup_{y\in I_f^s} \tB{r_s}{y}} \bigcup B_{r_{i+1}/4}\big(T'_i\big) \equiv \tilde R_i\bigcup B_{r_{i+1}/4}\big(T'_i\big)\, ,
\end{align}
where $\tilde R_i$ represents our remainder term, and consists of those balls and sets which will not be further subdivided at the next stage of the induction.  Now in order to finish the inductive step of the construction, we can cover $B_{r_{i+1}/4}\big(T'_i\big)$ by some Vitali set
\begin{align}
B_{r_{i+1}/4}\big(T'_i\big)\subseteq \bigcup_{y\in I} \tB{r_{i+1}}{y}\, ,
\end{align}
where $y\in I\subseteq T'_i$.  We may then decompose the ball centers 
\begin{align}
I^{i+1}=I^{i+1}_g\cup I^{i+1}_b\cup I^{i+1}_f\, ,
\end{align}
based on \eqref{e:downward_induction:2}.  Now we will use Definition \ref{deph_sigma} and the best approximating subspaces $V(y,r_{i+1})$ to build $\sigma_{i+1}:\dR^n\to \dR^n$ such that
\begin{align}
\text{supp}\{\sigma_{i+1}-Id\}\subseteq \bigcup_{y\in I^{i+1}_g}\B{3r_{i+1}}{y}\, .
\end{align}
This completes the outline of the inductive construction.

\subsection{Second induction: downwards: detailed proof}\label{sec_IIind_pre}
Let us now describe precisely the proof of this inductive construction which will lead to \eqref{eq_srx}. For $j\leq i\leq A$, we will define a sequence of approximating manifolds $T_i$ for the set $S$ and a sequence of smooth maps $\sigma_i$ such that
\begin{enumerate}
 \def\theenumi{\roman{enumi}}
 \item \label{it_i1} $\sigma_j=id$, $T_{j,j}=T_j=\tilde V(x,r_j)\cap \B {2r_j}{x}\subseteq \R^n$,
 \item \label{it_i2} $T_i=\sigma_i(T_{i-1})$,
 \item \label{it_i3} for $y\in T_{i}$,
 \begin{gather}\label{eq_distdelta}
  d(\sigma_{i+1}(y),y)\leq c\delta r_{i+1}\, ,
 \end{gather}
 and $\sigma_{i+1}|_{T_{i}}$ is a diffeomorphism,
 \item \label{it_i5} for every $y\in T_i$, $T_i\cap \B{2 r_i}{y}$ is the graph over some $k$-dimensional affine subspace of a smooth function $f$ satisfying
  \begin{gather}\label{eq_lipTi}
  \frac{\norm{f}_{\infty}}{r_i} + \norm{\nabla f}_\infty \leq c\delta\, ,
 \end{gather}
\end{enumerate}
As outlined before, the manifolds $T_i$ will be good approximations of the set $S$ up to some ``excess'' set of small measure. Moreover, we will also introduce the concept of \textit{good, bad} and \textit{final} balls (whose centers will be in the sets $I_g^i$, $I_b^i$ and $I_f^i$), a \textit{remainder set} $R^i$, and the manifolds $T_i'\subseteq T_i$. Before giving the precise definitions (which are in equations \eqref{eq_deph_Igb}, \eqref{eq_deph_If}, \eqref{eq_R} and \eqref{e:downward_induction:map_manifold:1} respectively), let us group here all the properties that we will need (and prove) for these objects, so that the reader can always come back to this page to have a clear picture of what are the objectives of the proof.
\begin{enumerate}
 \setcounter{enumi}{4} \def\theenumi{\roman{enumi}}
 \item\label{it_i4} for every $i\geq j+1$ and $y\in I_g^i$, $d(y,\tilde V(y,r_i))\leq c\delta r_i$, the set $T_i\cap \B{1.5 r_i}{y}$ is the graph over $\tilde V(y,r_i)$ of a smooth function $f$ satisfying \eqref{eq_lipTi}, where $\tilde V(x,r)$ is one of the $k$-dimensional affine subspaces minimizing $\int_{\tB{r}{x}} d^2(y,\tilde V)\,d\lambda^k$,
 \item \label{it_i6} for all $i$, we have the inclusion
 \begin{gather}\label{eq_sub}
  \tilde B \subseteq \B{r_{i}/2}{T_{i-i}'}\bigcup \tilde R^{i-1}\, ,
 \end{gather}
 \item \label{it_i7} for every $i\geq j+1$ and all $y\in I_g^i$, the set $T_{i-1}'\cap \B{2 r_i}{y}$ is a Lipschitz graph over the plane $\tilde V(y,r_{i})$ with $d(y,\tilde V(y,r_{i}))\leq c\delta r_{i}$ ,
\end{enumerate}
The last two properties needed are the key for the final volume estimates:
\begin{enumerate}
 \setcounter{enumi}{7} \def\theenumi{\roman{enumi}}
\item \label{it_i8} we can estimate
 \begin{gather}\label{eq_volFB}
  \lambda^k(\sigma_i^{-1}(T_i'))+ \qua{\#\ton{I_b^i}+\#\ton{I_f^i}} \omega_k (r_i/10)^k \leq \lambda^k(T_{i-1}')\, ,
 \end{gather}
 \item\label{it_i9} we can estimate the excess set by
 \begin{align}
 \lambda^k\big(\tilde E(y,r_i)\big) r_{i+1}^2\leq C(n) D^k_S(y,2r_i))\, .
 \end{align}
\end{enumerate}
\vspace{.5 cm}

\paragraph{Inductive definitions} First of all, note that we can assume without loss of generality that
\begin{gather}
 \lambda^k(\tilde B_{r_j}(x))\geq \gamma_k r_j^k\, ,
\end{gather}
otherwise there's nothing to prove. With this hypothesis, we start our inductive construction by setting
\begin{gather}
 I_g^j = \cur{x}\, , \quad I_f^j=I_b^j = \emptyset\, , \quad T^j = \tilde V(x,r_j)\, \quad T_j'=T_j\, , \quad \sigma_j=id.
\end{gather}

\paragraph{Excess set.}
Let us begin by describing the construction of the excess set. Fix any $y$ and $r_i\geq \rho^{-1}\bar r=\rho^{A-1}$, and assume that $\tB{r_i}{y}$ satisfies $\lambda^k(\tilde B_{r_i}(y))\geq \gamma_k r_i^k$.\\

Thus define $\tilde V(y,r_i)$ to be (one of) the $k$-dimensional plane minimizing $\int_{\tB{r_i}{y}} d(y,\tilde V)^2 d\lambda^k$, and define also the excess set to be the set of points which are some definite amount away from the best plane $\tilde V$.  Precisely, 
\begin{gather}\label{eq_E}
 E(y,r_i)= \B{r_i}{y} \setminus \B{r_{i+1}/4}{\tilde V}\, , \quad \tilde E(y,r_i) =  \bigcup_{y\in \tSr^{r_{i+2}}\cap E(y,r_i)}\B{r_y}{y} \bigcap S^\star \, .
\end{gather}
The points in $\tilde E$ are in some sense what prevents the set $S$ from satisfying a uniform Reifenberg condition on $\B {r_i}{y}$. By construction, all points in $\tilde E$ have a uniform lower bound on the distance from $\tilde V$, so that if we assume $\lambda^k(\tilde B_{r_i}(y))\geq \gamma_k r_i^k$, i.e. $\B{r_i}{y}$ is a good ball, then we can estimate
\begin{align}\label{eq_exrough}
\int_{\tB{r_i}{y}\setminus \tilde E(y,r_i)} &d(y,\tilde V(y,r_i))^2 \ d\lambda^k(y)+\lambda^k\big(\tilde E(y,r_i)\big) (r_{i+1}/5)^2\leq \int_{\tB{r_i}{y}} d(y,\tilde V(y,r_i))^2 \ d\lambda^k(y)\notag \\
&\leq \int_{\tB{r_i}{y}} d(y,V(y,r_i))^2 \ d\lambda^k(y)
\leq \int_{\B{(1+\rho)r_i}{y}} d(y,V(y,(1+\rho)r_i))^2\notag\\
&= ((1+\rho)r_i)^{k+2} D^k_S(y,(1+\rho)r_i)\leq C(n) r_i^{k+2} D_S^k(y,2r_i)\, .
\end{align}
\vspace{.5 cm}

\paragraph{Good, bad and final balls, and remainder set} Inductively, let us define the remainder set to be the union of all the previous bad balls, final balls, and the excess sets:
\begin{gather}\label{eq_R}
R^i=\bigcup_{s=j}^i\ton{\bigcup_{y\in I_b^s}\B {r_s}{y} \bigcup_{y\in I_f^s} \B{r_{s}}{y} \bigcup_{y\in I_g^s}E(y,r_s) }\, ,\quad \tilde R^i=\bigcup_{s=j}^i\ton{\bigcup_{y\in I_b^s}\tB {r_s}{y} \bigcup_{y\in I_f^s} \ton{\B{r_s}{y}\cap S_{\bar r}} \bigcup_{y\in I_g^s}\tilde E(y,r_s) }\, . 
\end{gather} 
The set $\tilde R^i$ represents everything we want to throw out at the inductive stage of the proof.  We will see later in the proof how to estimate this remainder set itself.  

Note that for $i=j$, i.e. at the first step of the induction, we have
\begin{gather}
R^j=E(x,r_j)\, ,\quad \tilde R^j=\tilde E(x,r_j) \, . 
\end{gather}

Now consider the balls in the covering outside the remainder set, and separate the balls with radius $\geq r_{i+1}$ from the others by defining for $y\in I_g^i$ the sets 
\begin{gather}\label{eq_deph_If}
I_f^{i+1}(y)=\cur{z\in \Big(\tSr\setminus  R^i\Big)\cap \B{r_i}{y} \ \ s.t. \ \ r_z=r_{i+1}}\, ,
\end{gather}
and
\begin{gather}
J^{i+1}(y)=\cur{z\in \Big(\tSr^{r_{i+2}}\setminus R^i\Big)\cap \B{r_i}{y}\cap \B{r_{i+1}/3}{\tilde V(y,r_i)}}\, .
\end{gather}
From this we can construct the sets
\begin{gather}
I_f^{i+1}=\cup_{y\in I_g^i} I_f^{i+1}(y)\quad \text{ and }\quad J^{i+1}=\cup_{y\in I_g^i} J^{i+1}(y)\, ,
\end{gather}


Note that by construction we have 
\begin{gather}\label{e:gbf_balls:1}
 S_{\bar r}\setminus \tilde R^i \subseteq  \bigg(\bigcup_{z\in I_f^{i+1}}\B{r_{i+1}}{z} \bigcup_{z\in J^{i+1}} \B{r_z}{z}\bigg)\bigcap S_{\bar r} \, .
\end{gather}

Let us now consider a minimal covering of \eqref{e:gbf_balls:1} given by 
\begin{align}
 S_{\bar r} \setminus \tilde R^i \subseteq \bigcup_{z\in I^{i+1}_f}\B{r_{i+1}}{z}\, \bigcup_{z\in I}\B {r_{i+1}}{z}\, ,
\end{align}
where $I\subseteq T_i'$, and for any $p\neq q\in I_f^{i+1}\cup I$, $\B{r_{i+1}/5}{p}\cap \B{r_{i+1}/5}{q}=\emptyset$.  Note that this second property is true by definition for $p,q\in I_f^{i+1}$, we only need to complete this partial Vitali covering with other balls of the same size. To be precise, note that by \eqref{eq_R} and \eqref{eq_E}
\begin{gather}\label{eq_333}
 S_{\bar r}\setminus R^i \setminus \bigcup_{z\in I_f^{i+1}}\B{4 r_{i+1}/5}{z} \subset \ton{\bigcup_{y\in I_g^i} \B {r_{i+1}/4 }{\tilde V(y,r_i)} }\bigcap \ton{\bigcup_{y\in I_g^i} \B{r_i}{y}} \bigcap \ton{\bigcup_{z\in I^{i+1}_f}\B{4r_{i+1}/5}{z}}^C\, .
\end{gather}
Take a finite covering of this last set by balls $\cur{\B {r_{i+1}/3}{y}}_{y\in Y}$. Note that we can pick
\begin{gather}\label{eq_disj}
 Y\cap \ton{\bigcup_{z\in I^{i+1}_f}\B{r_{i+1}}{z}}=\emptyset\, .
\end{gather}
Since $T_i'$ is locally a Lipschitz graph over $\tilde V(y,r_i)$ with \eqref{eq_lipTi}, we can choose $Y\subset T_i'$. Moreover, since we have the inclusion $ S_{\bar r} \setminus \tilde R^i \subseteq \bigcup_{y\in I_g^i} \tB{r_i}{y}$, we can also choose $Y\subset \bigcup_{y\in I_g^i} \B{r_i}{y}$.

Consider a Vitali subcovering of this set, denote $I$ the set of centers in this subcovering. Such a subcovering will have the property that the balls $\cur{\B {r_{i+1}/3} {y}}_{y\in I}$ will be pairwise disjoint. These balls will also be disjoint from $\bigcup_{z\in I^{i+1}_f}\B{r_{z}/5}{z}$ by \eqref{eq_disj}. The (finite version of) Vitali covering theorem ensures that $\bigcup_{y\in I}\B {r_{i+1}} {y}$ will cover the whole set in \eqref{eq_333}.

Now by construction of $I_f$ and the remainder set, all the balls $\cur{\B {r_x}{x}}_{s\in \tilde S_{\bar r}}$ with $r_x\geq r_{i+1}$ have already been accounted for. This means that 
\begin{align}
 S_{\bar r} \setminus \tilde R^i \setminus \bigcup_{z\in I^{i+1}_f}\B{r_{i+1}}{z}\subset \bigcup_{y\in I}\tB {r_{i+1}} {y}\, ,
\end{align}
as desired.

We split the balls with centers in $I$ into two subsets, according to how much measure they carry. In particular, let
\begin{gather}\label{eq_deph_Igb}
 I_g^{i+1} = \cur{y\in I \ \ s.t. \ \ \lambda^k \ton{\tB{r_{i+1}}{y}}\geq \gamma_k r_{i+1}^k}\, , \quad I_b^{i+1} = \cur{y\in I \ \ s.t. \ \ \lambda^k \ton{\tB{r_{i+1}}{y}}< \gamma_k r_{i+1}^k}\, .
\end{gather}
\vspace{.5 cm}

\subsection{Map and manifold structure.}
Let $\cur{\lambda_{s}^{i+1}}=\cur{\lambda_{s}}$ be a partition of unity such that for each $y_s\in I_g^{i+1}$
\begin{itemize}
 \item $\supp{\lambda_{s}}\subseteq \B{3 r_{i+1}}{y_s}$
 \item for all $z\in \cup_{y_s\in I_g^{i+1}} \B{2 r_{i+1}}{y_s}$, $\sum_s \lambda_{s}(z)=1$
 \item $\max_s \norm{\nabla \lambda_{s}}_\infty \leq C(n)/r_{i+1}$.
\end{itemize}
For every $y_s\in I_g^{i+1}$, let $\tilde V(y_s,r_{i+1})$ to be (one of) the $k$-dimensional subspace that minimizes $\int_{\tB{r_{i+1}}{y_s}} d(z,V)^2 d\lambda^k$. By Remark \ref{rem_mon} and by \eqref{eq_tB}, we can estimate
\begin{gather}
 r_{i+1}^{-k-2}\int_{\tB{y_s}{r_{i+1}}} d(z,\tilde V(y_s,r_{i+1}))^2 d\lambda^k(z) \leq (1+\rho)^{k+2}D(y_s,(1+\rho)r_{i+1})\, .
\end{gather}
Let $p_s\in \B{(1+\rho)r_{i+1}}{y_s}$ be the center of mass of $\lambda^k|_{\tB {r_{i+1}}{y_s}}$.  It is worth observing that $p_s\in \tilde V_{y_s,r_{i+1}}$.

Define the smooth function $\sigma_{i+1}:\R^n\to\R^n$ as in Definition \ref{deph_sigma}, i.e.,
\begin{gather}
 \sigma_{i+1}(x)= x+\sum_s \lambda_s^{i+1}(x) \pi_{\tilde V(y_s,r_{i+1})^\perp}\ton{p_s-x} \, .
\end{gather}

With this function, we can define the sets
\begin{gather}\label{e:downward_induction:map_manifold:1}
 T_{i+1}=\sigma_{i+1} (T_i)\, ,\quad  T_{i+1}'=\sigma_{i+1}\ton{T_{i}'\setminus \bigcup_{y\in I_f^{i+1}} \B{r_{y}/6}{y}\bigcup_{y\in I_b^{i+1}}\B{r_{i+1}/6}{y}}\, .
\end{gather}

Fix any $y\in I_g^{i+1}$, and let $z\in I_g^{i}$ be such that $y\in \B{r_i}{z}$. By induction, $T_i\cap \B{10r_{i+1}}{y}\subseteq T_i\cap \B{1.5 r_i}{z}$ is the graph of a $C^1$ function over $\tilde V(z,r_{i})$. Consider the points $\cur{y_s}=I_{g}^{i+1}\cap \B{5r_{i+1}}{y}$. By construction and using an estimate similar to \eqref{eq_dfcm}, it is easy to see that $d(y_s,\tilde V(z,r_{i}))\leq c\delta r_{i+1}$, and so we can apply the estimates in Lemma \ref{lemma_vw} with $M=C_1$ by the first induction. Using condition \eqref{e:reifenberg_displacement_L2}, we obtain that for all $y_s$:
\begin{gather}\label{eq_deltasquash}
 r_{i+1}^{-1} d_H\ton{\tilde V(z,r_i)\cap \B{r_{i+1}}{y_s},\tilde V(y_s,r_{i+1})\cap \B{r_{i+1}}{y_s} }\leq c\ton{D^k_S (y_s,(1+\rho)r_{i+1})+ D^k_S (z,(1+\rho)r_i) }^{1/2}\leq c(n,\rho,C_1)\delta\, .
\end{gather}
This implies that, if $\delta(n,\rho,C_1)$ is small enough, $T_i\cap \B{10r_{i+1}}{y}$ is a graph also over $\tilde V(y,r_{i+1})$ satisfying the same estimates as in \eqref{eq_lipTi}, up to a worse constant $c$.  That is, if $\delta$ is sufficiently small, we can apply Lemma \ref{lemma_squash} and prove induction point \eqref{it_i4}.

It is important to notice that on $\B {1.5 r_{i+1}}{ y}$, the bound on the Lipschitz constant of the graph is \textit{independent} of the previous bound in the induction step by point \eqref{it_s3} in Lemma \ref{lemma_squash}.
\\

\paragraph{Points \eqref{it_i3} and \eqref{it_i5}} Points \eqref{it_i3} and \eqref{it_i5} are proved with similar methods. We briefly sketch the proofs of these two points.\\

Let $y\in T_{i+1}$, and recall the function $\psi_{i+1}\equiv 1-\sum \lambda_s$. If $\psi_{i+1}|_{\B{2r_{i+1}}{y}}$ is identically $1$, then $\sigma_{i+1}|_{\B{2r_{i+1}}{y}}=id$, and there is nothing to prove.

Otherwise, there must exist some $z'\in I_g^{i+1}\cap \B{5r_{i+1}}{y}$, and thus there exists a $z\in I_g^{i}$ such that $\B{3r_{i+1}}{y}\subseteq \B{1.5 r_{i}}{z}$.  By point \eqref{it_i5} in the induction, $T_{i}\cap \B{1.5 r_{i}}{z}$ is a Lipschitz graph over $\tilde V(z,r_{i})$. Proceeding as before, by the estimates in Lemma \ref{lemma_vw} and Lemma \ref{lemma_squash}, we obtain that $T_{i+1}\cap \B{2r_{i+1}}{y}$ is also a Lipschitz graph over $\tilde V(z,r_{i})$ with small Lipschitz constant, and that $\abs{\sigma_{i+1}(p)-p}\leq c\delta r_{i+1}$ for all $p\in T_i$. 

Moreover, $\sigma_{i+1}|_{T_i}$ is locally a diffeomorphism at scale $r_{i+1}$. From this we see that $\sigma_{i+1}$ is a diffeomorphism on the whole $T_i$.  

It is worth to remark a subtle point. In order to prove point \eqref{it_i3}, we cannot use inductively \eqref{it_i3}, we need to use point \eqref{it_i5}. Indeed, as we have seen, given any $z\in I_g^i$, then $T_i\cap \B {1.5 r_i}{z}$ is a Lipschitz graph of a function $f$ where $\abs{\nabla f}\leq c\delta$, and this $c$ is \textit{independent} of the induction step we are considering by \eqref{it_s3} in Lemma \ref{lemma_squash}. If we tried to iterate directly the bound given by \eqref{it_i3}, the constant $c$ would depend on the induction step $i$, and thus we could not conclude the estimate we want.
\\

Now we turn our attention to the items \eqref{it_i6}, \eqref{it_i7}, \eqref{it_i8}.
\paragraph{Properties of the manifolds $T_i'$} Here we want to prove the measure estimate in \eqref{eq_volFB}. The basic idea is that bad and final balls correspond to holes in the manifold $T_i$, and each of these holes carries a $k$-dimensional measure which is proportionate to the measure inside the balls. In particular, let $\B{r_{i+1}}{y}$ be a bad or a final ball. In either case, we will see that $y$ must be $\sim r_{i+1}$-close to $T_i$, which is a Lipschitz graph at scale $r_i$. This implies that $\lambda^k(\B{r_i}{y}\cap T_i) \sim r_i^k$, and thus we can bound the measure of a bad or final ball with the measure of the hole we have created on $T_i$.

In detail, point \eqref{it_i6} is an immediate consequence of the definition of $\tilde R_i$. As for point \eqref{it_i7}, if $y\in I_g^i$, then by definition, there exists $z\in I_g^{i-1}$ such that $y\in \B{r_{i-1}}{z}$, and $y\not \in R^{i-1}$. This implies that $y$ is far away from the balls we discard while building $T_{i-1}'$, in particular 
\begin{gather}
 \B{3r_i}{y}\bigcap \ton{\bigcup_{y\in I_f^{i-1}}\B{r_{y}/6}{y}\bigcup_{I_b^{i-1}}\B{r_{i-1}/6}{y} }= \emptyset
\end{gather}
This proves that $T_{i-1}\cap \B{2r_i}{y}=T_{i-1}'\cap \B{2r_i}{y}$, and in turn point \eqref{it_i7}.

In order to prove the volume measure estimate, consider that 
\begin{gather}
 T_{i}' \setminus \sigma_{i+1}^{-1}(T_{i+1}')\subseteq \ton{\bigcup_{y\in I_f^{i+1}\cup I_b^{i+1}}\B{r_{i+1}/6}{y} }\, .
\end{gather}
Note that the balls in the collection $\cur{\B{r_{i+1}/5}{y}}_{y\in I_f^{i+1}\cup I_b^{i+1}}$ are pairwise disjoint. Pick any $y\in I_b^{i+1}$, and let $z\in I_g^i$ be such that $y\in \B{r_i}{z}$. By definition, $y\in T_i'$ and $\lambda^k(\tB{r_{i+1}}{y})< \gamma_k r_{i+1}^k<10^{-k}\omega_k r^k_{i+1}$. 
Since $y\not \in R^i$, by \eqref{eq_R} $\B{r_{i+1}/6}{y}$ is disjoint from the set
\begin{gather}
 \bigcup_{s=j}^i\ton{\bigcup_{y\in I_b^s}\B {r_s/5}{y} \bigcup_{y\in I_f^s} \B{r_s/5}{y}}
\end{gather}
and thus $T_i\cap \B {r_{i+1}/6}{y}=T_i'\cap \B {r_{i+1}/6}{y}$. Moreover, $T_i\cap \B {2r_i}{z}$ is a graph over $\tilde V(z,r_i)$ with $y\in T_i'\subset T_i$, thus (for $\delta\leq \delta_0(n)$ small enough) 
\begin{gather}
\lambda^k (T_i'\cap \B{r_{i+1}/6}{y})\geq \omega_k 7^{-k} r_{i+1}^k\, .
\end{gather}

A similar estimate holds for the final balls. The only difference is that if $y\in I_f^{i+1}$, then it is not true in general that $y\in T_i$. However, by construction of the balls $\B{r_y}{y}$, using an argument similar to the one in the proof of \eqref{eq_yfinal}, we obtain that $d(y,\tilde V(z,r_i))\leq c\delta r_{i}+ r_{i+1}/10$. Given \eqref{eq_lipTi}, we can conclude
\begin{gather}
\lambda^k (T_i'\cap \B{r_{i+1}/7}{y})\geq \omega_k 10^{-k} r_{i+1}^k\, .
\end{gather}

Now it is evident from the definition of $T_i'$ that
\begin{gather}
 \lambda^k(\sigma_i^{-1}(T_i'))+ \qua{\#\ton{I_b^i}+\#\ton{I_f^i}} \omega_k (r_i/10)^k \leq \lambda^k(T_{i-1}')\, .
\end{gather}

\subsection{Volume estimates on the manifold part} Here we want to prove that for every measurable $\Omega\subseteq T_i$
\begin{gather}\label{eq_volT}
  \lambda^k(\sigma_{i+1}(\Omega))\leq \lambda^k(\Omega) + c(n,\rho,C_1) \int_{ S \cap \B{(1+\rho) r_j}{x}} D(p,4r_{i+1}) d\lambda^k(p)\, .
\end{gather}
The main applications will be with $\Omega=T_i\cap \B {2r_j}{x}$ and $\Omega=T_{i}'\cap \B {2r_j}{x}$. In order to do that, we need to analyze in a quantitative way the bi-Lipschitz correspondence between $T_i$ and $T_{i+1}$ given by $\sigma_{i+1}$. 

As we already know, $\sigma_{i+1}=id$ on the complement of the set $G=\cup_{y\in I_g^{i+1}} \B{5r_{i+1}}{y} $, so we can concentrate only on this set.

Using the same techniques as before, and in particular by Lemmas \ref{lemma_vw} and \ref{lemma_squash}, we prove that for each $y\in I_g^{i+1}$, the set $ T_{i}\cap \B{5r_{i+1}}{y}$ is a Lipschitz graph over $\tilde V(y,r_{i+1})$ with Lipschitz constant bounded by
\begin{gather}
 c(n,\rho,C_1) \ton{D(y,(1+\rho)r_{i+1}) + \sum_{z\in I_g^{i}\cap \B{5r_{i}}{y} } D(z,(1+\rho)r_i) }^{1/2}
\end{gather}
In a similar manner, we also have that $T_{i+1}\cap \B{5r_{i+1}} {y}$ is a Lipschitz graph over $\tilde V(y,r_{i+1})$ with Lipschitz constant bounded by
\begin{gather}
 c(n,\rho,C_1) \ton{\sum_{z\in I_g^{i+1} \cap \B{10 r_{i+1}}{y} } D(z,(1+\rho)r_{i+1}) + \sum_{z\in I_g^{i}\cap \B{5r_{i}}{y} } D(z,(1+\rho)r_i) }^{1/2}\, .
\end{gather}
By the bi-Lipschitz estimates in the squash Lemma \ref{lemma_squash}, we obtain that $\sigma_{i+1}$ restricted to $T_i \cap \B{5r_{i+1}}{y}$ is a bi-Lipschitz equivalence with bi-Lipschitz constant bounded by
\begin{gather}\label{eq_2lip_est}
 L(y,5r_{i+1})\leq 1+ c\ton{\sum_{z\in I_g^{i+1} \cap \B{10 r_{i+1}}{y} } D(z,(1+\rho)r_{i+1}) + \sum_{z\in I_g^{i}\cap \B{5r_{i}}{y} } D(z,(1+\rho)r_i) }
\end{gather}

In order to estimate this upper bound, we use an adapted version of \eqref{eq_estDint} and the definition of good balls to write for all $z\in I_g^{i+1}$
\begin{gather}
 D(z,(1+\rho)r_{i+1})\leq c\fint_{\tB{r_{i+1}}{z}} D(p,4r_{i+1})d\lambda^k(p)\leq c(n,\rho,C_1)r_{i+1}^{-k}\int_{\tB{r_{i+1}}{z}} D(p,4r_{i+1})d\lambda^k(p)\, ,
\end{gather}
and a similar statement holds for $z\in I_g^i$. Since by construction any point $x\in \R^n$ can be covered by at most $c(n)$ different good balls at different scales, we can bound
\begin{gather}
 L(y,5r_{i+1})\leq 1+\frac{c(n,\rho,C_1)}{r_{i+1}^k} \int_{ \tB {5 r_i}{y}} \qua{D(p,4r_{i+1}) + D(p,4r_i)}d\lambda^k(p)
\end{gather}

We can also badly estimate
\begin{gather}
 D(p,4r_{i+1}) + D(p,4r_i) \leq c(n,\rho)D(p,4r_{i})\, .
\end{gather}

Now let $P_s$ be a measurable partition of $\Omega\cap G$ such that for each $s$, $P_s\subseteq \B{5r_{i+1}}{y_s}$. By summing up the volume contributions of $P_s$, and since evidently $\lambda^k(P_s)\leq c r_{i+1}^k$, we get 
\begin{gather}
 \lambda^k(\sigma_{i+1}(\Omega))= \sum_s \lambda^k(\sigma_{i+1}(\Omega\cap P_s)) \leq \sum_s \lambda^k(P_s)\ton{1+\frac{c}{r_{i+1}^k} \int_{ \tB {5 r_i}{y_s}} D(p,4r_{i}) d\lambda^k(p)}\leq \notag\\
 \leq \lambda^k(\Omega) + c\int_{ \bigcup_{y_s\in I_g^{i+1}} \tB {5 r_i}{y_s}} D(p,4r_{i}) d\lambda^k(p)\leq \notag\\
 \leq \lambda^k(\Omega) + c(n,\rho,C_1) \int_{\B{(1+\rho)r_j}{x}\cap S} D(p,4r_{i}) d\lambda^k(p)\label{eq_arg}\, .
\end{gather}

\vspace{.5 cm}

\subsection{Estimates on the excess set}
In this paragraph, we estimate the total measure of the excess set, which is defined by
\begin{gather}
 \tilde E_T = \bigcup_{i=j}^A \bigcup_{y\in I_g^i} \tilde E(y,r_i)\, .
\end{gather}
At each $y$ and at each scale $r_i$ such that $\lambda^k (\tB{r_i}{y})\geq \gamma_k r_i^k$, we have by \eqref{eq_exrough} and \eqref{eq_estDint}
\begin{gather}
 \lambda^k(\tilde E(y,r_i))\leq c(n,\rho) r_i^k D^k_S(y,2r_i)\leq c(n,\rho) r_i^k \fint_{\B{2r_i}{y}\cap S} D^k_S(p,4r_i)d\lambda^k(p)\leq c(n,\rho)\int_{\B{2r_i}{y}\cap S} D^k_S(p,4r_i)d\lambda^k(p)\, 
\end{gather}

Now by construction of the good balls, there exists a constant $c(n)$ such that at each step $i$, each $x\in \R^n$ belongs to at most $c(n)$ many balls of the form $\cur{\B {2r_i}{y}}_{y\in I_g^i}$. Thus for each $i\geq j$, we have
\begin{gather}
 \sum_{y\in I_g^i} \lambda^k(\tilde E(y,r_i))\leq c(n,\rho) \int_{\cup_{y\in I_g^i} \B{2r_i}{y}} D^k_S(p,4r_i)d\lambda^k(p)\leq c(n,\rho)\int_{\B{2r_j}{x}\cap S} D^k_S(p,4r_i)d\lambda^k(p)\, .
\end{gather}
If we sum over all scales, we get
\begin{gather}
 \lambda^k\ton{\tilde E_T} \leq c(n,\rho)\sum_{i=j}^A\int_{\B{2r_j}{x}\cap S}D^k_S(p,4r_i)d\lambda^k(p)\, .
\end{gather}
Since $\rho=2^{-q}$, it is clear that
\begin{gather}\label{eq_volE}
 \lambda^k\ton{\tilde E_T} \leq c(n,\rho)\sum_{i=j}^A\int_{\B{2r_j}{x}\cap S}D^k_S\ton{p,2^{2-qi}}d\lambda^k(p)\leq c(n,\rho)\delta r_j^k\, ,
\end{gather}
since the sum in the middle is clearly bounded by \eqref{eq_sum2^alpha}.  \\

This estimate is exactly what we want from the excess set.
\vspace{.5 cm}

\subsection{Completion of the weak upper bounds}  
By adding \eqref{eq_volT}, with $\Omega\equiv\sigma_{i+1}^{-1}(T_{i+1}')$, and \eqref{eq_volFB}, we prove that for all $i=j,\cdots,A+1,...$
\begin{gather}
 \lambda^k(T_{i+1}')+\qua{\#\ton{I_b^i}+\#\ton{I_f^i}} \omega_k (r_i/10)^k \leq \lambda^k(T_{i}')+ c(n,\rho,C_1) \int_{ \B{4r_j}{y}\cap S_{\bar r}  } D(p,2r_{i}) d\lambda^k(p)\, .
\end{gather}
Adding the contributions from all scales, by \eqref{eq_sum2^alpha} we get 
\begin{align}
 &\lambda^k(T_{i+1}')+\sum_{s=j}^{i}\qua{\#\ton{I_b^s}+\#\ton{I_f^s}} \omega_k (r_s/10)^k \leq \notag \\
 &\leq \lambda^k(T_j\cap \B{2r_j}{x})+ c(n,\rho,C_1) \sum_{s=j}^{i} \int_{ \B{2r_j}{x}\cap S_{\bar r} } D(p,2r_{s}) d\lambda^k(p)\leq\notag \\
 &\leq \lambda^k\ton{T_j\cap \B{2r_j}{x}}\qua{1+ c(n,\rho,C_1)\delta^2}\,\label{eq_volTFB} \, ,
\end{align}
where in the last line we estimated $\lambda^k\ton{T_j\cap \B{2r_j}{x}}\sim r_j^{k}$, since $T_j$ is a $k$-dimensional subspace, and we bounded the sum using \eqref{eq_sum2^alpha}.

In the same way, we can also bound the measure of $T_i$ by
\begin{gather}
 \lambda^k(T_{i+1})\leq \lambda^k\ton{T_j\cap \B{2r_j}{x}}\qua{1+ c(n,\rho,C_1)\delta^2}\leq c(n)r_j^k\,\label{eq_volTFB_2} .
\end{gather}

\vspace{.5 cm}

\paragraph{Upper estimates for $\lambda^k$.}
Given the definition of $\tSr$, all the balls $\B{r_y}{y}$ inside this set have $r_y\geq r_A$. Thus by \eqref{eq_sub}
\begin{gather}
 \tilde B = \tB{r_j}{x} \subseteq \tilde R^A\, .
\end{gather}
In particular, this and the estimates in \eqref{eq_volE} and \eqref{eq_volTFB} imply
\begin{gather}
 \lambda^k(\tilde B) \leq \sum_{s=j}^{A}\qua{\gamma_k\#\ton{I_b^s} + 2\omega_k \#\ton{I_f^s} } (r_s)^k + \lambda^k(\tilde E_T) \leq \notag\\
 \leq C_3(n) \sum_{s=j}^{i}\qua{\#\ton{I_b^s} + \#\ton{I_f^s} } \omega_k (r_s/10)^k+ \lambda^k(\tilde E_T)\leq C_3(n) (1+c(n,\rho,C_1)\delta) r_j^k \, .
\end{gather}
In this last estimate, we can fix $C_1(n)=2C_3(n)\leq 40^n$, and $\rho(n,C_1)$ according to Lemma \ref{lemma_alpharho}. Now, it is easy to see that if $\delta(n,\rho,C_1)$ is sufficiently small, then
\begin{gather}\label{eq_volR}
 \lambda^k(\tilde B) \leq C_1(n) r_j^k \, ,
\end{gather}
which finishes the proof of the downward induction, and hence the actual ball estimate \eqref{eq_lambda_obj2}.

\vspace{.7 cm}
\subsection{\texorpdfstring{Rectifiability, $W^{1,p}$ and improved measure bounds}{Rectifiability, W(1,p) and improved measure bounds}}\label{sec_impro}
We have now proved a mass bound for the set $S$.  In this subsection, we wish to improve this mass bound, as well as prove the rectifiability conditions on $S$.  At several stages we will have to repeat the arguments of the upward and downward inductions, which we will only sketch since the arguments will be almost verbatim, though in some cases technically much easier.

\paragraph{Rectifiability and $W^{1,p}$ estimates}
Now that we have proved the bound \eqref{eq_lambda_obj2}, we can use a construction similar to the one just described to sharpen the upper estimate and prove rectifiability of $\lambda^k|_S$. The main difference with the previous case is that we do not need to be concerned any more with any fixed covering $\B{r_y}{y}$ of our set, and so at every step we can estimate directly the measure $\lambda^k|_S$ of a whole Euclidean ball $\B {r}{z}$ without the need to limit our estimate to $\tB{r}{z}$. For the same reason, we do not need to introduce the subspaces $\tilde V(z,r)$ and the sets $\tilde E, \ \tilde R$, we will only deal with $V(z,r),E,R$. Moreover, since we don't have to stop our construction at any positive scale $r_y$, we do not need to introduce and study the set of final balls $I_f^i$. As a consequence, the construction we need here is technically less involved than the one used before. 

However, as opposed to the previous construction, we are concerned about what happens at an infinitesimal scale, and in particular we want to have uniform estimates also for the limit $\bar r\to 0$ of various quantities.

\vspace{3mm}

Fix any $x\in \R^n$ and $r>0$ such that $\B r x \subseteq \B 1 0$. We are going to prove \eqref{eq_lambda_obj}, i.e.
\begin{gather}
 \lambda^k\ton{S\cap \B r x} \leq (1+\epsilon) \omega_k r^k\, ,
\end{gather}
where $\epsilon>0$ is the arbitrary constant chosen at the beginning of Theorem \ref{t:reifenberg_W1p_holes}.

For convenience and wlog, we assume $x=0$ and $r=1$. Using the same technique as before, we build a sequence of smooth maps $\cur{\sigma_i}_{i=0}^\infty$ on $\R^n$ and manifolds $T_i$ such that
\begin{enumerate}
 \def\theenumi{\roman{enumi}}
 \item $\sigma_0=id$, $T_0=V(x,r)=V(0,1)\subseteq \R^n$ and $\phi_i = \sigma_i\circ \sigma_{i-1}\circ \cdots \sigma_0$,
 \item $T_i=\sigma_i(T_{i-1})=\phi_i(T_0)$ is a smooth $k$-dimensional submanifold of $\R^n$,
 \item \label{it__w1p_i3}for $y\in T_{i}$
 \begin{gather}\label{eq_distdelta3}
  d(\sigma_{i+1}(y),y)\leq c\delta r_{i+1}\, ,
 \end{gather}
and $\sigma_{i+1}|_{T_{i}}$ is a diffeomorphism. In a similar way for all $y\in T_0$, $d(\phi_{i}(y),y)\leq c\delta $ and $\phi_i|_{T_0}$ is a diffeomorphism, 
 \item for every $y\in T_i$, $T_i\cap \B{2 r_i}{y}$ is the graph over some $k$-dimensional affine subspace of a smooth function $f$ satisfying
  \begin{gather}
  \frac{\norm{f}_{\infty}}{r_i} + \norm{\nabla f}_\infty \leq c\delta\, ,
 \end{gather}
\end{enumerate}

In order to do so, we define inductively on $i=0,\cdots,\infty$ a sequence of sets $E(y,r_i)$, $I^i_g,I^i_b,R^i$ and manifolds $T_i'\subseteq T_i$ such that
\begin{enumerate}
 \setcounter{enumi}{4} \def\theenumi{\roman{enumi}}
 \item for every $i\geq 1$ and all $y\in I_g^i$, the set $T_{i-1}'\cap \B{1.5 r_i}{y}$ is a Lipschitz graph over the plane $V(y,r_{i})$ with $d(y,V(y,r_{i}))\leq c\delta r_{i}$
 \item for all $i$, we have the inclusion
 \begin{gather}\label{eq_sub_W}
S\cap \B 1 0 \subseteq \B{r_{i}/2}{T_{i-1}'}\cup R^{i-1}
 \end{gather}
  \item  for every $i\geq j+1$ and all $y\in I_g^i$, the set $T_{i-1}'\cap \B{2 r_i}{y}$ is a Lipschitz graph over the plane $\tilde V(y,r_{i})$ with $d(y,\tilde V(y,r_{i}))\leq c\delta r_{i}$ ,
 \item  we can estimate
 \begin{gather}\label{eq_volB_W}
  \lambda^k(\sigma_i^{-1}(T_i'))+ \#\ton{I_b^i} \omega_k (r_i/10)^k \leq \lambda^k(T_{i-1}')\, .
 \end{gather}
\end{enumerate}

Up to minor modifications (actually simplifications), all these properties are proved in the same way as in the downward induction from the previous subsection. \\

The key extra-property we need is some form of control over the $W^{1,p}$ norm of $\phi_i$. In particular we will prove inductively that for all $j\in \N$
\begin{gather}
 \norm{\nabla \phi_j}_{L^p}^p = \fint_{T_0\cap \B{2}{0}} \abs{\nabla \phi_j}^p d\lambda^k \leq 1+c(n)2^p \delta^2\, ,
\end{gather}
where here $\lambda^k$ is also Lebesgue measure on $\R^k$.

As we will see in \eqref{eq_deltap}, in order for this estimate to work, we will need to choose $\delta(n,\rho,C_1,p)$ small enough, and in particular $\delta\to 0$ as $p\to \infty$. 
\vspace{.5 cm}

\paragraph{Proof of the first points}
As mentioned before, the proof of items (i)-(viii) can be carried out in the same way as in Section \ref{sec_IIind_pre}, and we take from this section also the definitions of the sets $I_g^i$, $I_b^i$, $E(x,r)$ and $R^i$, up to replacing $\tB r y$ with $\B r y$ and $\tilde V(y,r)$ with $V(y,r)$. Recall also that in this case we have no final balls, so we can just assume that $I_f^i=\emptyset$ for all $i$, and that in this case we have $x=0$, $r=1$ and so $j=0$.

For convenience, we recall the definition of the excess and remainder sets:
\begin{gather}
 E(y,r_i)= \B{r_i}{y} \setminus \B{r_{i+1}/4}{V(y,r_i)}\, ,\quad  R^i=\bigcup_{s=0}^i\ton{\bigcup_{y\in I_b^s}\B {r_s}{y} \bigcup_{y\in I_g^s}E(y,r_s) }\, . 
\end{gather}
We also introduce the notation
\begin{gather}
 E^i = \bigcup_{s=0}^i\bigcup_{y\in I_g^s}E(y,r_s)  \, , \quad R^i = E^i \bigcup_{s=0}^i \bigcup_{y\in I_b^s}\B {r_s}{y} \, .
\end{gather}

We briefly sketch again the main steps in the construction. Assuming wlog that $\B 1 0$ is a good ball, i.e., that $\lambda^k(\B 1 0)\geq \gamma_k$, we first estimate the excess set on this ball. Since this set is the set of points which are some definite amount away from the best plane $V(0,1)$, the definition of $D$ immediately gives the following estimate, similar to \eqref{eq_exrough}:
\begin{align}\label{eq_exrou2}
\lambda^k\big(E(0,1)\big) (\rho/5)^2\leq D^k_S(0,1)\, .
\end{align}

Then we cover the non-excess part with a Vitali-type covering by balls $\B {\rho}{x_i}_{i\in I}$ centered on the plane $V(0,1)$. We classify the balls in this covering into good and bad balls, according to how much mass they carry. $\B r x$ is a good ball if $\lambda^k (S\cap \B r x)\geq \gamma_k r^k$, otherwise it's a bad ball.

A good ball carries enough measure to apply Lemma \ref{lemma_vw}, and compare the best subspace $V(0,1)$ with the new best subspace $V(x_i,\rho)$. 

We set $\sigma_1$ to be the map defined in \ref{deph_sigma}, i.e., an interpolation among all the projections onto $\cur{V(x_i,\rho)}_{i\in I_g^1}$, and we also set $T_1=\sigma_1(T_0)=\sigma_1(V(0,1)\cap \B {1+c\delta} 0)$. By the squash lemma \ref{lemma_squash}, $\sigma_1|_{T_0}$ is a smooth diffeomorphisms, and $T_1$ is locally at scale $r_1=\rho^1$ the Lipschitz graph of a function with small Lipschitz bounds. Moreover, $\sigma_1$ is a bi-Lipschitz equivalence with quantitative estimates on the Lipschitz constant. In particular, we get that for all $y\in \B 1 0\cap T_0$, the following version of the estimates in \eqref{eq_2lip_est} holds:
\begin{gather}
 L(y,5\rho)=\max\cur{\norm{\nabla \sigma_1}_{L^\infty(T_0\cap \B {5\rho}{y})  },\norm{\nabla \sigma_1}_{L^\infty\ton{\sigma_1\ton{T_0\cap \B {5\rho}{y}}}  } }\leq 1+ c\ton{\sum_{z\in I_g^{1} \cap \B{10 \rho }{y} } D(z,\rho) + D(0,1) }
\end{gather}

In order to keep track of the measure inside the bad balls, we define the manifold ``with holes''
\begin{gather}
 T_{1}=\sigma_{1} (T_0)\, ,\quad  T_{1}'=\sigma_{1}\ton{T_{0}\setminus \bigcup_{z\in I_b^{1}}\B{\rho/6}{z}}\, .
\end{gather}
Since $V(0,1)$ is a $k$-plane, and by definition of bad balls, each ``hole'' in the manifold $T_1'$ carries more measure than the corresponding bad ball which created it, giving us the estimate
 \begin{gather}\label{eq_volbad3}
  \lambda^k(\sigma_1^{-1}(T_1'))+ \lambda^k \ton{\bigcup_{z\in I_b^1} S\cap \B {\rho}{z}}\leq \lambda^k(\sigma_1^{-1}(T_1'))+ \#\ton{I_b^1} \omega_k (\rho/9)^k \leq \lambda^k(T_{0})\, .
 \end{gather}
 Now we repeat the construction inductively on the scales $r_2=\rho^2,r_3,\cdots$, and we obtain all the desired properties. Moreover, by summing \eqref{eq_exrou2} and its iterations, we obtain the following estimate for the total excess set:
 \begin{gather}\label{eq_extot3}
  \lambda^k\ton{E^\infty} \leq c(n,\rho)\delta^2 \, ,
 \end{gather}
which is the equivalent of \eqref{eq_volE}.

We conclude by noting the following: $\sigma|_i(z)=id$ for all $z\in T_0 \cap \ton{\cup_{j=0}^i\bigcup_{z\in I_b^j}\B{r_i/2}{z}}$. In other words, once a hole is created, it never changes. This implies that the iterations of \eqref{eq_volbad3} lead to 
\begin{gather}\label{eq_volRinfty}
 \lambda^k\ton{T'_i \cap \B {1+c\delta}{0}} +9^{-k}\gamma_k^{-1}\sum_{j=0}^i\sum_{y\in I_b^j} \lambda^k(S\cap \B{r_j}{y})\leq \lambda^k(T_i\cap \B {1+c\delta}{0})\, .
\end{gather}

\paragraph{$W^{1,p}$ estimates}
As part of the downward induction in the proof of (i)-(viii), let us define the maps $\tau_{i,j}=\sigma_j\circ\sigma_{j-1}\circ\cdots \circ\sigma_i$, so that $\tau_{i,j}(T_i)=T_{j+1}$ and $\tau_{0,j}=\phi_j$. We will prove inductively for $i=j,\cdots,0$ that for all $x\in T_i$, 
\begin{gather}\label{eq_w1pC}
 r_i^{-k} \int_{\B{r_i}{x}\cap T_i }\abs{\nabla \tau_{i,j}}^p d\lambda^k \leq 2C_1(n)\, .
\end{gather}
Here the integration is simply the integration on a smooth $k$-dimensional subset wrt the $k$-dimensional Lebesgue measure $\lambda^k$, and the gradient of the functions $\sigma_i$ and $\tau_{i,j}$ is the restriction of the gradient in $\R^n$ to the corresponding manifold $T_i$. 

Suppose that the statement is true for $i+1$. Consider a covering of this ball by $\cur{\B {r_{i+1}}{y_s}}$, then by the chain rule
\begin{gather}
\int_{\B{r_i}{x}\cap T_i }\abs{\nabla \tau_{i,j}}^p d\lambda^k(y) \leq \sum_s \int_{\B{r_{i+1}}{y_s}\cap T_i }\abs{\nabla \tau_{i+1,j}|_{\sigma_i(y)}}^p\abs{\nabla \sigma_i(y)}^p d\lambda^k(y)\leq C(n,p,\rho)\, .
\end{gather}
This gives us a first rough estimate. 

In order to obtain a better estimate, we will prove by induction on $s=i,\cdots,j$ that
\begin{gather}\label{eq_Tind2}
 \int_{\B{r_i}{x}\cap T_i }\abs{\nabla \tau_{i,j}}^p d\lambda^k(y) \leq \int_{B_s}\abs{\nabla \tau_{s,j}}^p d\lambda^k(y) + c(n,\rho)2^p \sum_{t=i}^s \int_{\B {10r_i}{x}\cap S} D(p,10r_{t})d\lambda^k(p)\, ,
\end{gather}
where $B_s\equiv \tau_{i,s-1}(T_i\cap \B{r_i}{x})\subseteq T_s\cap \B{(1+c\delta)r_i}{x}$.  Indeed, suppose that the statement is true for $s$, then we have
\begin{gather}
 \int_{B_s}\abs{\nabla \tau_{s,j}}^p d\lambda^k(y) =\int_{B_s}\abs{\nabla \tau_{s+1,j}|_{\sigma_s(y)}}^p\abs{\nabla \sigma_i(y)}^p d\lambda^k(y) \, .
\end{gather}
Since $\sigma_i$ is a diffeomorphism, we can change the variables and write
\begin{gather}
 \int_{B_s}\abs{\nabla \tau_{s,j}}^p d\lambda^k(y) =\int_{B_{s+1}}\abs{\nabla \tau_{s+1,j}|_{y}}^p\abs{\nabla \sigma_s|_{\sigma_s^{-1}(y)}}^p \det{J(\sigma_s)} \ d\lambda^k(y) \, .
\end{gather}
Now consider the partial covering of this set given by $\cur{\B{5r_s}{y_t}}_{y_t\in I_g^{s}}$. As we have seen before, outside of this set, $\sigma_{s}$ is the identity, so we don't need to make any estimates on it.

Arguing in a manner verbatim to the proof of \eqref{eq_volT}, we can prove bi-Lipschitz estimates for $\sigma|_s$. In particular, we can use the definition of good balls and the bounds in Lemma \ref{lemma_vw} to estimate in a quantitative way the distance between best subspaces at nearby points and scales. This allows us to use the squash Lemma \ref{lemma_squash} and prove that $\sigma_s$ restricted to $T_s\cap \B{5r_{s+1}}{y_t}$ is a bi-Lipschitz map with 
\begin{gather}
 \norm{\abs{\nabla \sigma_s|_{T_s}} - 1}_{L^\infty(T_s\cap \B{5r_{s+1}}{y_t})}\leq  \frac{c(n,\rho)}{r_{s}^k} \int_{ \B {5 r_s}{y_t}\cap S} D(p,10r_{s})d\lambda^k(p)\, .
\end{gather}
Let $\cur{P_t,Q}$ be a measurable partition of $B_{s}$ with $P_t\subseteq \B{5r_{s+1}}{y_t}$ and $Q\subseteq \cap \B{5r_{s+1}}{y_t}^c$, then $\cur{\sigma_s(P_t),\sigma_s(Q)}$ is a measurable partition of $B_{s+1}$ with $\sigma_s(Q)=Q$. So we get 
\begin{gather}
\int_{B_s}\abs{\nabla \tau_{s,j}}^p d\lambda^k(y) =\int_{Q}\abs{\nabla \tau_{s,j}}^p d\lambda^k(y) +\sum_t\int_{\sigma(P_t)}\abs{\nabla \tau_{s+1,j}|_{y}}^p\abs{\nabla \sigma_s|_{\sigma_s^{-1}(y)}}^p \det{J(\sigma_s)}\ d\lambda^k(y)\leq \notag\\
 \leq \int_{Q}\abs{\nabla \tau_{s+1,j}}^p d\lambda^k(y)+ \sum_t \int_{\sigma(P_t)}\abs{\nabla \tau_{s+1,j}}^p d\lambda^k \ \ton{1+\frac{c}{r_{{s+1}}^k} \int_{ \B {5 r_s}{y_t}\cap S} D(p,10r_{s})d\lambda^k(p)}\, .
\end{gather}
Note that these estimates are basically the same as the ones in \eqref{eq_arg}.

By the first induction, we have the upper bound $\int_{P_s}\abs{\nabla \tau_{s+1,j}|_{y}}^p \leq c(n)r_{s+1}^k$, and thus
\begin{gather}
 \int_{B_s}\abs{\nabla \tau_{s,j}}^p d\lambda^k(y) \leq \int_{B_{s+1}}\abs{\nabla \tau_{s+1,j}}^p d\lambda^k(y) + c(n,\rho)2^p\sum_t \int_{ \B {5 r_{s+1}}{y_t}\cap S} D(p,10r_{s})d\lambda^k(p)\, .
\end{gather}
Since all points in $\R^n$ are covered at most $c(n,\rho)$ times by $\cur{\B {5 r_{s+1}}{y_t}}_t$, we can simply estimate
\begin{gather}
 \int_{B_s}\abs{\nabla \tau_{s,j}}^p d\lambda^k(y) \leq \int_{B_{s+1}}\abs{\nabla \tau_{s+1,j}}^p d\lambda^k(y) + c(n,\rho)2^p\int_{ \B {10r_i}{x}\cap S} D(p,10r_{s})d\lambda^k(p)\, .
\end{gather}

This proves \eqref{eq_Tind2}. In order to complete the first induction, observe that
\begin{gather}
  \int_{\B{r_i}{x}\cap T_i }\abs{\nabla \tau_{i,j}}^p d\lambda^k(y) \leq \int_{B_s}\abs{\nabla \tau_{j,j}}^p d\lambda^k(y) + c(n,\rho)2^p \sum_{t=i}^j \int_{\B {2r_i}{x}\cap S} D(p,10r_{t})d\lambda^k(p)\, ,
\end{gather}
Since $\tau_{j,j}$ is the identity on $T_j$, and given the measure bounds \eqref{eq_volTFB_2}, we get the result if we choose $\delta(n,\rho,p)$ such that the rhs is small enough, and in particular
\begin{gather}\label{eq_deltap}
 c(n,\rho)2^p\sum_{j=i}^A\int_{ S\cap \B{2r_i}{x}} D(p,10r_{i-1})d\lambda^k(p)\leq c(n,\rho)2^p\delta^2 \leq C_1(n)\, .
\end{gather}

Note that, once the improved estimate \eqref{eq_lambda_obj} is proven (which will be done in the next subsection) it will likewise be possible to improve this $W^{1,p}$ estimate to
\begin{gather}\label{eq_w1peps}
 \frac{1}{\omega_k r_i^{k}} \int_{\B{r_i}{x}\cap T_i }\abs{\nabla \tau_{i,j}}^p d\lambda^k \leq 1+\epsilon\, .
\end{gather}
\vspace{.5 cm}

\paragraph{Improved measure estimates}
Consider the maps $\phi_j:T_0\to T_{j+1}$. Since $\phi_j$ has uniform $W^{1,p}$ estimates and by point \eqref{it__w1p_i3}, then $\phi_j\to \phi_\infty$ uniformly, with
\begin{gather}
 \int_{T_0\cap \B {2}{0}} \abs{\nabla \phi_\infty}^p \leq 2C_1(n)\, .
\end{gather}
By \eqref{eq_sub_W}, we have the inclusion for all $i$:
\begin{gather}\label{eq_sub_2}
 S\subseteq E_i \bigcup \B{r_{i+1}/2}{T'_i} \bigcup_{j=0}^i \bigcup_{y\in I_b^j}\B{r_j}{y}  \, , \quad \Longrightarrow \quad S\cap \B 1 0 \subseteq E_\infty \bigcup \ton{T'_\infty \cap \B 1 0 }\bigcup_{j=0}^\infty \bigcup_{y\in I_b^j}\B{r_j}{y}\, .
\end{gather}
By \eqref{eq_volRinfty} and \eqref{eq_extot3}, we obtain that
\begin{gather}
 \lambda^k \ton{S\cap \B 1 0} \leq \lambda^k(T_\infty)+\lambda^k(E_\infty)\leq \lambda^k(T_\infty)+c\delta^2\, .
\end{gather}

In order to give better estimates on $T_\infty$, note that $T_\infty \cap \B 1 0 \subseteq \phi(T_0\cap \B {1+c\delta}{0})$. Define the function $f:T_0\to \R$ by
\begin{gather}
 f(x)= \sum_{i=0}^\infty \sup_{B_{r_i}(\phi_i(x))\cap T_i} \abs{\abs{\nabla \sigma_i}- 1}\, .
\end{gather}
By the usual double-induction argument, see for example the proofs of \eqref{eq_volR} and \eqref{eq_w1pC}, we can estimate
\begin{gather}
 \int_{T_0\cap \B {1+c\delta}{0}} f(x) d\lambda^k \leq c\delta^2\, .
\end{gather}
Define the sets $U_a\subseteq T_0$ by
\begin{gather}
 U_a=\cur{x\in T_0 \cap \B {1+c\delta}{0}\ \ s.t. \ \ f(x)>a}\, .
\end{gather}
By simple $L^1$ estimates, we know that $\lambda^k(U_a)\leq \frac{c\delta^2}{a}$, and by Lemma \ref{lemma_monti} for $p>k$ we get
\begin{gather}
 \lambda^k(\phi_\infty(U_a)) \leq C(n,p) \norm{\nabla \phi_\infty }_{L^p}^{k} \lambda^k(U_a)^{\frac{p-k}{p} }\leq C \ton{\frac{c\delta^2}{a}}^{1-k/p}\, .
\end{gather}

Note that, on the complement of $U_a$, $\phi_\infty$ is a Lipschitz function with $\norm{\nabla \phi_\infty}_\infty\leq e^{a}+c\delta$. Indeed, take any $x,y\in U_a^c$, and let $i$ be such that $r_{i+1}/2<\abs{x-y}\leq r_i/2$. We prove that
\begin{gather}\label{eq_lipf}
 \abs{\phi_\infty(x)-\phi_\infty(y)}\leq (e^a+c\delta)\abs {x-y}\, .
\end{gather}

By definition, $\phi_\infty(z)=\tau_{i\infty}(\phi_i(z))$, and by \eqref{eq_distdelta3}, we easily get
\begin{gather}
 \abs{\tau_{i\infty}(\phi_i(x))-\phi_i(x)}+\abs{\tau_{i\infty}(\phi_i(y))-\phi_i(y)} \leq c\delta r_i \, .
\end{gather}
Moreover, the definition of $f$ immediately implies a uniform Lipschitz condition on $\phi_i$. Indeed, 
\begin{gather}
 \norm{\nabla \phi_i}_{L^\infty (\B {r_i}{x} )} \leq \prod_{s=0}^i \norm{\nabla \sigma_s}_{L^\infty (\B {r_s}{\phi_s(x)} )}\leq \exp\ton{\sum_{i=0}^s\log\ton{ \norm{\nabla \sigma_s}_{L^\infty (\B {r_s}{\phi_s(x)} )} }  } \leq e^{a}\, .
\end{gather}
Thus we get
\begin{gather}
 \abs{\psi(x)-\psi(y)}\leq c\delta \abs{x-y}+e^a\abs{x-y}\, ,
\end{gather}
as desired.

By choosing $a(\epsilon)$ sufficiently small, and $\delta(n,\rho,p,\epsilon)$ sufficiently small as well, we get
\begin{gather}\label{eq_588}
 \lambda^k(T_\infty\cap \B {1} {0}) \leq \lambda^k \ton{\phi_\infty(U_a) \cup \phi_\infty(U_a^C) }\leq \ton{1+\frac{1}{2}\epsilon}\omega_k \, .
\end{gather}

Summing up all the estimates, we prove the sharpened upper bound
\begin{gather}
 \lambda^k(S\cap \B 1 0)\leq \lambda^k(E_\infty)+\lambda^k(\phi_\infty(T_0\cap \B{1+c\delta}{0}))\leq (1+\epsilon)\omega_k\, .
\end{gather}

\vspace{.5 cm}

\paragraph{Rectifiability}
As for the rectifiability, we can restrict ourselves to $S^\star$ and consider a covering of this set made by balls $\cur{\B{r_x}{x}}_{x\in S^\star}$ such that $\lambda^k(\B{r_x}{x}\cap S)\geq 2^{-k-1}\omega_k r_x^k$.

If we prove that for all such balls there exists a subset $G\subseteq S\cap \B{r_x}{x}$ which is rectifiable and for which $\lambda^k(G)\geq \lambda^k(\B{r_x}{x}\cap S)/2$, then an easy covering argument gives us the rectifiability of all of $S$.\\

By scale invariance, we assume for simplicity $x=0$ and $r_x=1$. As we have seen before,
\begin{gather}
 S\subseteq R^\infty \cup T_\infty\, .
\end{gather}
Since $T_\infty$ is the image of a $W^{1,p}$ map with $p>k$, this set is rectifiable by Lemma \ref{lemma_w1p_rec}. Moreover, by \eqref{eq_volRinfty} and \eqref{eq_588}, we know that
\begin{gather}
\sum_{j=0}^i\sum_{y\in I_b^j} \lambda^k(S\cap \B{r_j}{y})\leq 10^k \gamma_k 
\end{gather}
 Given the estimates on the excess set given in \eqref{eq_extot3}, we obtain the lower bound
\begin{gather}
 \lambda^k(S\cap T_\infty) \geq \lambda^k(S) - \lambda^k(R^\infty) \geq 2^{-k-1}\omega_k  - \gamma_k 10^k -c\delta\geq 2^{-k-2}\omega_k \, ,
\end{gather}
which therefore completes the proof.
\vspace{.5 cm}

\section{Proof of Theorems \ref{t:reifenberg_W1p} and \ref{t:reifenberg_W1p_discrete}}\label{sec_proofII}
As it will be clear, the proofs of these theorems are simple modifications of the proof of Theorem \ref{t:reifenberg_W1p_holes}, and actually from the technical point of view they are a lot simpler. For this reason, we will simply outline them, pointing out the main differences needed in these cases. 

\subsection{\texorpdfstring{Proof of Theorem \ref{t:reifenberg_W1p}: The $W^{1,p}$-Reifenberg}{Proof of Theorem \ref{t:reifenberg_W1p}: The W-(1,p)-Reifenberg}}
The proof of this theorem is almost a corollary of Theorem \ref{t:reifenberg_W1p_holes} and the classic Reifenberg theorem \ref{t:classic_reifenberg}.  In addition to the upper volume bound proved in Theorem \ref{t:reifenberg_W1p_holes}, we also need to prove a lower volume bound on each ball.  

In fact, one could just trace through the argument of Theorem \ref{t:reifenberg_W1p_holes}, and see inductively that there exists no bad balls to conclude this.  Instead, we will use a standard argument (see for example \cite[lemma 13.2]{davidtoro}) to prove the lower bound on $\lambda^k(S\cap \B r x)$ given in \eqref{eq_lambda_lower_upper} directly by using only the uniform Reifenberg condition.


\begin{lemma}
 Under the assumptions of Theorem \ref{t:classic_reifenberg}, for all $x\in S$ such that $\B{r}{x}\subseteq \B 1 0$, 
 \begin{gather}
  \lambda^k(S\cap \B r x ) \geq (1-c\delta)\omega_k r^k\, .
 \end{gather}
\end{lemma}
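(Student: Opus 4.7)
The plan is to invoke the classical Reifenberg theorem (Theorem \ref{t:classic_reifenberg} together with Remark \ref{rem:classic_reifenberg_+}) on the rescaled ball $B_r(x)$ to obtain a bi-Hölder parametrization of $S\cap B_r(x)$ by a $k$-plane, and then extract the lower bound via an orthogonal projection and a topological degree argument.

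First, I would fix $x\in S$ and $r>0$ with $B_r(x)\subseteq B_1(0)$ and rescale $B_r(x)\to B_1(0)$. The hypothesis \eqref{e:reifenberg_L_infty} is scale-invariant, so the hypothesis of the classical Reifenberg theorem holds on the rescaled ball with the same constant $\delta$. Applying Remark \ref{rem:classic_reifenberg_+} and then undoing the rescaling, I would obtain a $k$-dimensional affine subspace $T_0\subset \R^n$ with $x\in T_0$ and a bi-Hölder homeomorphism $\phi_r:T_0\to S'$ onto a closed set $S'$ satisfying $S'\cap B_{r}(x) = S\cap B_{r}(x)$, $|\phi_r(y)-y|\leq C(n)\delta r$ for all $y\in T_0$, and $\phi_r(y)=y$ outside the rescaled ball. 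In particular, this gives the inclusion
\begin{align*}
S\cap B_r(x)\supseteq \phi_r\big(T_0\cap B_{(1-C\delta)r}(x)\big)\, .
\end{align*}

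Next, I would exploit the orthogonal projection $\pi:\R^n\to T_0$, which is $1$-Lipschitz and hence satisfies $\lambda^k(\pi(E))\leq \lambda^k(E)$. The composition $\pi\circ \phi_r$ maps the closed $k$-ball $T_0\cap \overline{B_{(1-C\delta)r}(x)}$ continuously into $T_0$ with $|\pi\circ\phi_r(y)-y|\leq C\delta r$. The straight-line homotopy $H_t(y)=(1-t)\pi\circ\phi_r(y)+t\,y$ keeps the boundary of the $k$-ball away from $T_0\cap \overline{B_{(1-2C\delta)r}(x)}$ once $\delta$ is small, so $\pi\circ\phi_r$ has degree one at every point of the smaller ball, yielding
\begin{align*}
\pi\circ\phi_r\big(T_0\cap B_{(1-C\delta)r}(x)\big)\supseteq T_0\cap B_{(1-2C\delta)r}(x)\, .
\end{align*}
Combining these ingredients with the fact that $\pi$ is $1$-Lipschitz,
\begin{align*}
\lambda^k\big(S\cap B_r(x)\big)\geq \lambda^k\big(\phi_r(T_0\cap B_{(1-C\delta)r}(x))\big)\geq \lambda^k\big(T_0\cap B_{(1-2C\delta)r}(x)\big)\geq (1-c\delta)\,\omega_k r^k\, ,
\end{align*}
which is the desired estimate.

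The only genuinely delicate point is the degree argument in the second step, but it reduces to checking that the straight-line homotopy does not cross the boundary of the target ball, which is automatic from the uniform displacement bound $|\pi\circ\phi_r(y)-y|\leq C\delta r$ once $\delta$ is chosen sufficiently small. Everything else is a routine rescaling and the application of the classical Reifenberg theorem, and the argument matches the standard one referenced in \cite[lemma 13.2]{davidtoro}.
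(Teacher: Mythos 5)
Your proof is correct and takes essentially the same route as the paper's: rescale, invoke the classical Reifenberg parametrization and its displacement bound, and then combine the $1$-Lipschitz orthogonal projection onto the approximating plane with a topological degree argument to show that a smaller $k$-ball lies in the image. The paper phrases the degree step as surjectivity of $\pi_L\circ\phi$ (since $\phi$ is the identity outside a slightly larger ball), while you make it explicit via the straight-line homotopy; these are the same argument.
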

\begin{proof}
 By scale invariance, we assume $x=0$ and $r=1$. The classic Reifenberg theorem proves that there exits a bi-\hol continuous map $\phi:L\to \R^n$ where $L$ is a $k$-dimensional plane and
 \begin{enumerate}
  \item $\abs{\phi(x)-x}\leq c\delta$ for all $x\in L$
  \item $\phi(x)=x$ for $\abs x \geq 1+c\delta$
  \item $S\cap \B 1 0=\phi(L')$, where $\B {1-c\delta}{0}\cap L \subseteq L'\subseteq \B {1+c\delta}{0}\cap L$. 
 \end{enumerate}
Now let $f=\pi_L\circ\phi:L\to L$. This map is continuous and it is the identity outside $\B{1+c\delta}{0}$, and thus by topological reasons (degree theory) it is also surjective from $L$ to itself.

In particular, the set $A=\B{1-3c\delta}{0}\cap L$ is contained in the image of $f$. By the uniform Reifenberg condition, $\pi_L^{-1}(A)\cap S\subseteq \B{1-2c\delta}{0}$, and by the properties of $\phi$, $f^{-1}(A)=\phi^{-1}(\pi_L^{-1}(A)\cap S)\subseteq \B{1-c\delta}{0}$. Thus $\phi(f^{-1}(A))\subseteq S$.

Now, since $\pi_L$ has Lipschitz constant $1$, by a standard result (see \cite[2.10.11]{Fed})
\begin{gather}
 \lambda^k(S)\geq \lambda^k(\phi(f^{-1}(A))\geq \lambda^k(\pi_L\circ \phi(f^{-1}(A)))=\lambda^k(A)\geq (1-3c\delta)^k \omega_k r^k\, .
\end{gather}
For $\delta$ small enough, we have the thesis.


\end{proof}

The upper bound on $\lambda^k(S)$ and the rectifiability are direct consequences of Theorem \ref{t:reifenberg_W1p_holes}. The only thing left to prove is the statement about $W^{1,p}$ bounds on the map. This can be obtained by simple modifications (actually simplifications) in the proof of Theorem \ref{t:reifenberg_W1p_holes}.

As before, we denote by $V(x,r)$ (one of) the $k$-dimensional subspace that minimizes $\int_{\B r x } d(y,V)^2 $, and we set $L(x,r)$ to be one of the $k$-dimensional subspaces satisfying the Reifenberg condition, i.e., a $k$-dimensional subspace such that $d_H(S\cap \B r x,L\cap \B r x )<\delta r$.

First of all, note that for any $x\in S$, $r>0$, $\lambda^k(\B{r}{x})>\omega_k r^k /2$, and so there are no bad balls in our covering. Thus, by Lemma \ref{lemma_LV}, we can conclude that
\begin{gather}
 d_H(V(x,r)\cap \B r x , L(x,r)\cap \B r x)\leq c\delta\, .
\end{gather}
As a consequence, for $\delta(n,\rho)$ small enough, all the excess sets at all scales are empty. Indeed, by \eqref{eq_E}, 
\begin{gather}
  E(x,r)= \B{r}{x} \setminus \B{\rho r /3}{V(x,r)}\cap S\subseteq \B{r}{x} \setminus \B{(\rho/3  - c\delta )r}{L(x,r)}\cap S 
  =\emptyset\, .
\end{gather}

Proceeding with the same construction as in Section \ref{sec_impro}, we obtain a sequence of maps $\phi_j:T_0\to \R^n$, $\phi_j(T_0)\equiv T_{j+1}$ converging in $W^{1,p}$ to some $\phi_\infty$ such that for all $i$, $S\subseteq \B{r_i}{T_i}\subseteq \B{r_{i-1}}{T_{i-1}}$, thus proving that $S\subseteq T_\infty\equiv \phi_\infty(T_0)$. 

\paragraph{$W^{1,p}$ estimates for the inverse} Here we want to prove that the map $\phi^{-1}_\infty:S\to T$ is also a $W^{1,p}$ map with bounds. Note that $S$ equipped with the Euclidean distance and the $k$-dimensional Hausdorff measure is a \al regular metric measure space, in the sense that there exists a $C_1(n)$ such that for all $x\in S\cap \B 1 0$ and $r\leq 1$,
\begin{gather}\label{eq_alregC1}
 C_1^{-1} r^k\leq \lambda^k(S\cap \B r x) \leq C_1 r^k\, ,
\end{gather}
where $\B r x$ is the usual Euclidean ball in $\R^n$. On such spaces there are several methods of defining the space of $W^{1,p}$ maps, for instance as the closure of the lipschitz functions under the $W^{1,p}$ norm.  For rectifiable spaces all such definitions are classically understood to be equivalent.\\

Note that since $S$ is rectifiable, we can use the integration by substitution to write 
\begin{gather}
 \int_S f(z) d\lambda^k(z) = \int_{\phi^{-1}_\infty(S)} f\ton{\phi_\infty(x)} J(\phi_\infty)|_x d\lambda^k(x)\, ,
\end{gather}
which will allow us to easily study integrals on our rectifiable spaces.

The next Lemma tells us that the mapping $\phi_\infty^{-1}$ is approximated by a sequence of Lipschitz maps whose gradients form a Cauchy sequence in $L^p$ and have our desired estimates.  In particular, the following Lemma will finish our proof of the $W^{1,p}$-Reifenberg result.  For convenience, we introduce the notation $\psi\equiv \phi_\infty^{-1}$:

\begin{lemma}\label{lemma_w1p_inv}
 There exists a sequence of functions $\psi_t$ such that
 \begin{enumerate}
  \item $\psi_t$ are Lipschitz functions in $\R^n$, with Lipschitz constant bounded by $c(n)t$,
  \item if $R_t=\cur{z\in S \ \ s.t. \ \ \psi_t(z)\neq \psi(z)}$, then $\lambda^k(R_t)\to 0$,
  \item $\psi_t$ converges uniformly to $\psi$,
  \item the sequence $\cur{\nabla \psi_t}$ is a Cauchy sequence in $L^p(S)$.
 \end{enumerate}
 Moreover, there exists a $g\in L^p(S)$ such that for all $t$, $\abs{\nabla \psi_t}\leq g$ $\lambda^k$-a.e. on $S$, and the following is valid for all $t$:
 \begin{gather}
  \int_{S\cap \B 2 0} \abs{\nabla \psi_t}^p \leq C(n) \, , \quad \fint_{S\cap \B 1 0} \abs{\nabla \psi_t}^p\leq 1+\epsilon\, .
 \end{gather}
\end{lemma}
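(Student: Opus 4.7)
The plan is to construct the approximations $\psi_t$ via the classical Lipschitz truncation / maximal function technique, adapted to the Ahlfors $k$-regular rectifiable space $(S,\lambda^k)$.  The Ahlfors regularity \eqref{eq_alregC1} has already been established, so $S$ is a doubling metric measure space and the Hardy--Littlewood maximal operator is bounded on $L^p(S)$ for $p>1$.

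First I would verify that $\psi=\phi_\infty^{-1}$ admits a well-defined (approximate) tangential gradient $\nabla\psi\in L^p(S)$ with the desired quantitative bounds.  Since $S\subseteq T_\infty=\phi_\infty(T_0)$ is rectifiable and $\phi_\infty$ has $W^{1,p}$ bounds with $\int_{T_0\cap B_2}|\nabla\phi_\infty|^p\leq 2C_1(n)$, the change-of-variables formula together with the pointwise identity $|\nabla\psi(\phi_\infty(x))|=|\nabla\phi_\infty|_x|^{-1}$ (at a.e.\ differentiability point) produces $\int_{S\cap B_2}|\nabla\psi|^p\leq C(n)$.  Since the construction of $\phi_\infty$ shows $|\nabla\phi_\infty|=1+O(\delta+f(x))$ outside the small exceptional set $U_a$ (see \eqref{eq_lipf} and the discussion preceding \eqref{eq_588}), in fact $\fint_{S\cap B_1}|\nabla\psi|^p\leq 1+\tfrac{\epsilon}{2}$ once $\delta,a$ are chosen small.

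Next, define the Hardy--Littlewood maximal function on $S$,
\begin{equation*}
M(z)\equiv \sup_{r>0}\fint_{S\cap B_r(z)}|\nabla\psi|\,d\lambda^k,
\end{equation*}
and set $E_t\equiv\{z\in S:M(z)\leq t\}$.  A standard Poincaré-type estimate on Ahlfors regular spaces gives the pointwise bound $|\psi(x)-\psi(y)|\leq c(n)(M(x)+M(y))|x-y|$ for Lebesgue points $x,y\in S$; in particular $\psi|_{E_t}$ is $c(n)t$-Lipschitz.  Extend this restriction by McShane--Kirszbraun to a $c(n)t$-Lipschitz map $\psi_t:\R^n\to T_0$.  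Properties (i)--(iii) then follow:  $\psi_t=\psi$ on $E_t$; the complement $S\setminus E_t\supseteq R_t$ satisfies $\lambda^k(S\setminus E_t)\leq t^{-p}\|M\|_{L^p}^p\to 0$ by Chebyshev; and uniform convergence on $S\cap B_1$ comes from the fact that both $\psi$ and $\psi_t$ are continuous with equicontinuous moduli on the set $E_t$, which exhausts $S$.

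For the gradient control, observe that on $E_t$ we have $\psi_t=\psi$, so tangentially $|\nabla\psi_t|\leq|\nabla\psi|\leq M$ $\lambda^k$-a.e., while on $S\setminus E_t$ we have $|\nabla\psi_t|\leq c(n)t\leq c(n)M(z)$ since $M(z)>t$ there.  Thus $g\equiv c(n)M\in L^p(S)$ dominates every $|\nabla\psi_t|$, yielding $\int_{S\cap B_2}|\nabla\psi_t|^p\leq C(n)$ and, by the sharper bound on $|\nabla\psi|$, $\fint_{S\cap B_1}|\nabla\psi_t|^p\leq 1+\epsilon$ for $\delta$ and $t$ large enough.  The Cauchy property in $L^p(S)$ is then immediate from dominated convergence, since $\nabla\psi_t\to\nabla\psi$ pointwise $\lambda^k$-a.e.\ on $S$ and is dominated by $g$.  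The main technical obstacle is the Poincaré-type inequality yielding the Lipschitz bound $c(n)t$ on $E_t$; this is classical on Ahlfors regular doubling spaces, but one must check that the gradient computed via $\phi_\infty$ plays the role of an upper gradient for $\psi$, which here is guaranteed by the bi-Lipschitz estimates of the squash Lemma \ref{lemma_squash} applied scale-by-scale in the construction of $\phi_\infty$.
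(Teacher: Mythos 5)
Your proposal takes a genuinely different route: you replace the paper's explicit scale-by-scale dominating function $g(z)=\prod_{i}\sup_{B_{3r_i}(\beta_i(z))\cap T_i}|\nabla\sigma_i^{-1}|$ with the Hardy--Littlewood maximal function $M=M(|\nabla\psi|)$ on $(S,\lambda^k)$, and you base the Lipschitz truncation on a pointwise Poincar\'e-type estimate $|\psi(x)-\psi(y)|\leq c(n)(M(x)+M(y))|x-y|$. However, there is a genuine gap in that step.

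The claim that such a pointwise estimate \emph{``is classical on Ahlfors regular doubling spaces''} is false. Ahlfors $k$-regularity of $(S,\lambda^k)$ gives doubling, but doubling alone does not imply a Poincar\'e inequality, and the pointwise oscillation bound $|\psi(x)-\psi(y)|\leq c(n)(M(x)+M(y))|x-y|$ is exactly what a Poincar\'e inequality would buy you. The set $S$ here is an arbitrary $\lambda^k$-measurable subset of the graph $T_\infty$ satisfying the displacement hypotheses; it can be badly disconnected (think of a fat Cantor set inside a $k$-plane, which satisfies \eqref{e:reifenberg_displacement_L2} trivially). For such an $S$, the tangential gradient $\nabla_S\psi$ is simply not an upper gradient of $\psi$: two nearby points $x,y\in S$ need not be joined by any curve in $S$, so there is no way to integrate $|\nabla_S\psi|$ to bound $|\psi(x)-\psi(y)|$, and the maximal function of $|\nabla_S\psi|$ over $S$-balls carries no information about oscillations of $\psi$ across components. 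You partially acknowledge this by appealing to the squash Lemma \ref{lemma_squash} ``scale-by-scale,'' but once you go down that road you are forced to pass through the intermediate connected manifolds $T_i$ at every scale --- and that is precisely what the paper's function $g$ encodes: at the scale $i$ determined by $|x-y|$, the oscillation of $\psi$ is controlled by the Lipschitz constant of $\psi_i$ on the (connected, graphical) manifold $T_i$, which factors as a product over scales of the distortions $|\nabla\sigma_s^{-1}|$. The maximal function of $|\nabla_S\psi|$ on $S$ alone cannot recover this, because it never sees the graphs $T_i$. So the proposed shortcut does not close; you would still need to build something equivalent to the paper's $g$, at which point the maximal function adds nothing. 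The remaining steps (McShane extension, domination $|\nabla\psi_t|\leq$ const$\cdot g$, Cauchy in $L^p$ by dominated convergence) parallel the paper's argument and are fine once the correct dominating function is in place.
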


\begin{proof}
The proof is a standard consequence of the properties proved so far for the map $\phi$ and the usual Lusin-type approximation for $W^{1,p}$ functions (see for example \cite[theorem 3, sec 6.6.3]{EG}).
 
First of all, we fix some notation. Given the maps $\phi_i:T\to T_i$ and $\phi_\infty:T\to S$, we set
\begin{gather}
 \psi_i\equiv \phi_i^{-1}:T_i\to T\, , \quad \alpha_i\equiv \phi_\infty \circ \psi_i:T_i\to S\, , \quad \beta_i\equiv \alpha_i^{-1}:S\to T_i\, .
\end{gather}
Note that, by the Reifenberg construction, all these maps are \hol continuous maps.

Next, we introduce a slight variant of the function $f(x)$ defined before. In particular, set for $z\in S$:
\begin{gather}
 g(z)= \prod_{i=0}^\infty \sup_{y\in \B {3r_i}{\beta_i(z)}\cap T_i} \abs{\nabla \sigma_i^{-1}}\, .
\end{gather}
By adapting the proof of the $W^{1,p}$ estimates in \eqref{eq_w1peps}, we prove that $g\in L^p(S)$ with
\begin{gather}
 \int_{S\cap \B 2 0} g(z)^p d\lambda^k(z) \leq \int_{T\cap \B {2.1} 0 } g(\phi(x))^p J(\phi_\infty)|_x d\lambda^k(x) \leq 3C_1(n)\, .
\end{gather}
Moreover, we can also refine the bound to 
\begin{gather}
 \int_{S\cap \B 1 0} g(z)^p d\lambda^k(z) =\int_{T\cap \psi(S\cap \B 1 0)} g(\phi(x))^p J(\phi_\infty)|_x d\lambda^k(x) \leq \omega_k(1+\epsilon)\, .
\end{gather}

Now define for $t\geq 1$ the sets $R_t= g^{-1}[0,t]$. Since $g\in L^p$, then
\begin{gather}\label{eq_gLp}
 \limsup_{t\to \infty} \ t^p \lambda^k(R_t)\leq \limsup_{t\to \infty} \int_{g\geq t} g^p \to 0\, .
\end{gather}
Moreover, $\psi|_{R_t}$ is a Lipschitz function with Lipschitz constant bounded by $c(n)t$. Indeed, let $x,y\in R_t$, and set $i$ to be such that $r_{i+1}/2<\abs{x-y}\leq r_i/2$. We prove that
\begin{gather}\label{eq_lipg}
 \abs{\psi(x)-\psi(y)}\leq c(n) t \abs {x-y}
\end{gather}
that is, $\psi|_{R_t}$ is $c(n)t$ Lipschitz wrt the extrinsic distance on $S$.

By definition, $\psi(z)=\psi_i(\beta_i(z))$. Moreover, the definition of $g$ immediately implies a uniform Lipschitz condition on $\psi_i$. In particular, 
\begin{gather}
 \norm{\nabla \psi_i}_{L^\infty (\B {r_i}{\beta_i(x)} )} \leq \prod_{s=0}^i \norm{\nabla \sigma_s^{-1}}_{L^\infty (\B {r_s}{\beta_s(x)} )} \leq t\, .
\end{gather}
Now, by \eqref{eq_distdelta}, we have that $\abs{\beta_i(x)-x}+\abs{\beta_i(y)-y}\leq c(n,\rho)\delta r_{i+1}$, so, for $\delta\leq \delta_0(n,\rho)$ small enough, we get
\begin{gather}
 \abs{\beta_i(x)-\beta_i(y)}\leq \abs{x-y}+ 2c\delta r_{i+1}\leq 2 \abs{x-y}\, ,
\end{gather}
and thus
\begin{gather}
 \abs{\psi(x)-\psi(y)}\leq \abs{\psi_i(\beta_i(x)) -\psi_i(\beta_i(y))}\leq t \abs{\beta_i(x)-\beta_i(y)}\leq 2t \abs{x-y}\, ,
\end{gather}
as desired.

Now, define $\psi_t$ to be a Lipschitz extension of $\psi|_{R_t}$ over all $\R^n$ with the same Lipschitz constant. In particular, $\psi_t$ will be a Lipschitz function on $S$ with Lipschitz constant bounded by $2t$. Note that, independently of $t$, we have the estimate
\begin{gather}\label{eq_psilip}
 \abs{\nabla \psi_t(z)} \leq 4 g(z) \quad \text{for }\ \ \lambda^k-\text{a.e.}\  z\in S\, .
\end{gather}
Indeed, if $z\in R_t^C$, this estimate is valid for all $x$ for which $\nabla \psi_t(x)$ is defined. Moreover, we prove that this estimate is also valid for all $z\in R_t\subset S$ which have $\lambda^k$-density $1$ (both with respect to $R_t$ and $S$) and such that $\nabla \psi_t(x)$ is defined. Since $R_t\subset S$ is $k$-rectifiable, it is well-known that the density of both these sets is $1$ $\lambda^k$ almost everywhere (see for example \cite[theorem 2.63]{AmFu}). In particular, for $\lambda^k$ a.e. $z\in R_t$ we have
\begin{gather}
 \lim_{r\to 0} \frac{\lambda^k (\B r z \cap R_t)}{\lambda^k (\B r z \cap S)}=1\, .
\end{gather}
For $y\in S$, set $y_t$ to be an almost projection of $y$ onto $R_t$, i.e., a point such that $d(y,y_t)\leq 2 d(y,R_t)$. Then we have
\begin{gather}
 \limsup_{y\to z} \frac{\abs{\psi_t(y)-\psi_t(z)}}{\abs{y-z}}\leq \limsup_{y\to z}\qua{\frac{\abs{\psi_t(y)-\psi_t(y_t)}}{\abs{y-z}} +\frac{\abs{\psi_t(y_t)-\psi_t(z)}}{\abs{y-z}}}\, .
\end{gather}
By the proof of \eqref{eq_lipg}, and since $\abs{y_t-z}\leq 2\abs{y-z}$, we get
\begin{gather}
 \limsup_{y\to z} \frac{\abs{\psi_t(y_t)-\psi_t(z)}}{\abs{y-z}}=\limsup_{y\to z} \frac{\abs{\psi(y_t)-\psi(z)}}{\abs{y-z}}\leq 4g(z)\, .
\end{gather}
Moreover, since $z$ is a density $1$ point on $R_\lambda\subset S$, $\abs{y-y_t}$ cannot be too big around $z$. Indeed, if $\abs{y-y_t}\geq 2r$, then $\B {r}{y}\cap R_t=\emptyset$, and so
\begin{gather}
 \ton{\frac 1 4 \limsup_{y\to z} \frac{\abs{y-y_t}}{\abs{y-z}}}^k= \limsup_{y\to z} \frac{\lambda^k \ton{\B {\abs {y-y_t}/2}{z}\cap  S}}{\lambda^k \ton{\B {2\abs {y-z}}{z}\cap  S}}\leq \limsup_{y\to z} \ton{1-\frac{\lambda^k \ton{\B {2\abs {y-z}}{z}\cap R_t}}{\lambda^k \ton{\B {2\abs {y-z}}{z}\cap  S}}}=0\, .
\end{gather}
By the global Lipschitz estimate on $\psi_t$, we get 
\begin{gather}
 \limsup_{y\to z}\frac{\abs{\psi_t(y)-\psi_t(y_t)}}{\abs{y-z}}\leq 2t\limsup_{y\to z} \frac{\abs{y-y_t}}{\abs{y-z}}=0\, .
\end{gather}
This proves \eqref{eq_psilip}. Now consider any $T\geq t$. Since $S$ is rectifiable, and since $\psi_t=\psi_T$ on $R_t$, then $\nabla \psi_t=\nabla \psi_T$ $\lambda^k$-a.e. on $R_t$. Moreover, by \eqref{eq_psilip}, we have
\begin{gather}
 \int_{S}\abs{\nabla \psi_T-\nabla \psi_t}^p =\int_{R_t^C} \abs{\nabla \psi_T-\nabla \psi_t}^p\leq 2^p \int_{R_t^C}\abs{\nabla \psi_t}^p + 2^p \int_{R_t^C} \abs{\nabla \psi_T}^p\leq 2^p t^p \lambda^k \ton{R_t^C} + 2^p \int_{R_t^C} \abs{g}^p\, .
\end{gather}
By \eqref{eq_gLp}, this proves point $(4)$. Since $\psi$ is a uniformly continuous function, point $(3)$ is a corollary of point $(2)$.

\end{proof}

\subsection{Proof of Theorem \ref{t:reifenberg_W1p_discrete}: The Discrete Rectifiable-Reifenberg}
Up to minor differences, the proof of this theorem is essentially the same as the proof of \eqref{eq_volR}, and is actually much simpler from the technical point of view. Indeed, we do not need to define the sets $\tilde E,\tilde B,\tilde V$, but we can just deal with the sets $E,V,B$. 

In particular, let $r_\alpha=2^{-\alpha}$ and $\bar r = r_A$ for some $A\in \N$. One can define the measure 
\begin{gather}
 \mu\equiv \sum_{x_j\in S}\omega_k r^k_j \delta_{x_j} \, , \quad  \mu_{\bar r}=\omega_k \sum_{x_j\in S\cap \B 1 0 \ \ s.t. \ \ r_j\geq \bar r} r_j^k \delta_{x_j}\, ,
\end{gather}
and prove inductively on $\alpha=A,A-1,\cdots, 0$ that for all $x\in \B 1 0$ such that $\B {r_\alpha/10} x $ is not contained in any of the $\B {r_j}{x_j}$:
\begin{gather}
 \mu_{\bar r}(\B {r_\alpha}{x})\leq D(n) r_\alpha^k\, .
\end{gather}
By proceeding with a rough covering argument as in the proof of \eqref{eq_lambdarough}, one obtains easily a rough upper bound on $\mu_{\bar r}(\B {r_{\alpha-1}} x)$. Moreover, by mimicking the inductive constructions in Section \ref{sec_IIind_pre}, one can define Excess sets, good, bad and final balls, and the maps $\sigma_i$ and the approximating manifolds $T_i$. By studying the bi-Lipschitz properties of $\sigma_i$, and by keeping track of the holes created by final and bad balls in the same way as before, one proves the inductive estimate
\begin{gather}
 \mu_{\bar r}(\B {r_\alpha}{x}) 
 \leq C(n) \ton{1+\sum_{r_\beta\leq 2r_\alpha}\int_{B_{r_\alpha}(x)}D^k_\mu(x,r_\alpha)\, d\mu(x)}\, ,
\end{gather}
as desired.

\vspace{1cm}

\section{\texorpdfstring{$L^2$-Best Approximation Theorems}{L2-Best Approximation Theorems}}\label{s:best_approx}

In this Section we prove the main estimate necessary for us to be able to apply the rectifiable-Reifenberg of Theorems \ref{t:reifenberg_W1p_holes} and \ref{t:reifenberg_W1p_discrete} to the singular sets $S^k_\epsilon(f)$ of the stratification induced for stationary harmonic maps.\\

Namely, we need to understand how to estimate on a ball $B_r(x)$ the $L^2$-distance of $S^k_\epsilon$ from the best approximating $k$-dimensional subspace.  When $r<\inj(M)$ this means we would like to consider subspaces $L^k\subseteq T_xM$ and estimate $d^2(x,L^k)$ for $x\in S^k\cap B_r(x)$, where the distance is taken in the normal coordinate charts.  After rescaling this is equivalent to looking at a ball of definite size, but assuming $K_M<<1$.  The main Theorem of this Section is stated in some generality as we will need apply it with some care when proving Theorems \ref{t:main_quant_strat_stationary} and \ref{t:main_eps_stationary}. We recall that by definition
\begin{align}
W_{\alpha}(x) \equiv W_{r_{\alpha},r_{\alpha-3}}(x)\equiv \theta_{r_{\alpha-3}}(x)-\theta_{r_{\alpha}}(x)\geq 0\, ,
\end{align}
where $r_\alpha=2^{-\alpha}$.

\begin{theorem}[$L^2$-Best Approximation Theorem]\label{t:best_approximation}
Let $f:B_{9r}(p)\to N$ be a stationary harmonic map satisfying \eqref{e:manifold_bounds} with $r^{2-n}\int_{B_{9}(p)}|\nabla f|^2\leq \Lambda$, and let $\epsilon>0$.  Then there exists $\delta(n,K_N,\Lambda,\epsilon)$, $ C(n,K_N,\Lambda,\epsilon)>0$ such that if $K_M<\delta$, and $B_{9r}(p)$ is $(0,\delta)$-symmetric but {\it not} $(k+1,\epsilon)$-symmetric, then for any finite measure $\mu$ on $B_r(p)$ we have that
\begin{align}\label{e:best_approx:L2_estimate}
D_\mu(p,r) = \inf_{L^k} r^{-2-k}\int_{B_r(p)} d^2(x,L^k)\,d\mu(x) \leq C r^{-k}\int_{B_r(p)} W_0(x)\,d\mu(x)\, , 
\end{align}
where the $\inf$ is taken over all $k$-dimensional affine subspaces $L^k\subseteq T_p M$.
\end{theorem}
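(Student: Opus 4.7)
My approach is a compactness-and-contradiction argument based on the following heuristic. By the monotonicity formula, if $x$ is a point where the energy drop $W_0(x)=\theta_8(x)-\theta_1(x)$ is small, then $f$ is nearly radially constant about $x$ on the annulus $A_{1,8}(x)$, i.e., approximately $0$-symmetric about $x$. Combined with the $(0,\delta)$-symmetry about $p$, this produces an approximate translation symmetry in the direction $x-p$. If $\mu$ is not well approximated by any $k$-plane, i.e., $D_\mu(p,r)$ is not small, then $\supp{\mu}$ sees at least $k+1$ linearly independent such directions, which cone splitting upgrades to $(k+1)$-symmetry of $f$ on $B_r(p)$, contradicting the hypothesis.

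\textbf{Setup.} By scale invariance reduce to $r=1$. Suppose the estimate fails: there exist $\epsilon_0>0$ and sequences $\delta_i,K_{M_i}\to 0$, $C_i\to\infty$, stationary harmonic maps $f_i:B_9(p_i)\subseteq M_i\to N$ with $\int_{B_9(p_i)}|\nabla f_i|^2\leq\Lambda$ such that $B_9(p_i)$ is $(0,\delta_i)$-symmetric but not $(k+1,\epsilon_0)$-symmetric, together with finite measures $\mu_i$ on $B_1(p_i)$ satisfying $D_{\mu_i}(p_i,1)>C_i\int W_0^{(i)}\,d\mu_i$. Normalize $\mu_i(B_1)=1$ (after a possible rescaling to avoid $D_{\mu_i}\to 0$) and extract a subsequence with $D_{\mu_i}\to D_\infty\in(0,\infty)$, so that in particular $\int W_0^{(i)}\,d\mu_i\to 0$. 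Using $K_{M_i}\to 0$, identify $B_9(p_i)$ with $B_9(0)\subseteq\R^n$ via the exponential map. A further subsequence yields $f_i\rightharpoonup f_\infty$ weakly in $W^{1,2}$ (and strongly in $L^2$), $|\nabla f_i|^2\,dv_{g_i}\rightharpoonup|\nabla f_\infty|^2\,dv+\nu$ with $\nu$ the defect measure (Definition~\ref{d:defect}), and $\mu_i\rightharpoonup\mu_\infty$ on $\overline{B_1(0)}$ with $D_{\mu_\infty}=D_\infty>0$. By Theorem~\ref{t:defect:0symmetry} the pair $(f_\infty,\nu)$ is $0$-symmetric about $0$ on $B_9(0)$, and by $L^2$ convergence $f_\infty$ is not $(k+1,\epsilon_0/2)$-symmetric on $B_9$.

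\textbf{Limit symmetry and cone splitting.} Define the limit density $\theta_r^\infty(x):=r^{2-n}(|\nabla f_\infty|^2\,dv+\nu)(B_r(x))$, which is monotone non-decreasing in $r$. For generic radii close to $1$ and $8$ (a small perturbation avoids boundary masses of $\nu$) we have $\theta_r^{(i)}(x)\to\theta_r^\infty(x)$ for $\mu_\infty$-a.e.\ $x$, so passing to the limit gives
\[
\int_{B_1}\bigl(\theta_8^\infty(x)-\theta_1^\infty(x)\bigr)\,d\mu_\infty(x)=\lim_{i\to\infty}\int W_0^{(i)}\,d\mu_i=0\,.
\]
Monotonicity then forces $\theta_1^\infty(x)=\theta_8^\infty(x)$ for $\mu_\infty$-a.e.\ $x\in B_1$, and the rigidity half of the monotonicity formula (valid in the presence of $\nu$, cf.~\cite{lin_stat}) implies $(f_\infty,\nu)$ is $0$-symmetric about each such $x$ on $A_{1,8}(x)$. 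Since $D_{\mu_\infty}>0$, the affine hull of $\supp{\mu_\infty}$ has dimension at least $k+1$, so there exist $x_1,\ldots,x_{k+1}\in\supp{\mu_\infty}$ that are linearly independent as vectors from the origin and each of which lies in the $\mu_\infty$-full-measure set where the $0$-symmetry holds (the latter is dense in $\supp{\mu_\infty}$, making such a choice possible). Cone splitting applied to the pair $(f_\infty,\nu)$ then yields translation invariance along $\Span\{x_1,\ldots,x_{k+1}\}$, hence $(k+1)$-symmetry of $f_\infty$ on $B_1(0)$, contradicting the non-$(k+1,\epsilon_0/2)$-symmetry of $f_\infty$.

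\textbf{Main obstacle.} The principal technical hurdle is the correct treatment of the defect measure $\nu$: verifying that $\theta_r^\infty$ satisfies both the monotonicity and the rigidity one expects from stationary harmonic maps, that $\int W_0^{(i)}\,d\mu_i\to 0$ passes through the weak limit on a $\mu_\infty$-full-measure set (so that genuine $0$-symmetry points can be selected in $\supp{\mu_\infty}$), and that cone splitting for the coupled object $(f_\infty,\nu)$ produces $(k+1)$-symmetry of $f_\infty$ as a map and not merely of the energy density. These issues are handled via Lin's defect measure framework (Theorems~\ref{t:defect:rectifiable} and~\ref{t:defect:0symmetry}); the pointwise $0$-symmetry of $f_\infty$ at each a.e.\ symmetry point is extracted from the rigidity of the monotonicity formula, after which translation invariance of the density becomes equivalent to translation invariance of the map.
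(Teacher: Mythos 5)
Your approach is genuinely different from the paper's, and it has a gap that I do not think can be repaired within the compactness-and-contradiction framework you set up.

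\textbf{Comparison with the paper's route.} The paper does \emph{not} take a compactness limit of the measures $\mu$ at all. It proves an explicit, uniform-in-$\mu$ inequality in two steps. First, Lemma~\ref{l:best_subspace:energy_lower_bound} uses compactness (with no $\mu$ in sight) to establish a gradient lower bound: if $B_4(p)$ is $(0,\delta)$-symmetric but not $(k+1,\epsilon)$-symmetric, then $\fint_{A_{3,4}(p)}\abs{\ps{\nabla f}{V}}^2\geq\delta$ for every $(k+1)$-plane $V$. Second, Proposition~\ref{p:best_subspace_estimates} is a direct Euler--Lagrange/Cauchy--Schwarz computation with the second moments $\lambda_j(\mu),v_j(\mu)$ of $\mu$: inner-producting the Euler--Lagrange identity \eqref{e:second_moment_EL} with $\nabla f(z)$, using $\int\ps{x-x_{cm}}{z}\,d\mu=0$, and integrating over the annulus gives $\lambda_j\int_{A_{3,4}}\abs{\ps{\nabla f}{v_j}}^2\leq C\int W_0\,d\mu$, valid for \emph{every} finite measure $\mu$. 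Combining with the gradient lower bound and $D_\mu\leq(n-k)\lambda_{k+1}$ proves the theorem with a constant that depends only on $\delta$.

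\textbf{The gap in your argument.} After normalizing $\mu_i(B_1)=1$, the contradiction hypothesis $D_{\mu_i}>C_i\int W_0^{(i)}\,d\mu_i$ with $C_i\to\infty$ does not prevent $D_{\mu_i}\to 0$: it only forces $\int W_0^{(i)}\,d\mu_i$ to vanish faster. Since $D_\mu=\min_{L^k}\int d^2(\cdot,L^k)\,d\mu$ is a continuous functional of the probability measure $\mu$ on $\overline{B_1}$ (the infimum is over the compact Grassmannian of a jointly continuous integrand), $D_{\mu_i}\to 0$ forces $D_{\mu_\infty}=0$, and then the final step (affine hull of $\supp\mu_\infty$ has dimension $\geq k+1$, hence $(k+1)$ independent $0$-symmetry points, hence cone splitting) has nothing to conclude. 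The parenthetical ``after a possible rescaling to avoid $D_{\mu_i}\to 0$'' names the problem but does not solve it, and I do not see how it can: the ball $B_r(p)$, the basepoint, and the map $f_i$ are all fixed by the hypotheses, so any rescaling that inflates the orthogonal spread of $\mu_i$ to keep $D_{\mu_i}$ bounded below would have to be applied to $f_i$ as well, which does not preserve the stationary-harmonic-map structure nor the non-$(k+1,\epsilon_0)$-symmetry. Moreover the degenerate case is precisely where the theorem has content: if $f_i$ is $k$-symmetric along a $k$-plane $L$ and $\mu_i$ concentrates near $L$, both $D_{\mu_i}$ and $\int W_0^{(i)}\,d\mu_i$ tend to zero, and the theorem asserts the nontrivial quantitative bound $W_0(x)\gtrsim d(x,L)^2$ near $L$, which a soft limiting argument cannot detect.

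\textbf{A secondary concern.} Even in the non-degenerate case $D_{\mu_i}\to D_\infty>0$, passing from $\int W_0^{(i)}\,d\mu_i\to 0$ to $\int W_0^\infty\,d\mu_\infty=0$ is not immediate: both $W_0^{(i)}$ and $\mu_i$ are only weakly converging, and pointwise convergence of $\theta_r^{(i)}(x)\to\theta_r^\infty(x)$ at $\mu_\infty$-a.e.\ fixed $x$ does not by itself control $\int W_0^{(i)}\,d\mu_i$ when the domain measure moves. You would need, e.g., local uniform convergence of $W_0^{(i)}$ (or a semicontinuity argument of Reshetnyak type) on a $\mu_\infty$-full set, which requires additional work since equicontinuity of $\theta_r^{(i)}$ fails where energy concentrates. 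This is likely fixable with care, but it is a step, not a remark. The degeneracy issue above, however, appears to be structural, and the paper's direct Euler--Lagrange/Cauchy--Schwarz argument is precisely designed to avoid it.
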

\begin{remark}
 Throughout this section, for convenience we will fix $r=1$ without essential loss of generality. 
\end{remark}

\begin{remark}
The assumption $K_M<\delta$ is of little consequence, since given any $K_M$ this just means focusing the estimates on balls of sufficiently small radius after rescaling.
\end{remark}
\vspace{.5 cm}

\subsection{Symmetry and Gradient Bounds}\label{ss:symmetry_gradient}

In this subsection we study stationary harmonic maps which are {\it not} $(k+1,\epsilon)$-symmetric on some ball.  In particular, we show that this forces the gradient to be of some definite size when restricted to any $k+1$-dimensional subspace.  More precisely:

\begin{lemma}\label{l:best_subspace:energy_lower_bound}
Let $f:B_4(p)\subseteq M\to N$ be a stationary harmonic map satisfying \eqref{e:manifold_bounds} with $\fint_{B_4(p)}|\nabla f|^2\leq \Lambda$.  Then for each $\epsilon>0$ there exists $\delta(n,K_N,\Lambda,\epsilon)>0$ such that if $K_M<\delta$, and $B_4(p)$ is $(0,\delta)$-symmetric but is {\it not} $(k+1,\epsilon)$-symmetric, then for every $k+1$-subspace $V^{k+1}\subseteq T_p M$ we have 
\begin{align}
\fint_{A_{3,4}(p)} |\langle \nabla f, V\rangle|^2 d\lambda \geq \delta\, ,
\end{align}
where $|\langle \nabla u, V\rangle|^2\equiv \sum_1^{k+1} |\langle\nabla u,v_i\rangle|^2$ for any orthonormal basis $\{v_i\}$ of $V$, and $\lambda$ is the volume measure on $M$ (which is equivalent to $\lambda^n$).
\end{lemma}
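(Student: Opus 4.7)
The plan is to argue by contradiction, using a compactness argument based on the defect measure theory recalled in Section \ref{ss:defect_measure}. Suppose the lemma fails for some $\epsilon > 0$. Then there exist sequences $\delta_i \to 0$, stationary harmonic maps $f_i:B_4(p_i)\subseteq M_i \to N$ with $\fint_{B_4(p_i)}|\nabla f_i|^2\leq \Lambda$, $K_{M_i}<\delta_i$, such that $B_4(p_i)$ is $(0,\delta_i)$-symmetric but not $(k+1,\epsilon)$-symmetric, and there exist $(k+1)$-subspaces $V_i^{k+1}\subseteq T_{p_i}M_i$ with
\begin{align}
\fint_{A_{3,4}(p_i)} |\langle \nabla f_i, V_i\rangle|^2\, d\lambda < \delta_i \, .
\end{align}
Since $K_{M_i}\to 0$, I identify $B_4(p_i)$ with $B_4(0)\subseteq \mathbb{R}^n$ via exponential coordinates, where the pulled-back metrics converge smoothly to the Euclidean metric. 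After passing to a subsequence, the $V_i$ converge in the Grassmannian to a $(k+1)$-subspace $V\subseteq \mathbb{R}^n$, and the $f_i$ converge weakly in $W^{1,2}$ and strongly in $L^2(B_4)$ to a weakly harmonic (in fact stationary, by \cite{lin_stat}) map $f:B_4(0)\to N$.

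By Theorem \ref{t:defect:0symmetry}, the hypothesis that $B_4(p_i)$ is $(0,\delta_i)$-symmetric with $\delta_i\to 0$ forces the energy drop $|\theta_{i,2}(p_i)-\theta_{i,\delta_i}(p_i)|\to 0$ (via the monotonicity formula and the definition of $(0,\delta)$-symmetry), so the defect measure $|\nabla f_i|^2\,dv_{g_i}\rightharpoonup |\nabla f|^2\,dx +\nu$ is $0$-symmetric, and in particular $f$ itself is $0$-symmetric about the origin. Next I claim that $\nabla f$ vanishes in the $V$-directions a.e.\ on $A_{3,4}(0)$. Indeed,
\begin{align}
\int_{A_{3,4}} |\langle \nabla f_i, V\rangle|^2 \leq 2\int_{A_{3,4}} |\langle \nabla f_i, V_i\rangle|^2 + 2\,d_G(V,V_i)^2 \int_{A_{3,4}}|\nabla f_i|^2 \longrightarrow 0\, ,
\end{align}
so by the $L^2$ lower semicontinuity for weak $W^{1,2}$ convergence $\int_{A_{3,4}}|\langle \nabla f,V\rangle|^2 = 0$. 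Therefore $f$ is invariant under translations in $V$ on $A_{3,4}$, and combining this with the $0$-symmetry (dilation invariance) yields that $f$ is $(k+1)$-symmetric with respect to $V$ on all of $B_4(0)$.

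Finally, since $f_i\to f$ strongly in $L^2(B_4)$ by Rellich--Kondrachov, we obtain $\fint_{B_4(p_i)}|f_i-f|^2\to 0$, where we view $f$ as a map on $T_{p_i}M_i$ via the identification above. As $f$ is $(k+1)$-symmetric, this forces $B_4(p_i)$ to be $(k+1,\epsilon)$-symmetric for all sufficiently large $i$, contradicting the standing assumption. The main technical subtlety in executing this plan is the passage to the defect-measure limit with varying base manifolds $M_i$ (handled by $K_{M_i}\to 0$), together with ensuring that only $K_N$ and $\Lambda$ -- not $N$ itself -- enter the constants, which follows by observing that $N$ only enters the compactness argument through its isometric embedding into some $\mathbb{R}^L$ using $K_N$-bounded extrinsic geometry, so the argument depends only on $K_N$.
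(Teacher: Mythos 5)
Your proof takes the same high-level route as the paper's: a contradiction argument via compactness of the sequence $f_i$, followed by lower semicontinuity of the directional Dirichlet energy, $0$-symmetry of the limit, and strong $L^2$-convergence to derive the contradiction. However, the paper gets to the $0$-symmetry of the limit $f$ in one line, and your detour through the defect-measure machinery introduces a gap. You assert that $(0,\delta_i)$-symmetry of $B_4(p_i)$ with $\delta_i\to 0$ ``forces the energy drop $|\theta_{i,2}(p_i)-\theta_{i,\delta_i}(p_i)|\to 0$,'' so that Theorem \ref{t:defect:0symmetry} applies. This is the \emph{converse} of Theorem \ref{t:quantitative_0_symmetry}(1), and it is not proved anywhere in the paper. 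Indeed, $(0,\delta)$-symmetry is a purely $L^2$-closeness condition to some (not necessarily harmonic) $0$-symmetric comparison map, while the energy drop is a $W^{1,2}$-quantity; the former does not control the latter without further work, precisely because energy can concentrate on the defect measure. The paper avoids this entirely: since each $f_i$ is $L^2$-close to a $0$-symmetric $\tilde f_i$, and $f_i\to f$ strongly in $L^2$, the comparison maps $\tilde f_i$ also converge to $f$ in $L^2$; the class of $0$-symmetric $L^2$-maps is closed under $L^2$ limits, hence $f$ is $0$-symmetric, with no appeal to the defect measure or monotonicity formula at all.

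A second, smaller issue: to obtain a constant $\delta$ that depends only on $(n,K_N,\Lambda,\epsilon)$ rather than on $N$ itself, the contradiction sequence must be allowed to have varying targets $N_i$ subject to the uniform bound $K_{N_i}\leq K_N$; the paper passes to a $C^{1,\alpha}$-limit target $N_i\to N$. You fix a single $N$ from the outset, which as written would only produce a $\delta$ depending on $N$, and the closing remark about $\mathbb{R}^L$-embeddings does not by itself repair this -- one would still need to explain why a sequence of targets with uniform $K_N$-bounds has a convergent subsequence, which is exactly what the paper invokes.
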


\begin{proof}

The proof is by contradiction.  So with $n,K_N,\Lambda,\epsilon>0$ fixed let us assume the result fails.  Then there exists a sequence $f_i:B_4(p_i)\to N_i$ for which $K_{M_i}<\delta_i$, with $B_4(p_i)$ being $(0,\delta_i)$-symmetric but not $(k+1,\epsilon)$-symmetric, and such that for some subspaces $V_i^{k+1}$ we have that
\begin{align}\label{e:best_subspace_lower_energy:1}
\fint_{A_{3,4}(p_i)}|\langle \nabla f_i,V^{k+1}_i\rangle |^2 \leq \delta_i\to 0\, .
\end{align}
Then after passing to subsequences we have that $V_i\to V^k\subseteq \dR^n$ with
\begin{align}
N_i\stackrel{C^{1,\alpha}}{\longrightarrow} N\, ,
\end{align}
and
\begin{align}
f_i\longrightarrow f:B_4(0^n)\to N\, ,
\end{align}
where the convergence is in $L^2\cap H^1_{weak}$.  Now \eqref{e:best_subspace_lower_energy:1} and the $H^1_{weak}$ convergence guarantees that 
\begin{align}
\fint_{A_{3,4}(0^n)}|\langle \nabla f,V^{k+1}\rangle |^2 = 0\, .
\end{align}

On the other hand, the $(0,\delta_i)$-symmetry of the $f_i$, combined with the $L^2$ convergence, tells us that $f$ is $0$-symmetric.  Combining these tells us that 
\begin{align}
\fint_{B_4(0^n)}|\langle \nabla f,V^{k+1}\rangle |^2 = 0\, ,
\end{align}
and hence we have that $f$ is $k+1$-symmetric.  Because the convergence $f_i\to f$ is in $L^2$, this contradicts that the $f_i$ are not $(k+1,\epsilon)$-symmetric for $i$ sufficiently large, which proves the Lemma.

\end{proof}
\vspace{1cm}

\subsection{\texorpdfstring{Best $L^2$-Subspace Equations}{Best L2-Subspace Equations}}\label{ss:best_subspaces}

In order to prove Theorem \ref{t:best_approximation} we need to identify which subspace minimizes the $L^2$-energy, and the properties about this subspace that allow us to estimate the distance.  We begin in Section \ref{sss:second_moments}  by studying some very general properties of the second directional moments of a general probability measure $\mu\subseteq B_1(p)$.  We will then study in Section \ref{sss:restricted_Dirichlet} the Dirichlet energy of a stationary harmonic map when restricted to these directions.\\

\subsubsection{Second Directional Moments of a Measure}\label{sss:second_moments}

Let us consider a probability measure $\mu\subseteq B_1(0^n)$, and let
\begin{align}
x^i_{cm}=x^i_{cm}(\mu)\equiv \int x^i \, d\mu(x)\, ,
\end{align}
be the center of mass.  Let us inductively consider the maximum of the second directional moments of $\mu$.  More precisely:

\begin{definition}\label{d:second_moments}
Let $\lambda_1=\lambda_1(\mu)\equiv \max_{|v|^2=1} \int |\langle x-x_{cm},v\rangle|^2\, d\mu(x)$
and let $v_1=v_1(\mu)$ with $|v_1|=1$ (any of) the vector obtaining this maximum.  Now let us define inductively the pair $(\lambda_{k+1},v_{k+1})$ from $v_1,\ldots,v_{k}$ by
\begin{align}
\lambda_{k+1}=\lambda_{k+1}(\mu)\equiv \max_{|v|^2=1,\langle v,v_i\rangle=0} \int |\langle x-x_{cm},v\rangle|^2\, d\mu(x)\, ,
\end{align}
where $v_{k+1}$ is (any of) the vector obtaining this maximum.
\end{definition}

Thus $v_1,\ldots,v_n$ defines an orthonormal basis of $\dR^n$, ordered so that they maximize the second directional moments of $\mu$.  Let us define the subspaces
\begin{align}\label{e:best_subspace:Vk}
V^k= V^k(\mu)\equiv x_{cm}+\text{span}\{v_1,\ldots,v_k\}\, .
\end{align}
The following is a simple but important exercise:

\begin{lemma}\label{l:best_subspace_Vk}
If $\mu$ is a probability measure in $B_1(0^n)$, then for each $k$ the functional 
\begin{align}
\min_{L^k\subseteq \dR^n} \int d^2(x,L^k)\,d\mu(x)\, ,
\end{align}
where the $\min$ is taken over all $k$-dimensional affine subspaces, attains its minimum at $V^k$.  Further, we have that
\begin{align}
\min_{L^k\subseteq \dR^n} \int d^2(x,L^k)\,d\mu(x) = \int d^2(x,V^k)\,d\mu(x) = \lambda_{k+1}(\mu)+\cdots+\lambda_n(\mu)\, .
\end{align}
\end{lemma}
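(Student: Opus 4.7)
The plan is to reduce this to a standard spectral (Courant--Fischer) statement about the covariance matrix of $\mu$. First I would show that in the minimization over affine $k$-planes $L^k = p + W$, where $W$ is a linear $k$-subspace, the optimal $p$ can always be taken to be $x_{cm}$. Indeed, writing
\begin{align*}
\int d^2(x,L^k)\,d\mu(x) = \int |\pi_{W^\perp}(x-p)|^2 \, d\mu(x) = \int |\pi_{W^\perp}(x-x_{cm})|^2 \, d\mu(x) + |\pi_{W^\perp}(x_{cm}-p)|^2 \, ,
\end{align*}
(using that $\mu$ is a probability measure and that the cross term vanishes by definition of the center of mass), we see the minimum in $p$ is attained whenever $p - x_{cm} \in W$. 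So it suffices to minimize over linear $k$-subspaces through $x_{cm}$, and after translation we may assume $x_{cm} = 0$.

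Next I would introduce the covariance matrix
\begin{align*}
A_{ij} \equiv \int x^i x^j \, d\mu(x) \, ,
\end{align*}
which is symmetric and positive semi-definite. For any orthonormal basis $\{w_1,\ldots,w_k\}$ of a linear $k$-subspace $W$,
\begin{align*}
\int d^2(x,W)\,d\mu(x) = \int |x|^2\,d\mu(x) - \sum_{i=1}^k \int \langle x, w_i\rangle^2\, d\mu(x) = \mathrm{tr}(A) - \sum_{i=1}^k \langle A w_i, w_i\rangle \, .
\end{align*}
Therefore minimizing the left-hand side over $W$ is equivalent to maximizing $\sum_{i=1}^k \langle Aw_i,w_i\rangle$ over orthonormal $k$-tuples.

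The key step is then to identify the iterative construction of Definition \ref{d:second_moments} with the spectral decomposition of $A$. By the variational characterization, $\lambda_1 = \max_{|v|=1}\langle Av, v\rangle$, and $v_1$ is an eigenvector with eigenvalue $\lambda_1$. Inductively, after restricting $A$ to the orthogonal complement of $\mathrm{span}\{v_1,\ldots,v_k\}$, the same argument produces $v_{k+1}$ as an eigenvector with eigenvalue $\lambda_{k+1}$, and yields $\lambda_1 \geq \lambda_2 \geq \cdots \geq \lambda_n \geq 0$ with $\mathrm{tr}(A) = \sum_{i=1}^n \lambda_i$. The Courant--Fischer theorem then gives
\begin{align*}
\max_{\{w_1,\ldots,w_k\} \text{ orthonormal}} \sum_{i=1}^k \langle Aw_i, w_i\rangle = \lambda_1 + \cdots + \lambda_k\, ,
\end{align*}
attained at $w_i = v_i$. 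Combining with the displayed identity, the minimum of $\int d^2(x, L^k)\,d\mu$ equals $\mathrm{tr}(A) - (\lambda_1+\cdots+\lambda_k) = \lambda_{k+1}+\cdots+\lambda_n$, attained at $V^k = x_{cm} + \mathrm{span}\{v_1,\ldots,v_k\}$, as claimed.

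I do not anticipate a real obstacle: this is a clean PCA-style computation and the only mild care needed is the reduction of the affine minimization to the linear one through $x_{cm}$, together with correctly invoking Courant--Fischer. The lemma is essentially an exercise as the paper remarks.
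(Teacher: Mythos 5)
Your proof is correct. The paper does not actually supply a proof of this lemma; it is flagged as ``a simple but important exercise,'' with only the remark that the optimal affine plane passes through $x_{cm}$ by Steiner's formula or Jensen's inequality, which is exactly your opening reduction, so there is no internal argument to diverge from.

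One small attribution note: the identity
\begin{align*}
\max_{\{w_1,\ldots,w_k\}\text{ orthonormal}}\sum_{i=1}^{k}\langle Aw_i,w_i\rangle=\lambda_1+\cdots+\lambda_k
\end{align*}
is Ky Fan's maximum principle rather than Courant--Fischer (which is the min-max characterization of a single eigenvalue). It is still elementary: writing $A=\sum_i\lambda_i v_iv_i^\top$, the left side equals $\sum_i\lambda_i\,|\pi_W v_i|^2$, where the weights $|\pi_W v_i|^2\in[0,1]$ sum to $k$, and the maximum over such weights is clearly $\lambda_1+\cdots+\lambda_k$. Also note that your claim that the greedy maximizers of Definition \ref{d:second_moments} are eigenvectors of the covariance matrix is exactly what the paper records separately as Lemma \ref{l:best_subspace:euler_lagrange}; your variational derivation is the same Lagrange-multiplier computation.
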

Note that the best affine subspace $V^k$ will necessarily pass through the center of mass $x_{cm}$ by Steiner's formula, or equivalently by Jensen's inequality and the definition of $x_{cm}$.

\vspace{.5cm}

Now let us record the following Euler-Lagrange formula, which is also an easy computation:

\begin{lemma}\label{l:best_subspace:euler_lagrange}
If $\mu$ is a probability measure in $B_1(0^n)$, then we have that $v_1(\mu),\ldots,v_n(\mu)$ satisfy the Euler-Lagrange equations:
\begin{align}\label{e:second_moment_EL}
\int \langle x-x_{cm},v_k\rangle (x-x_{cm})^i\, d\mu(x) = \lambda_k v_k^i \, ,
\end{align}
where 
\begin{align}
&\lambda_k = \int |\langle x-x_{cm},v_k\rangle|^2\, d\mu(x)\, .
\end{align}
\end{lemma}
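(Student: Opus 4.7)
The plan is to recognize the vectors $v_k$ as eigenvectors of the symmetric covariance matrix
\begin{equation*}
M_{ij} \equiv \int (x-x_{cm})^i (x-x_{cm})^j \, d\mu(x),
\end{equation*}
since the displayed identity \eqref{e:second_moment_EL} is simply the coordinate form of $Mv_k = \lambda_k v_k$, and the formula for $\lambda_k$ is just the Rayleigh quotient $v_k^T M v_k$. Setting $Q(v) = v^T M v = \int |\langle x-x_{cm}, v\rangle|^2 \, d\mu(x)$, Definition \ref{d:second_moments} says exactly that $v_k$ is a maximizer of $Q$ subject to $|v|^2 = 1$ and $\langle v, v_j \rangle = 0$ for $j < k$.

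I would then apply the method of Lagrange multipliers to this constrained maximization. The gradients of $Q$, of $|v|^2 - 1$, and of the orthogonality constraints $\langle v, v_j\rangle$ are $2Mv$, $2v$, and $v_j$ respectively, so at the critical point $v_k$ one obtains
\begin{equation*}
M v_k = \lambda_k v_k + \sum_{j < k} \alpha_{k,j} \, v_j
\end{equation*}
for scalars $\lambda_k$ and $\alpha_{k,j}$. Once this identity is established, pairing with $v_k$ immediately yields $\lambda_k = v_k^T M v_k$, which matches the claimed formula for $\lambda_k$.

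The only point requiring care is verifying that the off-diagonal multipliers $\alpha_{k,j}$ vanish, which I would handle by induction on $k$. Assuming $Mv_j = \lambda_j v_j$ for all $j < k$, I pair the Lagrange identity with $v_\ell$ for any $\ell < k$ and use symmetry of $M$ to write $\alpha_{k,\ell} = \langle Mv_k, v_\ell\rangle = \langle v_k, Mv_\ell\rangle = \lambda_\ell \langle v_k, v_\ell\rangle = 0$. This collapses the equation to $Mv_k = \lambda_k v_k$, closing the induction and giving \eqref{e:second_moment_EL} in vector form.

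I do not expect any real obstacle here: the result is the standard principal-axes / spectral characterization of a positive-semidefinite quadratic form, and the iterative maximization procedure of Definition \ref{d:second_moments} is precisely the Courant–Fischer construction of an orthonormal eigenbasis. The small notational nuisance is not conflating the measure $\mu$ with the Lagrange multipliers, which I have handled above by denoting the latter $\alpha_{k,j}$.
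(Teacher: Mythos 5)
Your proposal is correct and follows essentially the same route as the paper: recognize the problem as constrained optimization of the quadratic form associated to the covariance matrix, apply Lagrange multipliers, and then kill the off-diagonal multipliers using symmetry of $M$ and orthogonality of the $v_j$. The paper's version avoids explicitly invoking the inductive hypothesis $Mv_\ell = \lambda_\ell v_\ell$ at the final step, instead substituting the raw Lagrange identity for $v_\ell$ (with its own multipliers still present) and observing that orthogonality of $v_k$ to every $v_s$ with $s < k$ annihilates all the resulting terms — a cosmetic difference that makes the argument technically non-inductive, but the substance is identical to yours.
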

\begin{proof}
 The proof is a simple application of the Lagrange multipliers method. Inductively on $k$, consider the function $f(v_k,\lambda_k,\lambda_{k,1},\cdots,\lambda_{k,k-1}):\R^n\times\R^{k}\to \R$ given by
 \begin{gather}
  f(v_k,\lambda_k,\lambda_{k,\ell}) = \int \ps{x-x_{cm}}{v_k}^2 d\mu(x) -\lambda_k \ton{\abs{v_k}^2-1}-2\sum_{\ell=0}^{k-1}\lambda_{k,\ell} \ps{v_k}{v_\ell}\, .
 \end{gather}
By the multipliers method, we obtain that $v_k$ and $\lambda_k$ satisfy the equations
\begin{gather}
 \frac 1 2 \nabla^{(v_k)} f(v_k,\lambda_k,\lambda_{k,\ell}) = \int \ps{x-x_{cm}}{v_k}(x-x_{cm}) d\mu(x) -\lambda_k v_k- \sum_{\ell=0}^{k-1}\lambda_{k,\ell} v_\ell=0\, .
\end{gather}
By taking the scalar product of this equation with $v_\ell$, since $\ps{v_k}{v_\ell}=\delta_{k,\ell}$, we have
\begin{gather}
\lambda_k = \int |\langle x-x_{cm},v_k\rangle|^2\, d\mu(x)\, ,\\
\lambda_{k,\ell} = \int \langle x-x_{cm},v_\ell\rangle\langle x-x_{cm},v_k\rangle\, d\mu(x)= \ps{\int \ps{x-x_{cm}}{v_\ell}(x-x_{cm})\, d\mu(x)}{v_k}=\notag \\
= \ps{\lambda_\ell v_\ell +\sum_{s=1}^{\ell-1} \lambda_{\ell,s}v_s}{v_k}=0\, .
\end{gather}

\end{proof}

\vspace{1cm}

\subsubsection{Restricted Dirichlet Energies}\label{sss:restricted_Dirichlet}

Our goal is to now study the Dirichlet energy of a stationary harmonic map when restricted to the directions $v_1(\mu),\ldots,v_n(\mu)$ associated to a probability measure.  The main result of this subsection is the following, which holds for a general stationary harmonic map:


\begin{proposition}\label{p:best_subspace_estimates}
Let $f:\B 9 p\subseteq M\to N$ be a stationary harmonic map satisfying \eqref{e:manifold_bounds} with $K_M<2^{-4}$ and such that $\fint_{\B 9 p}|\nabla f|^2\leq \Lambda$.  Let $\mu$ be a probability measure on $B_1(p)$ with $\lambda_k(\mu),v_k(\mu)$ defined as in Definition \ref{d:second_moments}.  Then there exists $C(n,K_N)>0$ such that
\begin{align}
\lambda_k \int_{A_{3,4}(p)} |\langle\nabla f(z), v_k\rangle|^2\,dv_g(z) \leq C \int W_0(x)\, d\mu(x)\, .
\end{align}
\end{proposition}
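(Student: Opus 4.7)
The plan is to combine the Euler-Lagrange equation for the best subspace vectors $v_k$ (Lemma \ref{l:best_subspace:euler_lagrange}) with the pointwise monotonicity formula for $\theta_r(x)$. The argument is purely algebraic on the probability side, and purely geometric on the harmonic-map side, linked by a single application of Cauchy--Schwarz.

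First, I would use Lemma \ref{l:best_subspace:euler_lagrange} to write
\begin{align*}
\lambda_k\,\langle \nabla f(z),v_k\rangle
= \int \langle x-x_{cm},v_k\rangle\,\langle \nabla f(z),\,x-x_{cm}\rangle\,d\mu(x).
\end{align*}
The key observation is that by definition of the center of mass, $\int \langle x-x_{cm},v_k\rangle\,d\mu(x)=0$, so I can freely subtract any quantity that depends only on $z$ from the second factor inside the inner product. Splitting $x-x_{cm}=(x-z)+(z-x_{cm})$ in the second slot and discarding the vanishing $z$-dependent piece yields
\begin{align*}
\lambda_k\,\langle \nabla f(z),v_k\rangle
= \int \langle x-x_{cm},v_k\rangle\,\langle \nabla f(z),\,x-z\rangle\,d\mu(x).
\end{align*}
Now Cauchy--Schwarz in $L^2(d\mu)$, together with the defining identity $\int|\langle x-x_{cm},v_k\rangle|^2\,d\mu = \lambda_k$, gives the pointwise bound
\begin{align*}
\lambda_k\,|\langle \nabla f(z),v_k\rangle|^2 \leq \int |\langle \nabla f(z),\,z-x\rangle|^2\,d\mu(x).
\end{align*}

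Next I would integrate this inequality in $z$ over $A_{3,4}(p)$ and apply Fubini, reducing the claim to showing that
\begin{align*}
\int_{A_{3,4}(p)} |\langle \nabla f(z),z-x\rangle|^2\,dv_g(z) \leq C\,W_0(x)
\qquad\text{for every } x\in B_1(p).
\end{align*}
This is where I would invoke the monotonicity formula from Section~\ref{ss:monotonicity}. For $x\in B_1(p)$ one has the inclusion $A_{3,4}(p)\subseteq A_{2,5}(x)\subseteq A_{1,8}(x)$, and on this region $|z-x|$ is comparable to a positive dimensional constant, so writing $|\langle \nabla f(z),z-x\rangle|^2 = |z-x|^2\,|\partial_r f|^2$ and inserting an $|z-x|^{n-2}|z-x|^{2-n}$ factor yields
\begin{align*}
\int_{A_{3,4}(p)} |\langle \nabla f(z),z-x\rangle|^2\,dv_g(z)
\leq C(n)\int_{A_{1,8}(x)} |z-x|^{2-n}\,|\partial_r f|^2\,dv_g(z) = \tfrac{C(n)}{2}\bigl(\theta_8(x)-\theta_1(x)\bigr),
\end{align*}
which is exactly $\tfrac{C(n)}{2}W_0(x)$ on Euclidean space. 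Combining with the previous step gives the desired estimate.

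The main obstacle, and the one I would handle most carefully, is that $M$ is only Riemannian with $K_M<2^{-4}$, not flat: the monotonicity formula holds only in its almost-monotonic form $\tfrac{d}{dr}(e^{Cr}\theta_r)\geq 0$, and ``radial derivative'' must be interpreted in geodesic normal coordinates at $x$. I would absorb the exponential factor into the constant $C(n,K_N)$ and use that on a fixed annulus of radius $\leq 5$ the Jacobian of the exponential map and the relation between $\langle \nabla f,z-x\rangle$ (in coordinates) and the true radial derivative differ from their Euclidean values by multiplicative factors of $1+O(K_M)$, which is bounded since $K_M<2^{-4}$. The Euler-Lagrange step is unaffected since it takes place in $T_pM$ under the identification via $\exp_p$. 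After these technical adjustments the same chain of inequalities yields the desired bound with a constant depending only on $n$ and $K_N$.
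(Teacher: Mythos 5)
Your proposal is correct and follows essentially the same path as the paper: apply the Euler–Lagrange identity from Lemma \ref{l:best_subspace:euler_lagrange}, use the vanishing of $\int \langle x-x_{cm},v_k\rangle\,d\mu$ to replace $x-x_{cm}$ by $x-z$ in the second slot, apply Cauchy--Schwarz in $L^2(d\mu)$ with $\int|\langle x-x_{cm},v_k\rangle|^2\,d\mu=\lambda_k$, then integrate in $z$, use Fubini, and bound $\int_{A_{3,4}(p)}|\langle\nabla f(z),x-z\rangle|^2\,dv_g$ by $C(n)W_0(x)$ via the radial-derivative monotonicity formula after enlarging to $A_{1,8}(x)$. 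The paper normalizes $x_{cm}=0$ and dispatches the Riemannian-versus-Euclidean issue with a one-line remark, whereas you keep $x_{cm}$ general and spell out the normal-coordinate corrections, but these are cosmetic differences only.
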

\begin{proof}
Note first that there is no harm in assuming that $x_{cm}\equiv 0$.  If not we can easily translate to make this so, in which case we still have that $\text{supp}(\mu)\subseteq B_2$.  Additionally, we will simplify the technical aspect of the proof by assuming that $M\equiv \dR^n$.  By working in normal coordinates the proof of the general case is no different except up to some mild technical work.  \\

Now let us fix any $z\in A_{3,4}$. By Lemma \ref{l:best_subspace:euler_lagrange}, we can inner product both sides of \eqref{e:second_moment_EL} by $\nabla f(z)$ to obtain for each $k$ and $z\in A_{3,4}$:
\begin{align}
\lambda_k \ps{\nabla f(z)}{v_k} &= \int \ps{x}{v_k}\ps{\nabla f(z)}{x} \, d\mu(x) \, ,
\end{align}
Observe that, by definition of center of mass,
\begin{align}
\int \langle x,z\rangle\, d\mu(x) = \langle x_{cm},z\rangle = 0\, . 
\end{align}
Thus we can write
\begin{align}
\lambda_k \ps{\nabla f(z)}{v_k}&= \int \ps{ x}{v_k}\ps{\nabla f(z)}{x-z}\,d\mu(x) \, .
\end{align}
By \hol inequality, we can then estimate
\begin{align}
\lambda_k^2\abs{\ps{\nabla f(z)}{v_k}}^2 \leq \lambda_k\int \abs{\ps{\nabla f(z)}{x-z}}^2\,d\mu(x) \, .
\end{align}
If $\lambda_k\neq 0$, integrating with respect to $z$ on both sides we get the estimate
\begin{align}\label{e:best_subspace_estimate:1}
\lambda_k \int_{A_{3,4}}\abs{\ps{\nabla f(z)}{v_k}}^2\,dv_g(z)\leq & \int\int_{A_{3,4}} \abs{\ps{\nabla f(z)}{x-z}}^2\,dv_g(z)\,d\mu(x) \, .
\end{align}
Set for convenience $n_x(z)=(z-x)/\abs{z-x}$, i.e., $n_x(z)$ is the radial vector from $x$ to $z$. Now for $x\in \text{supp}(\mu)$ we can estimate
\begin{align}
\int_{A_{3,4}} \abs{\ps{\nabla f(z)}{x-z}}^2\,dv_g(z) &= \int_{A_{3,4}} \abs{\ps{\nabla f(z)}{n_x(z)}}^2|x-z|^{2-n}|x-z|^n\,dv_g(z)\notag\\
&\leq 6^n\int_{A_{3,4}} |\langle \nabla f(z),n_x(z)\rangle|^2|x-z|^{2-n}\,dv_g(z)\notag\\
&\leq C(n)\int_{A_{1,8}(x)} |\langle \nabla f(z),n_x(z)\rangle|^2|x-z|^{2-n}\,dv_g(z) \notag\\
&= C(n)W_0(x)\, .
\end{align}
Applying this to \eqref{e:best_subspace_estimate:1} we get as desired
\begin{align}\label{e:best_subspace_estimate:2}
\lambda_k \int_{A_{3,4}}\abs{\ps{\nabla f(z)}{v_k}}^2\,dv_g(z)\leq &C(n)\int W_0(x)\,d\mu(x)\, .
\end{align}

\end{proof}
\vspace{1cm}

\subsection{Proof of Theorem \ref{t:best_approximation}}\label{ss:proof_L2_best}

Let us now combine the results of this Section in order to prove Theorem \ref{t:best_approximation}.  Indeed, let $\mu$ be a measure in $B_1(p)\subseteq T_p M$. We can assume that $\mu$ is a probability measure without any loss of generality, since both sides of our estimate scale.  Let $\big(\lambda_1(\mu),v_1(\mu)\big),\ldots,\big(\lambda_n(\mu),v_n(\mu)\big)$ be the directional second moments as defined in Definition \ref{d:second_moments}, with $V^k$ the induced subspaces defined as in \eqref{e:best_subspace:Vk}.  Using Lemma \ref{l:best_subspace_Vk} we have that
\begin{align}\label{e:best_subspace:proof:1}
\min_{L^k\subseteq \dR^n} \int d^2(x,L^k)\,d\mu(x) = \int d^2(x,V^k)\,d\mu(x) = \lambda_{k+1}(\mu)+\cdots+\lambda_n(\mu)\leq (n-k)\lambda_{k+1}(\mu)\, ,
\end{align}
where we have used that $\lambda_j\leq \lambda_i$ for $j\geq i$.  Therefore our goal is to estimate $\lambda_{k+1}$.  To begin with, Proposition \ref{p:best_subspace_estimates} tells us that for each $j$ 
\begin{align}
\lambda_j \int_{A_{3,4}(p)} |\langle\nabla f(z), v_j\rangle|^2\,dv_g(z) \leq C \int W_0(x)\, d\mu(x)\, .
\end{align}
Let us sum the above for all $j\leq k+1$ in order to obtain
\begin{align}
\sum_{j=1}^{k+1}\lambda_j\int_{A_{3,4}(p)} |\langle\nabla f(z), v_j\rangle|^2\,dv_g(z) \leq (k+1)C \int W_0(x)\, d\mu(x)\, ,
\end{align}
or by using that $\lambda_{k+1}\leq \lambda_{j}$ for $k+1\geq j$ we get
\begin{align}\label{e:best_subspace:proof:2}
\lambda_{k+1}\int_{A_{3,4}(p)} |\langle\nabla f(z), V^{k+1}\rangle|^2\,dv_g(z)= \lambda_{k+1} \sum_{j=1}^{k+1}\int_{A_{3,4}(p)} |\langle\nabla f(z), v_j\rangle|^2\,dv_g(z)\leq C \int W_0(x)\, d\mu(x)\, .
\end{align}

Now we use that $B_8(p)$ is $(0,\delta)$-symmetric, but {\it not} $(k+1,\epsilon)$-symmetric in order to apply Lemma \ref{l:best_subspace:energy_lower_bound} and conclude that
\begin{align}
\int_{A_{3,4}(p)} |\langle\nabla f(z), V^{k+1}\rangle|^2\,dv_g(z)\geq \delta\, .
\end{align}
Combining this with \eqref{e:best_subspace:proof:2} we obtain
\begin{align}\label{e:best_subspace:proof:3}
\delta\lambda_{k+1}\leq \lambda_{k+1} \int_{A_{3,4}(p)} |\langle\nabla f(z), V^{k+1}\rangle|^2\,dv_g(z)\leq C \int W_0(x)\, d\mu(x)\, ,
\end{align}
or that
\begin{align}
\lambda_{k+1}\leq C(n,K_N,\Lambda,\epsilon) \int W_0(x)\, d\mu(x)\, .
\end{align}
Combining this with \eqref{e:best_subspace:proof:1} we have therefore proved the Theorem. \hfill $\square$
\vspace{1cm}

\section{The Inductive Covering Lemma}\label{s:covering_main}

This Section is dedicated to the basic covering lemma needed for the proof of the main theorems of the paper.  The covering scheme is similar in nature to the one introduced by the authors in \cite{NaVa} in order to prove structural theorems on critical sets.  Specifically, let us consider a stationary harmonic map $f$ between Riemannian manifolds.  We wish to build a covering of the quantitative stratification
\begin{align}
S^k_{\epsilon,r}(f)\cap B_1(p)\subseteq U_r\cup U_+ = U_r\cup\bigcup B_{r_i}(x_i)\, ,
\end{align}
which satisfies several basic properties.  To begin with, the set $U_+$ is a union of balls satisfying $r_i>r\geq 0$, and should satisfy the packing estimate $\omega_k\sum r_i^k\leq C$.  Each ball $B_{r_i}(x_i)$ should have the additional property that there is a definite energy drop of $f$ when compared to $B_1(p)$.  To describe the set $U_r$ we should distinguish between the case $r>0$ and $r\equiv 0$.  In the case $r>0$ we will have that $U_r = \bigcup B_{r}(x^r_i)$ is a union of $r$-balls and satisfies the Minkowski estimate $\Vol(U_r)\leq C r^{n-k}$.  In the case when $r\equiv 0$ we will have that $U_0$ is $k$-rectifiable with the Hausdorff estimate $\lambda^{k}(U_0)\leq C$.  Let us be more precise:\\

\begin{lemma}[Covering Lemma]\label{l:covering}
Let $f:B_{16}(p)\subseteq M\to N$ be a stationary harmonic map satisfying \eqref{e:manifold_bounds} with $\fint_{B_{16}(p)}|\nabla f|^2\leq \Lambda$ and $K_M<\delta(n,K_N,\Lambda,\epsilon)$.  Let $E = \sup_{x\in B_1(p)\cap S^k_{\epsilon,r}} \theta_1(x)$ with $\epsilon>0$, $r\geq 0$, and $k\in\dN$.  Then for all $\eta\leq \eta(n,K_N,\Lambda,\epsilon)$, there exists a covering $S^k_{\epsilon,r}(f)\cap B_1(p)\subseteq U = U_r\cup U_+$ such that 
\begin{enumerate}
\item $U_+=\bigcup B_{r_i}(x_i)$ with $r_i>r$ and $\sum r_i^k\leq C(n,K_N,\Lambda,\epsilon)$.
\item $\sup_{y\in B_{r_i}(x_i)\cap S^k_{\epsilon,r}}\theta_{r_i}(y)\leq E- \eta$.
\item If $r>0$ then $U_r=\bigcup_1^N B_{r}(x^r_i)$ with $N\leq C(n) r^{-k}$.
\item If $r=0$ then $U_0$ is $k$-rectifiable and satisfies $\Vol(B_s\, \ton{U_0})\leq C(n) s^{n-k}$ for each $s>0$.\newline  In particular, $\lambda^k(U_0)\leq C(n)$.
\end{enumerate}
\end{lemma}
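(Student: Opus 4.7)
The plan is to build the covering by an iterative Vitali-type construction at dyadic scales $r_\alpha = 2^{-\alpha}$, coupled with an energy stopping-time rule. At each scale, every ball $B_{r_\alpha}(x)$ in the current cover is refined at the next dyadic scale via a Vitali cover; any sub-ball $B_{r_{\alpha+1}}(x')$ containing a point $y \in S^k_{\epsilon,r}$ with $\theta_{r_{\alpha+1}}(y) \leq E - \eta$ is peeled off into the bad set $U_+$ (with radius $r_{\alpha+1}$), while the remaining sub-balls continue the iteration. If $r > 0$ the iteration halts at scale $r$ and the surviving sub-balls form $U_r$, whose count is controlled by the $k$-content bound we will establish. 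If $r = 0$ the iteration runs indefinitely and $U_0$ emerges as the set of points where no dyadic scale ever triggers the energy drop.

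The technical heart is the packing bound $\sum r_i^k \leq C$ on $U_+$, which I would obtain via the discrete rectifiable-Reifenberg theorem (Theorem \ref{t:reifenberg_W1p_discrete}) applied to the measure $\mu = \sum \omega_k r_i^k \delta_{x_i}$. To verify its hypothesis on a ball $B_s(z)$ contained in a surviving ball at scale $s$, one uses that each center $x_i \in \supp{\mu} \cap B_s(z)$ lies in $S^k_{\epsilon,r}$, so $B_{r_\alpha}(x_i)$ is never $(k+1,\epsilon)$-symmetric, while quantitative $0$-symmetry (Theorem \ref{t:quantitative_0_symmetry}) guarantees that it is $(0,\delta)$-symmetric at all but a bounded number of scales. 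Applying the $L^2$-best approximation theorem (Theorem \ref{t:best_approximation}) at every good scale yields
\begin{align*}
D^k_\mu(y, r_\alpha) \leq C\, r_\alpha^{-k} \int_{B_{r_\alpha}(y)} W_\alpha(z)\, d\mu(z) \, .
\end{align*}
Summing over $\alpha$ and applying Fubini,
\begin{align*}
\sum_\alpha \int_{B_s(z)} D^k_\mu(y, r_\alpha)\, d\mu(y) \leq C \int_{B_s(z)} \bigl(\theta_s(y) - \theta_0(y)\bigr)\, d\mu(y) \leq C\, \eta\, \mu(B_s(z)) \, ,
\end{align*}
where the final inequality uses that each center peeled off inside $B_s(z)$ had energy at scale $s$ still close to $E$ (else $B_s(z)$ itself would have been peeled off at an earlier stage), so that the accumulated drop below scale $s$ is at most $\eta$ along the relevant portion of the monotone profile. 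Choosing $\eta(n, K_N, \Lambda, \epsilon)$ small enough so that $C\eta$ falls below the Reifenberg threshold $\delta^2$ verifies the hypothesis and yields $\sum_{x_i \in B_s(z)} r_i^k \leq D(n) s^k$; iterating from $B_1(p)$ gives the global bound.

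For $r = 0$, the same $L^2$-best approximation estimate, now tested against $\lambda^k|_{U_0}$, verifies the hypothesis of the full rectifiable-Reifenberg (Theorem \ref{t:reifenberg_W1p_holes}) on the limiting set $U_0$, yielding simultaneously the Minkowski bound $\Vol(B_s(U_0)) \leq C s^{n-k}$ and $k$-rectifiability. For the $r > 0$ case, the bound $N \leq C(n) r^{-k}$ on $U_r$ follows from the same argument at the stopping scale $r$, since the surviving balls form a Vitali packing at scale $r$ whose centers lie in a set of controlled $k$-content. The main obstacle I anticipate is the bookkeeping needed to apply the best-approximation theorem uniformly across every scale and every sub-ball of the iterative cover: the finite collection of bad scales from Theorem \ref{t:quantitative_0_symmetry} at each point must be absorbed by a dimensionally-bounded dyadic refinement, and sub-balls whose measure falls below the threshold $\epsilon_n s^k$ required for $D^k_\mu$ to be meaningful must be handled separately, by direct absorption into $U_r$ or $U_0$ without further subdivision, which is compatible with the final measure bounds since such small-measure sub-balls cost nothing in the packing.
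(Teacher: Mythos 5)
Your overall strategy — a dyadic stopping-time covering with a packing bound obtained by checking the discrete rectifiable-Reifenberg hypothesis against $\mu = \sum \omega_k r_i^k\delta_{x_i}$, using the $L^2$-best approximation theorem and a telescoping sum of energy drops — is the same as the paper's. But there are two genuine gaps. The first is minor but real: your stopping rule peels a ball as soon as it contains \emph{one} point whose energy has dropped, whereas property (2) of the Lemma demands that \emph{every} $y$ in the peeled ball satisfy $\theta_{r_i}(y)\le E-\eta$. These are different conditions; the paper's energy scale $s_x$ is built around $\sup_{B_s(x)\cap S^k_{\epsilon,r}}\theta_{\eta s}(y)$ and the intermediate covering $U'_+$ is subdivided once more into $\eta r_i$-balls precisely to translate the ``all points drop'' statement at the stopping scale into the literal form of (2).

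The second gap is where the heart of the proof lives. In your telescoping step you claim that ``each center peeled off inside $B_s(z)$ had energy at scale $s$ close to $E$\ldots\ so the accumulated drop below scale $s$ is at most $\eta$.'' What survival of the parent ball actually gives is $\theta_s(x_i)>E-\eta$ together with $\theta_s(x_i)\le E$; this pins the value at scale $s$ but says nothing about $\theta_{r_i}(x_i)$. The ball $B_{r_i}(x_i)$ was peeled because \emph{some other} point $y'$ dropped, and there is no a priori lower bound on $\theta_{r_i}(x_i)$ — the drop $\theta_s(x_i)-\theta_{r_i}(x_i)$ can be as large as $\Lambda$ (and the quantity $\theta_s(y)-\theta_0(y)$ appearing in your display is even less controlled). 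Consequently the smallness required by the Reifenberg hypothesis is not verified. The paper closes exactly this gap with the decomposition $U_>=\bigcup U^a_>$ of Lemma \ref{l:covering:break_up_U_+}, the improved continuity Lemma \ref{l:covering:improved_continuity}, and the energy density Lemma \ref{l:covering:energy_density} — which uses cone splitting and quantitative dimension reduction against the constraint $x_i\in S^k_{\epsilon,r}$ to force $\theta_{\eta' r_i}(x_i)\ge E-\eta'$ whenever the local $\mu^a$-mass is large — together with the replacement of small-mass centers $x_j$ by nearby pinched points $y_j$ in Section \ref{ss:covering_U_+}. None of this appears in your argument, and without it the telescoping estimate, and hence the packing bound, does not go through.
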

\begin{remark}
The assumption $K_M<\delta$ is of little consequence, since given any $K_M$ this just means focusing the estimates on balls of sufficiently small radius after rescaling.
\end{remark}

\vspace{.5 cm}

To prove the result let us begin by outlining the construction of the covering, we will then spend the rest of this section proving the constructed cover has all the desired properties.\\

Thus let us consider some $\eta>0$ fixed, and then define the energy scale for $x\in B_1(p)$ by
\begin{align}\label{e:covering:energy_scale}
s_x=s^{E,\eta}_x \equiv \inf\big\{r\leq s\leq 1: \sup_{B_s(x)\cap S^k_{\epsilon,r}}\theta_{\eta s}(y)\geq E-\eta\big\}\, .
\end{align}
Note that the energy scale implicitly depends on many constants.  If $r=0$ let us define the set $U_0$ by
\begin{align}
U_0\equiv \big\{x\in S^k_{\epsilon,r}(f)\cap B_1(p): s_x=0\big\}\, ,
\end{align}
while if $r>0$ let us first define the temporary covering
\begin{align}
U'_r = \bigcup B_{r}(x^r_i)\, ,
\end{align}
where 
\begin{align}
\{x_i^r\}\subseteq \big\{x\in S^k_{\epsilon,r}(f)\cap B_1(p): s_x=r\big\}\, ,
\end{align}
is any minimal $r/5$-dense set.  In particular, note that the collection of balls $\{B_{r/10}(x^r_i)\}$ are disjoint.\\

In order to define the temporary covering $U'_+=\{B_{r_i}(x_i)\}$ let us first consider
\begin{align}
\big\{x\in S^k_{\epsilon,r}(f)\cap B_1(p): s_x>r\big\}\subseteq \bigcup_{s_x>r} B_{s_x/10}(x)\, ,
\end{align}
and choose from it a Vitali subcovering so that 
\begin{align}
\big\{x\in S^k_{\epsilon,r}(f)\cap B_1(p): s_x>r\big\}\subseteq \bigcup_{i\in I} B_{r_i/2}(x_i) \subseteq \bigcup_{i\in I} B_{r_i}(x_i)\equiv U'_+\, ,
\end{align}
where $r_i\equiv s_{x_i}$.  In particular, we have that the collection of balls $\{B_{r_i/10}(x_i)\}$ are all disjoint.  It is clear from the construction that we have built a covering
\begin{align}
S^k_{\epsilon,r}(f)\cap B_1(p) \subseteq \bigcup B_{r/2}(x^r_i) \cup \bigcup_{i\in I} B_{r_i/2}(x_i)\subseteq \bigcup B_{r}(x^r_i) \cup \bigcup_{i\in I} B_{r_i}(x_i) \equiv U'_r\cup U'_+\, .
\end{align}

Now this is not quite the covering of Lemma \ref{l:covering}, as the energy drop properties of \eqref{e:covering:energy_scale} involve dropping an extra $\eta$-scales.  It will be more convenient to estimate this covering, however with only minimal work let us now build from this covering the desired covering of Lemma \ref{l:covering}, which is only a small modification.  Indeed, consider for each ball $B_{r_i}(x_i)$ (or $B_r(x^r_i)$) a Vitali covering
\begin{align}
	B_{r_i/2}(x_i)\cap S^k_{\epsilon,r}\subseteq \bigcup_1^{N_i} B_{\eta r_i}(x_{ia})\equiv \bigcup_1^{N_i} B_{r_{ia}}(x_{ia})\, ,
\end{align}
where $x_{ia}\in B_{r_i/2}(x_i)\cap S^k_{\epsilon,r}$ and by a standard covering argument $N_i\leq N_i(n,\eta)$.  Then we can define the coverings
\begin{align}
U_r \equiv \bigcup_i \bigcup_{a=1}^{N_i} B_{r_{ia}}(x^r_{ia})\, ,\notag\\
U_+ \equiv \bigcup_i \bigcup_{a=1}^{N_i} B_{r_{ia}}(x_{ia})\, .
\end{align}
It is clear from the construction that $U_+$ now satisfies the energy drop condition of Lemma \ref{l:covering}.2 .  What is left is to show the content estimates of Lemma \ref{l:covering}, and from our estimates on $N_i$ it is clear with $\eta<\eta(n,K_N,\Lambda,\epsilon)$ that it is enough to prove these estimates for the sets $U'_r$ and $U'_+$ themselves, which is therefore the goal of much of this section.  \\

The outline of this Section is as follows.  Section \ref{ss:covering:energy_drop} is dedicated to proving a variety of technical lemmas which will be used to further decompose the sets $U'_+$ and $U'_r$ when $r>0$.  The technical issue is that we cannot directly apply the discrete Reifenberg to the set $U'_+$, and will instead need to exchange $U'_+$ for a more manageable collection of balls without losing much content.  In Section \ref{ss:covering_U_+} we will use these tools in order to prove our content estimates on $U'_+$ and $U'_r$ when $r>0$.  The proof will require an inductive argument combined with applications of the discrete Reifenberg of Theorem \ref{t:reifenberg_W1p_discrete} to our new covering.  Finally in Section \ref{ss:covering_U_0} we will prove the desired estimates on $U_0$.  The volume estimates will follow almost immediately from our previous constructions, and to prove the rectifiable statement will require a careful application of the rectifiable Reifenberg theorem.\\

\subsection{\texorpdfstring{Technical Constructions for Decomposing $U'_r\cup U'_+$}{Technical Constructions for Decomposing U>}}\label{ss:covering:energy_drop}

Let us consider the set of positive radius balls in our covering given by
\begin{align}\label{e:U>}
U_> \equiv \begin{cases}
 U'_+ &\text{ if } r=0\, ,\\	
 U'_r\cup U'_+ &\text{ if } r>0\, .
 \end{cases}
\end{align}

This section will be dedicated to proving some technical results which will be required in estimating this set.  The estimates on this set are a little delicate, the reason being that we cannot directly apply the rectifiable Reifenberg of Theorem \ref{t:reifenberg_W1p_discrete} to this set.  Instead, we will need to replace $U_>$ with a different covering at each stage, which will be more adaptable to Theorem \ref{t:reifenberg_W1p_discrete}.  Thus this subsection is dedicated to proving a handful of technical results which are important in the construction of this new covering.\\

Throughout this subsection we are always working under the assumptions of Lemma \ref{l:covering}.  Let us begin with the following point, which is essentially a consequence of the continuity of the energy:

\begin{lemma}\label{l:covering:energy_bound}
For each $\eta'>0$ there exists $R(n,K_N,\Lambda,\eta')>1$ such that if $\delta<\delta(n,K_N,\Lambda,\eta')$ and $\eta\leq \eta(n,K_N,\Lambda,\eta')$, then we have for each $z\in \B {r_i}{x_i}$ the estimate
\begin{align}
\theta_{Rr_i}(z)>E-\eta'\, .
\end{align}
\end{lemma}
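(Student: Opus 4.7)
The estimate follows by combining the defining property of the energy scale $s_{x_i}=r_i$ with the almost-monotonicity of the normalized Dirichlet energy recalled in Section \ref{ss:monotonicity}.

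By the definition of $s_{x_i}=r_i$ as an infimum in \eqref{e:covering:energy_scale}, for every $\tau\in(0,1)$ there is a point $y=y_\tau\in B_{(1+\tau)r_i}(x_i)\cap S^k_{\epsilon,r}$ with $\theta_{\eta(1+\tau)r_i}(y)\geq E-2\eta$, where the factor $2$ absorbs the fact that the supremum may not be attained. Since $s\mapsto e^{CK_Ms}\theta_s(y)$ is non-decreasing in $s$ (with $C=C(n)$), we have for every $s\geq\eta(1+\tau)r_i$ with $B_s(y)\subseteq B_{16}(p)$ that
\begin{align*}
\theta_s(y)\ \geq\ e^{-CK_Ms}(E-2\eta).
\end{align*}

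To transfer this lower bound from $y$ to an arbitrary $z\in B_{r_i}(x_i)$, observe that $d(y,z)<3r_i$, so for any $R\geq 4$ the ball inclusion $B_{Rr_i}(z)\supseteq B_{(R-3)r_i}(y)$ holds. Directly from the definition of $\theta_r$ as a normalized integral,
\begin{align*}
\theta_{Rr_i}(z)\ \geq\ \left(\frac{R-3}{R}\right)^{n-2}\theta_{(R-3)r_i}(y)\ \geq\ \left(\frac{R-3}{R}\right)^{n-2}e^{-CK_MRr_i}(E-2\eta).
\end{align*}

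The proof is then closed by a quantifier chase, using the a priori bound $E\leq C(n,\Lambda)$ that follows from $\fint_{B_{16}(p)}|\nabla f|^2\leq\Lambda$ together with the volume bound on $B_2(p)$. Given $\eta'>0$, first fix $R=R(n,K_N,\Lambda,\eta')$ large enough that $((R-3)/R)^{n-2}\geq 1-\eta'/(8(1+E))$; next fix $\delta=\delta(n,K_N,\Lambda,\eta')$ small enough that $e^{-C\delta R}\geq 1-\eta'/(8(1+E))$ whenever $K_M<\delta$; finally set $\eta\leq \eta'/8$. A short arithmetic check then yields $\theta_{Rr_i}(z)\geq E-\eta'$. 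The principal obstacle is purely bookkeeping: the quantifier order $R\to\delta\to\eta$ must be respected, and one must verify that $B_{Rr_i}(z)\subseteq B_{16}(p)$, which holds precisely when $Rr_i\leq 14$.
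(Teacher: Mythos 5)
Your proof is correct and follows essentially the same route as the paper's: locate a pinched point $y$ near $x_i$ from the definition of the energy scale $s_{x_i}=r_i$, use (almost-)monotonicity of $\theta$ to push the lower bound up in scale, then transfer it to $z$ via the ball-inclusion/scaling comparison $\theta_{Rr_i}(z)\geq ((R-a)/R)^{n-2}\theta_{(R-a)r_i}(y)$, and close with a quantifier chase in $R,\delta,\eta$. The only differences are cosmetic — you track the almost-monotonicity factor $e^{-CK_Ms}$ and the possible non-attainment of the supremum explicitly, whereas the paper's write-up absorbs these into the smallness of $\delta,\eta$ without comment.
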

\begin{proof}
The proof relies on a straight forward energy comparison.  Namely, let $x,y\in B_{s/2}$ with $s<1$ and let us denote $d\equiv d(x,y)$.  Then we have the estimate
\begin{align}\label{e:covering:energy_bound:1}
\theta_s(y) = s^{2-n}\int_{B_s(y)}|\nabla f|^2 \leq s^{2-n}\int_{B_{s+d}(x)}|\nabla f|^2 = \ton{\frac{s}{s+d}}^{2-n}\theta_{s+d}(x)\, .
\end{align}
To apply this in our context, let us note for each $x_i$ in our covering, that by our construction of $U_>$ there must exist $y_i\in B_{r_i}(x_i)$ such that $\theta_{(R-1)r_i}(y_i)\geq \theta_{r_i}(y_i)=E-\eta$.  Let us now apply \eqref{e:covering:energy_bound:1} to obtain
\begin{align}
\theta_{Rr_i}(z)\geq \ton{\frac{Rr_i}{(R-2)r_i}}^{2-n}\theta_{(R-2)r_i}(y_i)\geq \ton{\frac{R}{R-2}}^{2-n}\big(E-\eta\big)\, . 
\end{align}
If $R=R(n,K_N,\Lambda,\eta')>0$ and $\eta\leq \eta(n,K_N,\Lambda,\eta')$, then we obtain from this the claimed estimate.
\end{proof}

In words, the above Lemma is telling us that even though we have no reasonable control over the size of $\theta_{r_i}(x_i)$, after we go up a controlled number of scales we can again assume that the energy density is again close to $E$.\\

In the last Lemma the proof was based on continuity estimates on the energy $\theta$.  In the next Lemma we wish to show an improved version of this continuity under appearance of symmetry.  Precisely:\\

\begin{lemma}[Improved Continuity of $\theta$]\label{l:covering:improved_continuity}
Let $f:B_4(p)\to N$ be a stationary harmonic map satisfying \eqref{e:manifold_bounds} with $\fint_{B_4(p)}|\nabla f|^2\leq \Lambda$.  Then for each $0<\tau,\eta<1$ there exists $\delta(n,K_N,\Lambda,\eta,\tau)>0$ such that if
\begin{enumerate}
\item we have $K_M<\delta$ ,
\item there exists $x_0,\ldots,x_k\in B_1(p)$ which are $\tau$-independent with $|\theta_{\delta}(x_j)-\theta_3(x_j)|<\delta$,
\end{enumerate}
then if $V^k$ is the $k$-dimensional subspace spanned by $x_0,\ldots,x_k$, then for all $x,y\in B_1(p)\cap B_{\delta}(V^k)$ and $\eta\leq s\leq 1$ we have that $|\theta_{s}(x)-\theta_s(y)|<\eta$.
\end{lemma}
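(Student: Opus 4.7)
The proof will proceed by a standard contradiction/compactness argument, exploiting the defect measure machinery of Section \ref{ss:defect_measure} together with the cone splitting Theorem \ref{t:con_splitting}.

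Suppose the lemma fails. Then we can fix $\eta,\tau>0$ and find sequences $\delta_i\to 0$, stationary harmonic maps $f_i:B_4(p_i)\subseteq M_i\to N$ with $K_{M_i}<\delta_i$ and $\fint_{B_4(p_i)}|\nabla f_i|^2\leq \Lambda$, points $x_{0,i},\ldots,x_{k,i}\in B_1(p_i)$ which are $\tau$-independent and satisfy $|\theta_{i,\delta_i}(x_{j,i})-\theta_{i,3}(x_{j,i})|<\delta_i$, and points $x_i,y_i\in B_1(p_i)\cap B_{\delta_i}(V^k_i)$ and $s_i\in[\eta,1]$ for which $|\theta_{i,s_i}(x_i)-\theta_{i,s_i}(y_i)|\geq \eta$. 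Since $K_{M_i}\to 0$, after passing to a subsequence we may assume (in harmonic/exponential coordinates) that $f_i$ is defined on $B_4(0^n)\subseteq \dR^n$, and, after possibly rotating, that $x_{j,i}\to x_{j,\infty}$ span a fixed $k$-plane $V^k\subseteq \dR^n$ of $\tau$-independent points, $x_i\to x_\infty,y_i\to y_\infty\in V^k\cap \overline{B_1}$, and $s_i\to s_\infty\in[\eta,1]$. By Definition \ref{d:defect} we may extract a weak $W^{1,2}$ limit $f_i\rightharpoonup f_\infty$ and a defect measure $\nu$ so that $|\nabla f_i|^2dv_{g_i}\rightharpoonup |\nabla f_\infty|^2dv+\nu$.

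The first step is to upgrade the scale-invariance at each $x_{j,i}$ to full $0$-symmetry of the pair $(f_\infty,\nu)$ around each $x_{j,\infty}$. This is exactly Theorem \ref{t:defect:0symmetry}: the hypothesis $|\theta_{i,\delta_i}(x_{j,i})-\theta_{i,3}(x_{j,i})|<\delta_i$ implies that both $f_\infty$ and $\nu$ are dilation-invariant about $x_{j,\infty}$. The next step is to apply cone splitting to the pair $(f_\infty,\nu)$: having $k+1$ distinct $0$-symmetric centers $x_{0,\infty},\ldots,x_{k,\infty}$ which are $\tau$-independent forces $(f_\infty,\nu)$ to be $k$-symmetric with respect to $V^k$, i.e.\ translation invariant along $V^k$. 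This can be implemented either by running the quantitative cone splitting Theorem \ref{t:con_splitting} on $f_i$ (together with the analogous statement for the measure $\nu$, which follows from the same contradiction argument applied to $\nu$), or directly in the limit using the classical fact that multiple dilation symmetries in independent directions generate a translation symmetry. Either way, we conclude that both $f_\infty$ and $\nu$ are translation invariant along $V^k$.

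Once we know $(f_\infty,\nu)$ is $V^k$-translation invariant, the density function $\Theta_s(x)\equiv s^{2-n}\int_{B_s(x)}|\nabla f_\infty|^2\,dv+s^{2-n}\nu(B_s(x))$ depends only on $\pi_{V^\perp}(x)$. Since $x_\infty,y_\infty\in V^k$, we have $\Theta_{s_\infty}(x_\infty)=\Theta_{s_\infty}(y_\infty)$. The remaining issue, which is the main technical point, is to pass $\theta_{i,s_i}(x_i)$ and $\theta_{i,s_i}(y_i)$ to the limit. By the weak convergence of Radon measures and the monotonicity of $r\mapsto e^{Cr}\theta_r$ (Section \ref{ss:monotonicity}), we have $\lim_i \theta_{i,s}(z_i)=\Theta_s(z_\infty)$ at every point $z_i\to z_\infty$ and every scale $s$ at which $s\mapsto \Theta_s(z_\infty)$ is continuous. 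Since $\Theta_s$ is monotone in $s$, it has at most countably many discontinuities, and by wiggling $s_i$ by an arbitrarily small amount and using continuity of $\theta_{i,s}$ in $s$ (via the same comparison used in Lemma \ref{l:covering:energy_bound}) we may assume $s_\infty$ is a continuity point. Then
\[
\eta\leq |\theta_{i,s_i}(x_i)-\theta_{i,s_i}(y_i)|\longrightarrow |\Theta_{s_\infty}(x_\infty)-\Theta_{s_\infty}(y_\infty)|=0,
\]
a contradiction.

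The main obstacle in carrying out this plan is the last step: handling the potential discontinuities of $\Theta_s$ in $s$ and ensuring that $\theta_{i,s_i}$ actually converges to $\Theta_{s_\infty}$, because the defect measure could in principle concentrate on a sphere $\partial B_{s_\infty}(x_\infty)$. This is dealt with by the standard trick of comparing $\theta_{i,s}$ at nearby scales $s^\pm=s_\infty\pm\epsilon$ where $\Theta_{s^\pm}$ is continuous, using the weak convergence of measures on open and closed sets, and then invoking the monotonicity of $\theta_{i,s}$ in $s$ (modulo the $K_{M_i}$-error, which vanishes) to squeeze the limit of $\theta_{i,s_i}$ between $\Theta_{s^-}$ and $\Theta_{s^+}$, which differ by $o(1)$ as $\epsilon\to 0$.
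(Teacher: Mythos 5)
Your proof takes essentially the same approach as the paper's: a contradiction/compactness argument passing to a defect measure, then Theorem \ref{t:defect:0symmetry} for $0$-symmetry at the $k+1$ centers, cone splitting for $V$-translation invariance, and a contradiction with the quantified pinching at scale $s_\infty$. You go further than the paper, which simply writes down the limiting inequality \eqref{e:covering:improved_cont:1} without discussing whether $\theta_{i,s_i}(x_i),\theta_{i,s_i}(y_i)$ actually pass to the limit at the moving points and scales; you correctly single this out as the delicate step, since weak convergence of Radon measures does not a priori give convergence of masses of balls when boundary spheres can carry mass.

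There is one imprecision in your handling of that step. In the final squeeze you claim $\Theta_{s^-}$ and $\Theta_{s^+}$ ``differ by $o(1)$ as $\epsilon\to 0$.'' But for a general monotone $\Theta_s$ they need not: the difference converges to the \emph{jump} of $\Theta$ at $s_\infty$, and if $\mu=|\nabla f_\infty|^2dv+\nu$ gave positive mass to the sphere $\partial B_{s_\infty}(x_\infty)$ the jump could well be $\geq \eta$, leaving no contradiction. ``Wiggling $s_i$'' also cannot make $s_\infty$ a continuity point, since a small perturbation of $s_i$ does not move the limit away from a fixed discontinuity. The clean fix is to use the symmetry you have already established: since $\mu$ is dilation-invariant about $x_0$, $\mu(B_s(x_0))=cs^{n-2}$; since $\mu$ is $V$-translation invariant and $x_\infty,y_\infty,x_0\in V$, one gets $\mu(B_s(x_\infty))=\mu(B_s(x_0))=cs^{n-2}$, so $s\mapsto\Theta_s(x_\infty)$ is actually \emph{constant} in $s$ and in particular continuous. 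This removes the discontinuity issue entirely and makes your squeeze argument correct. So the overall argument is sound, and with this one observation inserted it is a complete and somewhat more careful version of the paper's proof.
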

\begin{proof}
The proof is by contradiction.  Thus, imagine no such $\delta$ exists.  Then there exists a sequence of stationary harmonic maps $f_i: B_4(p_i)\to N_i$ satisfying $\fint_{B_4(p)}|\nabla f|^2\leq \Lambda$ such that
\begin{enumerate}
\item $K_M<\delta_i\to 0$
\item there exists $x_{i,0},\ldots,x_{i,k}\in B_1(p)$ which are $\tau$-independent with $\abs{\theta^{f_i}_{\delta_i r}(x_{i,j})-\theta^{f_i}_3(x_{i,j})}<\delta_i\to 0$,
\end{enumerate}
however we have that there exists $x_i,y_i\in B_1(p)\cap B_{\delta_i}(V^k_i)$ and $\eta\leq s_i\leq 1$ such that $\abs{\theta^{f_i}_{s_i}(x_i)-\theta^{f_i}_{s_i}(y_i)}\geq \eta$.  Let us now pass to a subsequence to obtain the defect measure
\begin{align}
&|\nabla f_i|^2dv_g\to \abs{\nabla f}^2 dv_g +\nu\, , \notag\\
&x_{i,\beta}\to x_\beta\in B_1(0^n)\, ,\notag\\
&x_i\to x,\, y_i\to y\in V=\text{span}\{x_0,\ldots,x_k\}\, ,\notag\\
&s_i\to s\, ,
\end{align}
where $\nu$ is a measure on $B_4(0^n)$, and
\begin{align}\label{e:covering:improved_cont:1}
s^{2-n}\abs{\int_{B_s(x)}\ton{\abs{\nabla f}^2 dv_g + d\nu} - \int_{B_s(y)}\ton{\abs{\nabla f}^2 dv_g + d\nu}}\geq \eta\, .
\end{align}
However, we have by theorem \ref{t:defect:0symmetry} that $f$ and $\nu$ are $0$-symmetric on $B_2$ with respect to each of the points $x_0,\ldots x_k$.  In particular, by the standard cone splitting arguments we have that $\nu$ is $k$-symmetric with respect to the $k$-plane $V$ on $B_1$.  In particular, $\nu$ is invariant under translation by elements of $V$.  However, this is a contradiction to \eqref{e:covering:improved_cont:1}, and therefore we have proved the result.

\end{proof}
\vspace{.5 cm}

Now the first goal is to partition $U_>$ into a finite collection, each of which will have a few more manageable properties than $U_>$ itself.  More precisely:

\begin{lemma}\label{l:covering:break_up_U_+}
For each $R>1$ there exists $N(n,R)>1$ such that we can break up $U_>$ as a union
\begin{align}\label{e:covering:U^a_decomposition}
U_> = \bigcup_{a=1}^N U^a_>=\bigcup_{a=1}^N \bigcup_{i\in I^a} \B{r_i}{x_i}\, ,
\end{align}
such that each $U^a_>$ has the following property:  if $i\in I^a$, then for any other $j\in I^a$ we have that if $x_j\in B_{Rr_i}(x_i)$, then $r_j<R^{-2}r_i$.
\end{lemma}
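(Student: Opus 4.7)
The strategy is to recast the condition as a bounded-degree graph coloring problem. I would define a conflict graph $G$ on the index set $I$ of $U_>$, where $\{i,j\}$ is an edge iff placing $i$ and $j$ into the same class would violate the required property. Applying the quantifier in both orderings, a pair with WLOG $r_j \leq r_i$ is an edge of $G$ precisely when
\begin{gather*}
 r_j \geq R^{-2} r_i \ \ \text{and}\ \ d(x_i, x_j) < R r_i, \quad\text{or}\quad r_j < R^{-2} r_i \ \ \text{and}\ \ d(x_i, x_j) < R r_j.
\end{gather*}
If $\deg G \leq N_0(n,R)$, a greedy coloring partitions $I$ into $N(n,R) = N_0 + 1$ color classes $I^a$, each satisfying the condition. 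So everything reduces to bounding $\deg G$.

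I would first reduce to the case $R \geq 10$, using the trivial observation that a partition valid for any $R' \geq R$ is automatically valid for $R$, since $B_{R r_i}(x_i) \subseteq B_{R' r_i}(x_i)$ and $R'^{-2} \leq R^{-2}$. This reduction unlocks the crucial structural observation below.

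The main obstacle, and the heart of the argument, is to show that the second regime ($r_j < R^{-2} r_i$) is vacuous once $R \geq 10$. Here I would use the disjointness of the cores $\{B_{r_i/10}(x_i)\}$ (which holds within each of the Vitali families $U'_+$ and, if $r>0$, $U'_r$ making up $U_>$): for $i,j$ in the same family one has $d(x_i,x_j) \geq (r_i+r_j)/10 \geq r_i/10$, while the conflict hypotheses force $d(x_i,x_j) < R r_j < R \cdot R^{-2} r_i = r_i/R \leq r_i/10$, a contradiction. The remaining cross-family situation ($i\in U'_+$, $j \in U'_r$ with $r_j = r < R^{-2} r_i$) is handled directly by the $r/5$-separation of the $U'_r$ centers: the number of $j \in U'_r$ with $d(x_i,x_j) < R r$ is at most $C(n)(5R)^n$.

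It then remains to bound, for each fixed $i$, the number of conflicting $j$'s in the first regime $r_j \in [R^{-2} r_i, r_i]$ with $d(x_i, x_j) < R r_i$. This is a routine packing estimate: the disjoint cores $B_{r_j/10}(x_j)$ (within a given family) have radius $\geq R^{-2} r_i/10$ and lie in $B_{2R r_i}(x_i)$, so there are at most $(20 R^3)^n$ such $j$'s per family. Summing the finitely many contributions from the two families yields $\deg G \leq N_0(n,R) = C(n) R^{3n}$, and greedy coloring completes the argument. Once the small-$r_j$ regime has been eliminated, the rest of the proof is bookkeeping.
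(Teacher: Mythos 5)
Your proposal is correct and takes essentially the same route as the paper. The paper's construction is a greedy coloring in disguise: it sorts the indices by decreasing radius and assigns each new index $i$ to one of $N = \bar N + 1$ classes not already containing a conflicting index, where the degree bound $\bar N(n,R)$ comes from the same volume packing of the disjoint cores $B_{r_j/5}(x_j)$ inside $B_{R^3 r_i}(x_i)$ that you use. The paper's decreasing-radius ordering plays exactly the role of your vacuity observation: by processing larger radii first, only the large-$r_j$ half of the conflict relation ever needs to be examined when placing $i$, which is why a single packing estimate on the annular range $r_i \leq r_j \leq R^2 r_i$ suffices. You are somewhat more careful in two places, which is a mild improvement rather than a different argument. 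First, you make explicit the reduction to $R \geq 10$; the paper's implicit use of core disjointness to exclude a conflict with $d(x_i,x_j) < R r_i$ and $r_j > R^2 r_i$ in fact requires $R$ to be at least of order $5$, so for $1 < R < 5$ the paper's proof as written needs your monotonicity-in-$R$ remark to close. Second, you treat the two Vitali families $U'_r$ and $U'_+$ separately when invoking core disjointness, whereas the paper speaks of the collection $\{B_{r_i/5}(x_i)\}$ as pairwise disjoint without noting that this is only guaranteed within each family; the simplest fix, implicit in your accounting, is to color each family independently and take the union of the color classes, costing only a factor of two in $N$. Neither issue affects the statement, but both deserve a sentence.
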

\begin{proof}
Let us recall that the balls in the collection $\{B_{r_i/5}(x_i)\}$ are pairwise disjoint.  In particular, given $R>1$ if we fix a ball $B_{r_i}(x_i)$ then by the usual covering arguments there can be at most $\bar N(n,R)$ ball centers $\{x_j\}_1^N\in U_>\cap B_{R^3 r_i}(x_i)$ with the property that $r_j\geq R^{-6}r_i$.  Indeed, if $\{x_j\}_1^N$ is such a collection of balls then we get 
\begin{align}
\omega_n (6R^3 r_i)^n = \Vol(B_{6R^3 r_i}(x_i))&\geq \sum_1^N \Vol(B_{r_j/5}(x_j))\geq N\omega_n (R^{-6}r_i/5)^n\, ,
\end{align}
which by rearranging gives the estimate $\bar N\leq \bar N(n,R)$.\\

Now we wish to build our decomposition $U_> =\bigcup_1^N U^a_>$, where $N=\bar N+1$ is from the first paragraph.  We shall do this inductively, with the property that at each step of the inductive construction we will have that the sets $U^a_{+}$ will satisfy the desired property. In particular, for every $a$ and $i\in I^a$, if $j\in I^a$ is such that $x_j\in B_{R r_i}(x_i)$, then $r_j<R^{-2}r_i$.\\


Begin by letting each $I^a$ be empty. We are going to sort the points $\cur{x_i}_{i\in I}$ into the sets $I^a$ one at a time.  At each step let $i\in I\setminus \bigcup_{a=1}^N I^a$ be an index such that $r_i=\max_{} r_j$, where the max is taken over all indexes in $I\setminus \bigcup_{a=1}^N I^a$, i.e., over all indexes which haven't been sorted out yet.  Now let us consider the collection of ball centers $\{y_j\}_{j\in J}$ such that $J\subset I$, $x_i\in B_{R r_j}(y_j)$ and $r_i\leq r_j\leq R^{2}r_i$.  Note that, by construction, either $y_j$ has already been sorted out in some $I^a$, or $r_j=r_i$. Now evidently $y_j\in \B{R^3 r_i}{x_i}$ for all $j\in J$ and so by the first paragraph the cardinality of $J$ is at most $N(n,R)$.   In particular, there must be some $I^a$ such that $I^a\cap \{y_j\}=\emptyset$.  Let us assign $i\in I^a$ to this piece of the decomposition.  Clearly the decomposition $\bigcup U^a_>$ still satisfies the inductive hypothesis after the addition of this point, and so this finishes the inductive step of construction.  Since at each stage we have chosen $x_i$ to have the maximum radius, this process will continue indefinitely to give the desired decomposition of $U_>$.


\end{proof}
\vspace{.5 cm}

Now with a decomposition fixed, let us consider for each $1\leq a\leq N$ the measures
\begin{align}
\mu^a \equiv \sum_{x_i\in U^a_>} \omega_k r_i^k \delta_{x_i}\, .
\end{align}

The following is a crucial point in our construction.  It tells us that each ball $B_{10r_i}(x_i)$ either has small $\mu^a$-volume, or the point $x_i$ must have large energy at scale $r_i$.  Precisely:

\begin{lemma}\label{l:covering:energy_density}
Let $D,\eta'>0$ be fixed. There exists $R=R(n,K_N,\Lambda,\eta',D,\epsilon)>0$ such that if we consider the decomposition \eqref{e:covering:U^a_decomposition}, and if
\begin{enumerate}
\item $K_M<\delta(n,K_N,\Lambda,\eta',\epsilon)$ and $\eta\leq \eta(n,K_N,\Lambda,\eta',\epsilon)$, $r_i<10^{-4}$,
\item we have $\mu^a(B_{10r_i}(x_i))\geq 2\omega_k r_i^k$,
\item for all ball centers $y_j\in A_{r_i/10,10r_i}(x_i)\cap U^a$ and for $s = 10^{-2n}D^{-1}r_i$, we have that $\mu^a(B_{s}(y_j))\leq D s^k$,
\end{enumerate}
then we have that $\theta(x_i,\eta' r_i)\geq E-\eta'$.
\end{lemma}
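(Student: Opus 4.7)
The strategy is to argue by contradiction: assume $\theta_{\eta'r_i}(x_i)<E-\eta'$, and use hypotheses (2)--(3) to produce a $k$-dimensional cone-splitting structure around $x_i$ which, via Improved Continuity (Lemma \ref{l:covering:improved_continuity}), forces $\theta$ to be nearly constant at $x_i$ across the relevant scales and thus contradicts the assumption.

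\paragraph{Step 1: extracting $\tau$-independent ball centers.} I will first extract $k+1$ ball centers $y_0,\ldots,y_k\in U^a\cap A_{r_i/10,10r_i}(x_i)$ that are $\tau$-independent at $x_i$, with $\tau=\tau(n,D)>0$. Inductively: if, having chosen $y_0,\ldots,y_{\ell-1}$ with $\ell\leq k$, no admissible $y_\ell$ exists, then every ball center in $A_{r_i/10,10r_i}(x_i)\cap U^a$ lies within $\tau r_i$ of the $\ell$-dimensional linear subspace $V_\ell$ through $x_i$ spanned by $\{y_j-x_i\}_{j<\ell}$. Taking a maximal $s$-separated family of such ball centers (with $s=10^{-2n}D^{-1}r_i$), covering by $B_s$-balls whose $\mu^a$-mass is $\leq Ds^k$ from hypothesis (3), and using $\mathrm{vol}(B_{\tau r_i}(V_\ell)\cap B_{10r_i})\lesssim\tau^{n-\ell}r_i^n$, one bounds the total $\mu^a$-mass in this tube by $C(n)D^{n-k+1}\tau^{n-k}r_i^k$. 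For $\tau=\tau(n,D)$ small enough this is strictly less than $2\omega_k r_i^k$, contradicting hypothesis (2).

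\paragraph{Step 2: near-maximal energy at the $y_j$.} Applying Lemma \ref{l:covering:energy_bound} with a parameter $\eta''$ to be fixed, there is $R_0=R_0(n,K_N,\Lambda,\eta'')$ with $\theta_{R_0r_j}(y_j)>E-\eta''$. By the decomposition property (Lemma \ref{l:covering:break_up_U_+}), since $y_j\in B_{Rr_i}(x_i)\cap U^a$ (for $R\geq 10$) we have $r_j<R^{-2}r_i$, so $R_0r_j<R_0R^{-2}r_i$ is arbitrarily small once $R\gg R_0$. Combined with almost monotonicity and $\theta_1(y_j)\leq E$, this gives $|\theta_s(y_j)-E|<2\eta''$ on an interval of scales containing $[R_0 R^{-2}r_i,\,3r_i]$. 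Quantitative $0$-symmetry (Theorem \ref{t:quantitative_0_symmetry}(1)) then delivers $(0,\delta)$-symmetry of $B_{3r_i}(y_j)$ for any prescribed $\delta>0$, once $\eta''$ and $K_M$ are small enough.

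\paragraph{Step 3: cone splitting, improved continuity, and dimension reduction.} With $\{y_j\}$ $\tau$-independent at $x_i$ and each $B_{3r_i}(y_j)$ being $(0,\delta)$-symmetric, Lemma \ref{l:covering:improved_continuity} applies after rescaling $B_{4r_i}(x_i)\to B_4(x_i)$ and yields a $k$-plane $V^k$ of approximate symmetry with
\[
|\theta_s(z)-\theta_s(z')|<\eta'/2\qquad\text{for all }z,z'\in B_{r_i}(x_i)\cap B_{\delta r_i}(V^k),\ s\in[\eta'r_i/2,\,r_i].
\]
The $y_j$ lie on $V^k$ by construction. To place $x_i$ in the same $B_{\delta r_i}(V^k)$, I note that cone splitting (Theorem \ref{t:con_splitting}), which underlies Improved Continuity, also produces $(k,\epsilon'')$-symmetry of $B_{r_i}(x_i)$ with respect to $V^k$, and Quantitative Dimension Reduction (Theorem \ref{t:quant_dim_red}) applied to the hypothesis $x_i\in S^k_{\epsilon,r}$ then forces $d(x_i,V^k)<\delta r_i$: otherwise some $B_{r'}(x_i)$ would be $(k+1,\epsilon)$-symmetric, contradicting $x_i\in S^k_{\epsilon,r}$. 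With both $x_i$ and $y_0$ now admissible,
\[
\theta_{\eta'r_i}(x_i)\ \geq\ \theta_{\eta'r_i}(y_0)-\eta'/2\ \geq\ E-2\eta''-\eta'/2\ >\ E-\eta',
\]
for $\eta''<\eta'/4$, contradicting the standing assumption.

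\paragraph{Main obstacle.} The delicate point is Step 3: coordinating the cone-splitting $k$-plane (the direction of translation/affine span of the $y_j$'s) with the location of $x_i$, so that both $x_i$ and one of the $y_j$ can be placed in a common $B_{\delta r_i}(V^k)$ to which Improved Continuity applies. Quantitative Dimension Reduction applied to the fact $x_i\in S^k_{\epsilon,r}$ is the crucial ingredient that makes this coordination possible; without it, $\tau$-independence at $x_i$ would in general prevent $x_i$ from being close to the affine span of the $y_j$.
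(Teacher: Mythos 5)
Your proposal follows the same route as the paper's proof: extract $k+1$ $\tau$-independent ball centers from the annulus via hypotheses (2)--(3) and a tube--covering argument, apply Lemma \ref{l:covering:energy_bound} together with the packing property $r_j < R^{-2}r_i$ from Lemma \ref{l:covering:break_up_U_+} to get energy pinching at the $y_j$, and then combine improved continuity, cone splitting and quantitative dimension reduction with the hypothesis $x_i\in S^k_{\epsilon,r}$ to derive a contradiction. The logical order of your Step 3 (first placing $x_i$ near $V^k$, then invoking improved continuity) is a harmless rearrangement of the paper's (first ruling in or out $d(x_i,V)<\delta r_i$, then using cone splitting plus QDR in the bad case).

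There is one small but necessary verification you omit: for QDR to contradict $x_i\in S^k_{\epsilon,r}$ you need the $(k+1,\epsilon)$-symmetric ball $B_{\tau'r_i}(x_i)$ to have radius $\geq r$, and this requires knowing $r_i\geq R r$. The paper establishes this at the outset directly from hypothesis (2): if $r_i< Rr$ then every ball center in $A_{r_i/10,10r_i}(x_i)\cap U^a$ would satisfy $r\leq r_j< R^{-2}r_i< r$, so the annulus would contain no ball centers, contradicting $\mu^a(A_{r_i/10,10r_i}(x_i))\geq\omega_k r_i^k$. Without this your contradiction in Step 3 does not close when $r>0$. Additionally, the exponent bookkeeping in your Step 1 tube estimate is slightly off (the mass bound should carry a power $\tau^{n-\ell}$ with $\ell\leq k-1$, i.e.\ at least $\tau^{n-k+1}$, and you need $s$-balls to fit inside the $\tau r_i$-tube so $\tau$ cannot be taken arbitrarily small relative to $D^{-1}$); the paper sidesteps this by covering the $(k-1)$-dimensional slice $V\cap B_{10r_i}$ by $\sim\tau^{1-k}$ balls of radius $\sim\tau r_i$, which is cleaner, but your idea is sound and the intended smallness in $\tau$ does come out.
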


\begin{proof}
Let us begin by choosing $\eta''<<\eta'$, which will be fixed later in the proof so that $\eta''=\eta''(n,K_N,\Lambda,\epsilon)$, and let us also define $\tau\equiv 10^{-3n}D^{-1}$.  Let $\delta(n,K_N,\Lambda,\eta')$ be from Lemma \ref{l:covering:improved_continuity} so that the conclusions hold with $10^{-1}\eta'$, and let $\delta'(n,K_N,\Lambda,\eta'',\epsilon)$ be chosen so that conclusions of Lemma \ref{l:covering:energy_bound} are satisfied with $\eta''$ if $\eta\leq \eta(n,K_N,\Lambda,\eta'',\epsilon)$.  Now throughout we will assume $\eta<\delta$ and $K_M<\min\{\delta,\delta'\}$.  We will also choose $R=R(n,K_N,\Lambda,\eta'',D,\epsilon)>\max\{\tau^{-1},\delta^{-1},\delta'^{-1}\}$ so that Lemma \ref{l:covering:energy_bound} is satisfied with $\eta''$. \\

Since it will be useful later, let us first observe that $r_i\geq R r$.  Indeed, if not then for each ball center $y_j\in A_{r_i/10,10r_i}(x_i)\cap U^a_>$ we would have $r\leq r_j<R^{-2}r_i<r$.  This tells us that there can be no ball centers in $A_{r_i/10,10r_i}(x_i)\cap U^a_>$.  However, by our volume assumption we have that
\begin{align}
\mu^a\big(A_{r_i/10,10r_i}(x_i)\big) = \mu^a\big(B_{10r_i}(x_i)\big)-\mu^a\big( B_{r_i/10}(x_i)\big)\geq 2\omega_k r_i^k-\omega_k r_i^k\geq \omega_k r_i^k\, ,
\end{align}
which contradicts this.  Therefore we must have that $r_i\geq R r$.\\

Now our first real claim is that under the assumptions of the Lemma there exists ball centers $y_0,\ldots,y_{k}\in U^a_>\cap A_{r_i/10,10r_i}(x_i)$ which are $\tau r_i$-independent in the sense of Definition \ref{d:independent_points}.  Indeed, assume this is not the case, then we can find a $k-1$-plane $V^{k-1}$ such that
\begin{align}
\cur{y_i}_{i\in I^a}\cap A_{r_i/10,10r_i}(x_i) \subseteq B_{\tau r_i} \ton{V^{k-1}}\, .
\end{align}
In particular, by covering $B_{\tau r_i} \ton{V}\cap B_{10r_i}(x_i)\cap I^a$ by $C(n)\tau^{1-k}\leq 10^n \tau^{1-k}$ balls of radius $10\tau r_i$ with centers in $I^a$, and using our assumption that $\mu^a(B_{\tau r_i}(y))\leq D \tau^k r_i^k$, we are then able to conclude the estimate
\begin{align}
\mu^a\Big(A_{r_i/10,10r_i}(x_i)\Big) \leq \mu^a\Big(B_{\tau r_i} \ton{V}\cap B_{10r_i}(x_i)\Big) \leq 10^{2n} D \tau r_i^k<\omega_n r_i^k\, .
\end{align}
On the other hand, our volume assumption guarantees that
\begin{align}\label{e:covering:energy_density:1}
\mu^a\big(A_{r_i/10,10r_i}(x_i)\big) = \mu^a\big(B_{10r_i}(x_i)\big)-\mu^a\big( B_{r_i/10}(x_i)\big)\geq 2\omega_k r_i^k-\omega_k r_i^k\geq \omega_k r_i^k\, ,
\end{align}
which leads to a contradiction.  Therefore there must exist $k+1$ ball centers $y_0,\ldots,y_{k}\in A_{r_i/10,10r_i}(x_i)$ which are $\tau r_i$-independent points, as claimed.\\

Let us now remark on the main consequences of the existence of these $k+1$ points.  Note first that for each $y_j$ we have that $\theta_{R^{-1}r_i}(y_j)>E-\eta''$, since by the construction of $U^a_>$ we have that $r_{j}\leq R^{-2}r_i$, and therefore we can apply Lemma \ref{l:covering:energy_bound}.  Thus we have $k+1$ points in $B_{10r_i}(x_i)$ which are $\tau r_i$-independent, and whose energies are $\eta''$-pinched.  To exploit this, let us first apply Lemma \ref{l:covering:improved_continuity} in order to conclude that for each $x\in B_\delta(V)$, where $V$ is the plane spanned by the $k+1$ independent points just determined, we have
\begin{align}
\theta_{\eta' r_i}(x)\geq \theta_{\eta' r_i}(y_j)-|\theta_{\eta' r_i}(x)-\theta_{\eta' r_i}(y_j)|\geq E-\eta''-10^{-1}\eta'>E-\eta'\, .
\end{align}
In particular, if we assume that $x_i$ is such that $\theta_{\eta' r_i}(x_i)< E-\eta'$, then we must have that $r_i^{-1}d(x_j,V)\geq \delta =\delta(n,K_N,\Lambda,\eta')$.  \\

Therefore, let us now assume $\theta_{\eta' r_i}(x_i)< E-\eta'$, and thus to prove the Lemma we wish to find a contradiction.  To accomplish this notice that we have our $k+1$ points $y_0,\ldots,y_k\in B_{10r_i}$ which are $\tau r_i$-independent and for which $|\theta_{20r_i}(y_j)-\theta_{R^{-1}r_i}(y_j)|<\eta''$.  Therefore by applying the cone splitting of Theorem \ref{t:con_splitting} we have for each $\epsilon'>0$ that if $\eta''\leq \eta''(n,K_N,\Lambda,\epsilon')$ then $B_{10r_i}(x_i)$ is $(k,\epsilon')$-symmetric with respect to the $k$-plane $V^k$.  However, since $d(x_i,V)>\delta r_i$, we have by Theorem \ref{t:quant_dim_red} that if $\epsilon'\leq \epsilon'(n,K_N,\Lambda,\epsilon)$ then there exists some $\tau'=\tau'(n,K_N,\Lambda,\epsilon)$ such that $B_{\tau' r_i}(x_i)$ is $(k+1,\epsilon)$-symmetric.  However, we can assume after a further increase that $R=R(n,K_N,\Lambda,D,\epsilon)>4\tau'^{-1}$, and thus we have that $\tau'r_i>4R^{-1} r_i>4r$.  This contradicts that $x_i\in S^k_{\epsilon,r}$, and thus we have contradicted that $\theta_{\eta' r_i}(x_i)< E-\eta'$, which proves the Lemma.
\end{proof}
\vspace{.5cm}

\subsection{\texorpdfstring{Estimating $U_>$ in Lemma \ref{l:covering}}{Estimating U> in Lemma \ref{l:covering}}}\label{ss:covering_U_+}

Recall the set $U_>$ defined in \eqref{e:U>}, which consists of all positive radii balls in our covering.  We now wish to estimate this set in this subsection.  First let us pick $D'=D'(n)\equiv 2^{16n}D(n)$, where $D(n)$ is from Theorem \ref{t:reifenberg_W1p_discrete}.  Then for some $\eta'$ fixed we can choose $R$ as in Lemma \ref{l:covering:energy_density}.  It is then enough to estimate each of the sets $U^a_>$, as there are at most $N=N(n,K_N,\Lambda,\epsilon,\eta,\eta')$ pieces to the decomposition.  Thus we will fix a set $U^a_>$ and focus on estimating the content of this set.  Let us begin by observing that if $r>0$ then we have the lower bound $r_i\geq r$.  Otherwise, let us fix any $r>0$ and restrict ourselves to the collection of balls in $U^a_>$ with $r_i\geq r$.  There is no loss in this so long as the estimates we will prove will be independent of our choice of $r$, and thus by letting $r\to 0$ we will obtain estimates on all of $U^a_>$.\\

Now let us make the precise statement we will prove in this subsection.  Namely, consider any of ball centers $\cur{x_i}_{i\in I^a}$ and any radius $2^{-4}r_i\leq r_\alpha\leq 2$, where $r_\alpha=2^{-\alpha}$.  Then we will show that
\begin{align}\label{e:covering:U_+:1}
\mu^a\big(B_{r_\alpha}(x_i)\big)\leq 2^{5n}D(n) r_\alpha^k\, ,
\end{align}
where $D(n)$ is the constant from the discrete Reifenberg theorem.  Let us observe that once we have proved \eqref{e:covering:U_+:1} then we have finished the proof of the Covering Lemma, as we will then have the estimate
\begin{align}\label{e:covering:U_+:1'}
\sum r_i^k = \mu^a\big(B_2(x_i)\big)\leq C(n)\, . \\ \notag
\end{align}

We prove \eqref{e:covering:U_+:1} inductively on $\alpha$.  To begin notice that for each $x_i$ if $\beta$ is the largest integer such that $2^{-4}r_i\leq r_\beta$, then the statement clearly holds, by the definition of the measure $\mu^a$.  In fact, we can go further than this.  For each $x_i$, let $r'_i\in [r_i/10,10r_i]$ be the largest radius such that for all $10^{-1}r_i\leq s\leq  r'_i$ we have
\begin{align}
\mu^a\big( B_s(x_i)\big)\leq 2\omega_k s^k\, .
\end{align}
In particular, we certainly have the much weaker estimate $\mu^a\big( B_{s}(x_i)\big)\leq 2^{5n}D(n) s^k$, and hence \eqref{e:covering:U_+:1} is also satisfied for all $2^{-4}r_i\leq r_\beta \leq r'_i$.  Notice that we then also have the estimate
\begin{align}\label{e:covering:U_+:2}
\omega_k r_i^k\leq \mu^a\big(B_{r_i/10}(x_i)\big)\leq \mu^a\big( B_{r'_i}(x_i)\big)\leq 2\omega_k\big(r'_i\big)^k\, .
\end{align}

Now let us focus on proving the inductive step of \eqref{e:covering:U_+:1}.  Namely, assume $\alpha$ is such that for all $x_i$ with $2^{-4}r_i\leq r_{\alpha+1}<2$ we have that \eqref{e:covering:U_+:1} holds.  Then we want to prove that the same estimate holds for $r_\alpha$.  Let us begin by seeing that a weak version of \eqref{e:covering:U_+:1} holds.  
Namely, for any index $i\in I^a$ and any radius $r_{\alpha}\leq s\leq 8r_\alpha$, by covering $B_s(x_i)$ by at most $2^{8n}$ balls $\{B_{r_{\alpha+1}}(y_j)\}$ of radius $r_{\alpha+1}$ we have the weak estimate
\begin{align}
\mu^a\big(B_s(x_i)\big) \leq \sum \mu^a\big(B_{r_{\alpha+1}}(y_j)\big)\leq D'(n) s^k\, ,
\end{align}
where of course $D'(n)>>2^{5n}D(n)$.\\

To improve on this, let us fix an $i\in I^a$ and the relative ball center $x_i\in U^a_>$ with $2^{-4}r_i\leq r_\alpha$.  Now let $\{x_j\}_{j\in J}= \cur{x_i}_{i\in I}\cap B_{r_{\alpha}}(x_i)$ be the collection of ball centers in $B_{r_{\alpha}}(x_i)$.  Notice first that if $r'_j>2r_\alpha$ for any of the ball centers $\{x_j\}$, then we can estimate
\begin{align}
\mu^a\big(B_{r_{\alpha}}(x_i)\big)\leq \mu^a\big(B_{2r_{\alpha}}(x_j)\big) \leq 2\omega_k\big(2r_{\alpha}\big)^k\leq 2^{5n}D(n) r_\alpha^n\, ,
\end{align}
so that we may fairly assume $r'_j\leq 2r_\alpha$ for every $j\in J$.  Now for each ball $B_{r'_j}(x_j)$ let us define a new ball $B_{\bar r_j}(y_j)$ which is roughly equivalent, but will have some additional useful properties needed to apply the discrete Reifenberg. 
Namely, for a given ball $B_{r'_j}(x_j)$, let us consider the two options $r'_j<10r_j$ or $r'_j=10r_j$.  If $r'_j<10r_j$, then we let $y_j\equiv x_j$ with $\bar r_j\equiv r_j$.  In this case we must have that $\mu^a(B_{10 r_j}(x_j))>2\omega_n \big(r_j\big)^k$, and thus we can apply Lemma \ref{l:covering:energy_density} in order to conclude that $\theta_{\eta' \bar r_j}(y_j)\geq E-\eta'$.  In the case when $r'_j=10r_j$ is maximal, let $y_j\in B_{r_j}(x_j)\cap S^k_{\epsilon,r}$ with $y_j\neq x_j$ be a point such that $\theta_{\eta r_j}(y_j)=E-\eta$, such a point exists by the definition of $r_j$, and let $\bar r_j\equiv 9r_j$.   
In either case we then have the estimates
\begin{align}\label{e:covering:U_+:3}
&\theta_{\eta' \bar r_j}(y_j)\geq E-\eta'\, ,\notag\\
&\omega_k 10^{-k} \bar r_j^k\leq \mu\big(B_{\bar r_j/10}(y_j)\big)\leq \mu^a\big( B_{\bar r_j}(y_j)\big)\leq 4\omega_k \bar r_i^k\, ,\notag\\
&B_{r_j}(x_j)\subseteq B_{\bar r_j}(y_j)\, .
\end{align}

Now let us consider the covering $U^a_>\cap B_{r_{\alpha}}(x_i)\subseteq \bigcup B_{\bar r_j/10}(y_j)$, and choose from it a Vitali subcovering
\begin{align}
U^a_>\cap B_{r_{\alpha}}(x_i)\subseteq \bigcup B_{\bar r_j}(y_j)\, ,
\end{align}
such that $\{B_{\bar r_j/10}(y_j)\}$ are disjoint, where we are now being lose on notation and referring to $\{y_j\}_{j\in \bar J}$ as the ball centers from this subcovering.  Let us now consider the measure
\begin{align}
\mu' \equiv \sum_{j\in \bar J} \omega_k \Big(\frac{\bar r_j}{10}\Big)^k \delta_{y_j}\, .
\end{align}
That is, we have associated to the disjoint collection $\{B_{\bar r_j/10}(y_j)\}$ the natural measure.  Our goal is to prove that 
\begin{align}\label{e:covering:U_+:4}
\mu'\big( B_{r_\alpha}(x_i)\big)\leq D(n) r_\alpha^k\, .
\end{align}

Let us observe that if we prove \eqref{e:covering:U_+:4} then we are done.  Indeed, using \eqref{e:covering:U_+:3} we can estimate
\begin{align}
\mu^a\big(B_{r_\alpha}(x_i)\big)\leq \sum \mu^a\big(B_{\bar r_j}(y_j)\big)\leq 4\omega_k\sum \bar r_j^k = 4\cdot 10^k\mu'\big(B_{r_\alpha}(x_i)\big)\leq 2^{5n} D(n) r_\alpha^k\, ,
\end{align}
which would finish the proof of \eqref{e:covering:U_+:1} and therefore the Lemma.\\

Thus let us concentrate on proving \eqref{e:covering:U_+:4}.  We will want to apply the discrete Reifenberg in this case to the measure $\mu'$.  Let us begin by proving a weak version of \eqref{e:covering:U_+:4}.  Namely, for any ball center $y_j$ from our subcovering and radius $\bar r_j<s\leq 4r_\alpha$ let us consider the set $\{z_k\}=\cur{y_s}_{s\in \bar J}\cap B_s(y_j)$ of ball centers inside $B_s(y_j)$.  Since the balls $\{B_{\bar r_k/5}(z_k)\}$ are disjoint we have that $\bar r_k\leq 10s$.  Using this, \eqref{e:covering:U_+:2}, and \eqref{e:covering:U_+:3} we can estimate
\begin{align}\label{e:covering:U_+:5}
\mu'\big(B_s(y_j)\big) = \sum_{z_k\in B_s(y_j)} \omega_k 10^{-k}\bar r_k^k\leq C(n)\sum_{z_k\in B_s(y_j)}\mu^a(B_{\bar r_k/8}(z_k))\leq C(n)\mu^a(B_{2s}(y_j))\leq C(n) s^k\, ,
\end{align}
where of course $C(n)>>2^{5n}D(n)$. \\ 

Now let us finish the proof of \eqref{e:covering:U_+:4}.  Thus let us pick a ball center $y_j\in B_{r_\alpha}(x_i)$ and a radius $s<4r_\alpha$.  Note that by the first equation in \eqref{e:covering:U_+:3} and theorem \ref{t:con_splitting} we have that $B_s(y_j)$ can be made arbitrarily $0$-symmetric by choosing $\eta'$ sufficiently small.  If $\mu'(B_s(y))\leq \epsilon_n s^k$ then $D_{\mu'}(y_j,s)\equiv 0$ by definition, and if $s\leq \bar r_j/10$ then $D_{\mu'}(y,s)\equiv 0$, since the support of $\mu'$ in $B_{\bar r_i/10}(y_j)$ contains at most one point and thus is precisely contained in a $k$-dimensional subspace.  In the case when $s>\bar r_i/10$ and $\mu(B_s(y))> \epsilon_n s^k$ then by Theorem \ref{t:best_approximation} we have that
\begin{align}
D_{\mu'}(y_j,s) \leq C(n,K_N,\Lambda,\epsilon)s^{-k}\int_{B_s(y)} W_s(z)\,d\mu'(z)\, .
\end{align}
By applying this to all $r\leq t\leq s$  we can estimate 
\begin{align}
s^{-k}\int_{B_s(x)} D_{\mu'}(y,t)\,d\mu'(y)&\leq Cs^{-k}\int_{B_s(x)}t^{-k}\int_{B_t(y)} W_t(z)\,d\mu'(z)\,d\mu'(y)\notag\\
&= Cs^{-k}t^{-k}\int_{B_s(x)} \mu'(B_t(y)) W_t(y)\,d\mu'(y)\notag\\
&\leq Cs^{-k}\int_{B_s(x)}W_t(y)\,d\mu'(y)\, ,
\end{align}
where we have used our estimate on $\mu'(B_t(y))$ from \eqref{e:covering:U_+:5} in the last line.  Let us now consider the case when $t=r_\beta\leq s\leq 2r_{\alpha}$.  Then we can sum to obtain:
\begin{align}
\sum_{r_\beta\leq s} s^{-k}\int_{B_s(x)} D_{\mu'}(y,r_\beta)\,d\mu'(y) &\leq C\sum_{r'_y\leq r_\beta\leq s} s^{-k}\int_{B_s(x)}W_{r_\beta}(y)\,d\mu'(y)\notag\\
&=C s^{-k}\int_{B_s(x)}\sum_{\bar r_y\leq r_\beta\leq s}W_{r_\beta}(y)\,d\mu'(y)\notag\\
&\leq C\, s^{-k}\int_{B_s(x)}\big|\theta_{4s}(y)-\theta_{\bar r_y}(y)\big| \,d\mu'(y)\leq C(n,K_N,\Lambda,\epsilon)\eta'\, ,
\end{align}
where we are using \eqref{e:covering:U_+:3} in the last line in order to see that $\big|\theta_s(y)-\theta_{\bar r_y}(y)\big|\leq \eta'$.  Now let us choose $\eta'\leq \eta'(n,K_N,\Lambda,\epsilon)$ such that
\begin{align}\label{e:covering:U_r:2c}
\sum_{r_\beta\leq s} s^{-k}\int_{B_s(x)} D_{\mu'}(y,r_\beta)\,d\mu'(y) &\leq \delta^2\, ,
\end{align}
where $\delta$ is chosen from the discrete rectifiable-Reifenberg of Theorem \ref{t:reifenberg_W1p_discrete}.  Since the estimate \eqref{e:covering:U_r:2c} holds for all $B_{s}\subseteq B_{2r_{\alpha}}(x)$, we can therefore apply Theorem \ref{t:reifenberg_W1p_discrete} to conclude the estimate
\begin{align}
\mu'(B_{r_{\alpha}}(x'_i))\leq D(n) r_{\alpha}^k\, .
\end{align}
This finishes the proof of \eqref{e:covering:U_+:3}, and hence the proof of Lemma \ref{l:covering} for the sets $U_r$ and $U_+$. \hfill $\square$
\vspace{1cm}

\subsection{\texorpdfstring{Estimating $U_0$ in Lemma \ref{l:covering}}{Estimating U0 in covering Lemma}}\label{ss:covering_U_0}
We now finish the proof of Lemma \ref{l:covering}.  Let us begin by proving the Minkowski estimates on $U_0$.  Indeed, observe for any $r>0$ that $U_0\subseteq U_r$, and thus we have the estimate
\begin{align}
\Vol(B_r\,\ton{U_0})\leq \Vol(B_r\, \ton{U_r})\leq \omega_n r^n\cdot N\leq C(n,K_N,\Lambda,\epsilon) r^{n-k}\, ,
\end{align}
which proves the Minkowski claim.  In particular, we have as a consequence the $k$-dimensional Hausdorff measure estimate
\begin{align}
\lambda^k(U_0)\leq C(n,K_N,\Lambda,\epsilon)\, .
\end{align}
In fact, let us conclude a slightly stronger estimate, since it will be a convenient technical tool in the remainder of the proof.  If $B_s(x)$ is any ball with $x\in B_1$ and $s<\frac{1}{2}$, then by applying the same proof to the rescaled ball $B_s(x)\to B_1(0)$, we can obtain the Hausdorff measure estimate
\begin{align}\label{eq_U0est}
\lambda^k(U_0\cap B_r(x))\leq C r^k\, .
\end{align}

To finish the construction we need to see that $U_0$ is rectifiable.  We will in fact apply Theorem \ref{t:reifenberg_W1p_holes} in order to conclude this.  To begin with, let $\mu\equiv \lambda^k\big|_{U_0}$ be the $k$-dimensional Hausdorff measure, restricted to $U_0$.  Let $B_s(y)$ be a ball with $y\in B_1$ and $s<\frac{1}{2}$, now we will argue in a manner similar to Section \ref{ss:covering_U_+}.  Thus, if $\mu(B_s(y))\leq \epsilon_n s^k$ then $D_{U_0}(y,s)\equiv 0$, and by theorem \ref{t:con_splitting} and Theorem \ref{t:best_approximation} we have that
\begin{align}
D_\mu(y,s) \leq C(n,K_N,\Lambda,\epsilon)s^{-k}\int_{B_s(y)} W_s(z)\,d\mu(z)\, .
\end{align}
By applying this to all $t\leq s$  we have 
\begin{align}
s^{-k}\int_{B_s(x)} D_\mu(y,t)\,d\mu(y)&\leq Cs^{-k}\int_{B_s(x)}t^{-k}\int_{B_t(y)} W_t(z)\,d\mu(z)\,d\mu(y)\notag\\
&= Cs^{-k}t^{-k}\int_{B_s(x)} \mu(B_t(y)) W_t(y)\,d\mu(y)\notag\\
&\leq Cs^{-k}\int_{B_s(x)}W_t(y)\,d\mu(y)\, ,
\end{align}
where we have used our estimate \eqref{eq_U0est} in the last line.  Let us now consider the case when $t=r_\beta=2^{-\beta}\leq s$.  Then we can sum to obtain:
\begin{align}
\sum_{r_\beta\leq s} s^{-k}\int_{B_s(x)} D_\mu(y,r_\beta)\,d\mu(y) &\leq C\sum_{r_\beta\leq s} s^{-k}\int_{B_s(x)}W_{r_\beta}(y)\,d\mu(y)\notag\\
&=C s^{-k}\int_{B_s(x)}\sum_{r_\beta\leq s}W_{r_\beta}(y)\,d\mu(y)\notag\\
&\leq C\, s^{-k}\int_{B_s(x)}\big|\theta_s(y)-\theta_0(y)\big|\,d\mu(y)\leq C(n,K_N,\Lambda,\epsilon)\eta\, ,
\end{align}
where we have used two points in the last line.  First, we have used our estimate $\mu(B_s(x))\leq Cs^k$.  Second, we have used that by the definition of $U_0$, for each point in the support of $\mu$ we have that $\big|\theta_s(y)-\theta_0(y)\big|\leq \eta$.  Now let us choose $\eta\leq \eta(n,K_N,\Lambda,\epsilon)$ such that we have
\begin{align}\label{e:covering:U_r:2b}
\sum_{r_\beta\leq s} s^{-k}\int_{B_s(x)} D_\mu(y,r_\beta)\,d\mu(y) &\leq \delta^2\, ,
\end{align}
where $\delta$ is chosen from Theorem \ref{t:reifenberg_W1p_holes}.  Thus, by applying Theorem \ref{t:reifenberg_W1p_holes} we see that $U_0$ is rectifiable, which finishes the proof of Lemma \ref{l:covering} in the context of $U_0$.  \hfill $\square$
\vspace{1cm}

\section{Proof of Main Theorems for Stationary Harmonic Maps}\label{s:main_theorem_stationary_proofs}

In this section we prove the main theorems of the paper concerning stationary harmonic maps.  With the tools of Sections \ref{s:bi-Lipschitz_reifenberg}, \ref{s:best_approx}, and \ref{s:covering_main} developed, we will at this stage mainly be applying the covering of Lemma \ref{l:covering} iteratively to arrive at the estimates.  When this is done carefully, we can combine the covering lemma with the cone splitting in order to check that for $k$-a.e. $x\in S^k_\epsilon$ there exists a unique $k$-dimensional subspace $V^k\subseteq T_xM$ such that every tangent map of $f$ at $x$ is $k$-symmetric with respect to $V$.\\

For the proofs of the Theorems' of this section let us make the following remark.  For any $\delta>0$ we can cover $B_1(p)$ by a collection of balls
\begin{align}\label{e:main_theorem_stationary_proofs:1}
B_1(p)\subseteq \bigcup_1^N B_{K^{-1}_M\delta}(p_i)\, ,
\end{align}
where $N\leq C(n,K_M)\delta^{-n}$.  Thus if $\delta=\delta(n,K_M,K_N,\Lambda,\epsilon)$ and we can analyze each such ball, then this gives us estimates on all of $B_1(p)$.  In particular, by rescaling $B_{K^{-1}_M\delta}(p_i)\to B_1(p_i)$, we see that we can assume in our analysis that $K_M<\delta$ without any loss of generality.  We shall do this throughout this section.  \\

\subsection{Proof of Theorem \ref{t:main_quant_strat_stationary}}\label{ss:proof_thm_main_quant_strat_stationary}

Let $f:B_2(p)\subseteq M\to N$ be a stationary harmonic map satisfying \eqref{e:manifold_bounds} with $\fint_{B_2(p)}|\nabla f|^2\leq \Lambda$.  With $\epsilon,r>0$ fixed, let us choose $\eta(n,K_N,\Lambda, \epsilon)>0$ and $\delta(n,K_N,\Lambda,\epsilon)>0$ as in Lemma \ref{l:covering}.  By the remarks around \eqref{e:main_theorem_stationary_proofs:1} we see that we can assume that $K_M<\delta$, which we will do for the remainder of the proof.\\ 

Now let us begin by first considering an arbitrary ball $B_s(x)$ with $x\in B_1(p)$ and $r<s\leq 1$, potentially quite small.  We will use Lemma \ref{l:covering} in order to build a special covering of $S^k_{\epsilon,r}\cap B_s(x)$.  Let us define
\begin{align}
E_{x,s}\equiv \sup_{y\in B_s(x)\cap S^k_{\epsilon,r}}\theta_s(y)\, ,
\end{align}
and thus if we apply Lemma \ref{l:covering} with $\eta(n,K_N,\Lambda,\epsilon)$ fixed to $B_s(x)$, then we can build a covering 
\begin{align}\label{e:proof_main_quant_strat:1}
S^k_{\epsilon,r}\cap B_s(x)\subseteq U_{r}\cup U_{+} = \bigcup B_{r}(x^r_{i})\cup \bigcup B_{r_{i}}(x_{i})\, ,
\end{align}
with $r_i>r$.  Let us recall that this covering satisfies the following:
\begin{enumerate}
\item[(a)] $r^{k-n}\Vol(B_r\, U'_r)+\omega_k\sum r_i^k\leq C(n)\, s^k$.
\item[(b)] $\sup_{y\in B_{r_i}(x_i)}\theta_{r_i}(y)\leq E_{x,s}-\eta$.
\end{enumerate}

Now that we have built our required covering on an arbitrary ball $B_s(x)$, let us use this iteratively to build our final covering of $S^k_{\epsilon,r}(f)$.  First, let us apply it to $B_1(p)$ is order to construct a covering
\begin{align}
S^{k}_{\epsilon,r}(f)\subseteq U^1_{r}\cup U^1_+ = \bigcup B_{r}\ton{x^{r,1}_i}\bigcup B_{r^1_i}\ton{x^1_i}\, ,
\end{align}
such that
\begin{align}
r^{k-n}\Vol\ton{B_r\ton{U^1_{r}}} + \omega_k\sum \ton{r^1_{i}}^{k} \leq C(n,K_N,\Lambda,\epsilon)\, ,
\end{align}
and with
\begin{align}
\sup_{y\in B_{r^1_i}(x_i^1)\cap S^k_{\epsilon,r}}\theta_{r^1_i}(y)\leq \Lambda-\eta\, .\\ \notag
\end{align}

Now let us tackle the following claim, which is our main iterative step in the proof:\\

{\bf Claim}:  For each $\ell$ there exists a constant $C_\ell(\ell,n,K_M,K_N,\Lambda,\epsilon)$ and a covering 
\begin{align}
 S^{k}_{\epsilon,r}(f)\subseteq U^\ell_{r}\cup U^\ell_+ = \bigcup \B{r}{x^{r,\ell}_i} \bigcup \B{r^\ell_i}{x^\ell_i}\, ,
\end{align}
with $r_i^\ell>r$, such that the following two properties hold:
\begin{align}\label{e:proof_main_quant_strat:2}
&r^{k-n}\Vol\ton{\B r {U^\ell_{r}}} + \omega_k\sum \big(r^\ell_{i}\big)^{k} \leq C_\ell(\ell,n,K_N,\Lambda,\epsilon)\, ,\notag\\
&\sup_{y\in B_{r^\ell_i}(x_i^\ell)\cap S^k_{\epsilon,r}}\theta_{r^\ell_i}(y)\leq \Lambda-\ell\cdot\eta\, .
\end{align}
\vspace{.5 cm}

To prove the claim let us first observe that we have shown this holds for $\ell=1$.  Thus let us assume we have proved the claim for some $\ell$, and determine from this how to build the covering for $\ell+1$ with some constant $C_{\ell+1}(\ell+1,n,K_M,K_N,\Lambda,\epsilon)$, which we will estimate explicitly.  \\

Thus with our covering determined at stage $\ell$, let us apply the covering of \eqref{e:proof_main_quant_strat:1} to each ball $\cur{\B{r^{\ell}_i}{x^\ell_i}}$ in order to obtain a covering
\begin{align}
S^k_{\epsilon,r}\cap B_{r^\ell_i}(x_i^\ell)\subseteq U_{i,r}\cup U_{i,+} = \bigcup_j \B{r}{x^r_{i,j}}  \bigcup_j \B{r_{i,j}}{x_{i,j}}\, ,
\end{align}
such that
\begin{align}\label{e:proof_main_quant_strat:3}
&r^{k-n}\Vol\ton{\B r {U_{i,r}}} + \omega_k\sum_j (r_{i,j})^{k} \leq C(n,K_N,\Lambda,\epsilon)\big(r^\ell_i\big)^{k}\, ,\notag\\
&\sup_{y\in B_{r_{i,j}}(x_{i,j})\cap S^k_{\epsilon,r}}\theta_{r_{i,j}}(y)\leq \Lambda-(\ell+1)\eta\, .
\end{align}
Let us consider the sets
\begin{align}
&U^{\ell+1}_r \equiv U^\ell_r \bigcup_i U_{i,r}\, ,\notag\\
&U^{\ell+1}_+ \equiv \bigcup_{i,j} B_{r_{i,j}}(x_{i,j})\, .
\end{align}
Notice that the second property of \eqref{e:proof_main_quant_strat:2} holds for $\ell+1$ by the construction, hence we are left analyzing the volume estimate of the first property.  Indeed, for this we combine our inductive hypothesis \eqref{e:proof_main_quant_strat:2} for $U^\ell$ and \eqref{e:proof_main_quant_strat:3} in order to estimate
\begin{align}
r^{k-n}\Vol\ton{\B r {U^{\ell+1}_{r}}} + \omega_k\sum_{i,j} (r_{i,j})^{k} &\leq r^{k-n}\Vol\ton{\B r {U^\ell_{r}}}+\sum_i\Big(r^{k-n}\Vol\ton{\B r {U_{i,r}}} + \omega_k\sum_j (r_{i,j})^{k} \Big)\notag\\
&\leq C_\ell + C\sum_i (r^\ell_i)^k\notag\\
&\leq C(n,K_M,K_N,\Lambda,\epsilon)\cdot C_\ell(\ell,n,K_M,K_N,\Lambda,\epsilon)\notag\\
&\equiv C_{\ell+1}\, .
\end{align}

Hence, we have proved that if the claim holds for some $\ell$ then the claim holds for $\ell+1$.  Since we have already shown the claim holds for $\ell=1$, we have therefore proved the claim for all $\ell$. \\

Now we can finish the proof.  Indeed, let us take $\ell = \lceil\eta^{-1}\Lambda\rceil = \ell(\eta,\Lambda)$.  Then if we apply the Claim to such an $\ell$, we must have by the second property of \eqref{e:proof_main_quant_strat:2} that
\begin{align}
U^\ell_+\equiv \emptyset\, ,
\end{align}
and therefore we have a covering
\begin{align}
S^k_{\epsilon,r}\subseteq U^\ell_r = \bigcup_i B_{r}(x_i)\, .
\end{align}
But in this case we have by \eqref{e:proof_main_quant_strat:2} that
\begin{align}
\Vol\ton{\B r {S^k_{\epsilon,r}(f)}}\leq \Vol\ton{ \B r { U^\ell_r}} \leq C(n,K_N,\Lambda,\epsilon) r^{n-k}\, ,
\end{align}
which proves the Theorem.  \hfill $\square$
\vspace{1cm}

\subsection{Proof of Theorem \ref{t:main_eps_stationary}}\label{ss:proof_thm_main_eps_stationary}

There are several pieces to Theorem \ref{t:main_eps_stationary}.  To begin with, the volume estimate follows easily now that Theorem \ref{t:main_quant_strat_stationary} has been proved.  That is, for each $r>0$ we have that
\begin{align}
S^k_\epsilon(f)\subseteq S^k_{\epsilon,r}(f)\, ,
\end{align}
and therefore we have the volume estimate
\begin{align}
\Vol\ton{\B r {S^k_{\epsilon}(f)}}\leq \Vol\ton{ \B r {S^k_{\epsilon,r}(f)}} \leq C(n,K_M,K_N,\Lambda,\epsilon) r^{n-k}\, .
\end{align}
In particular, this implies the much weaker Hausdorff measure estimate
\begin{align}
\lambda^k(S^k_{\epsilon}(f)) \leq C(n,K_M,K_N,\Lambda,\epsilon)\, ,
\end{align}
which proves the first part of the Theorem.\\

Let us now focus on the rectifiability of $S^k_\epsilon$.  We consider the following claim, which is the $r=0$ version of the main Claim of Theorem \ref{t:main_quant_strat_stationary}.  We will be applying Lemma \ref{l:covering}, which requires $K_M<\delta$.  As in the proof of Theorem \ref{t:main_quant_strat_stationary} we can just assume this without any loss, as we can cover $B_1(p)$ by a controlled number of balls of radius $K^{-1/2}_M \delta$, so that after rescaling we can analyze each of these balls with the desired curvature assumption. Thus let us consider the following:\\

{\bf Claim:}  If $K_M<\delta$, then for each $\ell$ there exists a covering $S^{k}_{\epsilon}(f)\subseteq U^\ell_{0}\cup U^\ell_+ =U^\ell_0 \bigcup B_{r^\ell_i}(x^\ell_i)$ such that
\begin{enumerate}
\item  $\lambda^k(U^\ell_0) + \omega_k\sum \big(r^\ell_{i}\big)^{k} \leq C_\ell(\ell,n,K_N,\Lambda,\epsilon)$.
\item $U^\ell_0$ is $k$-rectifiable.
\item $\sup_{y\in B_{r^\ell_i}(x_i^\ell)\cap S^k_{\epsilon}}\theta_{r^\ell_i}(y)\leq \Lambda-\ell\cdot\eta$
\end{enumerate}
\vspace{.5 cm}

The proof of the Claim follows essentially the same steps as those for the main Claim of Theorem \ref{t:main_quant_strat_stationary}.  For base step $\ell=0$, we consider the decomposition $S^k_\epsilon \subseteq U^0_0\cup U^0_+$ where $U^0_0=\emptyset$ and $U^0_+=B_1(p)$.  

Now let us assume we have proved the claim for some $\ell$, then we wish to prove the claim for $\ell+1$.  Thus, let us consider the set $U^\ell_+$ from the previous covering step given by
\begin{align}
U^\ell_+ = \bigcup B_{r^\ell_i}(x^\ell_i)\, .
\end{align}
Now let us apply Lemma \ref{l:covering} to each of the balls $B_{r^\ell_i}(x^\ell_i)$ in order to write
\begin{align}
S^k_\epsilon\cap B_{r^\ell_i}(x^\ell_i)\subseteq U_{i,0}\cup U_{i,+} = U_{i,0}\cup \bigcup_{j} B_{r_{i,j}}(x_{i,j})\, ,
\end{align}
with the following properties:
\begin{enumerate}
\item[(a)] $\lambda^k(U_{i,0})+\omega_k\sum_j r_{i,j}^k\leq C(n,K_N,\Lambda,\epsilon,p) (r^\ell_i)^k$,
\item[(b)] $\sup_{y\in B_{r_{i,j}}(x_{i,j})\cap S^k_{\epsilon}}\theta_{r_{i,j}}(y)\leq \Lambda-(\ell+1)\eta$,
\item[(c)] $U_{i,0}$ is $k$-rectifiable.
\end{enumerate}

Now let us define the sets
\begin{align}
&U^{\ell+1}_0 = \bigcup U_{i,0}\cup U^{\ell}_0\, ,\notag\\
&U^{\ell+1}_+ = \bigcup_{i,j} B_{r_{i,j}}(x_{i,j})\, .
\end{align}

Conditions $(2)$ and $(3)$ from the Claim are clearly satisfied.  We need only check condition $(1)$.  Using $(a)$ and the inductive hypothesis we can estimate that

\begin{align}
\lambda^k(U^{\ell+1}_0) + \omega_k\sum_{i,j} \big(r_{i,j}\big)^{k} &\leq \lambda^k(U^\ell_0)+ \sum_i\Big( \lambda^k(U_{i,0})+\omega_k\sum_j \big(r_{i,j}\big)^k\Big)\, ,\notag\\
&\leq C_\ell+C(n,K_N,\Lambda,\epsilon)\sum_i\big(r^\ell_i\big)^k\notag\\
&\leq C(n,K_N,\Lambda,\epsilon)\cdot C_\ell\notag\\
&\equiv C_{\ell+1}\, .
\end{align}
Thus, we have proved the inductive part of the claim, and thus the claim itself.
\vspace{1cm}

Let us now finish the proof that $S^k_\epsilon(f)$ is rectifiable.  So let us take $\ell = \lceil\eta^{-1}\Lambda\rceil = \ell(\eta,\Lambda)$.  Then if we apply the above Claim to $\ell$, then by the third property of the Claim we must have that
\begin{align}
U^\ell_+\equiv \emptyset\, ,
\end{align}
and therefore we have the covering
\begin{align}
S^k_{\epsilon}\subseteq U^\ell_0\, ,
\end{align}
where $U^\ell_0$ is $k$-rectifiable with the volume estimate $\lambda^k(U^\ell_0)\leq C$, which proves that $S^k_\epsilon$ is itself rectifiable.\\

Finally, we prove that for $k$ a.e. $x\in S^k_\epsilon$ there exists a $k$-dimensional subspace $V_x\subseteq T_xM$ such that {\it every} tangent map at $x$ is $k$-symmetric with respect to $V_x$.  To see this we proceed as follows.  For each $\eta>0$ let us consider the finite decomposition
\begin{align}
S^k_\epsilon = \bigcup_{\alpha=0}^{\lceil \eta^{-1}\Lambda \rceil} W^{k,\alpha}_{\epsilon,\eta}\, ,
\end{align}
where by definition we have
\begin{align}
W^{k,\alpha}_{\epsilon,\eta}\equiv \big\{x\in S^k_\epsilon: \theta_0(x)\in \big[\alpha\eta,(\alpha+1)\eta\big)\big\}\, .
\end{align}
Note then that each $W^{k,\alpha}_{\epsilon,\eta}$ is $k$-rectifiable, and thus there exists a full measure subset $\tilde W^{k,\alpha}_{\epsilon,\eta}\subseteq W^{k,\alpha}_{\epsilon,\eta}$ such that for each $x\in\tilde W^{k,\alpha}_{\epsilon,\eta}$ the tangent cone of $W^{k,\alpha}_{\epsilon,\eta}$ exists and is a subspace $V_x\subseteq T_xM$.

Now let us consider such an $x\in \tilde W^{k,\alpha}_{\epsilon,\eta}$, and let $V^k_x$ be the tangent cone of $W^{k,\alpha}_{\epsilon,\eta}$ at $x$.  For all $r<<1$ sufficiently small we of course have $|\theta_r(x)-\theta_0(x)|<\eta$.  Thus, by the monotonicity and continuity of $\theta$ we have for all $r<<1$ sufficiently small and all $y\in W^{k,\alpha}_{\epsilon,\eta}\cap B_r(x)$ that $|\theta_r(y)-\theta_0(y)|<2\eta$.  In particular, by Theorem \ref{t:quantitative_0_symmetry} we have for each $y\in W^{k,\alpha}_{\epsilon,\eta}\cap B_r(x)$ that $B_r(y)$ is $(0,\delta_\eta)$-symmetric, with $\delta_\eta\to 0$ as $\eta\to 0$.  Now let us recall the cone splitting of Theorem \ref{t:con_splitting}.  Since the tangent cone at $x$ is $V^k_x$, for all $r$ sufficiently small we can find $k+1$ points $x_0,\ldots,x_k\in B_r(x)\cap W^{k,\alpha}_{\epsilon,\eta}$ which are $10^{-1}r$-independent, see Definition \ref{d:independent_points}, and for which $B_{2r}(x_j)$ are $(0,\delta_\eta)$-symmetric.  Thus, by the cone splitting of Theorem \ref{t:con_splitting} we have that $B_r(x)$ is $(k,\delta_\eta)$-symmetric with respect to $V^k_x$ for all $r$ sufficiently small, where $\delta_\eta\to 0$ as $\eta\to 0$.  In particular, every tangent map at $x$ is $(k,\delta_\eta)$-symmetric with respect to $V_x$, where $\delta_\eta\to 0$ as $\eta\to 0$.\\

Now let us consider the sets
\begin{align}
\tilde W^k_{\epsilon,\eta} \equiv \bigcup_\alpha \tilde W^{k,\alpha}_{\epsilon,\eta}\, .
\end{align}
So $\tilde W^k_{\epsilon,\eta}\subseteq S^k_\epsilon$ is a subset of full $k$-dimensional measure, and for every point $x\in \tilde W^k_{\epsilon,\eta}$ we have seen that {\it every} tangent map of is $(k,\delta_\eta)$-symmetric with respect to some $V_x\subseteq T_xM$, where $\delta_\eta\to 0$ as $\eta\to 0$.  Finally let us define the set
\begin{align}
\tilde S^k_{\epsilon} \equiv \bigcap_j \tilde W^k_{\epsilon,j^{-1}}\, . 
\end{align}
This is a countable intersection of full measure sets, and thus $\tilde S^k_{\epsilon}\subseteq S^k_{\epsilon}$ is a full measure subset.  Further, we have for each $x\in \tilde S^k_{\epsilon}$ that {\it every} tangent map must be $(k,\delta)$-symmetric with respect to some $V_x$, for all $\delta>0$.  In particular, every tangent map at $x$ must be $(k,0)=k$-symmetric with respect to some $V_x$.  This finishes the proof of the Theorem.  \hfill $\square$
\vspace{1cm}

\subsection{Proof of Theorem \ref{t:main_stationary}}\label{ss:proof_thm_main_stationary}

Let us begin by observing the equality
\begin{align}\label{e:proof_thm_main_stationary:1}
S^k(f) = \bigcup_{\epsilon>0} S^k_{\epsilon}(f) = \bigcup_{\beta\in \dN} S^k_{2^{-\beta}}(f)\, .
\end{align}
Indeed, if $x\in S^k_\epsilon(f)$, then no tangent map at $x$ can be $(k+1,\epsilon/2)$-symmetric, and in particular $k+1$-symmetric, and thus $x\in S^k(f)$.  This shows that $S^k_\epsilon(f)\subseteq S^k(f)$.  On the other hand, if $x\in S^k(f)$ then we claim there is some $\epsilon>0$ for which $x\in S^k_\epsilon(f)$.  Indeed, if this is not the case, then there exists $\epsilon_i\to 0$ and $r_i>0$ such that $B_{r_i}(x)$ is $(k+1,\epsilon_i)$-symmetric.  If $r_i\to 0$ then we can pass to a subsequence to find a tangent map which is $k+1$-symmetric, which is a contradiction.  On the other hand, if $r_i>r>0$ then we see that $B_r(x)$ is itself $k+1$-symmetric, and in particular {\it every} tangent map at $x$ is $k+1$-symmetric.  In either case we obtain a contradiction, and thus $x\in S^k_\epsilon(f)$ for some $\epsilon>0$.  Therefore we have proved \eqref{e:proof_thm_main_stationary:1}.\\

As a consequence, $S^k(f)$ is a countable union of $k$-rectifiable sets, and therefore is itself $k$-rectifiable.  On the other hand, Theorem \ref{t:main_eps_stationary} tells us that for each $\beta\in\dN$ there exists a set $\tilde S^k_{2^{-\beta}}(f)\subseteq S^k_{2^{-\beta}}(f)$ of full measure such that
\begin{align}
\tilde S^k_{2^{-\beta}}\subseteq \big\{x: \, \exists\, V^k\subseteq T_xM\text{ s.t. every tangent map at $x$ is $k$-symmetric wrt $V$}\big\}\, .
\end{align}
Hence, let us define 
\begin{align}
\tilde S^k(f)\equiv \bigcup \tilde S^k_{2^{-\beta}}(f)\, .
\end{align}
Then we still have that $\tilde S^k(f)$ has $k$-full measure in $S^k(f)$, and if $x\in \tilde S^k(f)$ then for some $\beta$ we have that $x\in \tilde S^k_{2^{-\beta}}$, which proves that there exists a subspace $V\subseteq T_xM$ such that every tangent map at $x$ is $k$-symmetric with respect to $V$.  We have finished the proof of the theorem.  \hfill $\square$

\vspace{1cm}

\section{Proof of Main Theorems for Minimizing Harmonic Maps}\label{s:main_theorem_minimizing_proofs}

In this section we prove the main theorems of the paper concerning minimizing harmonic maps.  That is, we finish the proofs of Theorem \ref{t:main_min_finite_measure} and Theorem \ref{t:main_min_weak_L3}. In fact, the proofs of these two results are almost identical, though the first relies on Theorem \ref{t:main_eps_stationary} and the latter on Theorem \ref{t:main_quant_strat_stationary}.  However, for completeness sake we will include the details of both.\\

\subsection{Proof of Theorem \ref{t:main_min_finite_measure}}

We wish to understand better the size of the singular set $\Sing(f)$ of a minimizing harmonic map.  Let us recall that the $\epsilon$-regularity of theorem \ref{t:eps_reg} tells us that if $f$ is minimizing, then there exists $\epsilon(n,K_N,\Lambda)>0$ 
with the property that if $x\in B_1(p)$ and $0<r<r(n,K_M,K_N,\Lambda)$ is such that $B_{2r}(x)$ is $(n-2,\epsilon)$-symmetric, then $r_f(x)\geq r$.  In particular, $x$ is a smooth point, and we have for $\epsilon(n,K_M,K_N,\Lambda)>0$ that
\begin{align}
\text{Sing}(f)\cap B_1(p)\subseteq S^{n-3}_\epsilon(f)\, .
\end{align}
Thus by Theorem \ref{t:main_eps_stationary} there exists $C(n,K_M,K_N,\Lambda)>0$ such that for each $0<r<1$ we have
\begin{align}
\Vol\ton{\B r {\text{Sing}(f)}\cap B_1(p)}\leq \Vol\ton{\B r{S^{n-3}_\epsilon(f)}}\leq C r^3\, .
\end{align}
This of course immediately implies, though of course is much stronger than, the Hausdorff measure estimate
\begin{align}
\lambda^{n-3}\big(\text{Sing}(f)\cap B_1(p)\big)\leq C\, ,
\end{align}
which finishes the proof of the Theorem.  \hfill $\square$
\vspace{1cm}

\subsection{Proof of Theorem \ref{t:main_min_weak_L3}}

We begin again by considering the $\epsilon$-regularity of theorem \ref{t:eps_reg}.  This tells us that if $f$ is minimizing, then there exists $\epsilon(n,K_N,\Lambda)>0$ 
with the property that if $x\in B_1(p)$ and $0<r<r(n,K_M,K_N,\Lambda)$ is such that $B_{2r}(x)$ is $(n-2,\epsilon)$-symmetric, then $r_f(x)\geq r$.  In particular, we have for such $\epsilon,r$ that
\begin{align}
\{x\in B_1(p): r_f(x)<r\}\subseteq S^{n-3}_{\epsilon,r}(f)\, .
\end{align}
Thus by Theorem \ref{t:main_eps_stationary} there exists $C(n,K_M,K_N,\Lambda)>0$ such that for each $0<r<1$ we have
\begin{align}
\Vol\big(B_r\{x\in B_1(p): r_f(x)<r\}\big)\leq \Vol\ton{\B r{S^{n-3}_{\epsilon,r}(f)}}\leq C r^3\, ,
\end{align}
which proves the second estimate of \eqref{e:main_min_weak_L3:1}.  To prove the first we observe that $|\nabla f|(x)\leq r_f(x)^{-1}$, and to prove \eqref{e:main_min_weak_L3:2} we use the remark after Definition \ref{d:regularity_scale} to conclude that $|\nabla^2 f|(x)\leq C(n,K_M,K_N)r_f(x)^{-2}$.  This concludes the proof of the Theorem. \hfill $\square$

\section{Sharpness of the results}\label{ss:examples}

In this section we present a few examples which motivate the sharpness of our results.  



\subsection{\texorpdfstring{Sharpness of $L^p$ Estimates for Minimizers}{Sharpness of Lp Estimates for Minimizers}}\label{sss:Lp_sharp_example}

This example is completely standard, we wish to simply point out some of its properties.  Namely, consider the mapping $f:B_2(0^3)\to S^2$ given by the projection
\begin{align}
f(x) = \frac{x}{|x|}\, .
\end{align}
This is a minimizing harmonic map (see \cite{lin_min,corgul}).  It is easy to compute that
\begin{align}
|\nabla f|(x) \approx r^{-1}_f(x) \approx \frac{1}{|x|}\, .
\end{align}
We therefore get that $|\nabla f|, r^{-1}_f(x)$ have uniform estimates in $L^3_{weak}$, however neither belong to $L^3$.  In particular, this shows that the estimates of Theorem \ref{t:main_min_weak_L3} are sharp.\\

\subsection{Rectifiable-Reifenberg Example I}

Let us begin with an easy example, which shows that the rectifiable conclusions of Theorem \ref{t:reifenberg_W1p_holes} is sharp.  That is, one cannot hope for better structural results under the hypothesis.  Indeed, consider any $k$-dimensional subspace $V^k\subseteq \dR^n$, and let $S\subseteq V^k\cap B_2(0^n)$ be an arbitrary measurable subset.  Then clearly $D(x,r)\equiv 0$ for each $x$ and $r>0$, and thus the hypotheses of Theorem \ref{t:reifenberg_W1p_holes} are satisfied, however $S$ clearly need not be better than rectifiable.  In the next example we shall see that $S$ need not even come from a single rectifiable chart, as it does in this example.\\

\subsection{Rectifiable-Reifenberg Example II}\label{sss:reifenberg}

With respect to the conclusions of Theorem \ref{t:reifenberg_W1p_holes} there are two natural questions regarding how sharp they are.  First, is it possible to obtain more structure from the set $S$ than rectifiable?  In particular, in Theorem \ref{t:reifenberg_W1p} there are topological conclusions about the set, is it possible to make such conclusions in the context of Theorem \ref{t:reifenberg_W1p_holes}?  In the last example we saw this is not the case.  Then a second question is to ask whether we can at least find a single rectifiable chart which covers the whole set $S$.  
This example taken from \cite[counterexample 12.4]{davidtoro} shows that the answer to this question is negative as well. \\ 

To build our examples let us first consider a unit circle $S^1\subseteq \dR^3$.  Let $M^2\supset S^1$ be a smooth M\"{o}bius strip around this circle, and let $S_\epsilon\subseteq M^2\cap B_\epsilon(S^1)\equiv M^2_\epsilon$ be an arbitrary $\lambda^2$-measurable subset of the M\"{o}bius strip, contained in a small neighborhood of the $S^1$.  In particular, $Area(S_\epsilon)\leq C\epsilon\to 0$ as $\epsilon\to 0$.  It is not hard, though potentially a little tedious, to check that assumptions of Theorem \ref{t:reifenberg_W1p_holes} hold for $\delta\to 0$ as $\epsilon\to 0$.\\

However, we have learned two points from these example.  First, since $S_\epsilon$ was an arbitrary measurable subset of a two dimensional manifold, we have that it is $2$-rectifiable, however that is the most which may be said of $S_\epsilon$.  That is, structurally speaking we cannot hope to say better than $2$-rectifiable about the set $S_\epsilon$.  More than that, since $S_\epsilon$ is a subset of the M\"{o}bius strip, we see that even though $S_\epsilon$ is rectifiable, we cannot even cover $S_\epsilon$ by a single chart from $B_1(0^2)$, as there are topological obstructions, see \cite{davidtoro} for more on this.\\

\section*{Acknowledgments}
We would like to thank prof. \href{https://en.wikipedia.org/wiki/Camillo_De_Lellis}{Camillo de Lellis} for his very precise comments on this paper and its earlier versions.

\bibliographystyle{aomalpha}
\bibliography{qstrat}
\vspace{.5 cm}

\end{document}